\documentclass[a4paper, reqno, 12pt]{amsart}

\usepackage[usenames,dvipsnames]{color}
\usepackage[utf8]{inputenc}
\usepackage[T1]{fontenc}
\usepackage{amsthm,amsfonts,amssymb,amsmath,amsxtra}
\usepackage[all]{xy}
\SelectTips{cm}{}
\usepackage{xr-hyper}
\usepackage[colorlinks=
   citecolor=Black,
   linkcolor=Red,
   urlcolor=Blue]{hyperref}
\usepackage{verbatim}
\usepackage{tikz,tikz-cd}
\usepackage{graphicx}
\usepackage{adjustbox}
\usetikzlibrary{positioning}

\usepackage[margin=1.25in]{geometry}
\usepackage{mathrsfs}

\RequirePackage{xspace}
\RequirePackage{etoolbox}
\RequirePackage{varwidth}
\RequirePackage{enumitem}
\RequirePackage{tensor}
\RequirePackage{mathtools}
\RequirePackage{longtable}
\RequirePackage{multirow}

\def\le{\leqslant}

\def\<{\langle}
\def\>{\rangle}

\newcommand{\fkm}{\ensuremath{\mathfrak{m}}\xspace}
\newcommand{\fkn}{\ensuremath{\mathfrak{n}}\xspace}

\newcommand{\BC}{\ensuremath{\mathbb {C}}\xspace}
\newcommand{\BD}{\ensuremath{\mathbb {D}}\xspace}

\newcommand{{\BG}}{\ensuremath{\mathbb {G}}\xspace}

\newcommand{{\BK}}{\ensuremath{\mathbb {K}}\xspace}

\newcommand{\BR}{\ensuremath{\mathbb {R}}\xspace}

\newcommand{\BW}{\ensuremath{\mathbb {W}}\xspace}

\newcommand{\BZ}{\ensuremath{\mathbb {Z}}\xspace}

\newcommand{\CC}{\ensuremath{\mathcal {C}}\xspace}

\newcommand{\CF}{\ensuremath{\mathcal {F}}\xspace}
\newcommand{\CG}{\ensuremath{\mathcal {G}}\xspace}

\newcommand{\CN}{\ensuremath{\mathcal {N}}\xspace}

\newcommand{\CR}{\ensuremath{\mathcal {R}}\xspace}
\newcommand{\CS}{\ensuremath{\mathcal {S}}\xspace}

\newcommand{\CY}{\ensuremath{\mathcal {Y}}\xspace}

\newcommand{\GL}{\mathrm{GL}}

\DeclareMathOperator{\Hom}{Hom}

\newcommand{\id}{\ensuremath{\mathrm{id}}\xspace}

\newcommand{\ind}{{\mathrm{ind}}}

\DeclareMathOperator{\Jac}{Jac}

\DeclareMathOperator{\ord}{ord}

\newcommand{\SL}{{\mathrm{SL}}}

\DeclareMathOperator{\Sym}{Sym}

\newcommand{\ov}{\overline}

\newcommand{\lra}{\longrightarrow}

\newcommand{\supp}{\operatorname{supp}}
\newcommand{\Ext}{\mathrm{Ext}}
\newcommand{\Irr}{\mathrm{Irr}}
\newcommand{\Fin}{\mathrm{Fin}}
\newcommand{\Alg}{\mathrm{Alg}}
\newcommand{\St}{\mathrm{St}}
\newcommand{\sm}{\mathrm{sm}}
\newcommand{\Nrd}{\mathrm{Nrd}}
\newcommand{\Trd}{\mathrm{Trd}}
\newcommand{\soc}{\mathrm{soc}}
\newcommand{\length}{\mathrm{length}}
\newcommand{\MS}{\operatorname{Mult}}

\newtheorem{theorem}{Theorem}
\newtheorem{proposition}[theorem]{Proposition}
\newtheorem{lemma}[theorem]{Lemma}
\newtheorem {conjecture}[theorem]{Conjecture}
\newtheorem{corollary}[theorem]{Corollary}

\theoremstyle{definition}
\newtheorem{definition}[theorem]{Definition}

\newtheorem{remark}[theorem]{Remark}

\numberwithin{equation}{section}
\numberwithin{theorem}{section}

\setitemize[0]{leftmargin=*,itemsep=\the\smallskipamount}
\setenumerate[0]{leftmargin=*,itemsep=\the\smallskipamount}

\renewcommand{\to}{%
   \ifbool{@display}{\longrightarrow}{\rightarrow}%
   }
\let\shortmapsto\mapsto
\renewcommand{\mapsto}{%
   \ifbool{@display}{\longmapsto}{\shortmapsto}%
   }
\newlength{\olen}
\newlength{\ulen}
\newlength{\xlen}
\newcommand{\xra}[2][]{%
   \ifbool{@display}%
      {\settowidth{\olen}{$\overset{#2}{\longrightarrow}$}%
       \settowidth{\ulen}{$\underset{#1}{\longrightarrow}$}%
       \settowidth{\xlen}{$\xrightarrow[#1]{#2}$}%
       \ifdimgreater{\olen}{\xlen}%
          {\underset{#1}{\overset{#2}{\longrightarrow}}}%
          {\ifdimgreater{\ulen}{\xlen}%
             {\underset{#1}{\overset{#2}{\longrightarrow}}}
             {\xrightarrow[#1]{#2}}}}%
      {\xrightarrow[#1]{#2}}
   }
\makeatother
\newcommand{\xyra}[2][]{%
   \settowidth{\xlen}{$\xrightarrow[#1]{#2}$}%
   \ifbool{@display}%
      {\settowidth{\olen}{$\overset{#2}{\longrightarrow}$}%
       \settowidth{\ulen}{$\underset{#1}{\longrightarrow}$}%
       \ifdimgreater{\olen}{\xlen}%
          {\mathrel{\xymatrix@M=.12ex@C=3.2ex{\ar[r]^-{#2}_-{#1} &}}}%
          {\ifdimgreater{\ulen}{\xlen}%
             {\mathrel{\xymatrix@M=.12ex@C=3.2ex{\ar[r]^-{#2}_-{#1} &}}}
             {\mathrel{\xymatrix@M=.12ex@C=\the\xlen{\ar[r]^-{#2}_-{#1} &}}}}}%
      {\mathrel{\xymatrix@M=.12ex@C=\the\xlen{\ar[r]^-{#2}_-{#1} &}}}%
   }
\makeatletter
\newcommand{\xla}[2][]{%
   \ifbool{@display}%
      {\settowidth{\olen}{$\overset{#2}{\longleftarrow}$}%
       \settowidth{\ulen}{$\underset{#1}{\longleftarrow}$}%
       \settowidth{\xlen}{$\xleftarrow[#1]{#2}$}%
       \ifdimgreater{\olen}{\xlen}%
          {\underset{#1}{\overset{#2}{\longleftarrow}}}%
          {\ifdimgreater{\ulen}{\xlen}%
             {\underset{#1}{\overset{#2}{\longleftarrow}}}
             {\xleftarrow[#1]{#2}}}}%
      {\xleftarrow[#1]{#2}}
   }
\newcommand{\isoarrow}{%
   \ifbool{@display}{\overset{\sim}{\longrightarrow}}{\xrightarrow\sim}%
   }
   
\begin{document}

\title[Multiplicity One Property]{On Irreducibility and Multiplicity One Property for Non-Archimedean
Type-II Theta Lifting}
\author[Kaidi Wu]{Kaidi Wu}
\address{Department of Mathematics and New Cornerstone Science Laboratory, The University of Hong Kong}
\email{kaidiwu24@connect.hku.hk}

\thanks{}

\subjclass[2020]{11F27, 22E50}
\keywords{Theta correspondence, Jordan–Hölder multiplicity, Arthur-type representations, L-function, GGP conjecture}

\date{\today}
\begin{abstract}
We establish a multiplicity-one refinement of the non-archimedean type-II Howe correspondence: whenever the small theta lift is non-zero, it occurs in the big theta lift with Jordan–Hölder multiplicity one. Furthermore, we apply this result alongside the Godement–Jacquet zeta integral to compute big theta lift for Arthur-type representations. Based on these computations, we also propose a conjecture regarding the irreducibility of the big theta lift.
\end{abstract}

\maketitle
\tableofcontents

\section{Introduction}
Let $F$ be a non-Archimedean local field of characteristic zero and arbitrary residue characteristic, and let $\mathrm{D}$ be a central division algebra over $F$ of dimension $d^2$. Let $(G_n, G_m')$ be the type-II reductive dual pair, where $G_n \simeq \GL_n(\mathrm{D})$ and $G_m' \simeq \GL_m(\mathrm{D})$. There is a Weil representation $\omega_{n,m}$ of $G_n\times G_m'$ on $\CS(M_{n,m}(\mathrm{D}))$, the space of Bruhat-Schwartz functions on $n\times m$ matrices with coefficients on $\mathrm{D}$.

Given an irreducible representation $\pi$ of $G_n$, we would like to study the maximal $\pi$-isotypic quotient of $\omega_{n,m}$, namely $\omega_{n,m}/\CN_{\pi}$, where
\[
 \CN_{\pi}:=\bigcap_{\phi\in \Hom_{G_n}(\omega_{n,m},\pi)} \ker \phi.
\]
It is a representation of $G_n\times G_m'$ isomorphic to $\pi\boxtimes \Theta_{n,m}(\pi)$, where $\Theta_{n,m}(\pi)$ is a representation of $G_m'$ of finite length. We call $\Theta_{n,m}(\pi)$ the \textbf{big theta} of $\pi$. By definition, it is isomorphic to the coinvariant:
\[
 \Theta_{n,m}(\pi)\simeq (\pi^{\vee}\otimes\omega_{n,m})_{G_n},
\]
where $\pi^{\vee}$ is the contragredient representation of $\pi$. To better state our results, when $nm=0$, we set 
     \[
     \Theta_{n,m}(\pi)\simeq\begin{cases}
          0 &\text{ if } \pi \not\simeq \mathbf{1}_n;\\
           \mathbf{1}_m &\text{ if } \pi \simeq \mathbf{1}_n,
     \end{cases}
     \]
     where $\mathbf{1}_n$ is the trivial representation of $G_n$.
The most celebrated and significant result regarding the big theta, known as the Howe duality, is due to M\'inguez in~\cite[Theorem 1]{Ming08}.
\begin{theorem}[Howe duality]\label{Howe duality thm}
  Let $\pi$ be an irreducible representation of $G_n$.
\begin{enumerate}
\item If $\Hom_{G_n}(\omega_{n,m},\pi)\neq0$, then there exists a unique irreducible representation $\pi'$ of $G'_m$ such that
\[
\Hom_{G_n\times G'_m}(\omega_{n,m},\pi\otimes\pi')\neq0.
\]
Moreover,
\[
\dim\Hom_{G_n\times G'_m}(\omega_{n,m},\pi\otimes\pi')=1.
\]
\item Suppose $n\le m$. Then $\Hom_{G_n}(\omega_{n,m},\pi)\neq0$ and, if $\pi$ is the Langlands quotient of $\tau_1\times\cdots\times\tau_N$, where $\tau_1,\ldots,\tau_N$ are essentially square-integrable, then $\pi'$ is the Langlands quotient
\[
LQ(\nu_1^{-\frac{m-n-1}{2}}\times\cdots\times\nu_1^{\frac{m-n-1}{2}}
\times \tau_1^{\vee}\times\cdots\times\tau_N^{\vee}).
\]
\end{enumerate}
\end{theorem}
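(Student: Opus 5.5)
The plan follows Mínguez's strategy: induction on $n+m$ together with the Kudla filtration of Jacquet modules of $\omega_{n,m}$, with Bernstein–Zelevinsky/Zelevinsky–Tadić classification used to pin down the lift. The first step is to compute the Jacquet module of $\omega_{n,m}$ along a maximal parabolic $P_{k,n-k}$ of $G_n$. This module has a $G_k\times G_{n-k}\times G'_m$-stable filtration whose graded pieces are
\[
\ind_{P'_{j,m-j}}^{G'_m}\bigl(\,\text{twist of }C_c^\infty(G_j)\otimes \omega_{k-j,m-j}\otimes\omega_{n-k,m}\bigr)\quad\text{(schematically)},
\]
where $C_c^\infty(G_j)$ carries the regular action of $G_j\times G_j$. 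This is the standard orbit-stratification argument on $M_{n,m}(\mathrm{D})$ by rank of the first $k$ rows; since everything is geometric, it works over $\mathrm{D}$ exactly as over $F$.

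For part (1), fix $\pi$ with $\Hom_{G_n}(\omega_{n,m},\pi)\neq 0$, so that $\Theta_{n,m}(\pi)$ is nonzero. I would show that $\Theta_{n,m}(\pi)$ has a unique irreducible quotient and that the quotient occurs once, which gives the uniqueness and multiplicity-one statements. Embed $\pi\hookrightarrow \rho\times\pi_0$ with $\rho$ supercuspidal of $G_k$ (or a segment/ladder block chosen by the Zelevinsky classification). Frobenius reciprocity converts $\Hom(\omega,\pi\otimes\pi')$ into Homs from the Jacquet module. The filtration then shows that only one graded piece contributes, namely the piece with $j=k$ or $j=0$, according to whether $\rho$ matches a twist of a character appearing in the "$C_c^\infty$" piece.

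This reduces the problem to the smaller pair $(G_{n-k},G'_{m-k})$ or $(G_{n-k},G'_m)$, where induction applies. The reduction forces $\pi'\hookrightarrow \rho^{\vee}\nu^{s}\times\pi'_0$ with $\pi'_0=\theta(\pi_0)$ unique. Combined with irreducibility and socle results for $\rho\times\pi_0$ (the Jantzen/Mínguez–Sécherre socle-uniqueness for $\GL_n(\mathrm{D})$), this determines $\pi'$ uniquely. The dimension-one claim comes from the same chain of injective Hom comparisons, since each step gives an inequality $\dim\le$ the corresponding dimension for the smaller pair, and the base case $nm=0$ is trivial.

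The main obstacle is the case where $\rho$ lies on the boundary, i.e.\ $\rho$ is the character $\nu^{\pm(m-n+1)/2}$-type that appears in the $C_c^\infty(G_j)$ piece. There several graded pieces can contribute, and Ext-vanishing fails. For this case I would try choosing the embedding of $\pi$ by taking $\rho$ to be the "highest" exponent in its Zelevinsky multisegment, so that central-character arguments (Bernstein center) kill all but one piece.

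For part (2), with $n\le m$, nonvanishing comes from an explicit functional: the map $\phi\mapsto$ the Godement–Jacquet zeta integral $Z(\phi(\cdot\,[1_n\ 0]),s,f)$ with matrix coefficients $f$ of $\pi$. Taking a leading Laurent coefficient gives a nonzero $G_n$-equivariant map to $\pi$. To identify $\pi'$, I would realize $\pi$ as a quotient of the standard module $\tau_1\times\cdots\times\tau_N$ and use the "induction principle": $\omega_{n,m}$ maps onto $\ind_{P'}^{G'_m}(\mathbf 1_{m-n}\nu^{\cdot}\otimes \omega_{n,n})$. Applying this with $\omega_{n,n}\to \pi\otimes\pi^{\vee}$ (the $C_c^\infty(G_n)$ piece) shows that $\pi\otimes(\text{character}\times\pi^\vee)$ is a quotient. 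Writing $\mathbf 1_{m-n}$ twisted as the Langlands quotient of $\nu_1^{-\frac{m-n-1}{2}}\times\cdots\times\nu_1^{\frac{m-n-1}{2}}$ and reordering the exponents into Langlands order then identifies the unique irreducible quotient as the stated Langlands quotient. The delicate point here is the ordering of exponents; I would handle it using the fact that a unique irreducible quotient of a product of standard modules in suitable order is the Langlands quotient.
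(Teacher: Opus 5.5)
The paper does not reprove this theorem; it quotes it from \cite[Theorem 1]{Ming08}, and Sections 3--5 reuse M\'inguez's ingredients. Your Kudla-filtration reduction for a non-critical cuspidal is one half of that argument. The half you propose for the critical case cannot work, because there are $\pi$ for which \emph{every} cuspidal $\chi$ with $a_\chi(\pi)>0$ lies in $\mathscr{J}_{n,m}$, so no choice of exponent helps. Take $n=1$ and $\pi=\nu_1^{m}=\nu_1^{\frac{2m-n+1}{2}}$ in the $\sigma_{1,m}$-normalization of Section 3; there is nothing to choose. For $t=1$ both Kudla pieces have nonzero $\Hom$ to $\pi$. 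The piece $\tau_0$, on which $G_1$ acts by $\nu_1^m$, yields the character $\nu'$ of $G'_m$. The piece $\tau_1$ is the $C_c^\infty(G_1)$ piece; it contributes for every $\rho$, so ``only $j=0$'' never occurs, and it yields a subrepresentation of $\nu^{\frac{m+1}{2}}\times\nu^{\frac12}\mathbf{1}_{m-1}$. These two have the same cuspidal support, so neither the Bernstein center nor this filtration excludes a second irreducible subrepresentation of $\ov{\Theta}_{1,m}(\pi)$.

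The missing idea is the dichotomy of Lemma~\ref{binary lem}. When no non-critical $\chi$ is available, $\pi$ is off the boundary for the \emph{rank} filtration (here $R_0/R_1$ is the trivial character of $G_1$). Hence $\ov{\Theta}_{n,m}(\pi)$ injects into the open-orbit term $\nu_{m-n}'^{\frac n2}\times\pi\nu_n'^{\frac{n-m}{2}}$ of Lemma~\ref{lift off boundary lemma}, which is socle irreducible. For $m<n$ one can instead work from the $G'_m$ side using (2).

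Even in the non-critical case, your step ``$\pi'\hookrightarrow\rho^\vee\nu^s\times\theta(\pi_0)$'' does not follow. Proposition~\ref{big theta relative to highest derivative} only embeds $\ov{\Theta}_{n,m}(\pi)$ into $(\chi')^{\times a}\times\ov{\Theta}(\rho)$, with the \emph{big} theta of the smaller pair. With a single factor $\chi$, the $\chi'$-part of the Jacquet module of an irreducible subrepresentation $\tau$ is a reducible first derivative. So neither uniqueness nor your inequality ``$\dim\le$ the dimension for the smaller pair'' is justified. You need three things:
\begin{enumerate}
\item $a=a_\chi(\pi)$ maximal;
\item $\rho=D_\chi(\pi)$;
\item control of $\chi'$-reducedness on the theta side, as in \cite[Proposition 4.4]{Ming08}; compare Theorem~\ref{preserve reducedness thm}.
\end{enumerate}
Then $\Jac_{\chi'^{\times a}}(\tau)=\chi'^{\times a}\boxtimes D_{\chi'}(\tau)$ is irreducible, and $D_{\chi'}(\tau)\hookrightarrow\ov{\Theta}(\rho)$ forces $D_{\chi'}(\tau)=\ov{\theta}(\rho)$.

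In part (2), nonvanishing via the normalized zeta integral is fine; it is Proposition~\ref{GJ proposition}(3). The identification of $\pi'$, however, fails for two reasons.

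First, the map $\omega_{n,m}\to\ind_{\ov{P'_n}}^{G'_m}(\cdots\otimes\omega_{n,n})$, $\phi\mapsto\bigl(g'\mapsto(\omega(g')\phi)(\,\cdot\mid 0)\bigr)$, is not onto for $m>n$. Composed with evaluation at $X=0$, its image is a line inside an infinite-dimensional degenerate principal series. So $\mathbf{1}_{m-n}\times\pi^\vee$ need not be a quotient of $\Theta_{n,m}(\pi)$.

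Second, the order of the two factors is forced by the geometry, not chosen by you, and the cosocle of such a product is in general not the Langlands quotient of the merged data.

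Concretely, take $\mathrm{D}=F$, $n=1$, $m=3$, $\pi=|\cdot|^{-3/2}$. The statement gives $\pi'=\nu^{\frac12}\mathbf{1}_3=Z([-\tfrac12,\tfrac32])$. This is the \emph{socle} of $\mathbf{1}_2\times\pi^\vee=Z([-\tfrac12,\tfrac12])\times Z([\tfrac32])$, whose cosocle is $Z([-\tfrac12,\tfrac12]+[\tfrac32])$. Consistently, $L(s,\pi)$ has a pole at $s_0=\tfrac32$, so the zeta functional vanishes on the open orbit (Lemma~\ref{GJ integral on boundary lem}), and its image is the proper subrepresentation $\pi'$.

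In M\'inguez's proof the parameter comes out of the same induction as part (1). When $\pi$ is off the boundary, $\ov{\theta}(\pi)$ is the socle of the open-orbit induced representation. Otherwise, $D_{\chi'}(\ov{\theta}(\pi))=\ov{\theta}(D_\chi(\pi))$ with $a_{\chi'}=a_\chi$, and the Langlands data are read off from this.
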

In Theorem~\ref{Howe duality thm}, $\times$ denotes normalized parabolic induction, and $LQ(\sigma_1\times \cdots\times \sigma_r)$ is the Langlands quotient of the standard representation $\sigma_1'\times \cdots\times\sigma_r'$, where $\sigma_1',\dots,\sigma_r'$ is a permutation of $\sigma_1,\dots,\sigma_r$. In addition, $\nu_n$ is a character of $G_n$ given by $|\Nrd(g)|_{F}^d$ for $g\in G_n$, where $\Nrd$ is the reduced norm.

In particular, part (1) shows that when $\Theta_{n,m}(\pi)\neq 0$, it has a unique irreducible quotient, and part (2) gives the Langlands parameter of this irreducible quotient. We call this irreducible quotient \textbf{small theta} and denote it by $\theta_{n,m}(\pi)$. 

Although M\'inguez's result explicitly determines the small theta $\theta_{n,m}(\pi)$, the big theta $\Theta_{n,m}(\pi)$ remains less understood. A basic and natural question is to determine the Jordan-H\"older multiplicity of small theta in big theta, i.e., the multiplicity of $\theta_{n,m}(\pi)$ occurs in the Jordan-H\"older factors of $\Theta_{n,m}(\pi)$, which we denote by $[\Theta_{n,m}(\pi):\theta_{n,m}(\pi)]$. The first main result of this article gives an answer to this question.
\begin{theorem}\label{multi one intro thm}
  Let $(n,m)$ be a pair of positive integers.  Let $\pi$ be an irreducible representation of $G_n$. If $\theta_{n,m}(\pi)\neq 0$, then 
    \[
    [\Theta_{n,m}(\pi):\theta_{n,m}(\pi)]=1.
    \]
\end{theorem}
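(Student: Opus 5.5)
We plan to prove Theorem~\ref{multi one intro thm} by induction on $n+m$, using the Kudla-type filtration of the Jacquet module of $\omega_{n,m}$. We also use Mínguez's explicit description of $\theta_{n,m}(\pi)$ from Theorem~\ref{Howe duality thm}.

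\textbf{Reductions.} By symmetry of the dual pair, and since $\Theta$ commutes with passing to contragredients up to the obvious twist, we may assume the relevant inequality between $n$ and $m$ as convenient. The case $nm=0$ is trivial by convention. The basic tool is the computation of the Jacquet module of $\omega_{n,m}$ along a maximal parabolic $P'_{k,m-k}$ of $G'_m$. It has a filtration whose graded pieces are induced from $\omega_{n-j,m-k}$ tensored with $\CS(\GL_j(\mathrm{D}))$-type pieces, i.e. regular representations of $\GL_j(\mathrm{D})\times\GL_j(\mathrm{D})$, together with the character twists.

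\textbf{Choosing $k$.} Let $\pi'=\theta_{n,m}(\pi)$. Using the Langlands/Zelevinsky data given in Theorem~\ref{Howe duality thm}(2), we choose an irreducible (cuspidal or segment) representation $\rho$ of $\GL_k(\mathrm{D})$ such that $\pi'\hookrightarrow \rho\times\pi'_0$, with $\rho$ appearing in $\pi'$ with maximal multiplicity among such "extremal" exponents. Then we apply the exact functor "$\rho$-isotypic part of the Jacquet module $r_{(k,m-k)}$" to $\Theta_{n,m}(\pi)$. The Jacquet module of $\Theta_{n,m}(\pi)$ is a quotient of $(\pi^\vee\otimes r(\omega_{n,m}))_{G_n}$. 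Plugging in the filtration, each graded piece contributes either $\Theta_{n-k,m-k}$ of a suitable Jacquet-derivative $\pi_0$ of $\pi$, or a term whose $\rho$-part vanishes by a cuspidal-support or central-character comparison. The generic position of $\rho$ is chosen so that the twist $\nu^{\pm(m-n)/2}$ separates the pieces.

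\textbf{Counting multiplicities.} If $\pi'$ occurred twice in $\Theta_{n,m}(\pi)$, then the $\rho^{a}$-derivative, taken with highest exponent $a$ (in the sense of Jantzen/Mínguez derivatives), of $\Theta_{n,m}(\pi)$ would contain $\pi'_0\boxtimes\rho^{a}$ with multiplicity $\ge2$. Here $\pi'_0$ is the highest $\rho$-derivative of $\pi'$, which is irreducible and occurs with multiplicity one in the derivative of $\pi'$. By the filtration, that derivative is bounded above, in the Grothendieck group restricted to the relevant cuspidal support, by $\Theta_{n-ak,m-ak}(\pi_0)\boxtimes\rho^{a}$ (or the analogue for the smaller dual pair). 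Here $\pi_0$ is the corresponding highest derivative of $\pi$. By induction, $\pi'_0=\theta(\pi_0)$ occurs with multiplicity one, which gives a contradiction. The base of the induction is the case where no such $\rho$ exists beyond the "stable range" segment $\nu^{-(m-n-1)/2},\dots,\nu^{(m-n-1)/2}$. There we treat the case via the Godement–Jacquet/ doubling computation, or via the case $n=m$ with $\pi'=\pi^\vee$. In the latter case $\Theta_{n,n}(\pi)=\pi^\vee$ is known, since $\omega_{n,n}$ is the regular representation on $\CS(\GL_n(\mathrm{D}))$ on the open orbit, with lower orbits contributing other cuspidal supports.

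\textbf{Main obstacle.} The hardest step is to show that the $\rho$-isotypic part of the non-leading graded pieces of the filtration cannot contribute $\pi'_0$. This requires a careful comparison of cuspidal supports. Complications arise because $\pi'$ contains the characters $\nu^{(m-n-1)/2-i}$, which may coincide with exponents of $\pi^\vee$. We would first choose $\rho$ to be the "rightmost" cuspidal line in $\pi'$ away from these characters, handling the remaining case by induction on $m-n$ through the tower $\Theta_{n,m}$ versus $\Theta_{n,m-1}$.
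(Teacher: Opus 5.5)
Your proposal has a genuine gap. The case where no usable $\rho$ exists is not a base case, and you give no argument for it. The paper runs its induction through M\'inguez's dichotomy (Lemma~\ref{binary lem}): either $\pi$ is off the boundary, or there is a cuspidal $\chi\notin\mathscr{J}_{n,m}=\{\nu_1^{\frac{n+1}{2}},\nu_1^{\frac{2m-n+1}{2}}\}$ with $\pi$ not left $\chi$-reduced. Only the second branch admits a derivative argument like yours. Your ``generic position'' and ``rightmost cuspidal line'' heuristics do not produce such a non-critical exponent; its existence is exactly the content of the binary lemma. When it fails you are in the off-boundary case, which occurs for arbitrarily large $n,m$ and needs a separate socle-irreducibility input:
\begin{itemize}
\item for $m\ge n$, restriction to the open rank stratum embeds $\ov{\Theta}_{n,m}(\pi)$ into $\nu_{m-n}^{\frac{n}{2}}\times(\pi\cdot\nu_n^{\frac{n-m}{2}})$, which is socle irreducible by Lapid--M\'inguez;
\item for $m<n$, it embeds $\ov{\Theta}_{n,m}(\pi)$ into a twist of the big derivative $\BW_{\nu_{n-m}^{\frac{m}{2}}}(\pi)$; after applying $\mathscr{A}$ this becomes a big derivative along a Steinberg representation, which is socle irreducible by Chan (Proposition~\ref{off boundary multiplicity one prop}).
\end{itemize}
Nothing in your plan replaces this. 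The Godement--Jacquet integral only gives a nonzero map $\Theta_{n,m}(\pi)\to\mathbf{1}_{m-n}\times\pi^{\vee}$ and carries no multiplicity information. Your claim that $\Theta_{n,n}(\pi)\simeq\pi^{\vee}$, because lower rank strata contribute other cuspidal supports, is false. For $\mathrm{D}=F$, Theorem~\ref{big theta speh thm} with $a=1$ gives $\Theta_{2,2}(\mathbf{1}_2)\simeq\nu^{\frac12}\times\nu^{-\frac12}$, which has length two; the rank-one stratum contributes exactly the cuspidal support of $\mathbf{1}_2$.

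Your reduction ``by symmetry we may assume the convenient inequality between $n$ and $m$'' is also not legitimate. The multiplicity of $\theta_{n,m}(\pi)$ in $\Theta_{n,m}(\pi)$ is a different quantity from the multiplicity of $\pi$ in $\Theta_{m,n}(\theta_{n,m}(\pi))$. So lifts from the larger group must be handled on their own, which is where the big derivative enters.

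On the other hand, your Jacquet-module counting in the on-boundary branch is a workable alternative to the paper's route, under three conditions: $\rho$ must be a cuspidal $\chi\notin\mathscr{J}_{n,m}$ rather than a segment (segments break the block argument), it must be chosen on the side of $\pi$, and you must take $a=a_{\chi}(\pi)$. The right Kudla filtration (Proposition~\ref{Kudal filtration right}) then bounds $\Jac_{(\nu^{\frac{n-m}{2}}\chi)^{\times a}}(\ov{\Theta}_{n,m}(\pi))$ in the Grothendieck group by $(\nu^{\frac{n-m}{2}}\chi)^{\times a}\boxtimes\ov{\Theta}_{n-la,m-la}(\nu^{-\frac{la}{2}}D_{\chi}(\pi))\nu'^{\frac{la}{2}}$. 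This holds because the only non-leading piece that can meet this block contributes the character $\nu_1^{\frac{2n-m+1}{2}}$, which lies in it only for the critical $\chi=\nu_1^{\frac{n+1}{2}}$. Then \cite[Proposition~4.4]{Ming08} and Lemma~\ref{Jac of left chi seq} close the induction. This bypasses the paper's embedding into an induced representation, Theorem~\ref{preserve reducedness thm} and Lemma~\ref{multiplicity one detector lem}, but it covers only one branch of the dichotomy.
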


The proof uses M\'inguez's dichotomy (Lemma~\ref{binary lem}).
Either the representation $\pi$ is off the
boundary of the rank filtration, or a suitable cuspidal factor can
be removed by taking a highest derivative. In the first case,
restriction to the open stratum gives an embedding of the
smooth dual of the big theta into a parabolically induced representation when $m\geq n$,
or into a big derivative with respect to a character when $m<n$.
Then it suffices to show that the parabolically induced representation or the big derivative is socle irreducible, which follows from the
results of Lapid--M\'inguez~\cite{LM16} and Chan~\cite{Ch24}.

In the second case, one chooses a cuspidal representation $\chi$ of
$G_l$ and an embedding
\[
 \pi\hookrightarrow\chi^{\times a}\times\rho,
 \qquad \rho\in\Irr(G_{n-la}),
\]
with $a$ maximal, so that $\rho$ is left $\chi$-reduced. Under the
noncriticality conditions
in the proof, Kudla's filtration gives a surjection
\begin{equation}\label{induction intro eq}
 (\chi^{\vee})^{\times a}\times
 \Theta_{n-la,m-la}(\rho)
 \twoheadrightarrow\Theta_{n,m}(\pi).
\end{equation}
Here $m\geq la$ whenever the right-hand side is nonzero. The precise
reduction is given in Proposition~\ref{big theta relative to highest derivative}.
The key additional input is that cuspidal reducedness is preserved
at the level of composition factors of the relevant theta lift
(Theorem~\ref{preserve reducedness thm}). This allows us to apply
Lemma~\ref{multiplicity one detector lem} and complete the proof by
induction on the size $nm$ of the dual pair.

It is worth mentioning that in recent joint work~\cite{GW26} with Zhibin Geng, we establish a similar multiplicity-one result for arbitrary Archimedean dual pairs. Our primary tool in that setting is the lowest degree $K$-type. While it is natural to ask whether this idea can be applied to the $p$-adic case, the drawbacks of the lattice model in even residue characteristic present significant technical difficulties. Because applications to number theory require a result that is independent of the residue characteristic, we return to the basic Jacquet module argument used here.

As we mentioned before, our ultimate goal is to determine big theta explicitly. In~\cite{CLLTZ25}, building on the earlier works of~\cite{APS17,FSX18}, a partial result was obtained by a homological argument together with the Godement-Jacquet zeta integral when $\mathrm{D}=F$.
\begin{theorem}[\cite{CLLTZ25}]\label{L-function holomorphic big theta thm}
   Let $m\geq n$ be positive integers.  Let $\pi$ be an irreducible smooth representation of $G_n$. We have 
\[
    \Theta_{n,m}\left(\pi\right) \simeq \begin{cases}
       \mathbf{1}_{m-n}\times \pi^\vee \quad & \textit{if $L\left(s,\pi\right)$ is holomorphic at $s_0=\frac{1+m-n}{2}$};\\[10pt]

        \pi^\vee\times \mathbf{1}_{m-n} \quad & \textit{if $L\left(s,\pi^\vee\right)$ is holomorphic at $s_0=\frac{1+m-n}{2}$}
    \end{cases}
\]
as finite length smooth representations. In particular, if both $L\left(s,\pi\right)$ and $L\left(s,\pi^\vee\right)$ are holomorphic at $s_0=\frac{1+m-n}{2}$, then $\Theta_{n,m}\left(\pi\right)$ is irreducible.
\end{theorem}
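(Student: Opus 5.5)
We plan to follow the homological strategy of \cite{APS17,FSX18}: realize $\Theta_{n,m}(\pi)$ through the open orbit of the rank filtration, and use the Godement--Jacquet zeta integral to show that the lower strata contribute nothing. We treat the first case. The second case follows by applying the MVW involution / contragredient symmetry, which interchanges $\pi$ and $\pi^\vee$ and reverses the order of the induction.

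\textbf{Step 1 (filtration).} Consider $\omega_{n,m}$ restricted to $G_n\times P$, where $P\subset G'_m$ is the parabolic with Levi $G_{m-n}\times G_n$ stabilizing the first $m-n$ columns. Filter $\CS(M_{n,m})$ by the rank of the last $n\times n$ block. The open piece consists of functions supported on $\{X=(Y,Z): Z\in G_n\}$, i.e.\ on $M_{n,m-n}\times G_n$. As a $G_n\times G'_m$-module, this piece generates a subrepresentation whose $\pi$-coinvariants compute $\mathbf 1_{m-n}\times\pi^\vee$. Concretely, the open $G_n\times G_m'$-orbit is that of full-rank matrices, and
\[
\CS(\text{full rank})_{\pi^\vee\text{-coinv}}\simeq \mathrm{Ind}_{P}^{G'_m}\big(\nu^{\ast}\mathbf 1_{m-n}\otimes\pi^\vee\big),
\]
which with the Weil normalization is $\mathbf 1_{m-n}\times\pi^\vee$. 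This gives a natural map $\mathbf 1_{m-n}\times\pi^\vee\to\Theta_{n,m}(\pi)$, or dually an injection $\Theta_{n,m}(\pi)^\vee\hookrightarrow(\mathbf 1_{m-n}\times \pi^\vee)^\vee$. We must show that this map is an isomorphism.

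\textbf{Step 2 (vanishing of lower strata).} The closed complement has strata $\Omega_r$ of rank $r<n$. Each contributes $\Ext^i_{G_n}$ of Jacquet-type modules: $\mathrm{Ext}^i_{G_n}(\CS(\Omega_r),\pi)$ involves $\pi$ restricted to a parabolic with Levi $G_r\times G_{n-r}$, where $G_{n-r}$ acts through an explicit character $\nu^{\pm(m-\cdots)/2}$ on a trivial piece. We need:
\begin{enumerate}
\item $\Hom$ and $\Ext^1$ from the closed part to $\pi$ vanish, so the open part computes the coinvariants exactly;
\item the extension class does not create extra quotients.
\end{enumerate}

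Rather than controlling every exponent, we intend to use the zeta integral directly. For $\phi\in\CS(M_{n,n})$ and a matrix coefficient $f$ of $\pi$, set
\[
Z(s,\phi,f)=\int_{G_n}\phi(g)f(g)|\det g|^{s+\frac{n-1}{2}}\,dg .
\]
Composing with partial Fourier transform in the $m-n$ extra columns, evaluation at $s_0=\frac{1+m-n}{2}$ defines a $G_n\times G'_m$-invariant functional on $\omega_{n,m}\otimes\pi^\vee\otimes(\mathbf 1_{m-n}\times\pi^\vee)^\vee$. This functional is defined on all of $\omega_{n,m}$, not just on the open piece, precisely because $Z(s,\cdot,\cdot)/L(s,\pi)\cdot L(s,\pi)$ is holomorphic at $s_0$ when $L(s,\pi)$ is. Hence the Godement--Jacquet integral extends holomorphically. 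It gives a surjection $\Theta_{n,m}(\pi)\twoheadrightarrow$ (a quotient of) $\mathbf 1_{m-n}\times\pi^\vee$ that extends the open-orbit map. By the Godement--Jacquet theory, the image of the zeta integrals is all of $\mathbb C$ on the open stratum, which gives the required surjectivity onto $\mathbf 1_{m-n}\times\pi^\vee$.

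\textbf{Step 3 (isomorphism).} Combine the map $\mathbf 1_{m-n}\times\pi^\vee\to\Theta_{n,m}(\pi)$ of Step 1 with the map $\Theta_{n,m}(\pi)\to \mathbf 1_{m-n}\times\pi^\vee$ of Step 2; the composite is the identity on the open piece. To conclude, it then suffices to show that $\Theta_{n,m}(\pi)$ is generated by the image of the open piece, i.e.\ that $\Hom_{G_n}(\CS(\text{closed}),\pi\otimes\sigma)$ contributes no further quotient. We would argue this by comparing lengths. The closed strata inject into lower $\Theta_{n,m'}$-type data via Kudla's filtration, and their exponents match those of the pole locus of $L(s,\pi)$, which are excluded. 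Equivalently, holomorphy of $L(s,\pi)$ at $s_0$ rules out $\nu^{s_0-\frac12}$-type exponents in the Jacquet modules of $\pi$ that would be needed for the closed strata to produce quotients.

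\textbf{Main obstacle.} The key difficulty is the bridge in Step 3: identifying precisely the exponents occurring in the boundary strata with the poles of $L(s,\pi)$ at $s_0$, at the level of $\Ext^1$ and not just $\Hom$. We would try first to do this via the Bernstein--Zelevinsky description of Jacquet modules of $\pi$ together with the fact that poles of $L(s,\pi)$ are governed by characters appearing in the derivative/Jacquet module. This is the content borrowed from \cite{CLLTZ25}.

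The final claim follows from the two cases. If both $L$-functions are holomorphic at $s_0$, then $\mathbf 1_{m-n}\times\pi^\vee\simeq\Theta\simeq\pi^\vee\times\mathbf 1_{m-n}$, so this representation has its unique irreducible quotient $\theta_{n,m}(\pi)$ both as quotient and as subrepresentation. By the multiplicity-one result Theorem~\ref{multi one intro thm} it therefore equals $\theta_{n,m}(\pi)$, i.e.\ it is irreducible.
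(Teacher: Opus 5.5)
\textbf{Verdict.} The paper does not prove this statement; it quotes it from \cite{CLLTZ25}, so I judge your argument on its own. It has a genuine gap, exactly where you put the ``main obstacle'': Step~3. Showing that the boundary $\omega_{n,m}/\widetilde R_n$ contributes nothing to $\Theta_{n,m}(\pi)$ is the whole content of the theorem. What you offer there is not an argument:
\begin{itemize}
\item ``comparing lengths'';
\item closed strata that ``inject into lower $\Theta$-type data via Kudla's filtration'';
\item an exponent matching still to be carried out at the level of $\Ext^1$.
\end{itemize}
You then defer this to \cite{CLLTZ25}, which is the statement being proved, so the argument is circular. Relatedly, the injection $\Theta_{n,m}(\pi)^\vee\hookrightarrow(\mathbf 1_{m-n}\times\pi^\vee)^\vee$ you assert in Step~1 is not available at that stage. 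It is equivalent to surjectivity of the open-orbit map $\iota\colon \mathbf 1_{m-n}\times\pi^\vee\simeq(\widetilde R_n\otimes\pi^\vee)_{G_n}\to\Theta_{n,m}(\pi)$, which is what has to be shown. In the paper such an injection is obtained only for $\pi$ off the boundary (Lemma~\ref{lift off boundary lemma}).

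\textbf{How to close it.} After Step~2 no analysis of the boundary is needed, provided Step~2 uses the right functional. Take the functional of Proposition~\ref{GJ proposition}, built from $\phi(g\mid 0)$ with no Fourier transform. It is $\ov{P'_n}$-equivariant and lands in $\mathbf 1_{m-n}\times\pi^\vee$. On $\widetilde R_n$ it is an absolutely convergent integral for every $s$, and it realizes the canonical isomorphism $(\widetilde R_n\otimes\pi^\vee)_{G_n}\simeq\mathbf 1_{m-n}\times\pi^\vee$. Your version does not work: a partial Fourier transform in the extra $m-n$ columns followed by restriction to $(g\mid 0)$ just integrates those columns out. That gives a $P'_n$-equivariant map into $\pi^\vee\times\mathbf 1_{m-n}$, whose restriction to the open piece is only an intertwining map, in general not an isomorphism.

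With the correct functional, holomorphy of $L(s,\pi)$ at $s_0$ makes $Z(\cdot\,;s_0)$ defined on all of $\omega_{n,m}$. It gives $Z\colon\Theta_{n,m}(\pi)\to\mathbf 1_{m-n}\times\pi^\vee$ with $Z\circ\iota$ an isomorphism, hence $\Theta_{n,m}(\pi)=\iota(\mathbf 1_{m-n}\times\pi^\vee)\oplus\ker Z$. By Theorem~\ref{Howe duality thm}(1), $\Theta_{n,m}(\pi)$ has a unique irreducible quotient, with one-dimensional $\Hom$. A nonzero finite-length summand $\ker Z$ would produce either a second irreducible quotient or a two-dimensional $\Hom$. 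So $\ker Z=0$, and no $\Ext^1$ control is needed. This splitting-versus-Howe-duality step is the same device the paper uses in the proof of Theorem~\ref{big theta speh thm} for $a=1$ (and, in the exceptional setting, in Lemma~\ref{glue embedding lem}).

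\textbf{Your homological route also works, via a block argument.} By second adjointness and K\"unneth (as in Lemma~\ref{graded piece computation lem} and Theorem~\ref{separation thm}), for $k<n$ every $\Ext^i_{G_n}(\widetilde R_k/\widetilde R_{k+1},\pi)$ vanishes unless $r_{n-k,k}(\pi)$ has a constituent whose $G_{n-k}$-factor has cuspidal support $\{\nu_1^{-s_0-(n-k-1)},\dots,\nu_1^{-s_0}\}$. By the geometric lemma applied to the standard module of $\pi$, that support is a union of upper pieces of Langlands segments. The segment whose upper piece contains $\nu_1^{-s_0}$ must end there, since nothing larger occurs. That gives a pole of $L(s,\pi)$ at $s_0$ (Proposition~\ref{L-func prop}). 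So holomorphy kills all boundary $\Ext$ groups at once, and $\Hom_{G_n}(\omega_{n,m},\pi)_{\sm}\simeq\Hom_{G_n}(\widetilde R_n,\pi)_{\sm}$ directly.

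\textbf{Remaining parts.} Your reduction of the second case via the MVW involution is fine for $\mathrm D=F$. Your final irreducibility argument works once you justify $\soc(\pi^\vee\times\mathbf 1_{m-n})\simeq\cos(\mathbf 1_{m-n}\times\pi^\vee)$. This holds by Lapid--M\'inguez, because $\mathbf 1_{m-n}$ is $\square$-irreducible.
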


Motivated by this theorem, we introduce the following notation.
\begin{definition}
  Let $(n,m)$ be a pair of positive integers. An irreducible representation $\pi$ of $G_r$ is called \textbf{exceptional} (with respect to $(n,m)$) if both $L(s,\pi)$ and $L(s,\pi^{\vee})$ have a pole at $s_0:=\frac{d(m-n)+1}{2}$. Here, $r$ and $n$ are not required to be identical.
\end{definition}
Following Theorem~\ref{L-function holomorphic big theta thm}, it is natural to ask how to compute $\Theta_{n,m}(\pi)$ when $\pi$ is exceptional. The remainder of this article attempts to answer this question for Arthur-type representations using the multiplicity-one result of Theorem~\ref{multi one intro thm}.

To better illustrate our results, we first need to introduce some notation for the Arthur-type representation. For the reader's convenience, we assume $\mathrm{D}=F$ here, although our results hold for general $\mathrm{D}$. See subsection~\ref{Arthur-type sec} for more details about the case of inner forms.

An \textbf{Arthur-type representation} is the local component of an automorphic representation in the discrete spectrum (see Definition~\ref{Arthur-type rep def}). The building block of this family of representations is the Speh representation, which is the Langlands quotient
\[
U(\chi;a,b):= LQ(\nu^{\frac{b-1}{2}}\St(a,\chi)\times \cdots\times \nu^{\frac{1-b}{2}}\St(a,\chi)),
\]
where $a,b$ are positive integers and $\chi$ is a unitary cuspidal representation of $G_l$. Here, $\St(a,\chi)$ is the unique irreducible quotient of 
\[
 \nu^{\frac{1-a}{2}}\chi\times \cdots\times \nu^{\frac{a-1}{2}}\chi.
\]
Thus, $U(\chi;a,b)$ is a representation of $G_{abl}$. Let $\pi=U(\chi;a,b)$. Since $\pi^{\vee}\simeq U(\chi^{\vee};a,b)$, we have $L(s,\pi)$ is holomorphic at $s$ if and only if $L(s,\pi^{\vee})$ is, see Lemma~\ref{L-function holo lem} for further explanation.

In general, an Arthur-type representation of $G_n$ is parameterized by an $A$-parameter, which is a continuous homomorphism
\[
\phi: W_F\times \SL_2(\BC)\times\SL_2(\BC)\lra \GL_n(\BC)
\]
such that the restriction of $\phi$ to the Weil group $W_F$ has a bounded image, and the restriction of $\phi$ to two $\SL_2(\BC)$ factors is algebraic. Let $\tau_{\chi}$ be the irreducible representation of $W_F$ that corresponds to $\chi$ under local Langlands correspondence~\cite{HT01}, and let $\Sym^{k}(\BC^2)$ be the $k$-th symmetric power of the standard representation of $\SL_2(\BC)$. The $A$-parameter of the Speh representation $U(\chi;a,b)$ is
\[
 \tau_{\chi}\boxtimes \Sym^{a-1}(\BC^2)\boxtimes\Sym^{b-1}(\BC^2),
\]
which is also irreducible. 

Let $\pi$ be an Arthur-type representation of $G_n$, whose $A$-parameter has an irreducible decomposition:
\[
\phi_{\pi}=\bigoplus_{i=1}^k \phi_i.
\]
Let $U(\chi_i;a_i,b_i)$ be the Speh representation whose $A$-parameter is $\phi_i$. Then
\[
\pi\simeq U(\chi_1;a_1,b_1)\times \cdots\times U(\chi_k;a_k,b_k).
\]
It is a fact that any product of Speh representations is still irreducible. Let $\pi_{\mathrm{h}}$ be the product of Speh block $\pi_i=U(\chi_i;a_i,b_i)$ such that $L(s,\pi_i)$ is holomorphic at $s_0=\frac{m-n+1}{2}$. Furthermore, let $\pi_{\mathrm{e}}$ denote the product of the Speh blocks that are exceptional with respect to $(n,m)$. Then
\[
\pi\simeq \pi_{\mathrm{h}}\times \pi_{\mathrm{e}}.
\]
The first step towards computing the big theta lift is that we can separate $\pi_{\mathrm{h}}$ from the big theta lift.
\begin{proposition}\label{separation prop intro}
   Let $n,m$ be two positive integers. Let $\pi$ be an Arthur-type representation of $G_n$ such that $\pi_{\mathrm{e}}\in\Irr(G_l)$. Then 
    \begin{enumerate}
        \item If $m<n-l$, then $\Theta_{n,m}(\pi)=0$;
        \item If $m\geq n-l$, then
    \end{enumerate}
    \[
    \Theta_{n,m}(\pi)\simeq \pi_{\mathrm{h}}^{\vee}\times \Theta_{l,m-n+l}(\pi_{\mathrm{e}}).
    \]
\end{proposition}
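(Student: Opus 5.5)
We will peel off the holomorphic Speh blocks one at a time, using Kudla's filtration of the Jacquet module of $\omega_{n,m}$ together with the zeta-integral input behind Theorem~\ref{L-function holomorphic big theta thm}. Write $\pi_{\mathrm{h}}=\pi_1\times\cdots\times\pi_r$ with $\pi_i=U(\chi_i;a_i,b_i)\in\Irr(G_{n_i})$. Since products of Speh representations are irreducible and commute, we have $\pi\simeq \pi_1\times\pi'$ with $\pi'=\pi_2\times\cdots\times\pi_r\times\pi_{\mathrm{e}}$. Induction on $r$ then reduces the proposition to a single claim:
\[
\Theta_{n,m}(\pi_1\times\pi')\simeq \pi_1^{\vee}\times\Theta_{n-n_1,m-n_1}(\pi'),
\]
with both sides zero when $m<n_1$. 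In the base case $r=0$ the statement is tautological. The vanishing statement (1) follows by iterating this claim: after removing all holomorphic blocks we must have $m-(n-l)\ge 0$, otherwise the lift is zero by the convention for negative size or by the claim itself.

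To prove the claim, compute $\Theta_{n,m}(\pi)=(\pi^\vee\otimes\omega_{n,m})_{G_n}$ by Frobenius reciprocity. Use $\pi^\vee\simeq \pi_1^\vee\times\pi'^\vee$, or the other ordering, chosen according to which of $L(s,\pi_1)$, $L(s,\pi_1^\vee)$ is relevant. Then apply Kudla's filtration of the Jacquet module of $\omega_{n,m}$ along the parabolic $P_{n_1,n-n_1}$. The graded pieces are indexed by $0\le k\le \min(n_1,m)$. The open piece, $k=n_1$, contributes
\[
\operatorname{Ind}\bigl(\pi_1^\vee\boxtimes\Theta_{n-n_1,m-n_1}(\pi')\bigr),
\]
suitably twisted, and this gives a map from $\pi_1^\vee\times\Theta_{n-n_1,m-n_1}(\pi')$ to $\Theta_{n,m}(\pi)$, or in the other direction after dualizing. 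The remaining pieces involve $\Hom$ or Ext groups between $\pi_1$ (or its Jacquet modules) and twisted characters $\nu^{\pm\frac{d(m-n)+1}{2}}\mathbf{1}$-type modules. The holomorphy of $L(s,\pi_1)$ at $s_0$, hence also of $L(s,\pi_1^\vee)$ by Lemma~\ref{L-function holo lem}, should kill these contributions. For the bottom piece this is a Godement--Jacquet statement, as in \cite{CLLTZ25}. For the intermediate pieces it should follow from the explicit exponents of the Jacquet modules of Speh representations: a pole of the L-function is exactly detected by a cuspidal exponent matching the character twist.

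We then need the map to be an isomorphism, not merely a surjection. Surjectivity follows from the vanishing of the other strata. For injectivity, the plan is to run the same argument with the opposite parabolic, or equivalently on the contragredient side, to obtain a map in the reverse direction, and then compare lengths. Theorem~\ref{multi one intro thm} controls the cosocle: $\pi_1^\vee\times\Theta(\pi')$ has the same cosocle as $\Theta_{n,m}(\pi)$, and the multiplicity of $\theta(\pi)$ there is one. Concretely, we would show that the two compositions are scalar multiples of the identity on a generating quotient, using uniqueness of the relevant Hom-spaces (multiplicity one). The result for Mínguez's small theta then identifies the Langlands parameters.

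I expect the main obstacle to be the vanishing of the intermediate strata of Kudla's filtration for a general Speh block. One must show that no Jacquet-module constituent of $\pi_1$ can pair nontrivially with the degenerate character part, uniformly in $k$, using only holomorphy at the single point $s_0$. I would first try to use the Zelevinsky/Mœglin--Waldspurger description of the derivatives of $U(\chi;a,b)$. These derivatives are again Speh-type, so their L-functions have poles shifted by half-integers. The aim would be to verify that a pole at the shifted point forces a pole of $L(s,\pi_1)$ at $s_0$, which would give a contradiction.
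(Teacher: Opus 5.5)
Your overall architecture is the paper's. You induct over the holomorphic blocks, use Frobenius reciprocity for $\alpha\times\beta$, and filter the Jacquet module of $\omega_{n,m}$ by Kudla's filtration. The open stratum contributes $\alpha\times\Hom_{G_l}(\omega_{l,m-n+l},\beta)_{\sm}$. All other strata are killed by comparing the cuspidal support of the character $\widetilde{\xi}_{n-l,j}$ on $G_{n-l-j}$, which is a multiplicity-free segment starting at $\nu_1^{\frac{m-n+1}{2}}$, with the ladder constituents of $\ov{r}_{n-l-j,j}(\alpha)$. A match forces exactly the pole condition for $L(s,\alpha^\vee)$ at $s_0$. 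This also disposes of the obstacle you anticipate, and the bottom stratum needs no Godement--Jacquet input.

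There is, however, a genuine gap at the injectivity step. Vanishing of $\Hom$ out of the non-open strata only gives an embedding of $\Hom_{G_n}(\omega_{n,m},\pi)_{\sm}$ into the open-stratum term, i.e.\ a surjection $\pi_1^\vee\times\Theta(\pi')\twoheadrightarrow\Theta_{n,m}(\pi)$. Your remedy does not close this, for three reasons.
\begin{itemize}
\item Redoing the argument with the other ordering or the opposite parabolic again only produces a surjection onto $\Theta_{n,m}(\pi)$ from a module with the same Jordan--H\"older factors. That gives two upper bounds and no lower bound. No map in the reverse direction is actually constructed.
\item Theorem~\ref{multi one intro thm} says only that $\theta_{n,m}(\pi)$ occurs once in $\Theta_{n,m}(\pi)$. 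It says nothing about other constituents of $\pi_1^\vee\times\Theta(\pi')$ that could lie in the kernel.
\item ``Same cosocle'' is what you would need to prove, not an available input. In this setting $\Theta(\pi_{\mathrm{e}})$ can be reducible (e.g.\ $a=1$), so cosocle considerations cannot detect a kernel.
\end{itemize}

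The missing idea is that your vanishing mechanism is stronger than you use. A mismatch of cuspidal supports separates Bernstein components, so it kills $\Ext^i$ in every degree, not just $\Hom$. Concretely, for $j<k$ apply second adjointness on the $G_{n-l}$-factor and the K\"unneth formula over $L_{n-l,l}$. This reduces $\Ext^i_{L_{n-l,l}}(\widetilde{\tau}_j,\alpha\boxtimes\beta)_{\sm}$ to groups $\Ext^c_{L_{n-l-j,j}}(\widetilde{\xi}_{n-l,j}\otimes\rho_j,\ov{r}_{n-l-j,j}(\alpha))$, which vanish for all $c$.

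Hence every $\Ext^i$ of $\widetilde{J}_0/\widetilde{J}_k$ into $\alpha\boxtimes\beta$ vanishes. The connecting map into $\Ext^1$ is then zero (equivalently, the first homology obstruction in your coinvariant formulation vanishes). So $\Ext^i_{G_n}(\omega_{n,m},\alpha\times\beta)_{\sm}$ is computed entirely by the open stratum $\widetilde{J}_k$. There, projectivity of $\rho_{n-l}$ over $G_{n-l}$ gives $\alpha\boxtimes\Ext^i_{G_l}(\omega_{l,m-n+l},\beta)_{\sm}$.

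This is exactly the paper's proof (Theorem~\ref{separation thm}). It yields the isomorphism directly, for all $\Ext^i$, with neither zeta integrals nor the multiplicity-one theorem. Part (1) is the case $m<n-l$, where every stratum is non-open and everything vanishes.
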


For more general statements regarding extension groups, readers are referred to Theorem~\ref{separation thm}. We remark that $\pi_e$ is also exceptional with respect to $(l,m-n+l)$. 

According to this proposition, it suffices to determine the big theta of a product of exceptional Speh blocks. We complete the computation of a single Speh block in this article.
\begin{theorem}\label{big theta speh thm intro}
  Let $n,m$ be two positive integers. Let $\pi=U(\chi;a,b)$ be an exceptional Speh representation of $G_n$ with respect to $(n,m)$. Then
    \begin{equation}
        \Theta_{n,m}(\pi)\simeq\begin{cases}
            0 &\text{if } a>1,m<n  ;\\
            \mathbf{1}_{m-n}\times \pi &\text{if } a>1,m\geq n;\\
       \nu^{\frac{2n-m}{4}}\mathbf{1}_{\frac{m}{2}}\times  \nu^{\frac{m-2n}{4}}\mathbf{1}_{\frac{m}{2}} & \text{if } a=1 \text{ and } m \text{ is even}.
        \end{cases}
    \end{equation}
\end{theorem}
For the statement of inner forms, see Remark~\ref{speh theta rem}.

We remark that when $a=1$ and $m$ is odd, $\pi$ is never exceptional. Moreover, we observe that $\nu^{\frac{2n-m}{4}}\mathbf{1}_{\frac{m}{2}}\times  \nu^{\frac{m-2n}{4}}\mathbf{1}_{\frac{m}{2}}$ is reducible when $m\geq n$. The proof of this theorem starts from computing the Hom-space associated to each graded piece in the rank filtration. Then we use the multiplicity-one Theorem~\ref{multi one intro thm} and the Godement-Jacquet zeta integral to glue them together. Crucially, the rank filtration on the Fourier dual side provides additional information that is essential to our argument.

This theorem, together with Proposition~\ref{separation prop intro}, completely determines $\Theta_{n,m}(\pi)$ when $\pi$ is an Arthur-type representation such that
$$\ord_{s=s_0} L(s,\pi)\leq 1.$$
The case where $\ord_{s=s_0} L(s,\pi)> 1$ appears to be much more complicated. But we can still say something about the irreducibility of big theta. We propose the following conjecture.
\begin{conjecture}\label{irr conj intro}
  Let $m\geq n$ be positive integers.  Let $\pi$ be an Arthur-type representation of $G_n$ such that
    \[
   \pi\simeq u(\chi_1;a_1,b_1)\times \cdots\times u(\chi_k;a_k,b_k),
    \]
    where block $u(\chi_i;a_i,b_i)$ is exceptional with respect to $(n,m)$ for each $1\leq i\leq k$. Then $\Theta_{n,m}(\pi)$ is irreducible if and only if $a_i>1$ for each $1\leq i\leq k$.
\end{conjecture}

Here, $u(\chi;a,b)$ replaces $U(\chi;a,b)$ as the building block for Arthur-type representations of inner forms (see subsection~\ref{Arthur-type sec} for definition). As evidence for this conjecture, we prove it in the following special cases.
\begin{proposition}
 Retain the notation and assumption in Conjecture~\ref{irr conj intro}.
    \begin{enumerate}
    \item Assume $n=m$ and $\mathrm{D}=F$. If $a_i=1$ for some $i$, then $\Theta_{n,n}(\pi)$ is reducible. 
    \item Assume $m \geq n$. If $\pi$ is a repeated product of exceptional blocks $u(\chi;a,b)$ with $a>1$, i.e. $\chi_i=\chi$, $a_i=a$ and $b_i=b$ for any $1\leq i\leq k$, then $\Theta_{n,m}(\pi)$ is irreducible. 
\end{enumerate}
\end{proposition}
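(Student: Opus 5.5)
Both parts rest on three inputs: the multiplicity-one Theorem~\ref{multi one intro thm}, the computation of Theorem~\ref{big theta speh thm intro} for a single exceptional block, and the Godement--Jacquet description of Theorem~\ref{L-function holomorphic big theta thm} in the form of a map relating $\mathbf{1}_{m-n}\times\pi^\vee$ and $\pi^\vee\times\mathbf{1}_{m-n}$.

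For part (1), take $n=m$ and $\mathrm{D}=F$. Reducibility is certainly established if we exhibit two distinct irreducible constituents of $\Theta_{n,n}(\pi)$, one of which is $\theta_{n,n}(\pi)=\pi^\vee$. By Theorem~\ref{multi one intro thm}, it is equally enough to show that the length of $\Theta_{n,n}(\pi)$ is at least $2$.
\begin{enumerate}
\item Reorder the blocks so that $\pi\simeq \pi'\times U(\chi;1,b)$, where $U(\chi;1,b)$ is a character block that is exceptional with respect to $(n,n)$. Exceptionality at $s_0=\tfrac12$ forces $b$ to be even and $\chi=\nu^{\pm\cdots}$-trivial up to a unitary twist.
\item Use Kudla's filtration (the analogue of the surjection \eqref{induction intro eq}) together with the separation Theorem~\ref{separation thm}. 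Applied to the block $U(\chi;1,b)$, these produce a surjection from $\pi'^\vee\times\Theta_{b,b}(U(\chi;1,b))$, or from a suitable induced representation, onto a quotient of $\Theta_{n,n}(\pi)$.
\item Alternatively, work directly with the rank filtration. The bottom stratum (rank $<n$) contributes a nonzero $\Hom$ into some representation not isomorphic to $\pi^\vee$. Exceptionality means $L(s,\pi)$ has a pole at $1/2$, so the Godement--Jacquet zeta integral has a pole there. Its leading coefficient gives a nonzero $G_n\times G_n$-invariant functional on $\omega_{n,n}\otimes\pi^\vee\otimes\pi$ that does not factor through the open orbit. This yields a second constituent.
\end{enumerate}
Concretely, I expect $\Theta_{n,n}(\pi)$ to contain a constituent such as $\pi'^\vee\times(\nu^{1/2}\mathbf 1_{b/2}\times\nu^{-1/2}\mathbf 1_{b/2})$-type pieces, as suggested by Theorem~\ref{big theta speh thm intro}. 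That induced representation is reducible when $m\geq n$. The main obstacle is to show that this reducibility survives after inducing with $\pi'^\vee$. I would handle it by a Jacquet module or derivative count: compare the length of the Jacquet module of $\Theta$ with that of $\pi^\vee$, using the non-holomorphy of $L(s,\pi)$ to guarantee an extra exponent.

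For part (2), write $\pi=u(\chi;a,b)^{\times k}$ with $a>1$. Irreducibility follows if we show the following:
\begin{enumerate}
\item $\Theta_{n,m}(\pi)$ has socle and cosocle both equal to $\theta_{n,m}(\pi)$;
\item by Theorem~\ref{multi one intro thm}, $\theta_{n,m}(\pi)$ appears exactly once as a constituent.
\end{enumerate}
Together these force $\Theta_{n,m}(\pi)=\theta_{n,m}(\pi)$.

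The cosocle statement is Howe duality. For the socle, I would proceed by induction on $k$, using the single-block result $\Theta(u(\chi;a,b))\simeq \mathbf 1_{m-n}\times u(\chi;a,b)$ (Remark~\ref{speh theta rem}).
\begin{enumerate}
\item Peel off one block via Kudla's filtration. This gives a surjection
\[
u(\chi^\vee;a,b)\times \Theta_{n',m'}(u(\chi;a,b)^{\times(k-1)})\twoheadrightarrow \Theta_{n,m}(\pi),
\]
where the indices $n',m'$ are adjusted.
\item By induction, the source is $u^{\vee}\times \mathbf 1\times u^{\vee\,\times(k-1)}$.
\item Show that this source is irreducible, or at least has a unique irreducible quotient occurring with multiplicity one. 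Here one uses that products of Speh representations with $a>1$ commute with the trivial character $\mathbf 1_{m-n}$ irreducibly. This is a Lapid--M\'inguez (LM16) ladder/irreducibility criterion applied to $\mathbf 1_{m-n}\times u$, and exceptionality with $a>1$ puts the segments in a position where the product is irreducible.
\end{enumerate}
The key and hardest step is verifying this irreducibility of $\mathbf 1_{m-n}\times u(\chi;a,b)^{\times k}$.
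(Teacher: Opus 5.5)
\textbf{Part (1).} There is a genuine gap here. None of your three routes gives a \emph{lower} bound on $\Theta_{n,n}(\pi)$, and a lower bound is what reducibility requires.
\begin{itemize}
\item Theorem~\ref{separation thm} only splits off a factor $\alpha$ that is \emph{not} exceptional. Its proof kills the lower Kudla pieces by a cuspidal-support comparison that uses the holomorphy of $L(s,\alpha^\vee)$ at $s_0$, i.e.\ exactly non-exceptionality. Here every block is exceptional, so it does not apply.
\item In any case, a surjection from an induced representation onto $\Theta_{n,n}(\pi)$, or onto one of its quotients, is an upper bound. It cannot prove reducibility.
\item A nonzero $\Hom$ from a stratum $\widetilde R_k/\widetilde R_{k+1}$ to $\pi$ need not lift to $\omega_{n,n}$. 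The paper controls the connecting maps only for a single character block (Lemma~\ref{extension group lem}, via $\Ext^\ast_{G_n}(\chi,\chi)$).
\item Since $Z^\sharp$ is entire, the leading Laurent coefficient of the zeta integral at $s_0=\frac12$ is a nonzero multiple of $Z^\sharp(\cdot;s_0)$. So it \emph{is} the small-theta quotient $\Theta_{n,n}(\pi)\twoheadrightarrow\pi^\vee$. Its vanishing on $\widetilde R_n$, combined with multiplicity one, only shows that the open stratum contributes nothing to $\Theta_{n,n}(\pi)$ (as in Lemma~\ref{glue embedding lem}). It does not produce a second constituent.
\end{itemize}
Your sketch also never uses $m=n$ or $\mathrm{D}=F$.

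The missing idea is the paper's test representation. By the see-saw, $\Hom_{G_{n-1}}(\Theta_{n,n}(\pi),\lambda)\simeq\Hom_{G_n}(\Theta_{n-1,n}(\lambda)\otimes\Omega_n,\pi)$. The hypothesis $a_i=1$ (which forces $b_i$ even) puts $\mathbf 1_1$ into the Langlands data of the highest derivative $\pi^-$. This lets one choose $\lambda\in\Irr(G_{n-1})$ with $\theta_{n-1,n}(\lambda)\simeq\pi^-\times\rho$, where $\rho$ is cuspidal of $G_d$ and $d$ is the depth of $\pi$ (with $\rho$ ramified if $d=1$). Chan's non-tempered GGP theorem then gives $\Hom_{G_n}((\pi^-\times\rho)\otimes\Omega_n,\pi)\neq0$ by relevance. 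On the other hand, $\Hom_{G_{n-1}}(\pi^\vee,\lambda)=0$ by the Bernstein--Zelevinsky filtration and a cuspidal-support count (Lemma~\ref{hom-zero lem}).

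\textbf{Part (2).} The gap is in the step you treat as routine: the claimed surjection $u(\chi^\vee;a,b)\times\Theta_{n',m'}(u(\chi;a,b)^{\times(k-1)})\twoheadrightarrow\Theta_{n,m}(\pi)$.
\begin{itemize}
\item It does not follow from Kudla's filtration. As in Proposition~\ref{big theta relative to highest derivative} and Theorem~\ref{separation thm}, it would need the lower graded pieces to contribute nothing. For an exceptional block they do contribute; compare the nonzero $k=n-h$ term $\nu_{m-k}^{\frac{k-n}{2}}\times A$ in Lemma~\ref{graded piece computation lem}.
\item Your justification never uses $a>1$. Applied verbatim to $a=1$, $k=1$, $n=m=2$, it would give a surjection from the irreducible $\mathbf 1_2$ onto $\Theta_{2,2}(\mathbf 1_2)\simeq\nu^{\frac12}\mathbf 1_1\times\nu^{-\frac12}\mathbf 1_1$, which is reducible by Theorem~\ref{big theta speh thm}. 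So the mechanism is false in general.
\item The step you call hardest is immediate: $\mathbf 1_{m-n}\times u(\chi;a,b)^{\times k}$ is a product of unitary Speh-type representations, hence irreducible.
\item A surjection onto $\Theta_{n,m}(\pi)$ controls its quotients, not its socle. So it could not have delivered the socle statement your plan needs.
\end{itemize}

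The paper proceeds differently:
\begin{itemize}
\item It first reduces to $U(\mathbf 1_1;a,b)^{\times r}$ by separation, which does apply to the non-exceptional Speh factors of $u(\chi;a,c)$.
\item It computes the $\Hom$ from each rank stratum. Only $k=n$ and $k=n-h$ survive; the latter is identified via socle irreducibility of a big derivative and Lapid--M\'inguez irreducibility.
\item It uses the vanishing of $Z^\sharp$ on $\widetilde R_n$, plus multiplicity one, to embed $\Hom_{G_n}(\omega_{n,m},\pi)_{\sm}$ into $\nu_{m-n+h}^{-\frac h2}\times A\times U(\mathbf 1_1;a,b)^{\times(r-1)}$. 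This representation has length two and is socle-irreducible (Proposition~\ref{length two prop}).
\item It then uses the analogous embedding on the Fourier-dual side, where $a>1$ is essential, to rule out the second constituent.
\end{itemize}
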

Also see Proposition~\ref{reducible prop} for part (1) and Proposition~\ref{irreducible prop} for part (2). 

To prove (1), we borrow some information from (non-tempered) GGP conjecture. In what follows, we assume $\mathrm{D}=F$.
\begin{definition}
    Let $n,m$ be positive integers and $\pi\in \Irr(G_n)$. Then $\lambda\in \Irr(G_{m-1})$ is called a \textbf{(GGP-)test} representation for $\pi$ if
    \[
    \Hom_{G_{m-1}}(\Theta_{n,m}(\pi),\lambda)\neq 0 \text{ but } \Hom_{G_{m-1}}(\theta_{n,m}(\pi),\lambda)=0.
    \]
\end{definition}
It is clear that if a test representation for $\pi$ exists, then $\Theta_{n,m}(\pi)$ is non-zero and reducible. We emphasize that when $\pi$ and $\theta_{m-1,n}(\lambda)$ are Arthur-type representations, these two Hom-spaces are computable. The Hom-space for the small theta reduces to a branching problem from $G_m$ to $G_{m-1}$, which is now well understood thanks to a series of works by Chan and Chan–Pattanayak~\cite{Cha_qbl,Cha_csq,Cha_csq_ii,Cha_csq_iii,Cha_duality,CP25}. For the Hom-space of big theta, the see-saw diagram 
 \begin{equation}\label{see saw diagram}
   \begin{tikzpicture}
  \matrix[matrix of math nodes,row sep=2em,column sep=4em,minimum width=2em] (m) {
     G_m & G_n\times G_n\\
    G_{m-1}\times G_1   & G_n,\\};
  \draw[-] (m-1-1) -- (m-2-1); 
  \draw[-] (m-1-2) -- (m-2-2); 
  \draw[-] (m-1-1) -- (m-2-2); 
  \draw[-] (m-1-2) -- (m-2-1); 
\end{tikzpicture} 
\end{equation}
implies the isomorphism
\[
 \Hom_{G_{m-1}}(\Theta_{n,m}(\pi),\lambda)\simeq \Hom_{G_n}(\Theta_{m-1,n}(\lambda)\otimes \Omega_n , \pi).
\]
Here, $\Omega_n$ is a representation of $G_n$ on $\CS(F^n)$ given by 
\[
( g\cdot f)(x)= |\det g|_F^{-1/2}f(g^{-1}x).
\]
In particular, if the A-parameter of $\theta_{m-1,n}(\lambda)$ and $\pi$ is relevant (see Definition~\ref{relevant def}), then the non-tempered GGP-conjecture for this Fourier-Jacobi model (which is now a theorem due to Chan~\cite{Ch22}) ensures that this Hom-space is non-zero.

\section*{Acknowledgments}
The author is supported by the National Natural Science Foundation of China (Project No. 123B1004). He is also partially supported by the New Cornerstone Science Foundation through the New Cornerstone Investigator Program awarded to Professor Xuhua He.

This project grew out of joint work with Zhibin Geng, which establishes a similar multiplicity one result for arbitrary archimedean dual pairs. The author thanks Geng for his support and helpful discussions. He is also grateful to Kei Yuen Chan, Basudev Pattanayak, and Mohammed Saad Qadri for valuable conversations regarding the material in the appendix. The author would like to thank his advisor Xuhua He for continued support and encouragement.

During the drafting process, AI tools were used to search for references. After the initial draft was completed, AI (specifically ChatGPT) was used for proofreading, language revision, and the creation of Figure~\ref{fig:segments-exceptional-speh}. No AI tools were used in any other capacity. 
\section{Preliminary}
\subsection{Notations}
Let $F$ be a non-Archimedean local field of characteristic zero and residual characteristic $p>0$, and let $\mathrm{D}$ be a central division algebra over $F$ of dimension $d^2$. We fix a uniformizer $\varpi$ of $F$ and denote by $q$ the cardinality of its residue field. We write $|\cdot|_F$, or simply $|\cdot|$ when no confusion can arise, for the normalized absolute value on $F$. Throughout the article, an additive character $\psi:F\lra \BC^{\times}$ is fixed.

\subsubsection{Notations for dual pairs}
Let $n,m$ be positive integers. We denote by $M_{n,m}(\mathrm{D})$ the space of $n\times m$ matrices with entries in $\mathrm{D}$, and by
\[
\Nrd:M_{n,n}(\mathrm{D})\longrightarrow F
\]
the reduced norm. The group $\GL_n(\mathrm{D})$ of invertible matrices in $M_{n,n}(\mathrm{D})$ will be denoted by $G_n$. When considering a dual pair, we use $G_m'$ to denote the second member, in order to distinguish it from the first member.

Let $\nu_n$ denote the character of $G_n$ given by
$$\nu_n(g) := \vert{}\Nrd(g)\vert{}_F^d \quad \text{for } g \in G_n.$$

Denote by $S_{n,m}=\CS(M_{n,m}(\mathrm{D}))$ the vector space of locally constant compactly supported complex-valued functions. We use the Schr\"odinger model of the metaplectic representation $\omega_{n,m}$ restricted to the dual pair $G_n\times G_m'$:
\begin{equation}
\omega_{n,m}(g,g')
=\nu_n(g)^{-m/2}\,\sigma_{n,m}(g,g')\,\nu'_m(g')^{n/2},
\end{equation}
where $\sigma_{n,m}$ is defined by
\[
(\sigma_{n,m}(g,g')\Phi)(x)=\Phi(g^{-1}xg')
\]
for $g\in G_n$, $g'\in G_m'$, $x\in M_{n,m}(\mathrm{D})$, and $\Phi\in S_{n,m}$.

To effectively apply the results of~\cite{Ming08}, we follow his convention and define
\[
\ov{\Theta}_{n,m}(\pi) := \Hom_{G_n}(\sigma_{n,m}, \pi)_{\sm} \text{ for }\pi\in \Irr(G_n).
\]
For later convenience, we would like to extend the theta lift to the situation when $nm=0$.  For $\pi\in \Irr(G_n)$, we set 
     \[
     \ov{\Theta}_{n,m}(\sigma)\simeq\begin{cases}
          0 &\text{ if } \sigma \not\simeq \mathbf{1}_n;\\
           \mathbf{1}_m &\text{ if } \sigma \simeq \mathbf{1}_n.
     \end{cases}
     \]
Note that $\ov{\Theta}(\pi)\nu'^{-n/2}= \Theta(\nu^{-m/2}\pi)^{\vee}$.

Throughout the paper, we use a \textbf{prime} to distinguish the notation associated with the second member $G_m'$ of a dual pair from the corresponding notation associated with $G_n$. For example, characters and subgroups associated with $G_m'$ will be denoted by primed symbols whenever necessary.

\subsubsection{Notations for representations}
Throughout this article, we consider only smooth complex representations, and the word ``representation'' will always mean smooth complex representation. The category of these representations of some $\ell$-group $G$ is denoted by $\Alg(G)$. For a possibly non-smooth representation $V$, we denote by $V_{\sm}$ the subrepresentation consisting of its smooth vectors.  

We denote by $\Irr(G_n)$ the set of equivalence classes of irreducible representations of $G_n$. Let $\Irr$ denote the disjoint union
\[
\Irr=\bigcup_{n\geq 0}\Irr(G_n),
\]
and let $\CC$ be the subset of $\Irr$ formed by cuspidal representations. If $\pi\in\Irr(G_n)$, then $\mathrm{gr}(\pi):=n$.

For a finite length representation $\pi$ of $G_n$, let $\soc(\pi)$ be the socle (i.e. maximal semisimple submodule) of $\pi$ and let $\cos(\pi)$ be the cosocle (i.e. maximal semisimple quotient) of $\pi$. $\pi$ is called \textbf{socle irreducible} if $\soc(\pi)$ is irreducible and 
\[
[\pi: \soc(\pi)]=1.
\]

Let $k$ be an integer such that $0\leq k\leq n$. Let $P_{k,n-k}$ be the standard parabolic subgroup of $G_n$ consisting of block upper triangular matrices whose standard Levi subgroup is $L_{k,n-k}\simeq G_{k}\times G_{n-k}$. The opposite parabolic subgroup $\ov{P}_{k,n-k}$ is defined as the subgroup consisting of transpose matrices in $P_{k,n-k}$. When the ambient group $G_n$ is clear, we will simply write $P_k$ instead of $P_{k,n-k}$. When $k=0$ or $k=n$, $P_{k,n-k}=\ov{P}_{k,n-k}=G_n$. We also set $U_n$ to be the subgroup of $G_n$ consisting of strictly upper triangular matrices.

Let $\sigma$ be a representation of $L_{k,n-k}$. We use 
\[
\ind_{P_{k,n-k}}^{G_n}(\sigma)
\]
to denote the \textbf{normalized} parabolic induction of $\sigma$ to $G_n$. When $\sigma=\sigma_1\boxtimes\sigma_2$, where $\sigma_1$ is a representation of $G_k$ and $\sigma_2$ is a representation of $G_{n-k}$, we use the shorthand
\begin{equation}\label{times parabolic induction eq}
    \sigma_1\times \sigma_2:= \ind_{P_{k,n-k}}^{G_n}(\sigma).
\end{equation}
We extend this notation to $\sigma_1\times \cdots\times \sigma_t$ by associativity. When $\sigma_1=\cdots=\sigma_t=\chi$, it is also denoted by $\chi^{\times t}$.

Let $\pi$ be a representation of $G_n$, we will use
\[
r^{G_n}_{k,n-k}(\pi)
\]
to denote the \textbf{normalized} Jacquet module of $\pi$ with respect to $P_{k,n-k}$. Similarly, 
\[
\ov{r}^{G_n}_{k,n-k}(\pi)
\]
is for the normalized Jacquet module of $\pi$ with respect to $\ov{P}_{k,n-k}$. When $G_n$ is clear from the context, we will simply write $r_{k,n-k}$ and $\ov{r}_{k,n-k}$.

We will freely use the following standard identities:
\begin{enumerate}
    \item $\ind_{P_{k,n-k}}^{G_n}(\sigma_1\boxtimes \sigma_2)\simeq \ind_{\ov{P}_{n-k,k}}^{G_n}(\sigma_2\boxtimes \sigma_1)$.
    \item $(\sigma_1\times\sigma_2)^{\vee}\simeq \sigma_1^{\vee}\times \sigma_2^{\vee}$.
    \item Second adjointness: 
    \[
    \Hom_{G_n}(\ind_{P_{k,n-k}}^{G_n}(\sigma),\pi)\simeq \Hom_{L_{k,n-k}}(\sigma,\ov{r}_{k,n-k}(\pi)).
    \]
\end{enumerate}

There is a representation appearing in the filtration of the Weil representation. Let $\rho_n:=\CS(G_n)$ be the space of Bruhat-Schwartz functions on $G_n$. It is a representation of $G_n\times G_n'$ by
\begin{equation}\label{rho def eq}
   ( \rho_n(g,g')\Phi)(h):= \Phi(g^{-1} hg'), g\in G_n, g'\in G_n'.
\end{equation}

\subsubsection{Notations for segments}
Let $\rho\in \CC$. Then there exists a unique positive integer $s_{\rho}$ such that $\rho\times |\Nrd|^{\pm s_{\rho}}\rho$ is reducible. Let $\nu_{\rho}:= |\Nrd|^{ s_{\rho}}$. For $x,y\in \BR$ with $y-x\in \BZ_{\geq 0}$, we define the segment $[x,y]_{\rho}$ to be the set $\{\nu_{\rho}^x\rho,\cdots,\nu_{\rho}^y\rho\}$. And we call this segment based on $\rho$. If $\rho$ is the trivial representation of $G_1$, we will simply write $[x,y]$. 

We introduce some useful operations on a segment $\Delta=[x,y]_{\rho}$. Define
\begin{enumerate}
    \item $\Delta^-:= [x,y-1]_{\rho}$ and ${}^-\Delta:=[x+1,y]_{\rho}$;
    \item $\ell(\Delta):= y-x+1$ and $\Delta^{\vee}:=[-y,-x]_{\rho^{\vee}}$;
    \item $b(\Delta):=x$ and $e(\Delta):=y$, where ``b'' is for beginning and ``e'' is for ending.
\end{enumerate}
By convention, we set $[x,y]=\emptyset$ whenever $y<x$. Let $\Delta$ and $\Delta'$ be two segments. We say $\Delta$ and $\Delta'$ are linked if $\Delta\cup \Delta'$ is still a segment and $\Delta\not\subset \Delta', \Delta'\not\subset \Delta$. The segment $\Delta$ is strictly contained in $\Delta'$ if $\Delta\subset \Delta'$ and 
\[
b(\Delta')<b(\Delta)\quad ,\quad e(\Delta)<e(\Delta').
\]
We say that $\Delta$ proceeds $\Delta'$, which is denoted by $\Delta\prec \Delta'$, if 
\begin{enumerate}
    \item $\Delta$ and $\Delta'$ are linked, and
    \item $b(\Delta)<b(\Delta')$.
\end{enumerate}

 A multisegment is a multiset of segments. Let $\MS$ be the set of multisegments. Given $\fkm,\fkn\in \MS$, we write $\fkm\not\prec \fkn$ if $\Delta\not\prec \Delta'$ for any $\Delta\in\fkm$ and $\Delta'\in\fkn$. For a multisegment $\fkm:=\Delta_1+\cdots+\Delta_k$, we define 
\[
\fkm^{\vee}:=\Delta_1^{\vee}+\cdots +\Delta^{\vee}_k \text{ and }\fkm^{-}:=\Delta_1^{-}+\cdots +\Delta^{-}_k.
\]

\subsection{Cuspidal supports and blocks decomposition}
Let $M(\CC)$ be the set of multisets of cuspidal representations, or equivalently, functions
\[
\Omega: \CC \lra \BZ_{\geq 0}
\]
with finite support. If $\Omega\in M(\CC)$, set 
\[
|\Omega|:= \sum_{\chi\in \CC} \Omega(\chi)\mathrm{gr}(\chi).
\]
Let $\pi\in \Irr(G_n)$. By~\cite[III.2.1]{Ber92}, there exists a standard parabolic subgroup $P$ and a cuspidal representation $\chi$ of the standard Levi subgroup $L$ such that $\pi\hookrightarrow \ind_P^{G_n}(\chi)$. Since $L$ is the product of some copies of general linear groups, $\chi$ gives an element $\Omega\in M(\CC)$ such that $|\Omega|=n$. Moreover, \cite[III.2.1, Theorem 18]{Ber92} confirms that $\Omega$ is independent of the choice of $(L,\chi)$, which is the cuspidal support of $\pi$ and is denoted by $\supp(\pi)$.

 Let $\Fin(G_n)$ be the full subcategory of $\Alg(G_n)$ consisting of finite-length representations. For $\Omega\in M(\CC)$, we define $\Fin(\Omega)$ to be the full subcategory of $\Fin(G_n)$ such that, for every irreducible subquotient $\pi$ of every object of $\Fin(\Omega)$, one has $\supp(\pi)=\Omega$.

By Bernstein decomposition Theorem (cf.~\cite[III.2.2]{Ber92}), the category $\Fin(G_n)$ is the direct sum of the blocks $\Fin(\Omega)$, namely,
\begin{equation}\label{Bernstein decomp eq}
  \Fin(G_n)=\bigoplus_{\substack{\Omega\in M(\CC)\\ |\Omega|=n}} \Fin(\Omega).  
\end{equation}
We recall some basic results about the representation theory of $G_n=\GL_n(\mathrm{D})$. 
\begin{lemma}\label{cuspidal product irr lem}
    Let $\chi$ be a cuspidal representation of $G_n$. Then 
    \[
    \underbrace{\chi\times \cdots \times \chi}_{k}
    \]
    is irreducible for any positive integer $k$.
\end{lemma}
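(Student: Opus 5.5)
The plan is to reduce the statement to a computation with the Jacquet module of $\chi^{\times k}$ and an intertwining-operator count. We would argue by induction on $k$, the case $k=1$ being trivial. Write $\Pi=\chi^{\times k}$, a representation of $G_{nk}$.

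\emph{Step 1: the Jacquet module.} By the geometric lemma (Bernstein--Zelevinsky, Mackey theory for parabolic induction), the only nonzero Jacquet module of $\Pi$ along a proper standard parabolic whose Levi is a product of copies of $G_n$ is the one for the minimal such Levi $L=G_n^{\,k}$. Since $\chi$ is cuspidal, all other Jacquet modules with respect to partitions not made of multiples of $n$ vanish. We would then check that $r_{L}(\Pi)$ has a filtration whose graded pieces are indexed by the Weyl group elements $w\in S_k$ permuting the blocks. Each graded piece is $w(\chi\boxtimes\cdots\boxtimes\chi)=\chi^{\boxtimes k}$. Hence $r_L(\Pi)$ has length $k!$, and all its composition factors are isomorphic to $\chi^{\boxtimes k}$.

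\emph{Step 2: Frobenius reciprocity.} Let $\tau\subset\Pi$ be an irreducible subrepresentation. By Frobenius reciprocity, $\tau$ embeds into $\Pi$, so $r_L(\tau)\neq 0$. Since $\tau$ has cuspidal support $\{\chi,\dots,\chi\}$, every irreducible subquotient of $\Pi$ has a nonzero $r_L$, contained in $r_L(\Pi)$. It follows that the length of $\Pi$ is at most $k!$.

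\emph{Step 3: irreducibility.} A length bound is not irreducibility, so we would show directly that $\mathrm{End}_{G_{nk}}(\Pi)$ is one-dimensional and that $\Pi$ is generated by any nonzero subrepresentation. The standard route is Bernstein's theory:
\[
\mathrm{End}(\Pi)\hookrightarrow \Hom_L(\chi^{\boxtimes k}, r_L(\Pi)).
\]
Using the unramified twists $\nu^{s_i}\chi$ and the intertwining operators $M_w(s)$, the algebra of self-intertwiners at $s=0$ is spanned by the normalized operators. When $\chi\times\chi$ is irreducible, these normalized operators are scalars.

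The key case is therefore $k=2$: showing that $\chi\times\chi$ is irreducible. Here $r_L(\chi\times\chi)$ has length two with both factors $\chi\boxtimes\chi$. If $\chi\times\chi$ were reducible with pieces $A$ and $B$, each would have $r_L$ equal to $\chi\boxtimes\chi$. Second adjointness, combined with the symmetry $\ind_P(\chi\boxtimes\chi)\simeq\ind_{\ov P}(\chi\boxtimes\chi)$, would then make both $A$ and $B$ simultaneously subrepresentations and quotients. This would split $\chi\times\chi=A\oplus B$ and give $\dim\mathrm{End}=2$.

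We would rule this out using the Harish-Chandra Plancherel measure $\mu(s)$ attached to $\chi\otimes\nu^s\chi$. It has a pole at $s=0$, reflecting that $s_\chi$ is the unique reducibility point and is positive. A pole of $\mu$ at $0$ forces the normalized operator $M(0)$ to be a scalar, so $\mathrm{End}$ is one-dimensional. Over $\mathrm{D}$, this is the statement that the reducibility point is $\pm s_\chi\neq 0$, which is built into the notation of segments above. The general $k$ then follows from the fact that the Weyl group of the block is generated by simple reflections, each acting by a scalar, which gives $\dim\mathrm{End}(\Pi)=1$. We would combine this with the observation that the socle and the cosocle both have Jacquet module containing the generic vector. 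This identifies a unique irreducible subrepresentation $\tau$ with $r_L(\tau)=r_L(\Pi)$, forcing $\tau=\Pi$.

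The main obstacle is this last step. Showing that the pole of the Plancherel measure at $0$ makes $\Pi$ irreducible, and not just that $\mathrm{End}(\Pi)$ is one-dimensional, requires the full Knapp--Stein/Silberger $R$-group theory for unitary $\chi$. Twisting $\chi$ by a character to make it unitary is harmless. In practice I would cite Bernstein--Zelevinsky when $\mathrm{D}=F$, and Deligne--Kazhdan--Vignéras / Badulescu for inner forms.
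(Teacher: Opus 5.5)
Your route is genuinely different from the paper's. The paper proves the lemma with a one-line citation. It regards $\chi$ as the singleton segment $[0,0]_\chi$, which is not linked to itself, so $\chi^{\times k}=Z([0,0]_\chi)^{\times k}$ is irreducible by M\'inguez's (and Lapid--M\'inguez's) criterion that a product of $Z(\Delta_i)$'s with pairwise unlinked segments is irreducible. That covers all $k$ at once.

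You instead reduce to rank one through Harish-Chandra's commuting-algebra theorem, i.e. Knapp--Stein--Silberger $R$-groups for $\sigma=\chi^{\boxtimes k}$ with $W_\sigma=S_k$. The rank-one input comes from the existence of the positive reducibility point $s_\chi$. This is a valid alternative, and it explains conceptually why the case $k=2$ controls every $k$. However, it is much heavier, and it still rests on an external rank-one theorem of the same depth (Bernstein--Zelevinsky over $F$; Deligne--Kazhdan--Vign\'eras, Badulescu, S\'echerre over $\mathrm{D}$). Your Steps 1--2 end up playing no role.

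Three points in the sketch need correcting.

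First, your Plancherel criterion is reversed. For unitary cuspidal $\chi$, Harish-Chandra's theorem says $\chi\times\chi$ is irreducible if and only if $\mu(\chi\otimes\chi)=0$. The function $\mu(s)$ vanishes at $s=0$ and has its poles at $\pm s_\chi$; it is the unnormalized intertwining operator, not $\mu$, that is singular at $s=0$. What you need is Silberger's dichotomy. Either $\mu(0)\neq 0$, and then $\chi\times\chi$ is reducible while $\chi\times\nu^s\chi$ is irreducible for every real $s\neq 0$. Or $\mu(0)=0$, and then $\chi\times\chi$ is irreducible and there is a unique reducibility pair $\pm s_0$ with $s_0>0$. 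The existence of $s_\chi>0$ forces the second alternative.

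Second, the ``main obstacle'' you identify does not exist. After twisting, $\chi$ is unitary, so $\chi^{\times k}$ is unitary and hence semisimple. Every reduced root $\alpha$ (a pair of blocks) has $\mu_\alpha(\sigma)=\mu(\chi\otimes\chi)=0$, so $W'_\sigma=W_\sigma$ and the $R$-group is trivial. Hence $\dim\mathrm{End}(\chi^{\times k})=1$, which together with semisimplicity already gives irreducibility. Your closing argument with the ``generic vector'' should be dropped rather than repaired, since nothing in it actually forces $r_L(\tau)=r_L(\Pi)$.

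Third, in your $k=2$ discussion the splitting $\chi\times\chi=A\oplus B$ comes from semisimplicity, not from the sub/quotient argument. That argument alone does not exclude a non-split self-extension when $A\simeq B$.
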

\begin{proof}
    $\rho$ can be regarded as a singleton segment, and it is not linked to itself. The result follows from~\cite[Theorem 4.2]{Ming09}. Also see~\cite[Lemma A.5]{LM16}.
\end{proof}
We will simply use $k[\chi]$ to denote the $\Omega\in M(\CC)$ consisting of $k$-copies $\chi$. The above lemma shows that the subcategory $\Fin(k[\chi])$ has a unique irreducible representation, that is $\chi^{\times k}$.

\subsection{Zelevinsky classification and Bernstein-Zelevinsky filtration}
The Zelevinsky classification parametrizes the irreducible representations of $G_n$ by multisegments, and is dual to the Langlands classification via the Aubert–Zelevinsky involution. It was completed by Zelevinsky in~\cite{Ze80} for the case $\mathrm{D}=F$, and further extended to general $\mathrm{D}$ by M\'inguez and S\'echerre in~\cite{MS13}.

For a segment $\Delta=[x,y]_{\chi}$, let $Z(\Delta)$ (resp. $\St(\Delta)$) be the unique irreducible submodule (resp. quotient) of
\[
 \nu_{\chi}^x\chi\times \cdots\times \nu_{\chi}^y \chi.
\]
They are defined as trivial representations of $G_0$ when $\Delta=\emptyset$. For any multisegment $\fkm$, we can write it as $\fkm=\Delta_1+\cdots+\Delta_k$ such that
\[
 \Delta_i \not\prec \Delta_j \text{ whenever } i<j.
\]
Then
\[
 Z(\Delta_1)\times \cdots\times Z(\Delta_k)
\]
has a unique irreducible submodule, which is independent of the possible labeling of $\Delta_i$ and denoted by $Z(\fkm)$.  
\begin{theorem}
    The map $\fkm\mapsto Z(\fkm)$ is a bijection from $\MS$ to $\Irr$.
\end{theorem}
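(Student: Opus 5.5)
This theorem is the Zelevinsky classification, due to Zelevinsky~\cite{Ze80} for $\mathrm{D}=F$ and to M\'inguez--S\'echerre~\cite{MS13} in general. In the paper it would be quoted rather than reproved, but here is the route I would follow to prove it. The argument has three parts: $Z(\fkm)$ is well defined, the map $\fkm\mapsto Z(\fkm)$ is injective, and it is surjective. Throughout, the Bernstein decomposition \eqref{Bernstein decomp eq} lets me work inside one block $\Fin(\Omega)$. I would also reduce at once to the case where all cuspidals in $\Omega$ lie on a single line $\{\nu_\chi^{k}\chi\}_{k\in\BZ}$. Products of representations supported on different lines are irreducible and behave compatibly with Jacquet modules, so this reduction costs nothing.

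\textbf{Step 1: $Z(\fkm)$ is well defined.} Fix an ordering $\fkm=\Delta_1+\cdots+\Delta_k$ with $\Delta_i\not\prec\Delta_j$ for $i<j$, and set $\Pi:=Z(\Delta_1)\times\cdots\times Z(\Delta_k)$. Let $\sigma$ be the tensor product, in the order of the $\Delta_i$, of the cuspidal strings $\nu_\chi^{e}\chi\boxtimes\cdots\boxtimes\nu_\chi^{b}\chi$ read from the end of each segment; this is a constituent of $r_{\min}(\Pi)$.
\begin{enumerate}
\item Using the geometric lemma together with the known Jacquet modules of $Z(\Delta)$ (they are sums of $Z({}^{}\Delta_{\le c})\boxtimes Z(\Delta_{>c})$-type terms), I show that $\sigma$ occurs in the minimal Jacquet module $r_{\min}(\Pi)$ with multiplicity exactly one.
\item The ordering condition is used to exclude every other shuffle that could produce $\sigma$. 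This combinatorial verification is where the hypothesis $\Delta_i\not\prec\Delta_j$ enters, and I expect it to be the main obstacle, particularly over $\mathrm{D}$, where $\nu_\chi$ involves $s_\chi$ and one needs $\chi\times\nu_\chi^{\pm1}\chi$ reducible as the only source of reducibility.
\item Frobenius reciprocity shows that every irreducible subrepresentation of $\Pi$ has $\sigma$ in its Jacquet module. Combined with the multiplicity-one statement in (1), this gives $\soc(\Pi)$ irreducible, occurring with multiplicity one.
\item For independence of the admissible ordering, note that two admissible orderings differ by swapping adjacent unlinked segments (or linked segments in the permitted direction). For unlinked segments $Z(\Delta)\times Z(\Delta')\simeq Z(\Delta')\times Z(\Delta)$ is irreducible. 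Hence the two products are isomorphic, and so are their socles.
\end{enumerate}

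\textbf{Step 2: injectivity.} Step 1 shows that $Z(\fkm)$ contains the distinguished exponent $\sigma_\fkm$. I recover $\fkm$ from $Z(\fkm)$ by induction on the number of segments, as follows.
\begin{enumerate}
\item Take the cuspidal $\nu_\chi^{e}\chi$ with $e$ maximal in $\supp Z(\fkm)$.
\item Compute the highest $\nu_\chi^{e}\chi$-derivative, i.e.\ the largest $a$ with $r_{\cdot}(Z(\fkm))\supset(\nu_\chi^e\chi)^{\times a}\boxtimes\rho$.
\item Show $\rho=Z(\fkm')$, where $\fkm'$ is $\fkm$ with each segment ending at $e$ shortened by one. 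This reduced piece is determined by $\pi$, so applying the induction hypothesis to $\rho$ reconstructs $\fkm$.
\end{enumerate}

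\textbf{Step 3: surjectivity.} I use a counting argument.
\begin{enumerate}
\item The classes $[Z(\Delta_1)\times\cdots\times Z(\Delta_k)]$, as $\fkm$ ranges over multisegments with $\supp=\Omega$, are unitriangular with respect to the $[Z(\fkn)]$ for the usual partial order on multisegments: $Z(\fkm)$ appears once, and all other constituents are $Z(\fkn)$ with $\fkn$ strictly smaller. This again follows from Jacquet-module bookkeeping as in Step 1.
\item These products span the Grothendieck group $K(\Fin(\Omega))$. The reason is that products of cuspidals span it, since every irreducible embeds in some $\ind_P^{G_n}(\chi)$ by~\cite[III.2.1]{Ber92}, and products of cuspidals are themselves expressible through the segment products.
\item By unitriangularity, the $[Z(\fkm)]$ then span $K(\Fin(\Omega))$. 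Since they are classes of irreducibles, every irreducible in the block is some $Z(\fkm)$.
\end{enumerate}

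Alternatively, for general $\mathrm{D}$ I would follow M\'inguez--S\'echerre and transport the whole problem, via type theory, to the $\mathrm{D}=F$ case or to an affine Hecke algebra of type $A$. There the classification is already known, and the only new input is that the relevant parameter is $q^{s_\chi}$.
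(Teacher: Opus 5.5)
The paper gives no proof of this theorem. It quotes \cite{Ze80} for $\mathrm{D}=F$ and \cite{MS13} in general, so only your closing remark matches what the paper does. Your self-contained sketch has genuine gaps, starting in Step~1. With the paper's convention ($Z(\Delta)$ is the socle of $\nu_\chi^{x}\chi\times\cdots\times\nu_\chi^{y}\chi$), Frobenius reciprocity forces the exponent read from the \emph{beginning} of each segment, not from the end. More seriously, in any convention that exponent need not have multiplicity one in $r_{\min}(\Pi)$. For $\fkm=[0,0]_\chi+[0,0]_\chi$, the semisimplification of $r_{\min}(\chi\times\chi)$ contains $\chi\boxtimes\chi$ twice. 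For $\mathrm{D}=F$ and the admissible ordering $([0,1],[1,1])$, the exponent $(\nu_1^{0},\nu_1^{1},\nu_1^{1})$ occurs twice among the shuffles for $Z([0,1])\times Z([1,1])$, although this product is irreducible. So no shuffle combinatorics in (2) can succeed, and (3) does not follow. The socle statement needs a different mechanism. Options include grouping equal segments into the irreducible $Z(\Delta)^{\times r}$, choosing the ordering carefully and using a coarser Jacquet module, or an induction on $\rho$-derivatives as in M\oe glin--Waldspurger and M\'inguez--S\'echerre.

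Step~2 rests on a false derivative formula. The highest derivative of $Z(\fkm)$ with respect to the extremal cuspidal $\nu_\chi^{e}\chi$ does not simply shorten every segment ending at $e$, because segments ending at $e-1$ interfere through a matching. For $\mathrm{D}=F$, $r_{\min}(Z([-1,0]+[0,1]))$ has semisimplification $(\nu_1^{0},\nu_1^{-1},\nu_1^{1},\nu_1^{0})+(\nu_1^{0},\nu_1^{1},\nu_1^{-1},\nu_1^{0})$. So this representation is $\nu_1^{1}$-reduced on both sides, although $[0,1]$ ends at the maximal exponent. Even granting your rule, $\fkm$ would not be determined by $(a,\fkm')$: both $[-1,1]+[0,0]$ and $[-1,0]+[0,1]$ give $a=1$ and $\fkm'=[-1,0]+[0,0]$. (The Bernstein--Zelevinsky highest derivative $Z(\fkm)^-=Z(\fkm^-)\cdot\nu^{1/2}$, which the paper alludes to for $\mathrm{D}=F$, shortens \emph{every} segment. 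Then $\fkm$ is recovered from $\fkm^-$, the depth and the cuspidal support.)

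Step~3 does not work either. The unitriangularity in (1) already assumes that every constituent of $Z(\Delta_1)\times\cdots\times Z(\Delta_k)$ is some $Z(\fkn)$, which is the surjectivity being proved, so the argument is circular. In (2), all products of the cuspidals of $\Omega$ have the same class in $K(\Fin(\Omega))$, so they span a rank-one subgroup. Yet already for $\mathrm{D}=F$ and $\Omega=\{\nu_1^{-1/2},\nu_1^{1/2}\}$ the block contains two irreducibles. Surjectivity needs a real construction. One way is to convert an embedding $\pi\hookrightarrow\rho_1\times\cdots\times\rho_n$ into an embedding of $\pi$ into an admissibly ordered product of segment representations. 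Another is to count against the Langlands classification once injectivity is known.
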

The theorem for $\mathrm{D}=F$ is proved by a decreasing filtration due to Bernstein-Zelevinsky~\cite{BZ77}. Because it is crucial for constructing the test representation in section~\ref{reducible sec}, we shall recall some relevant results here. 

Let $H$ be a closed subgroup of a $\ell$-group $G$, and let $\sigma$ be a smooth representation of $H$. We use $c-\ind_H^G\sigma$ to denote the normalized compact induction of $\sigma$ to $G$. Namely, it consists of sections that are locally constant and compactly supported on $H\backslash G$.

Let $M_n$ be the mirabolic subgroup of $G_n$ consisting of matrices with the last row $(0, \ldots, 0,1)$. We shall also regard $G_{n-1}$ as a subgroup of $M_n$ via the embedding $g \mapsto \begin{pmatrix} g & \\ & 1 \end{pmatrix}$.  Let $V=V_{n}$ be the unipotent radical of $M_n$. We define $\widetilde{\psi}: V \rightarrow \mathbb{C}^{\times}$ by $\widetilde{\psi}(v)=\psi(v_{n-1})$, where $v_{n-1}$ is the last entry in $v$. Note that the action of $M_{n-1}$ stabilizes $\widetilde{\psi}$. 

For a character $\lambda$ of $V$ and a representation $\pi$ of $M_n$, define 
\[  \pi_{V, \lambda} = \delta^{-1/2} \pi /\langle \pi(v)\cdot x-\lambda(v)x : v \in V, x\in \pi \rangle ,
\]
where $\delta$ is the modular character of $M_n$. When $\lambda$ is trivial (resp. $\lambda=\widetilde{\psi}$), we regard $\pi_{V,\lambda}$ as a $G_{n-1}$-representation (resp. $M_{n-1}$-representation).

Define 
\[  \Phi^+: \mathrm{Alg}(M_{n-1}) \rightarrow \mathrm{Alg}(M_{n}) ; \quad \Psi^+:\mathrm{Alg}(G_{n-1}) \rightarrow \mathrm{Alg}(M_{n}) 
\]
\[  \Phi^-: \mathrm{Alg}(M_n)\rightarrow \mathrm{Alg}(M_{n-1}); \quad \Psi^-: \mathrm{Alg}(M_n) \rightarrow \mathrm{Alg}(G_{n-1}) .
\]
by
\[  \Phi^+(\pi)=c-\mathrm{ind}_{M_{n-1}V_n}^{M_{n}} \pi\boxtimes \widetilde{\psi}, \quad \Psi^+(\pi)=c-\mathrm{ind}_{G_{n-1}V_n}^{M_{n}} \pi \boxtimes \BC_{\mathrm{triv}} ,
\]
\[  \Phi^-(\pi)=\pi_{V_n, \widetilde{\psi}}, \quad \Psi^-(\pi)=\pi_{V_n, \mathrm{triv}} ,
\]
where ``$\mathrm{triv}$'' is the trivial character of $V$.

For $\pi \in \mathrm{Alg}(G_n)$, the $k$-th BZ-derivatives of $\pi$ is defined as:
\[  \pi^{(k)} =\Psi^-(\Phi^-)^{k-1}(\pi|_{M_n}) \in \Alg(G_{n-k})
\]
and the $k$-th shifted BZ-derivatives of $\pi$ is defined as:
\[  \pi^{[k]}=\nu^{1/2}\cdot \pi^{(k)} .
\]

Let $d$ be the largest integer such that $\pi^{(d)} \neq 0$. We shall call $d$ to be the \textbf{depth} of $\pi$. We also set the highest (shifted) BZ-derivative $\pi^-:=\pi^{[d]}$. The Bernstein-Zelevinsky filtration implies that $\pi|_{G_{n-1}}$ admits a filtration:
\[ 0= \pi_{n} \subset \pi_{n-1} \subset \ldots \pi_1 \subset \pi_0 = \pi|_{G_{n-1}} 
\]
such that
\[   \pi_{k-1}/\pi_{k} \cong  \pi^{[k]} \times c-\mathrm{ind}_{U_{k-1}}^{G_{k-1}} \psi_{k-1} ,
\]
where $\psi_{k-1}$ is a non-degenerate character on $U_{k-1}$.

We will also need the following Proposition about the Zelevinsky classification.
\begin{proposition}\label{Zele class_properties}
    Let $\mathfrak{m},\mathfrak{n}\in\MS$. Then
\begin{enumerate}
\item \label{part: non-preceding socle}
If $\mathfrak{m}\not\prec\mathfrak{n}$ then $Z(\mathfrak{m}+\mathfrak{n})=\soc(Z(\mathfrak{m}) \times Z(\mathfrak{n}))=\cos(Z(\mathfrak{n})\times Z(\mathfrak{m}))$.
In particular, if both $\mathfrak{m}\not\prec\mathfrak{n}$ and $\mathfrak{n}\not\prec\mathfrak{m}$, i.e. if no $\Delta \in \mathfrak{m}$ and $\Delta' \in \mathfrak{n}$ are linked,
then $Z(\mathfrak{m}) \times Z(\mathfrak{n})$ is irreducible.

\item \label{part: contragredient}
$Z(\mathfrak{m}^\vee)=Z(\mathfrak{m})^\vee$.

\item \label{part: highest derivative}
$Z(\fkm)^-=Z(\fkm^-)\cdot \nu^{1/2}$
\end{enumerate}
\end{proposition}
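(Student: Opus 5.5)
All three assertions are standard facts of the Zelevinsky classification. For $\mathrm{D}=F$ they are due to Zelevinsky~\cite{Ze80}, and for general $\mathrm{D}$ they are established in~\cite{MS13} and~\cite{LM16}. I would mostly cite these sources, but indicate how each part reduces to the defining property of $Z(\fkm)$.

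For a multisegment $\fkm=\Delta_1+\cdots+\Delta_k$ ordered so that $\Delta_i\not\prec\Delta_j$ for $i<j$, write $\zeta(\fkm):=Z(\Delta_1)\times\cdots\times Z(\Delta_k)$. The key input, which I take from~\cite{Ze80,MS13}, is that $\zeta(\fkm)$ is socle irreducible with socle $Z(\fkm)$.

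\textbf{Part~(2).} I would prove this first. Use $Z(\Delta)^\vee\simeq Z(\Delta^\vee)$, which holds because $(\nu^x\chi\times\cdots\times\nu^y\chi)^\vee\simeq\nu^{-x}\chi^\vee\times\cdots\times\nu^{-y}\chi^\vee$ is the induced representation for $\Delta^\vee$ written in reversed order, and $Z(\Delta)$ is its unique irreducible submodule. Dualizing $Z(\fkm)\hookrightarrow\zeta(\fkm)$ makes $Z(\fkm)^\vee$ a quotient of $Z(\Delta_1^\vee)\times\cdots\times Z(\Delta_k^\vee)$. The order $\Delta_k^\vee,\dots,\Delta_1^\vee$ is admissible for $\fkm^\vee$, since $\Delta\prec\Delta'$ if and only if $\Delta'^\vee\prec\Delta^\vee$. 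Hence it suffices to know that the product in reversed admissible order has irreducible cosocle $Z(\fkm^\vee)$. This is the cosocle form of the socle statement above, and it follows from it by applying the standard involution $\pi\mapsto(\pi^\vee)^\theta$ (transpose-inverse), which reverses the order of induction. I would simply cite~\cite[Lemma A.5]{LM16} together with~\cite{MS13} here.

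\textbf{Part~(1).} Since $\fkm\not\prec\fkn$, listing the segments of $\fkm$ in an admissible order, followed by those of $\fkn$ in an admissible order, is an admissible order for $\fkm+\fkn$. By exactness of parabolic induction, $Z(\fkm)\times Z(\fkn)\hookrightarrow\zeta(\fkm)\times\zeta(\fkn)=\zeta(\fkm+\fkn)$. Since the latter is socle irreducible with socle $Z(\fkm+\fkn)$, so is $Z(\fkm)\times Z(\fkn)$. For the cosocle, dualize: $(Z(\fkn)\times Z(\fkm))^\vee\simeq Z(\fkn^\vee)\times Z(\fkm^\vee)$ by part~(2). Moreover $\fkm\not\prec\fkn$ is equivalent to $\fkn^\vee\not\prec\fkm^\vee$. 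So the socle statement just proved, applied to this product, gives socle $Z(\fkn^\vee+\fkm^\vee)=Z(\fkm+\fkn)^\vee$, and dualizing back gives the cosocle claim.

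If in addition $\fkn\not\prec\fkm$, then $Z(\fkm)\times Z(\fkn)$ has socle and cosocle both equal to $Z(\fkm+\fkn)$. This factor occurs with multiplicity one, because it occurs once in $\zeta(\fkm+\fkn)$. A finite-length module whose socle and cosocle are the same simple module of multiplicity one must be that simple module, so $Z(\fkm)\times Z(\fkn)$ is irreducible.

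\textbf{Part~(3).} This is the highest-derivative theorem of Zelevinsky~\cite[Theorem 8.1]{Ze80}, with its extension to inner forms in~\cite{MS13}. The normalization factor $\nu^{1/2}$ comes from our shifted derivative $\pi^{[k]}=\nu^{1/2}\pi^{(k)}$. The argument runs through the Leibniz rule for derivatives of products and the computation $Z(\Delta)^{(k)}$, which is $Z(\Delta^-)$ for $k=\mathrm{gr}$ of the base cuspidal and vanishes beyond. I expect this part to be the main obstacle if one insists on a self-contained proof, because one must show that the highest derivative of $\zeta(\fkm)$ restricts to its socle without cancellation. I will therefore rely on the cited references for it rather than reprove it.
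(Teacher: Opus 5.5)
Your proposal is correct and is essentially the paper's proof. The paper simply cites \cite[Proposition 3.5]{LM16} for parts (1)--(2) and \cite[8.1 Theorem]{Ze80} for part (3), and your derivation of (1) from the socle irreducibility of $\zeta(\fkm+\fkn)$ together with (2) is sound, including the socle-equals-cosocle argument for irreducibility.

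One caveat concerns your transpose-inverse argument for the reversed-order cosocle statement. The map $g\mapsto {}^tg^{-1}$ sends $\GL_n(\mathrm{D})$ to $\GL_n(\mathrm{D}^{op})$, not to itself, and the argument also needs a Gelfand--Kazhdan identity $\pi^\theta\simeq\pi^\vee$. So it works as stated only for $\mathrm{D}=F$, and for general $\mathrm{D}$ that step must be cited; the paper's reference for it is \cite[Proposition 3.5]{LM16}, not Lemma A.5. Likewise, part (3) is only used in the paper for $\mathrm{D}=F$, where \cite{Ze80} suffices, so you do not need the appeal to \cite{MS13} there.
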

\begin{proof}
    For part (1), (2), see~\cite[Proposition 3.5]{LM16}. Part (3) is a Theorem in~\cite[8.1 Theorem]{Ze80}.
\end{proof}

\subsection{cuspidal reducedness and left derivatives}
In this subsection, we introduce one of the main tools to establish the multiplicity one property, that is, the derivatives which allow us to reduce the problem to smaller dual pairs.
\begin{definition}
    Let $\pi\in \Irr(G_n)$ and let $\chi$ be a cuspidal representation of $G_l$. We define $a_{\chi}(\pi)$ to be the maximal non-negative integer $a$ which there exists some $\rho\in \Irr(G_{n-al})$ such that
    \[
    \pi \hookrightarrow \chi^{\times a} \times \rho.
    \]
    If $a_{\chi}(\pi)=0$, $\pi$ is called \textbf{(left) $\chi$-reduced}.
\end{definition}
Let $\pi\in\Fin(G_n)$ and let $\rho\in \Irr(G_k)$. Assume $k\leq n$. Then by Bernstein decomposition~\eqref{Bernstein decomp eq}, there exist finite subsets $\{\Lambda_i\}$ and $\{\Lambda'_j\}$ of $M(\CC)$, such that
\[
r_{k,n-k}(\pi)= \bigoplus_{i}\bigoplus_j \Pi_{i,j},
\]
where each irreducible component $\pi_i\otimes\pi_j'$ in $\Pi_{i,j}$ has the property that $\supp(\pi_i)=\Lambda_i$ and $\supp(\pi_j')=\Lambda_j'$. Define
\begin{equation}\label{Jac decomposition eq}
    \Jac_{\rho}(\pi):= \bigoplus_j \Pi_{i,j}, \text{ where } \Lambda_i=\supp(\rho).
\end{equation}
\begin{lemma}\label{reducedness characterization lem}
    Let $\pi\in \Irr(G_n)$, and let $\chi$ be a cuspidal representation of $G_l$. Then $\pi$ is left $\chi$-reduced if and only if $\Jac_{\chi}(\pi)=0$.  
\end{lemma}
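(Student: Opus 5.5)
We prove the two implications separately, using Frobenius reciprocity for the normalized Jacquet module $r_{l,n-l}$ together with the block decomposition~\eqref{Jac decomposition eq}.

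First suppose $\Jac_{\chi}(\pi)\neq 0$. We show that $\pi$ is not left $\chi$-reduced. Since $\pi$ is irreducible, hence admissible and of finite length, $r_{l,n-l}(\pi)$ is a finite length representation of $G_l\times G_{n-l}$. The summand $\Jac_{\chi}(\pi)$ is a direct summand of it, because the Bernstein decomposition on the $G_l$ factor splits $r_{l,n-l}(\pi)$ as a direct sum. By Lemma~\ref{cuspidal product irr lem} with $k=1$, every irreducible subquotient of $\Jac_{\chi}(\pi)$ has the form $\chi\boxtimes\rho$: the only irreducible representation of $G_l$ with cuspidal support $\{\chi\}$ is $\chi$ itself. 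Since $\Jac_\chi(\pi)$ is nonzero and of finite length, it has an irreducible quotient $\chi\boxtimes\rho$ with $\rho\in\Irr(G_{n-l})$. Composing with the projection onto this direct summand gives a nonzero map $r_{l,n-l}(\pi)\twoheadrightarrow\chi\boxtimes\rho$. Frobenius reciprocity $\Hom_{G_n}(\pi,\chi\times\rho)\simeq\Hom_{L_{l,n-l}}(r_{l,n-l}(\pi),\chi\boxtimes\rho)$ then yields a nonzero map $\pi\to\chi\times\rho$. This map is injective because $\pi$ is irreducible, so $a_\chi(\pi)\ge 1$.

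Conversely, suppose $\pi\hookrightarrow\chi^{\times a}\times\rho'$ with $a\ge1$ and $\rho'\in\Irr(G_{n-al})$. By associativity of induction, $\chi^{\times a}\times\rho'=\chi\times(\chi^{\times(a-1)}\times\rho')$. Frobenius reciprocity gives a nonzero map $r_{l,n-l}(\pi)\to\chi\boxtimes\sigma$, where $\sigma:=\chi^{\times(a-1)}\times\rho'$ is a representation of $G_{n-l}$. Its image is a nonzero subquotient of $r_{l,n-l}(\pi)$ on which $G_l$ acts through copies of $\chi$. Hence some irreducible subquotient of $r_{l,n-l}(\pi)$ has first factor $\chi$, with cuspidal support $\supp(\chi)=[\chi]$. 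Such a subquotient can only lie in the summand $\Jac_\chi(\pi)$ of the decomposition, so $\Jac_\chi(\pi)\ne0$.

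There is no real obstacle in this argument. The only points needing care are, first, that the Bernstein block decomposition on the $G_l$ factor makes $\Jac_\chi(\pi)$ a genuine direct summand, so that a quotient of $\Jac_\chi(\pi)$ lifts to a quotient of the whole Jacquet module; and second, the identification of the irreducible objects in the block $\Fin([\chi])$ via Lemma~\ref{cuspidal product irr lem}.
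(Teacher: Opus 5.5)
Your proof is correct and follows the same argument as the paper. Both directions use Frobenius reciprocity for $r_{l,n-l}$, the fact that $\Jac_\chi(\pi)$ is a direct summand via the Bernstein decomposition, and the fact that $\chi$ is the only irreducible object of $\Fin([\chi])$. Your converse is slightly more careful than the paper's: the paper silently replaces $\pi\hookrightarrow\chi^{\times a}\times\rho'$ by an embedding $\pi\hookrightarrow\chi\times\rho$ with $\rho$ irreducible, while you treat the possibly reducible $\chi^{\times(a-1)}\times\rho'$ directly.
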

\begin{proof}
If $\pi$ is not left $\chi$-reduced, that is, $\pi\hookrightarrow \chi\times \rho$ for some $\rho\in \Irr(G_{n-l})$, then by Frobenius reciprocity, $\rho$ is an irreducible component in $\Jac_{\chi}(\pi)$.

Suppose $\Jac_{\chi}(\pi)\neq 0$, equivalently, there exists some $j$ such that in the decomposition~\eqref{Jac decomposition eq}, $\Pi_{i,j}\neq 0$. Then we take an irreducible quotient of $\Pi_{i,j}$. It is isomorphic to $\chi\boxtimes \rho$ for some $\rho\in \Irr(G_{n-l})$ since the only irreducible object in $\Fin([\chi])$ is $\chi$. Thus,  
\[
\pi \twoheadrightarrow r_{n,n-l}(\pi) \twoheadrightarrow \chi \boxtimes \rho
\]
induces a non-zero map $\pi\lra \chi\times \rho$, which is injective since $\pi$ is irreducible.
\end{proof}

To make sure that the left highest derivative of some cuspidal representation $\chi$ is well-defined, we need the following lemma.
\begin{lemma}\label{Jac of left chi seq}
 Let $\chi$ be a cuspidal representation of $G_l$.  If $\rho\in \Irr$ is left $\chi$-reduced, then $\Jac_{\chi^{\times a}}(\chi^{\times a} \times \rho)=\chi^{\times a} \times\rho$.
\end{lemma}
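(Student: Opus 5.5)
We compute $r_{al,n-al}(\chi^{\times a}\times\rho)$ with the geometric lemma of Bernstein--Zelevinsky, and then check which of its constituents have first factor with cuspidal support $a[\chi]$. Write $\Pi:=\chi^{\times a}\times\rho$ and $n=al+\mathrm{gr}(\rho)$.

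The geometric lemma gives a filtration of $r_{al,n-al}(\Pi)$. Its graded pieces are indexed by ways of splitting the Jacquet modules of the inducing data. Concretely, each piece has the form
\[
\big(\chi^{\times i}\times \sigma_1\big)\boxtimes\big(\chi^{\times (a-i)}\times\sigma_2\big),
\]
where $\sigma_1\boxtimes\sigma_2$ runs over the constituents of $r_{(a-i)l,\,\mathrm{gr}(\rho)-(a-i)l}(\rho)$. This uses that $\chi$ is cuspidal, so the only Jacquet modules of $\chi^{\times a}$ are of the form $\chi^{\times i}\boxtimes\chi^{\times(a-i)}$ (up to semisimplification, by Lemma~\ref{cuspidal product irr lem} and the cuspidality of $\chi$).

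A piece contributes to $\Jac_{\chi^{\times a}}$ exactly when the first factor $\chi^{\times i}\times\sigma_1$ has cuspidal support $a[\chi]$, which forces $\supp(\sigma_1)=(a-i)[\chi]$. When $i<a$, $\sigma_1$ is a nonzero representation in $\Fin((a-i)[\chi])$, so $r(\rho)$ has a constituent whose first factor has support $(a-i)[\chi]$. Taking a further Jacquet module of that first factor, this produces a constituent of $r_{l,\mathrm{gr}(\rho)-l}(\rho)$ whose first factor is $\chi$. Hence $\Jac_{\chi}(\rho)\neq 0$, and by Lemma~\ref{reducedness characterization lem} this contradicts the assumption that $\rho$ is left $\chi$-reduced.

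So only the piece $i=a$ survives, namely the piece with $\sigma_1$ trivial, which is $\chi^{\times a}\boxtimes\rho$. The block decomposition is exact and compatible with the filtration, so $\Jac_{\chi^{\times a}}(\Pi)$ equals this single piece. The statement's "$=\chi^{\times a}\times\rho$" should be read as $\chi^{\times a}\boxtimes\rho$, viewed as a representation of $G_{al}\times G_{n-al}$.

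The step that needs care is the transitivity argument: a constituent of the first factor with support $(a-i)[\chi]$ must have nonzero $\Jac_\chi$ in the $G_l$-direction. This holds because every irreducible object of $\Fin(k[\chi])$ is $\chi^{\times k}$, whose Jacquet module $r_{l,(k-1)l}$ is nonzero. We then use transitivity of Jacquet modules, $r_{l,\cdot}\circ r_{kl,\cdot}$ being a further Jacquet module of $\rho$.
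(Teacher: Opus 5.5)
Your proof is correct and follows the paper's argument. The geometric lemma reduces everything to constituents $(\chi^{\times i}\times\alpha)\boxtimes(\chi^{\times(a-i)}\times\beta)$, and left $\chi$-reducedness of $\rho$ rules out every $i<a$. You also spell out two steps the paper leaves implicit: the transitivity argument showing that $\alpha\simeq\chi^{\times(a-i)}$ would force $\Jac_\chi(\rho)\neq 0$, and the exactness of the block projection along the filtration, which upgrades the Grothendieck-group statement to an actual isomorphism $\Jac_{\chi^{\times a}}(\chi^{\times a}\times\rho)\simeq\chi^{\times a}\boxtimes\rho$.
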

\begin{proof}
    Let $n=\mathrm{gr}(\rho)$. By geometric Lemma~\cite[III.1.2 Lemma 24]{Ber92}, we have
    \[
    r_{i,al-i}(\chi^{\times a}) \neq 0
    \]
    only if $i=kl$ for some $k\leq a$. Moreover, any irreducible subquotient of $r_{kl,(a-k)l}(\chi^{\times a})$
    is $\chi^{\times k} \boxtimes \chi^{\times (a-k)}$. Hence, by geometric Lemma again, the irreducible component in $r_{al, n}(\chi^{\times a} \times \rho)$ is 
    \[
    (\chi^{\times k}\times \alpha )\boxtimes (\chi^{\times (a-k)} \times \beta),
    \]
    where $\alpha\boxtimes\beta$ is some irreducible component of $r_{(a-k)l, n-(a-k)l}(\rho)$. Since $\rho$ is left $\chi$-reduced, $\alpha\neq \chi^{\times (a-k)}$ when $k<a$. Therefore,
    the irreducible component in $r_{al, n}(\chi^{\times a} \times \rho)$ such that the first factor has cuspidal support $a[\chi]$ is $\chi^{\times a}\boxtimes \rho$, which has multiplicity one. 
\end{proof}

\begin{corollary}
     Let $\pi\in \Irr(G_n)$, and let $\chi$ be a cuspidal representation of $G_l$. If $a_{\chi}(\pi)=a$, then there exists a unique $\rho\in \Irr(G_{n-al})$ such that $\pi\hookrightarrow \chi^{\times a} \times \rho$.
\end{corollary}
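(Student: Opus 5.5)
Existence follows from the definition of $a_{\chi}(\pi)=a$, which provides some $\rho\in\Irr(G_{n-al})$ with $\pi\hookrightarrow\chi^{\times a}\times\rho$. The real content is uniqueness. Our plan is to show that any such $\rho$ is forced to be left $\chi$-reduced. Then Lemma~\ref{Jac of left chi seq} pins $\rho$ down as the unique constituent of a suitable Jacquet module of $\pi$.

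First we check reducedness. Suppose $\pi\hookrightarrow\chi^{\times a}\times\rho$ and $\rho$ is not left $\chi$-reduced. Then $\rho\hookrightarrow\chi\times\rho'$ for some irreducible $\rho'$. By exactness of parabolic induction and associativity,
\[
\pi\hookrightarrow\chi^{\times a}\times\chi\times\rho'=\chi^{\times(a+1)}\times\rho'.
\]
This contradicts the maximality of $a$. So every $\rho$ occurring in such an embedding is left $\chi$-reduced.

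Next we extract $\rho$ from $\pi$. Fix an embedding $\pi\hookrightarrow\chi^{\times a}\times\rho$ with $\rho$ left $\chi$-reduced. Frobenius reciprocity gives a nonzero map $r_{al,n-al}(\pi)\to\chi^{\times a}\boxtimes\rho$. Since $\chi^{\times a}\boxtimes\rho$ has support $a[\chi]$ in the first factor, this map factors through $\Jac_{\chi^{\times a}}(\pi)$. So $\chi^{\times a}\boxtimes\rho$ is a quotient of $\Jac_{\chi^{\times a}}(\pi)$.

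Exactness of the Jacquet functor preserves injectivity, and the projection onto a Bernstein block is exact. Hence $\Jac_{\chi^{\times a}}(\pi)$ embeds into $\Jac_{\chi^{\times a}}(\chi^{\times a}\times\rho)$. I read Lemma~\ref{Jac of left chi seq}, through its proof, as saying the latter is $\chi^{\times a}\boxtimes\rho$ with multiplicity one. It follows that $\Jac_{\chi^{\times a}}(\pi)\simeq\chi^{\times a}\boxtimes\rho$, because it is nonzero and is a subobject of an irreducible.

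Finally we conclude. If $\rho_1,\rho_2$ both satisfy the embedding, both are left $\chi$-reduced by the first step. The second step then gives $\chi^{\times a}\boxtimes\rho_1\simeq\Jac_{\chi^{\times a}}(\pi)\simeq\chi^{\times a}\boxtimes\rho_2$, hence $\rho_1\simeq\rho_2$.

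The only delicate point is interpreting Lemma~\ref{Jac of left chi seq} correctly. It gives the block component of the Jacquet module as exactly the irreducible $\chi^{\times a}\boxtimes\rho$, with $\chi^{\times a}$ irreducible by Lemma~\ref{cuspidal product irr lem}. We must also make sure that $\Jac$ commutes with taking subobjects, which it does since the Bernstein block projection is exact.
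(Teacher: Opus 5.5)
Your proof is correct and follows the paper's argument. In both, Frobenius reciprocity places $\chi^{\times a}\boxtimes\rho$ in $\Jac_{\chi^{\times a}}(\pi)$, and exactness embeds this into $\Jac_{\chi^{\times a}}(\chi^{\times a}\times\rho)$, which is the irreducible $\chi^{\times a}\boxtimes\rho$ by Lemma~\ref{Jac of left chi seq}. Your explicit check that maximality of $a$ forces $\rho$ to be left $\chi$-reduced supplies a hypothesis of that lemma which the paper uses without stating it.
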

\begin{proof}
    Suppose that there exists $\rho'\neq \rho$ such that $\pi\hookrightarrow \chi^{\times a} \times \rho'$. Then, by Frobenius reciprocity, $\chi^{\times a} \boxtimes\rho$ and $\chi^{\times a} \boxtimes\rho'$ are irreducible components of $\Jac_{\chi^{\times a}}(\pi)$. On the other hand, since Jacquet functor is exact,
    \[
    \Jac_{\chi^{\times a}}(\pi) \hookrightarrow \Jac_{\chi^{\times a}} (\chi^{\times a} \times \rho)=\chi^{\times a} \boxtimes\rho,
    \]
    which leads to a contradiction.
\end{proof}

\begin{definition}
    Let $\pi\in \Irr(G_n)$, and let $\chi$ be a cuspidal representation of $G_l$. Assume $a=a_{\chi}(\pi)$. Define $D_{\chi}(\pi)$ to be the unique object in $\Irr(G_{n-al})$ such that $\pi\hookrightarrow \chi^{\times a}\times D_{\chi}(\pi)$. It is called the \textbf{(left highest) $\chi$-derivative} of $\pi$.
\end{definition}

\begin{lemma}\label{subquotient lem}
   Let $\chi\in\CC$. If $\pi$ is a subquotient of $\chi^{\times a} \times \rho$ such that $\rho\in \Irr$ is left $\chi$-reduced and $a_{\chi}(\pi)=a$. Then $\rho=D_{\chi}(\pi)$.
\end{lemma}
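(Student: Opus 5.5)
The plan is to compare the $\chi^{\times a}$-isotypic parts of the Jacquet modules of $\pi$ and of $\chi^{\times a}\times\rho$, using exactness of the Jacquet functor together with Lemma~\ref{Jac of left chi seq}.

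First, since $a_{\chi}(\pi)=a$, we have an embedding $\pi\hookrightarrow \chi^{\times a}\times D_{\chi}(\pi)$ by definition. Frobenius reciprocity then gives a non-zero quotient $r_{al,n-al}(\pi)\twoheadrightarrow \chi^{\times a}\boxtimes D_{\chi}(\pi)$. Because $\chi^{\times a}\boxtimes D_{\chi}(\pi)$ lies in the Bernstein block of $r_{al,n-al}(\pi)$ whose first factor has cuspidal support $a[\chi]$, the decomposition~\eqref{Jac decomposition eq} shows that $\chi^{\times a}\boxtimes D_{\chi}(\pi)$ is a composition factor of $\Jac_{\chi^{\times a}}(\pi)$. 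Here $n=\mathrm{gr}(\pi)$.

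Next, write $\Pi=\chi^{\times a}\times\rho$, and realize $\pi$ as a subquotient: $\pi\simeq \Pi_1/\Pi_2$ with $\Pi_2\subset\Pi_1\subset\Pi$. The functor $\Jac_{\chi^{\times a}}$ is exact, since it is the exact Jacquet functor $r_{al,n-al}$ followed by projection onto a direct summand of the Bernstein decomposition. Hence $\Jac_{\chi^{\times a}}(\pi)$ is a subquotient of $\Jac_{\chi^{\times a}}(\Pi)$. By Lemma~\ref{Jac of left chi seq}, which applies because $\rho$ is left $\chi$-reduced, the latter has the unique composition factor $\chi^{\times a}\boxtimes\rho$, and that factor has multiplicity one (as the proof of the lemma shows). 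So every composition factor of $\Jac_{\chi^{\times a}}(\pi)$ is isomorphic to $\chi^{\times a}\boxtimes\rho$.

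Finally, comparing with the first step gives $\chi^{\times a}\boxtimes D_{\chi}(\pi)\simeq \chi^{\times a}\boxtimes\rho$, and therefore $\rho\simeq D_{\chi}(\pi)$.

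The only point needing care is the exactness and subquotient compatibility of $\Jac_{\chi^{\times a}}$, together with reading Lemma~\ref{Jac of left chi seq} as a statement about composition factors. The lemma's proof, via the geometric lemma, shows that all composition factors in that block are $\chi^{\times a}\boxtimes\rho$, so this is routine. Note that the hypothesis $a_{\chi}(\pi)=a$ is used only to guarantee that $D_{\chi}(\pi)$ has the right size, i.e. lies in $\Irr(G_{n-al})$, so that it lives in the same block as $\rho$.
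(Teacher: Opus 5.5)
Your proof is correct and follows the paper's argument: both compare $\Jac_{\chi^{\times a}}(\pi)$ with $\Jac_{\chi^{\times a}}(\chi^{\times a}\times\rho)\simeq\chi^{\times a}\boxtimes\rho$, using exactness of the Jacquet functor and Lemma~\ref{Jac of left chi seq}. The only minor difference is how you see that $\chi^{\times a}\boxtimes D_{\chi}(\pi)$ occurs in $\Jac_{\chi^{\times a}}(\pi)$: you get it directly from Frobenius reciprocity, whereas the paper applies Lemma~\ref{Jac of left chi seq} to $\chi^{\times a}\times D_{\chi}(\pi)$, which implicitly uses that $D_{\chi}(\pi)$ is left $\chi$-reduced by maximality of $a$.
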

\begin{proof}
    By Lemma~\ref{Jac of left chi seq}, we obtain $\Jac_{\chi^{\times a}}(\pi)=\chi^{\times a} \boxtimes D_{\chi}(\pi)$ and 
    \[
    \Jac_{\chi^{\times a}}(\chi^{\times a} \times \rho)=\chi^{\times a} \boxtimes \rho,
    \]
    which implies $\rho=D_{\chi}(\pi)$ since Jacquet functor is exact.
\end{proof}
\subsection{socle irreducibility of big derivatives}
In this subsection, we recall a result of Chan~\cite{Ch24} concerning the socle irreducibility of big derivatives. We first recall the definition of big derivative. Let $\sigma\in\Irr(G_t)$ and $\pi\in \Fin(G_n)$. Assume $t\leq n$.
\begin{definition}[{\cite[Definition 8.1]{Ch24}}]
    \begin{enumerate}
        \item Define the (left) big derivative to be the $G_{n-t}$ representation
        \[
      \BW_{\sigma}(\pi):=  \Hom_{G_t}(\sigma, r_{t,n-t}(\pi)).
        \]
        \item Define the (right) big derivative to be the $G_{n-t}$ representation
        \[
        \BD_{\sigma}(\pi):=\Hom_{G_t}(\sigma, \ov{r}_{t,n-t}(\pi)).
        \]
    \end{enumerate}
\end{definition}
The left and right derivative are related by the cohomological duality functor as follows. Let $\CS(G_n)$ be the space of Bruhat-Schwartz functions on $G_n$. We regard it as a representation of $G_n$ by right translation. Let $\Omega\in M(\CC)$ such that $|\Omega|=n$. Assume that the homological dimension of $\Fin(\Omega)$ is $d$. For $\pi\in \Fin(\Omega)$, define
\[
\mathscr{D}(\pi):=\Ext^d_{G_n}(\pi, \CS(G_n)).
\]
The left translation on $\CS(G_n)$ induces an action of $G_n$ on $\mathscr{D}(\pi)$. It is proved in~\cite[Page 102]{Ber92} that the contravariant functor $\mathscr{D}$ is an involution, namely, $\mathscr{D}^2=\id$. In particular, $\mathscr{D}$ is exact and sends irreducible representations to irreducible representations. 

We can extend $\mathscr{D}$ to be a functor in $\Fin(G_n)$ by Bernstein decomposition:
\[
 \mathscr{D}(\pi):= \bigoplus_{\substack{\Omega\in M(\CC)\\ |\Omega|=n}} \mathscr{D}(\mathrm{pr}_{\Omega}(\pi)),
\]
where $\mathrm{pr}_{\Omega}$ is the projection functor under~\eqref{Bernstein decomp eq}. Let $\mathscr{A}(\pi):= \mathscr{D}(\pi)^{\vee}$. Then $\mathscr{A}$ is a covariant functor on $\Fin(G_n)$, which is also an equivalence of categories. Moreover, it agrees with the Aubert-Zelevinsky dual \cite{Ze80, Au95} in the Grothendieck group level, see the work of Schneider-Stuhler \cite[Proposition IV.5.2]{SS97} and Bernstein-Bezrukavnikov-Kazhdan \cite[Section 3.2]{BBK18}.
\begin{lemma}\label{duality commutes induction lem}
    Let $\Lambda_1,\Lambda_2\in M(\CC)$. Let $\pi_1\in \Fin(\Lambda_1)$ and $\pi_2\in \Fin(\Lambda_2)$. Then $\pi_1\times\pi_2\in \Fin(\Lambda_1+\Lambda_2)$. There is a natural isomorphism 
    \[
    \mathscr{A}(\pi_1\times \pi_2)\simeq \mathscr{A}(\pi_2)\times \mathscr{A}(\pi_1).
    \]
\end{lemma}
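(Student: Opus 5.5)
The plan is to prove the isomorphism $\mathscr{A}(\pi_1\times\pi_2)\simeq\mathscr{A}(\pi_2)\times\mathscr{A}(\pi_1)$ by first establishing the contravariant statement
\[
\mathscr{D}(\pi_1\times\pi_2)\simeq \mathscr{D}(\pi_1)^{\vee}\times\mathscr{D}(\pi_2)^{\vee}
\quad\text{(suitably twisted, with the opposite parabolic),}
\]
and then taking contragredients. The block claim $\pi_1\times\pi_2\in\Fin(\Lambda_1+\Lambda_2)$ is immediate from the geometric lemma: every irreducible subquotient of $\pi_1\times\pi_2$ embeds in an induction from a cuspidal representation whose support is $\Lambda_1+\Lambda_2$, by transitivity of induction.

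The key step is a formula for $\Ext^{\bullet}_{G_n}(\ind_P^{G_n}\sigma,\CS(G_n))$, where $P=P_{k,n-k}$ and $\sigma=\pi_1\boxtimes\pi_2$. I will use second adjointness in derived form. Parabolic induction has the exact right adjoint $\ov{r}$ (the opposite Jacquet functor), so
\[
\Ext^i_{G_n}(\ind_P^{G_n}\sigma,\CS(G_n))\simeq \Ext^i_{L}(\sigma,\ov{r}_{k,n-k}\CS(G_n)).
\]
Here the Jacquet module is taken for the right-translation action, and it still carries the left $G_n$-action. Bernstein's computation gives
\[
\ov{r}(\CS(G_n))\simeq \ind_{\ov{P}}^{G_n}\bigl(\CS(L)\bigr)
\]
as a $G_n\times L$ bimodule, up to normalizing characters. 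Since $\Ext$ over $L=G_k\times G_{n-k}$ and induction in the other variable commute (induction is exact), this yields
\[
\Ext^i_{G_n}(\ind_P\sigma,\CS(G_n))\simeq \ind_{\ov{P}}^{G_n}\Ext^i_L(\sigma,\CS(L)).
\]

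Next I apply the Künneth formula over $L=G_k\times G_{n-k}$. This gives $\Ext^d_L(\pi_1\boxtimes\pi_2,\CS(G_k)\otimes\CS(G_{n-k}))$ in top degree $d=d_1+d_2$. Here I use that for the block of $\Lambda_1+\Lambda_2$ restricted to $L$, only the factor $\Lambda_1\otimes\Lambda_2$ contributes, and that the homological dimensions add. Since the cohomological dimension of $\Fin(\Omega)$ equals the split rank of the relevant Levi, the degree $d$ for $\Lambda_1+\Lambda_2$ on $G_n$ matches $d_1+d_2$. In all other degrees the Ext vanishes, by Bernstein's result that $\Ext^i(\pi,\CS)$ is concentrated in a single degree for these objects. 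This produces
\[
\mathscr{D}(\pi_1\times\pi_2)\simeq \ind_{\ov{P}_{k,n-k}}^{G_n}\bigl(\mathscr{D}(\pi_1)\boxtimes\mathscr{D}(\pi_2)\bigr).
\]
By identity (1) of the preliminaries, this is $\mathscr{D}(\pi_2)\times\mathscr{D}(\pi_1)$. Taking contragredients and using identity (2) then gives $\mathscr{A}(\pi_2)\times\mathscr{A}(\pi_1)$. Naturality holds because every isomorphism used is functorial in $\sigma$.

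The main obstacle is the bookkeeping in the Bernstein identification $\ov{r}\,\CS(G_n)\simeq\ind_{\ov{P}}\CS(L)$ with its correct modulus twists, together with checking that the top degrees match. If the twists do not cancel cleanly, I will verify the final statement on standard modules, where both sides can be computed explicitly. Alternatively, I will cite the known compatibility of the Schneider–Stuhler/Bernstein duality with parabolic induction \cite{Ber92,SS97}, or its use in \cite[Section 8]{Ch24}, which states exactly this flip of factors.
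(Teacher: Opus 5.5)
Your argument is correct and is essentially the paper's. The paper simply cites \cite[Theorem 31(4)]{Ber92}, namely $\mathscr{D}(\pi_1\times\pi_2)\simeq\mathscr{D}(\pi_2)\times\mathscr{D}(\pi_1)$, and dualizes using identity (2). Your ingredients (derived second adjointness, the identification $\ov{r}\,\CS(G_n)\simeq\ind_{\ov{P}}^{G_n}\CS(L)$, Künneth, and concentration of $\Ext^{\bullet}(\cdot,\CS)$ in a single degree) reconstruct Bernstein's own proof of that statement. In that last step the degree should be the split rank of the \emph{centre} of the cuspidal Levi, i.e.\ the number of cuspidals in $\Omega$, which is what makes it additive. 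Bernstein's statement holds for normalized functors, so the modulus twists cancel and no check on standard modules is needed; such a check would not yield a natural isomorphism in any case.

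The only slip is your opening display, where no contragredients should appear. Your own computation correctly gives
\[
\mathscr{D}(\pi_1\times\pi_2)\simeq\ind_{\ov{P}_{k,n-k}}^{G_n}\bigl(\mathscr{D}(\pi_1)\boxtimes\mathscr{D}(\pi_2)\bigr)\simeq\mathscr{D}(\pi_2)\times\mathscr{D}(\pi_1).
\]
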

\begin{proof}
    It follows from~\cite[Theorem 31(4)]{Ber92}, which claims
    \[
     \mathscr{D}(\pi_1\times \pi_2)\simeq \mathscr{D}(\pi_2)\times \mathscr{D}(\pi_1).
    \]
\end{proof}

\begin{lemma}\label{dualtiy commutes big derivative}
    Let $\sigma\in \Irr(G_t)$. Then for any $\pi\in\Fin(G_n)$, there is a natural isomorphism:
    \[
    \mathscr{A}(\BW_{\sigma}(\pi))\simeq \BD_{\mathscr{A}(\sigma)}(\mathscr{A}(\pi)).
    \]
\end{lemma}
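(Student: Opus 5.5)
We have to show $\mathscr{A}(\BW_{\sigma}(\pi))\simeq \BD_{\mathscr{A}(\sigma)}(\mathscr{A}(\pi))$. The strategy is to express $\BW_\sigma$ and $\BD_\sigma$ as adjoints of parabolic induction, and then use Lemma~\ref{duality commutes induction lem}, which says that $\mathscr{A}$ intertwines induction with the order of the two factors reversed. Both sides are additive in $\pi$ and compatible with the Bernstein decomposition~\eqref{Bernstein decomp eq}, so we may assume $\pi\in\Fin(\Omega)$ for a single block $\Omega$.

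First, apply $\mathscr{A}$ to the Jacquet module. Since $\mathscr{A}$ is an exact equivalence, it has an inverse on $\Fin$, and Lemma~\ref{duality commutes induction lem} shows it carries the functor $\tau_1\boxtimes\tau_2\mapsto \tau_1\times\tau_2=\ind_{P_{t,n-t}}(\tau_1\boxtimes\tau_2)$ to $\tau_2\times\tau_1\simeq \ind_{\ov P_{n-t,t}}(\tau_1\boxtimes \tau_2)$. This last identification is standard identity (1) and keeps the $G_t$-factor in the first slot.

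Next, pass to adjoints. Frobenius reciprocity makes $r_{t,n-t}$ left adjoint to $\ind_{P_{t,n-t}}$. Second adjointness (identity (3)) makes $\ov r_{t,n-t}$ right adjoint to $\ind_{P_{t,n-t}}$, which is the same as being left adjoint to $\ind_{\ov P_{t,n-t}}$. Conjugating adjunctions by the equivalence $\mathscr{A}$ (applied to $L_{t,n-t}$ and to $G_n$) then gives a natural isomorphism
\[
\mathscr{A}_{L}\circ r_{t,n-t}\simeq \ov r_{t,n-t}\circ \mathscr{A}_{G_n},
\]
where $\mathscr{A}_L=\mathscr{A}_{G_t}\boxtimes\mathscr{A}_{G_{n-t}}$. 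Uniqueness of adjoints yields this directly, provided we have a natural isomorphism $\mathscr{A}\circ\ind_{P}\simeq \ind_{\ov P}\circ\mathscr{A}_L$ of functors. This requires the naturality in Lemma~\ref{duality commutes induction lem}, which Bernstein's statement gives. One subtlety is that $\mathscr{A}$ is only defined on finite-length objects, so the adjunction must be restricted to $\Fin$; Jacquet modules and inductions preserve finite length, so this is harmless.

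Finally, apply $\Hom_{G_t}(\sigma,-)$ to the $G_t$-factor. Since $\mathscr{A}_{G_t}$ is an equivalence, $\Hom_{G_t}(\sigma,X)\simeq\Hom_{G_t}(\mathscr{A}(\sigma),\mathscr{A}(X))$ naturally in $X$. Taking $X=r_{t,n-t}(\pi)$, which has finite length as an $L$-module, gives
\[
\mathscr{A}_{G_{n-t}}\bigl(\Hom_{G_t}(\sigma,r_{t,n-t}\pi)\bigr)\simeq \Hom_{G_t}\bigl(\mathscr{A}\sigma,\mathscr{A}_L r_{t,n-t}\pi\bigr).
\]
For this we need to know that $\mathscr{A}_L$ on an external tensor product commutes with taking $\Hom$ in the first factor. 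This holds because $\mathscr{A}_L$ is the tensor product of the two factor functors: Ext over $G_t\times G_{n-t}$ of $\CS(G_t)\otimes\CS(G_{n-t})$ decomposes by a Künneth argument, and the degrees add. Combining the three steps gives the lemma.

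The main obstacle is this Künneth compatibility together with the naturality of the isomorphism in Lemma~\ref{duality commutes induction lem}. I would handle it by working blockwise with finitely generated projective generators, which reduces everything to module categories over Bernstein's Hecke algebras, where the required isomorphisms can be checked directly.
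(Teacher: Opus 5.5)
Your argument is correct in outline, but it takes a longer route than the paper.

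\textbf{What the paper does.} The paper never leaves the groups $G_n$, $G_t$, $G_{n-t}$. It tests against an arbitrary $\rho\in\Fin(G_{n-t})$ and uses Yoneda. By adjointness,
\[
\Hom_{G_{n-t}}(\rho,\mathscr{A}(\BW_\sigma(\pi)))\simeq\Hom_{G_{n-t}}(\mathscr{A}^{-1}(\rho),\BW_\sigma(\pi))\simeq\Hom_{G_n}(\mathscr{A}^{-1}(\rho)\times\sigma,\pi).
\]
It then applies $\mathscr{A}$ together with Lemma~\ref{duality commutes induction lem} to get $\Hom_{G_n}(\mathscr{A}(\sigma)\times\rho,\mathscr{A}(\pi))$. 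Second adjointness finishes at $\Hom_{G_{n-t}}(\rho,\BD_{\mathscr{A}(\sigma)}(\mathscr{A}(\pi)))$.

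\textbf{What this avoids.} Lemma~\ref{duality commutes induction lem} is only ever applied to an external product $\mathscr{A}^{-1}(\rho)\boxtimes\sigma$. So the paper never needs to define $\mathscr{A}_L$ on all of $\Fin(L)$. It also never needs to upgrade Bernstein's isomorphism to a natural isomorphism of functors on the Levi, or to invoke Künneth.

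\textbf{What your route buys.} You prove a stronger, $\sigma$-independent statement, $\mathscr{A}_L\circ r_{t,n-t}\simeq\ov r_{t,n-t}\circ\mathscr{A}$, which could be reused elsewhere. The cost is exactly the technical inputs you flag as the main obstacle.

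\textbf{Your step 3 is lighter than you think.} It does not need a Künneth decomposition for arbitrary $X\in\Fin(L)$, nor Hecke algebras. Since $\Hom_{G_{n-t}}(\rho,\Hom_{G_t}(\sigma,X))\simeq\Hom_L(\sigma\boxtimes\rho,X)$, Yoneda reduces it to $\mathscr{A}_L(\sigma\boxtimes\rho)\simeq\mathscr{A}(\sigma)\boxtimes\mathscr{A}(\rho)$. That is the same adjunction-plus-Yoneda trick the paper uses from the start.

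\textbf{Two small slips.}
\begin{enumerate}
\item By identity (1), $\tau_2\times\tau_1\simeq\ind_{\ov P_{t,n-t}}(\tau_1\boxtimes\tau_2)$, not $\ind_{\ov P_{n-t,t}}$.
\item $\ov r_{t,n-t}\dashv\ind_{\ov P_{t,n-t}}$ is ordinary Frobenius reciprocity for $\ov P$. It is not a reformulation of second adjointness, which says $\ind_{P}\dashv\ov r$. Your conjugation argument only needs the two Frobenius reciprocities, so nothing breaks.
\end{enumerate}
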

\begin{proof}
    By Yoneda's Lemma, it suffices to show that for any $\rho\in \Fin(G_{n-t})$, there is an isomorphism
    \[
    \Hom_{G_{n-t}}\left(\rho,  \mathscr{A}(\BW_{\sigma}(\pi))\right)\simeq  \Hom_{G_{n-t}}\left(\rho,  \BD_{\mathscr{A}(\sigma)}(\mathscr{A}(\pi))\right)
    \]
    By Bernstein decomposition~\eqref{Bernstein decomp eq}, we can assume $\rho\in \Fin(\Lambda)$ for some $\Lambda\in M(\CC)$ such that $|\Lambda|=n-t$. Then
    \begin{align*}
         \Hom_{G_{n-t}}\left(\rho,  \mathscr{A}(\BW_{\sigma}(\pi))\right)&\simeq \Hom_{G_{n-t}}(\mathscr{A}^{-1}(\rho), \BW_{\sigma}(\pi))\\
        &\simeq \Hom_{G_n}( \mathscr{A}^{-1}(\rho)\times \sigma, \pi)\\
        & \simeq \Hom_{G_n}( \mathscr{A}(\sigma)\times \rho, \mathscr{A}(\pi))\simeq\Hom_{G_{n-t}}\left(\rho,  \BD_{\mathscr{A}(\sigma)}(\mathscr{A}(\pi))\right),
    \end{align*}
    where the second and last isomorphisms come from Frobenius reciprocity, and the third isomorphism is Lemma~\ref{duality commutes induction lem}.
\end{proof}

Recall that for any segment $\Delta=[x,y]_{\chi}$, there is an associated representation $\St(\Delta)$, which is the unique irreducible quotient of 
\[
 \nu_{\chi}^x\chi\times \cdots \times \nu_{\chi}^y \chi.
\]
\begin{proposition}[{\cite[Proposition 11.5]{Ch24}}]\label{big derivative socle irreducible}
       Let $\pi\in \Irr$.  Let $\Delta$ be a segment. Suppose that $\BD_{\St(\Delta)}(\pi)\neq 0$, then it is socle irreducible.
\end{proposition}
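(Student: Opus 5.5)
The statement is \cite[Proposition 11.5]{Ch24}, and in the paper it is simply quoted. If I had to prove it, I would reduce it to a statement about \emph{left} big derivatives with respect to a ladder-type representation, and then study the Jacquet module directly.

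\textbf{Step 1 (reduction via $\mathscr{A}$).} By Lemma~\ref{dualtiy commutes big derivative}, applied to $\sigma:=\mathscr{A}^{-1}(\St(\Delta))$, we have
\[
\BD_{\St(\Delta)}(\pi)\simeq \mathscr{A}\bigl(\BW_{\sigma}(\mathscr{A}^{-1}(\pi))\bigr).
\]
Here $\sigma$ is (up to the standard normalisation of the Aubert–Zelevinsky dual) $Z(\Delta)$. Since $\mathscr{A}$ is a covariant exact equivalence of categories sending irreducibles to irreducibles, it preserves socles and Jordan–Hölder multiplicities. It therefore suffices to prove that $\BW_{Z(\Delta)}(\pi')$ is socle irreducible for every $\pi'\in\Irr$ with $\BW_{Z(\Delta)}(\pi')\neq 0$.

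\textbf{Step 2 (the socle is irreducible).} By Frobenius reciprocity, for irreducible $\tau$ we have
\[
\Hom(\tau,\BW_{Z(\Delta)}(\pi'))\neq 0 \iff \Hom(Z(\Delta)\times\tau,\pi')\neq0,
\]
with the factor order as in the proof of Lemma~\ref{dualtiy commutes big derivative}. The key input is that $Z(\Delta)$ is $\square$-irreducible in the sense of Lapid–Mínguez~\cite{LM16}. This means $Z(\Delta)\times\tau$ has irreducible cosocle occurring with multiplicity one, and the map $\tau\mapsto\cos(Z(\Delta)\times\tau)$ is injective. Hence at most one irreducible $\tau$ maps to $\pi'$ in this way, so $\soc(\BW_{Z(\Delta)}(\pi'))$ is isotypic for a single $\tau_0$. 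Moreover, $\dim\Hom(\tau_0,\BW_{Z(\Delta)}(\pi'))=\dim\Hom(Z(\Delta)\times\tau_0,\pi')=1$, so the socle is exactly $\tau_0$.

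\textbf{Step 3 (multiplicity one, the main obstacle).} It remains to show $[\BW_{Z(\Delta)}(\pi'):\tau_0]=1$. This is the analogue of Lemma~\ref{Jac of left chi seq} and the corollary following it, with the cuspidal $\chi$ replaced by the segment $\Delta$. I would argue as follows.
\begin{enumerate}
\item Define a highest $\Delta$-derivative by iterating: write $\pi'\hookrightarrow Z(\Delta)^{\times k}\times\rho$ with $k$ maximal and $\rho$ left $\Delta$-reduced, using Chan's $\eta$-invariants to make ``maximal'' precise.
\item Use the geometric lemma to compute the $\supp(Z(\Delta))$-isotypic part of $r_{t,n-t}(Z(\Delta)^{\times k}\times\rho)$. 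The goal is to show that the $Z(\Delta)$-isotypic quotient in the first factor yields, in the second factor, a representation whose only constituent of the relevant ``$\Delta$-level'' is $\tau_0$, and that it occurs once.
\end{enumerate}
The difficulty is that, unlike in the cuspidal case, $r(Z(\Delta))$ has many pieces. Mixed terms in which part of $\Delta$ is absorbed by $\rho$ must be excluded using the reducedness of $\rho$ together with the linkage combinatorics of Proposition~\ref{Zele class_properties}. I would first try an induction on $\ell(\Delta)$, peeling off the end $\nu^{e(\Delta)}\chi$ and using the cuspidal case (Lemma~\ref{Jac of left chi seq}) at each step. Transporting back along $\mathscr{A}$ then gives the proposition.
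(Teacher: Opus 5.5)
Your Steps 1 and 2 are sound. Transporting along $\mathscr{A}$ is legitimate, and the $\square$-irreducibility of $Z(\Delta)$ does show that $\soc(\BW_{Z(\Delta)}(\pi'))$ is a single $\tau_0$ occurring once in the socle. One correction: the adjunction actually reads $\Hom(\tau,\BW_{Z(\Delta)}(\pi'))\simeq\Hom(\tau\times Z(\Delta),\pi')$, as in the proof of Lemma~\ref{dualtiy commutes big derivative}, not with $Z(\Delta)\times\tau$. This is harmless, since $\tau\times Z(\Delta)$ also has a simple cosocle of multiplicity one and $\tau\mapsto\cos(\tau\times Z(\Delta))$ is injective. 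But this is the formal half of the statement. The paper does not reprove the proposition; it checks that $\Delta=[x,y]_\chi$ is strongly $\nu_\chi^{y}\chi$-saturated and quotes \cite[Proposition 11.5]{Ch24}. The real content of that result is the Jordan--H\"older statement $[\BW_{Z(\Delta)}(\pi'):\tau_0]=1$, and your Step 3 does not establish it.

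\textbf{Why Step 3 fails.} The mechanism you describe cannot give multiplicity one. The geometric lemma only yields a filtration of $r_{t,n-t}(Z(\Delta)^{\times k}\times\rho)$ with known subquotients. By contrast, $\BW_{Z(\Delta)}=\Hom_{G_t}(Z(\Delta),r_{t,n-t}(-))$ is only left exact and depends on how those subquotients are glued in the $G_t$-variable.

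Take the irreducible representation $\pi'=Z(\Delta)\times Z(\Delta)$, so $k=2$ and $\rho$ is trivial. The part of $r_{t,t}(\pi')$ whose first factor has cuspidal support $\supp(\Delta)$ consists of exactly two copies of $Z(\Delta)\boxtimes Z(\Delta)$. Any count of constituents therefore gives the bound $2$. The assertion $\BW_{Z(\Delta)}(\pi')\simeq Z(\Delta)$ is equivalent to this self-extension being non-split as a $G_t$-module. No bookkeeping of constituents, linkage combinatorics, or reducedness of $\rho$ can detect that.

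In general $Z(\Delta)\boxtimes\tau_0$ occurs $k$ times. The analogue of Lemma~\ref{Jac of left chi seq} only controls the piece where all $k$ copies of $Z(\Delta)$ go into the first factor, not the one-copy Jacquet module that defines $\BW_{Z(\Delta)}$. For the same reason your induction on $\ell(\Delta)$ has no base. Even for cuspidal $\Delta=[x,x]_\chi$, the equality $[\BW_\chi(\chi^{\times k}):\chi^{\times(k-1)}]=1$ is a statement about a Jordan block in the Jacquet module, and it does not follow from Lemma~\ref{Jac of left chi seq}.

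What is missing is genuinely homological input on extensions. In \cite{Ch24} it is obtained from a full-faithfulness property of the product functor $\tau\mapsto\tau\times\sigma$ on a suitable subcategory. It is not obtainable from Jacquet-module constituent counts alone.
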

\begin{proof}
    Let $\Delta=[x,y]_{\chi}$, where $\chi$ is a cuspidal representation. Then, by definition, $\Delta$ is strongly $\nu_{\chi}^y\chi$-saturated. Thus, the result follows from~\cite[Proposition 11.5]{Ch24}.
\end{proof}
\subsection{A Multiplicity one detector}
In this subsection, we introduce a lemma that detects the multiplicity one property from the left highest derivative. 
\begin{lemma}\label{multiplicity one detector lem}
     Let $\pi\in \Irr(G_n)$, and let $\chi$ be a cuspidal representation of $G_l$. Assume $a=a_{\chi}(\pi)$. Let $\sigma\in \Fin(G_{n-al})$ such that
     \begin{enumerate}
         \item any irreducible component of $\sigma$ is left $\chi$-reduced;
         \item Jordan-H\"older multiplicity $ [\sigma:D_{\chi}(\pi)]\leq 1$.
     \end{enumerate}
     Then $[\chi^{\times a}\times \sigma:\pi]\leq 1$.
\end{lemma}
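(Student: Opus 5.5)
We plan to detect the multiplicity of $\pi$ through the exact functor $\Jac_{\chi^{\times a}}$, i.e.\ the summand of $r_{al,n-al}$ whose first factor has cuspidal support $a[\chi]$. By Lemma~\ref{Jac of left chi seq}, every irreducible $\tau$ with $a_\chi(\tau)=a$ satisfies
\[
\Jac_{\chi^{\times a}}(\tau)=\chi^{\times a}\boxtimes D_{\chi}(\tau)\neq 0,
\]
and this is the whole of $\Jac_{\chi^{\times a}}(\tau)$. Consequently, if $\pi$ occurs $k$ times as a composition factor of $\chi^{\times a}\times\sigma$, exactness gives
\[
k\;\le\;\length\bigl(\Jac_{\chi^{\times a}}(\chi^{\times a}\times\sigma)\bigr)\quad\text{restricted to factors } \chi^{\times a}\boxtimes D_\chi(\pi),
\]
that is,
\[
k\;\le\;\bigl[\Jac_{\chi^{\times a}}(\chi^{\times a}\times\sigma):\chi^{\times a}\boxtimes D_{\chi}(\pi)\bigr].
\]
It therefore suffices to compute the right-hand side and show it equals $[\sigma:D_\chi(\pi)]\le 1$.

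Step one uses exactness of parabolic induction and of $\Jac_{\chi^{\times a}}$ to reduce to a Jordan--H\"older series of $\sigma$. In the Grothendieck group,
\[
\Jac_{\chi^{\times a}}(\chi^{\times a}\times\sigma)=\sum_{\tau}[\sigma:\tau]\,\Jac_{\chi^{\times a}}(\chi^{\times a}\times\tau),
\]
where $\tau$ runs over the irreducible constituents of $\sigma$.

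Step two applies Lemma~\ref{Jac of left chi seq}. Each such $\tau$ is left $\chi$-reduced by hypothesis (1), so
\[
\Jac_{\chi^{\times a}}(\chi^{\times a}\times\tau)=\chi^{\times a}\boxtimes\tau.
\]
Here $\chi^{\times a}$ is irreducible by Lemma~\ref{cuspidal product irr lem}, so this term is a single irreducible factor. Hence $\Jac_{\chi^{\times a}}(\chi^{\times a}\times\sigma)$ has exactly the composition factors $\chi^{\times a}\boxtimes\tau$ with multiplicity $[\sigma:\tau]$.

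Step three concludes. The factor $\chi^{\times a}\boxtimes D_\chi(\pi)$ occurs with multiplicity $[\sigma:D_\chi(\pi)]\le1$ by hypothesis (2), so $k\le 1$.

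The main subtlety is the first bound: each occurrence of $\pi$ contributes a full copy of $\Jac_{\chi^{\times a}}(\pi)$, which is nonzero and equal to $\chi^{\times a}\boxtimes D_\chi(\pi)$. Nonvanishing holds because $a_\chi(\pi)=a$ gives $\pi\hookrightarrow\chi^{\times a}\times D_\chi(\pi)$, and Frobenius reciprocity then makes $\chi^{\times a}\boxtimes D_\chi(\pi)$ a quotient of $r_{al,n-al}(\pi)$ lying in the $a[\chi]$-block. Since $\pi$ is irreducible, the precise identification with a single factor comes from the embedding combined with Lemma~\ref{Jac of left chi seq} applied to $\rho=D_\chi(\pi)$, which is $\chi$-reduced by maximality of $a$. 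Counting factors with additivity then finishes the argument.
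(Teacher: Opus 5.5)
Your argument is correct and is essentially the paper's proof, since both rest on the exactness of $\Jac_{\chi^{\times a}}$ together with Lemma~\ref{Jac of left chi seq}. The paper splits the count into two steps: it uses Lemma~\ref{subquotient lem} to discard constituents $\rho\neq D_\chi(\pi)$ of $\sigma$, and then shows $[\chi^{\times a}\times D_\chi(\pi):\pi]=1$, whereas you compare multiplicities of $\chi^{\times a}\boxtimes D_\chi(\pi)$ in $\Jac_{\chi^{\times a}}(\chi^{\times a}\times\sigma)$ in a single step.
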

\begin{proof}
For any irreducible component $\rho \neq D_{\chi}(\pi)$ of $\sigma$, $\pi$ cannot occur as an irreducible component of $\chi^{\times a} \times \rho$ by Lemma~\ref{subquotient lem}. In other words,
\[
 [\chi^{\times a}\times \sigma:\pi]=[\sigma:D_{\chi}(\pi)]\cdot [\chi^{\times a}\times D_{\chi}(\pi):\pi].
\]
On the other hand, since $\Jac_{\chi^{\times a}}(\pi)=D_{\chi}(\pi)$, Lemma~\ref{Jac of left chi seq} implies that
\[
[\chi^{\times a}\times D_{\chi}(\pi):\pi]= 1.
\]
This completes the proof.
\end{proof}

\subsection{Jacquet modules of Speh representations}
In this subsection, we collect some facts about Speh representations and their Jacquet module.

Let $\chi$ be a cuspidal representation of $G_l$, and let $a,b$ be two positive integers. We can write the Speh representations by Langlands classification:
\begin{equation}\label{multi-seg speh eq}
    U(\chi;a,b)= LQ\left( \St(\Delta_1)\times  \St(\Delta_2)\times \cdots\times  \St(\Delta_b)   \right),
\end{equation}
where $\Delta_i=[\frac{b-a}{2}+1-i,\frac{a+b}{2}-i]_{\chi}:=[x_i,y_i]_{\chi}$. It is a special class of ladder representations introduced in~\cite{LM14}. Applying the formula for Jacquet modules of ladder representations in~\cite[Theorem 2.1]{KL12}, we obtain the following lemma.
\begin{lemma}\label{Speh Jacquet lem}
 Let $n=lab$, and let $t$ be a positive integer such that $0<t<n$. There is a natural isomorphism as representations of $G_{n-t}\times G_{t}$:
 \begin{align*}
      &r_{n-t,t}(U(\chi;a,b))
      =\bigoplus_{\substack{c_1>\cdots>c_b \\ \sum_{i=1}^b l(c_i-x_i+1)=t}} LQ(\St([c_1+1,y_1]_{\chi})\times \cdots\times \St([c_b+1,y_b]_{\chi}))\\
      &\boxtimes LQ(\St([x_1,c_1]_{\chi})\times \cdots\times \St([x_b,c_b]_{\chi})),
 \end{align*}
where we assume $\St([x,y]_{\chi})=0$ if $y<x-1$ and $\St([x,x-1]_{\chi})$ to be the trivial representation of $G_0$. In particular, the Jacquet module of Speh representations is multiplicity-free. 

\end{lemma}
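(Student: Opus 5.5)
The plan is to derive the statement from the Kret--Lapid formula \cite[Theorem 2.1]{KL12} for Jacquet modules of ladder representations. By \eqref{multi-seg speh eq}, $U(\chi;a,b)$ is the ladder representation attached to $\Delta_1,\dots,\Delta_b$, where $\Delta_i=[x_i,y_i]_\chi$ and $x_i=\frac{b-a}{2}+1-i$, $y_i=\frac{a+b}{2}-i$. Both $x_i$ and $y_i$ are strictly decreasing in $i$, which is exactly the ladder condition. The Kret--Lapid theorem states that in the Grothendieck group the Jacquet module of a ladder $LQ(\St(\Delta_1)\times\cdots\times\St(\Delta_b))$ with respect to a maximal parabolic is
\[
\sum_{c_1>\cdots>c_b,\ x_i-1\le c_i\le y_i} LQ\bigl(\St([c_i+1,y_i]_\chi)_i\bigr)\boxtimes LQ\bigl(\St([x_i,c_i]_\chi)_i\bigr),
\]
where the block sizes are fixed by the degree. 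The first task is to translate this into the convention $r_{n-t,t}$ of the statement. I will check which tensor factor sits on the $G_{n-t}$ side, using the normalization of Jacquet modules in \cite{KL12} and the fact that the degree of $\St([x_i,c_i]_\chi)$ is $l(c_i-x_i+1)$. This yields the constraint $\sum_i l(c_i-x_i+1)=t$.

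The second step is to confirm that every term in the sum is nonzero and irreducible. Since $c_i$ and $y_i$ are both strictly decreasing, the segments $[c_i+1,y_i]_\chi$ again form a ladder once empty segments are discarded; the same holds for $[x_i,c_i]_\chi$. Their Langlands quotients are therefore irreducible ladder representations. I then need to show that distinct admissible tuples $(c_1,\dots,c_b)$ give non-isomorphic terms. The Langlands data of the second factor is the multisegment $\{[x_i,c_i]_\chi\}$. The $x_i$ are distinct, so each nonempty segment determines its $i$ and hence its $c_i$. An empty segment forces $c_i=x_i-1$, and the condition $c_i>c_{i+1}\ge x_{i+1}-1=x_i-2$ fixes the position of empty segments consistently. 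Hence the tuple can be recovered from the multisegment, and by the Langlands classification the constituents are pairwise distinct. This gives multiplicity-freeness.

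The third step upgrades the identity in the Grothendieck group to a direct sum decomposition. The terms have pairwise distinct cuspidal supports: the cuspidal support of the $G_{n-t}$ factor is $\bigcup_i[c_i+1,y_i]$, and this union determines the tuple $(c_i)$. For Speh representations this holds because $x_i=y_i-a+1$ and the segments are consecutive, so the union of the $[x_i,c_i]$ pins down each $c_i$ in turn, starting from $c_b$. Distinct supports lie in distinct Bernstein blocks, so by \eqref{Bernstein decomp eq} the finite length module $r_{n-t,t}(U(\chi;a,b))$ splits as a direct sum of semisimple pieces, and each piece is irreducible. I expect the main obstacle to be this recovery of $(c_i)$ from the cuspidal support, since it must be argued and is not merely read off from the Langlands data. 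If it fails in degenerate cases, the fallback is to argue through the Langlands data together with a self-extension vanishing argument, which would be more delicate.
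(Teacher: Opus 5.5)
Your proposal is correct and takes the same route as the paper. The paper's proof simply notes that $U(\chi;a,b)$ is a ladder representation and invokes \cite[Theorem 2.1]{KL12}. You additionally spell out three things: the check of which factor lies on the $G_{n-t}$ side, the multiplicity-freeness, and the upgrade from the Grothendieck group to a direct sum via pairwise distinct cuspidal supports and the Bernstein decomposition.

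The support-recovery step you flag as the main obstacle is in fact easy. The empty segments among the $[c_i+1,y_i]_\chi$ form an initial block, so the minimum of the support of the $G_{n-t}$-factor is $\nu_\chi^{c_b+1}\chi$, and you can peel off $c_b, c_{b-1},\dots$ in turn. For the $G_t$-factor, where the empty segments form a terminal block, you peel from the maximum $c_1$ instead, not from $c_b$ as you wrote.
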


Using the M\oe glin--Waldspurger's algorithm, we obtain an equivalent lemma in terms of the Zelevinsky classification:
\begin{equation}\label{Speh Zelevinsky}
    U(\chi;a,b)=Z(\Sigma_1+\cdots+\Sigma_a),
\end{equation}
where $\Sigma_i=[\frac{a-b}{2}+1-i,\frac{a+b}{2}-i]_{\chi}:=[x_i',y_i']_{\chi}$.
\begin{lemma}\label{Speh Jacquet lem zelevinsky}
 Let $n=lab$, and let $t$ be a positive integer such that $0<t<n$. There is a natural isomorphism as representations of $G_{n-t}\times G_{t}$:
 \begin{align*}
      &r_{n-t,t}(U(\chi;a,b))
      =\bigoplus_{\substack{c_1>\cdots>c_a \\ \sum_{i=1}^a l(y_i'-c_i)=t}}  Z([x_1',c_1]_{\chi}+ \cdots+ [x_a',c_a]_{\chi})\\
      &\boxtimes Z([c_1+1,y_1']_{\chi}+ \cdots + [c_a+1,y_a']_{\chi}),
 \end{align*}
where we assume $Z([x,y]_{\chi})=0$ if $y<x-1$ and $Z([x,x-1]_{\chi})$ to be the trivial representation of $G_0$.
\end{lemma}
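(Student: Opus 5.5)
The plan is to deduce Lemma~\ref{Speh Jacquet lem zelevinsky} from Lemma~\ref{Speh Jacquet lem} by applying the Aubert--Zelevinsky involution, rather than recomputing the Jacquet module directly.

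First, observe that the Zelevinsky multisegment $\Sigma_1+\cdots+\Sigma_a$ is obtained from the Langlands multisegment of $U(\chi;a,b)$ by interchanging the roles of $a$ and $b$. Indeed, $\Sigma_i=[\frac{a-b}{2}+1-i,\frac{a+b}{2}-i]_{\chi}$ has the same shape as $\Delta_i$ with $a\leftrightarrow b$. The Zelevinsky description \eqref{Speh Zelevinsky} itself is the M\oe glin--Waldspurger output, which we take as given. Equivalently, the Aubert dual of $U(\chi;a,b)$ is $U(\chi;b,a)$.

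Second, use that $\mathscr{A}$ satisfies the following: it is an exact equivalence, it agrees with the Aubert--Zelevinsky dual on Grothendieck groups, and it sends $LQ(\St(\Delta_1)\times\cdots)$ to $Z(\Delta_1+\cdots)$. Combined with the standard compatibility of the Aubert involution with Jacquet modules, this gives
\[
r_{n-t,t}\circ \mathscr{A}=(\mathscr{A}\boxtimes\mathscr{A})\circ \mathrm{Ad}(w)\circ \ov{r}_{t,n-t}
\]
in the Grothendieck group. Alternatively, one can use Lemma~\ref{dualtiy commutes big derivative} together with Frobenius reciprocity, which converts the opposite Jacquet module to $r$ up to a twist by the long Weyl element.

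Hence $r_{n-t,t}(U(\chi;a,b))$ equals the $\mathscr{A}$-image of the opposite Jacquet module $\ov{r}_{t,n-t}(U(\chi;b,a))$, with the factors swapped. Next, express $\ov{r}$ via $r$ through contragredients: $\ov r_{t,n-t}(\pi)\simeq (r_{t,n-t}(\pi^\vee))^\vee$ after conjugation. Then apply Lemma~\ref{Speh Jacquet lem} to $U(\chi^\vee;b,a)$, and finally apply Proposition~\ref{Zele class_properties}(2) and the duality $LQ\leftrightarrow Z$. Tracking the segments, the truncations $[x_i,c_i]$ and $[c_i+1,y_i]$ for the $b$-ladder of $U(\chi;b,a)$ become $[x_i',c_i]_\chi$ and $[c_i+1,y_i']_\chi$ for $i=1,\dots,a$. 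The degree condition $\sum l(y'_i-c_i)=t$ records that the second factor lives on $G_t$.

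Since the Jacquet module of a ladder lies in a category where the summands have distinct cuspidal supports or appear with multiplicity one, the semisimplification statement upgrades to the claimed direct sum decomposition. This holds because Lemma~\ref{Speh Jacquet lem} already gives a direct sum, and $\mathscr{A}$ is an equivalence of categories commuting with the decomposition up to the above identification.

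The main obstacle I expect is the bookkeeping of signs, shifts and the order of factors. One must check that the chain of dualities (contragredient, opposite parabolic, $\mathscr{A}$) sends the index set $c_1>\cdots>c_b$ for $U(\chi;b,a)$ exactly onto $c_1>\cdots>c_a$ with endpoints $x_i',y_i'$, rather than onto a reflected version. If this is messy, the fallback is to apply the Kret--Lapid formula directly, since Speh representations are ladders in the Zelevinsky picture as well, and read off the answer.
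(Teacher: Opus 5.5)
Your route differs from the paper's. The paper gives no separate argument: it presents the statement as the Zelevinsky-side reformulation of Lemma~\ref{Speh Jacquet lem} via the M\oe glin--Waldspurger algorithm. That is, it rewrites $U(\chi;a,b)=Z(\Sigma_1+\cdots+\Sigma_a)$ and converts the ladder factors from Langlands to Zelevinsky data, which implicitly requires re-indexing $b$-tuples $(c_i)$ by $a$-tuples. Transporting the Langlands-side formula for $U(\chi;b,a)$ through $\mathscr{A}$ is a cleaner idea. The segments $\Sigma_i$ and the index set $c_1>\cdots>c_a$ appear directly, and the only M\oe glin--Waldspurger input is $\mathscr{A}(LQ(\mathfrak{m}))\cong Z(\mathfrak{m})$. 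Together with \eqref{multi-seg speh eq} and \eqref{Speh Zelevinsky}, this gives $\mathscr{A}(U(\chi;b,a))\cong U(\chi;a,b)$.

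However, the one non-formal step, the compatibility of $\mathscr{A}$ with Jacquet functors, is misstated, and in exactly the way you feared. Since $w\ov{P}_{t,n-t}w^{-1}=P_{n-t,t}$, one has $\mathrm{Ad}(w)\circ\ov{r}_{t,n-t}\cong r_{n-t,t}$. So your identity asserts that $\mathscr{A}$ commutes with $r_{n-t,t}$. This already fails for $\mathrm{D}=F$, $n=2$: $r_{1,1}(\St_2)=\nu^{1/2}\boxtimes\nu^{-1/2}$, whereas $r_{1,1}(\mathbf{1}_2)=\nu^{-1/2}\boxtimes\nu^{1/2}$. Follow your chain literally: this identity, then Casselman's duality $\ov{r}_{t,n-t}(\pi)\cong r_{t,n-t}(\pi^\vee)^\vee$ (correct, and it needs no conjugation), then Lemma~\ref{Speh Jacquet lem} for $U(\chi^\vee;b,a)$, contragredients, and $\Sigma_i^\vee=\Sigma_{a+1-i}$. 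You arrive at $\bigoplus Z([c_1+1,y_1']_\chi+\cdots+[c_a+1,y_a']_\chi)\boxtimes Z([x_1',c_1]_\chi+\cdots+[x_a',c_a]_\chi)$ with $\sum_i l(y_i'-c_i)=n-t$. This is the reflected formula; for $U(\mathbf{1}_1;1,2)=\mathbf{1}_2$ it predicts $\nu^{1/2}\boxtimes\nu^{-1/2}$.

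The correct identity pairs $r$ with the opposite parabolic having the \emph{same} Levi subgroup, as in Lemma~\ref{dualtiy commutes big derivative}:
\[
r_{n-t,t}\circ\mathscr{A}\cong(\mathscr{A}\boxtimes\mathscr{A})\circ\ov{r}_{n-t,t}\cong(\mathscr{A}\boxtimes\mathscr{A})\circ\mathrm{Ad}(w)\circ r_{t,n-t}.
\]
With this, no contragredients are needed. Apply Lemma~\ref{Speh Jacquet lem} to $r_{t,n-t}(U(\chi;b,a))$, whose Langlands segments are $\Sigma_1,\dots,\Sigma_a$, and swap the factors. This gives $\bigoplus LQ(\St([x_1',c_1]_\chi)\times\cdots\times\St([x_a',c_a]_\chi))\boxtimes LQ(\St([c_1+1,y_1']_\chi)\times\cdots\times\St([c_a+1,y_a']_\chi))$ over $c_1>\cdots>c_a$ with $\sum_i l(c_i-x_i'+1)=n-t$, equivalently $\sum_i l(y_i'-c_i)=t$. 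Applying $\mathscr{A}$ factorwise yields exactly the statement. Your contragredient detour also works once the identity is corrected, after re-indexing $c_j\mapsto -c_{a+1-j}-1$.

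Finally, a Grothendieck-group identity plus multiplicity one does not by itself give a direct sum. You need the identity as an isomorphism of functors, and it does hold. Taking adjoints in Bernstein's $\mathscr{D}\circ i_P\cong i_{\ov{P}}\circ\mathscr{D}_M$ (the input to Lemma~\ref{duality commutes induction lem}) gives $r_P\circ\mathscr{D}\cong\mathscr{D}_M\circ r_P$. Casselman's duality then gives $r_P\circ\mathscr{A}\cong\mathscr{A}_M\circ r_{\ov{P}}$, so the direct sum in Lemma~\ref{Speh Jacquet lem} transfers.
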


\subsection{Godement-Jacquet zeta integral}
In this section, we recall the integral representation of the standard $L$-function \`a la Godement-Jacquet. 

Let $(n,m)$ be a pair of integers such that $m\geq n$. Let $\pi\in \Irr(G_n)$. Given $\phi\in S_{n,m}$, $v\in \pi$, $\lambda\in \pi^{\vee}$, and $s\in \BC$, we can associate a zeta integral
\[
Z(\phi,\lambda,v;s):=\int_{G_n} \phi( g\mid 0) \lambda(g\cdot v) |\Nrd(g)|_F^{s+\frac{dn-1}{2}} \, dg.
\]
It has the following basic properties.
\begin{proposition}\label{GJ proposition}
    \begin{enumerate}
        \item The zeta integral $Z(\phi,\lambda,v;s)$ is absolutely convergent when $\mathrm{Re} s$ is sufficiently large, and it admits a meromorphic continuation to the whole complex plane, which is also denoted by $Z(\phi,\lambda,v;s)$.  
        \item 
        \[
         Z^{\sharp}(\phi,\lambda,v;s):=\frac{Z(\phi,\lambda,v;s)}{L(s,\pi)}
        \]
        is a holomorphic function.
        \item It induces a non-zero $G_n\times \ov{P_n'}$-equivariant map $Z^{\sharp}(\cdot,\cdot,\cdot;s): \omega_{n,m} \lra \pi\boxtimes \pi^{\vee}\otimes \xi_s$, where 
       the action of $ \ov{P_n'}$ factors through $\ov{P_n'}\lra G_n'\times G_{m-n}'$ and
       \begin{equation}
           \xi_s=\begin{cases}
               |\Nrd|^{s+\frac{d(n-m)-1}{2}}  & \text{ on } G_n\\
               |\Nrd|^{-s+\frac{1}{2}}    & \text{ on } G_n'\\
               \nu^{\frac{n}{2}}       & \text{ on } G_{m-n}'.
           \end{cases}
       \end{equation}
       Here, on the right hand side, $G_n$ acts on $\pi$ and $G_n'$ acts on $\pi^{\vee}$.
    \end{enumerate}
\end{proposition}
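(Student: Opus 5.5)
The plan is to reduce parts (1) and (2) to the classical Godement--Jacquet theory for $\GL_n(\mathrm{D})$, and then verify part (3) by a change of variables.

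\emph{Parts (1) and (2).} For $\phi\in S_{n,m}$, the function $\phi_0(y):=\phi(y\mid 0)$, $y\in M_{n,n}(\mathrm{D})$, is again Bruhat--Schwartz. So $Z(\phi,\lambda,v;s)$ is exactly the Godement--Jacquet zeta integral
\[
\int_{G_n}\phi_0(g)\,\lambda(g\cdot v)\,|\Nrd(g)|^{s+\frac{dn-1}{2}}\,dg
\]
attached to $\phi_0$ and the matrix coefficient $g\mapsto\lambda(g v)$. Godement--Jacquet treat an arbitrary central simple algebra, so the following facts hold:
\begin{itemize}
\item the integral converges absolutely for $\mathrm{Re}\, s\gg0$;
\item it lies in $\BC(q^{-s})$;
\item the span of all such integrals is a fractional ideal of $\BC[q^{\pm s}]$ generated by $L(s,\pi)$.
\end{itemize}
The map $\phi\mapsto\phi_0$ from $S_{n,m}$ to $\CS(M_{n,n}(\mathrm{D}))$ is surjective (take products with a bump function in the last $m-n$ columns). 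Hence the family of integrals is the same as in the classical theory, and (1), (2) follow. In particular $Z^\sharp$ is a polynomial in $q^{\pm s}$.

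\emph{Part (3), equivariance.} Write an element of $\ov{P_n'}$ as $p=\begin{pmatrix}a&0\\ c&b\end{pmatrix}$ with $a\in G_n'$ and $b\in G'_{m-n}$. Then $(g\mid 0)\,p=(ga\mid 0)$, so the unipotent part acts trivially on $\phi_0$ and $b$ acts only through the scalar $\nu'(p)^{n/2}$; this accounts for $\nu^{n/2}$ on $G'_{m-n}$. For $h\in G_n$ we have
\[
(\omega(h,p)\phi)_0(g)=\nu(h)^{-m/2}\,\nu'(a)^{n/2}\,\nu'(b)^{n/2}\,\phi_0(h^{-1}ga).
\]
Substituting $g\mapsto hga^{-1}$ in the integral gives
\[
Z(\omega(h,p)\phi,\lambda,v;s)=\nu(h)^{-m/2}\,\nu'(a)^{n/2}\,\nu'(b)^{n/2}\,|\Nrd(h)|^{s+\frac{dn-1}{2}}\,|\Nrd(a)|^{-s-\frac{dn-1}{2}}\;Z\bigl(\phi,\pi^\vee(h^{-1})\lambda,\pi(a^{-1})v;s\bigr).
\]
Collecting exponents (using $\nu=|\Nrd|^d$) should give $|\Nrd|^{s+\frac{d(n-m)-1}{2}}$ on $G_n$ and $|\Nrd|^{-s+\frac12}$ on $G'_n$, up to the normalization convention for which factor is called $\pi$ and which $\pi^\vee$. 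Dividing by $L(s,\pi)$ does not affect equivariance.

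\emph{Part (3), the map and non-vanishing.} For fixed $s$, $Z^\sharp$ is an invariant trilinear form on $\omega\otimes\pi^\vee\otimes\pi$ (with the twists above). It therefore gives a map from $\omega$ to $(\pi^\vee\otimes\pi)^*$. Because $\omega$ is smooth, the image consists of smooth vectors, and by admissibility the smooth part of $(\pi^\vee\otimes\pi)^*$ is $\pi\boxtimes\pi^\vee$. Non-vanishing at every $s$ follows from (2): some finite combination of zeta integrals equals $L(s,\pi)$, so $Z^\sharp$ takes the value $1$ on a suitable combination. Concretely, one can take $\phi_0$ supported in a small neighbourhood of the identity, which makes the integral a nonzero constant.

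\emph{Main obstacle.} I expect the main difficulty to be bookkeeping rather than new ideas. The sign and half-integral exponents must match exactly the stated $\xi_s$, keeping track of:
\begin{itemize}
\item the normalization $\nu^{-m/2}$, $\nu'^{n/2}$ in $\omega_{n,m}$;
\item the modular behaviour of $dg$ under the substitution;
\item whether $G'_n$ acts on $\pi^\vee$ via $a$ or via $a^{-1}$.
\end{itemize}
If the exponents disagree, I would first re-check the convention $\sigma_{n,m}(g,g')\Phi(x)=\Phi(g^{-1}xg')$ against the definition of $\xi_s$.
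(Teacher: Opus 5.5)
Your proposal is correct and follows the paper's route. The paper's entire proof is the observation that $\phi\mapsto\phi(\cdot\mid 0)$ surjects onto $\CS(M_{n,n}(\mathrm{D}))$, together with a citation of Godement--Jacquet.

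Your change of variables does give exactly $\xi_s$, with $h$ acting on the $\pi$ factor through the $\lambda$-slot and $a$ acting on the $\pi^{\vee}$ factor through the $v$-slot, as in the statement:
\begin{itemize}
\item on $G_n$: $\nu(h)^{-m/2}|\Nrd(h)|^{s+\frac{dn-1}{2}}=|\Nrd(h)|^{s+\frac{d(n-m)-1}{2}}$;
\item on $G_n'$: $\nu'(a)^{n/2}|\Nrd(a)|^{-s-\frac{dn-1}{2}}=|\Nrd(a)|^{-s+\frac12}$.
\end{itemize}

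One correction: drop the ``concrete'' witness for non-vanishing. If $\phi_0$ is supported near the identity, then $Z\equiv c\neq 0$, so $Z^{\sharp}=c/L(s,\pi)$. This vanishes exactly at the poles of $L(s,\pi)$. That is precisely the case ($s=s_0$, $\pi$ exceptional) where the paper later uses part (3) to get surjectivity of $Z^{\sharp}$. So non-vanishing at every $s$ genuinely needs your first argument: $L(s,\pi)$ itself lies in the span of the zeta integrals, because that span is the full fractional ideal $L(s,\pi)\BC[q^{s},q^{-s}]$.
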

\begin{proof}
    Since 
    \[
    S_{n,m}\lra S_{n,n} \quad \phi\longmapsto \left(X\mapsto \phi(X\mid 0) \right)
    \]
    is surjective, the proposition follows from~\cite{GJ72}.
\end{proof}
By (3) of the Proposition and Frobenius reciprocity, we obtain a non-zero element in
\[
 Z^{\sharp}\in \Hom_{G_n\times G_m'}(\omega_{n,m}\otimes \pi^{\vee}, \mathbf{1}_{m-n}\times \pi^{\vee})
\]
when specializing to $s=s_0:=\frac{1+d(m-n)}{2}$. 

Let $\widetilde{R}_n$ be the subrepresentation of $\omega_{n,m}$ consisting of functions supported in the full rank locus of $M_{n,m}(\mathrm{D})$.
\begin{lemma}\label{GJ integral on boundary lem}
    If $L(s,\pi)$ is not holomorphic at $s=s_0$, then $Z^{\sharp}$ restricted to $\widetilde{R}_n\otimes \pi^{\vee}$ is zero.
\end{lemma}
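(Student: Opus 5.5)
The plan is to show that on the open stratum the zeta integral is a finite integral. Fix $\phi\in\widetilde R_n$ and $v\in\pi$, $\lambda\in\pi^{\vee}$. Then $Z(\phi,\lambda,v;s)$ is a Laurent polynomial in $q^{-s}$, hence entire, so dividing by $L(s,\pi)$ and evaluating at the pole $s_0$ gives zero.

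\textbf{Step 1 (support).} The support of $\phi$ is a compact subset $K$ of the full-rank locus of $M_{n,m}(\mathrm{D})$. The integrand involves only $g\mapsto \phi(g\mid 0)$, so the relevant set is
\[
C:=\{g\in G_n : (g\mid 0)\in K\}.
\]
This set is closed in $M_{n,n}(\mathrm{D})$ and bounded, since it is the preimage of a compact set under the closed embedding $X\mapsto (X\mid 0)$. I will argue that it stays away from the singular locus. If $g_k\in C$ and $g_k\to X$ in $M_{n,n}(\mathrm{D})$, then $(X\mid 0)\in K$ because $K$ is compact. Every point of $K$ has full rank $n$, so $X$ is invertible. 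Hence $C$ is a compact subset of $G_n$.

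\textbf{Step 2 (entireness).} On the compact set $C$, $|\Nrd(g)|_F$ takes only finitely many values, say $q^{-k}$ for $k$ in a finite set. The function $g\mapsto \phi(g\mid 0)\lambda(g v)$ is locally constant and compactly supported in $G_n$. So the integral is a finite sum
\[
Z(\phi,\lambda,v;s)=\sum_k c_k\, q^{-k(s+\frac{dn-1}{2})},
\]
with coefficients $c_k$ independent of $s$. This is an entire function of $s$, and it coincides with the meromorphic continuation of Proposition~\ref{GJ proposition}(1), because the integral converges absolutely for every $s$.

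\textbf{Step 3 (vanishing at $s_0$).} By hypothesis $L(s,\pi)$ has a pole at $s_0$. Since $L(s,\pi)$ is the reciprocal of a polynomial in $q^{-s}$, the function $1/L(s,\pi)$ vanishes at $s_0$. Therefore
\[
Z^{\sharp}(\phi,\lambda,v;s_0)=Z(\phi,\lambda,v;s_0)\cdot \frac{1}{L(s,\pi)}\Big|_{s=s_0}=0.
\]
This gives the vanishing of the $G_n\times\ov{P'_n}$-map on $\widetilde R_n\otimes\pi^{\vee}$.

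\textbf{Step 4 (passing to the induced map).} It remains to deduce vanishing of the Frobenius-reciprocity image in $\mathbf{1}_{m-n}\times\pi^{\vee}$. The induced map sends $\phi\otimes\lambda$ to the function $g'\mapsto Z^{\sharp}(\omega(g')\phi,\lambda,\cdot\,;s_0)$. The subspace $\widetilde R_n$ is $G_n\times G'_m$-stable, because right translation by $G'_m$ preserves the full-rank locus. So $\omega(g')\phi\in\widetilde R_n$, and Step 3 applies to it, making the function zero.

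The main obstacle is the properness claim in Step 1: the support must avoid the boundary where $\Nrd(g)\to 0$. This is exactly where the full-rank condition on the support of $\phi$ is used.
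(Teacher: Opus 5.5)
Your proposal is correct and follows the paper's argument. On $\widetilde R_n$ the zeta integral converges absolutely for every $s$; you make this precise by showing that the support of $g\mapsto\phi(g\mid 0)$ is compact in $G_n$, so the integral is a Laurent polynomial in $q^{-s}$, hence coincides with its meromorphic continuation and is killed by dividing by $L(s,\pi)$ at its pole. Your Step 4, which carries the vanishing through Frobenius reciprocity using the $G'_m$-stability of $\widetilde R_n$, fills in a detail the paper leaves implicit.
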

\begin{proof}
    We observe that 
    \[
    \int_{G_n} \phi( g\mid 0) \lambda(g\cdot v) |\Nrd(g)|_F^{s+\frac{dn-1}{2}} \, dg
    \]
    is absolutely convergent for any $v\in\pi,\lambda\in\pi^{\vee}$ and $s\in\BC$ when $\phi\in \widetilde{R}_n$. Thus, it equals $Z(\phi,\lambda,v;s)$ for any $s\in \BC$. When $L(s,\pi)$ is not holomorphic at $s=s_0$, we have $Z^{\sharp}(\phi,\lambda,v;s_0)=0$ for any $v\in\pi$ and $\lambda\in\pi^{\vee}$. This completes the proof.
\end{proof}

\section{Rank filtration and Kudla's filtration}
\subsection{Rank filtration and off the boundary condition}
In this subsection, we recall the rank filtration and the off-boundary condition in~\cite{Ming08}.

There is a natural decreasing filtration on $(\sigma_{n,m},S_{n,m})$ induced by ranks of matrices. Let $R_k$ be the subrepresentation of $S_{n,m}$ consisting of functions supported on matrices of rank greater or equal to $k$. Then, Since the $G_n\times G_m'$ action on $M_{n,m}(\mathrm{D})$ preserves the rank, we obtain a $G_n\times G_m'$-invariant decreasing filtration:
\[
0=R_{e+1}\subset R_e \subset \cdots\subset R_0=\sigma_{n,m},
\]
where $e=\min{\{n,m\}}$. Moreover, the graded piece is described by the following lemma.
\begin{lemma}[{\cite[section 2]{Ming08}}]\label{rank filtration}
    As representations of $G_n\times G_m'$, 
    \[
    R_k/R_{k+1}\simeq \ind_{P_k\times \ov{P'_k}}^{G_n\times G_m'}(\rho_k \otimes \eta_k),
    \]
    where $\eta_k$ is a character of the standard Levi subgroup of $P_k\times \ov{P'_k}$, given by
    \begin{equation}
      \eta_k=  \begin{cases}
             \nu^{\frac{k-n}{2}} & \text{ on } G_k\\
             \nu^{\frac{k}{2}} & \text{ on } G_{n-k}\\
             \nu'^{\frac{m-k}{2}}& \text{ on } G_k' \\
             \nu'^{\frac{-k}{2}} & \text{ on } G_{m-k}'
        \end{cases}.
    \end{equation}
\end{lemma}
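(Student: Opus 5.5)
The plan is to identify $R_k/R_{k+1}$ geometrically as compactly supported functions on the $G_n\times G_m'$-orbit $X_k\subset M_{n,m}(\mathrm{D})$ of rank-$k$ matrices. Since $R_k$ consists of functions supported on the closed set of rank $\ge k$ matrices and $R_{k+1}$ of those supported on rank $\ge k+1$, the quotient $R_k/R_{k+1}$ is $\CS(X_k)$. This uses the standard exact sequence $0\to \CS(U)\to\CS(Y)\to\CS(Z)\to 0$ for $U$ open in $Y$ and $Z=Y\setminus U$. Here we take $Y=\{\mathrm{rank}\ge k\}$, which is locally closed, and $U=\{\mathrm{rank}\ge k+1\}$, which is open in $Y$. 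The group acts by $\sigma_{n,m}$, i.e. $(g,g')\Phi(x)=\Phi(g^{-1}xg')$.

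Next I fix the base point
\[
x_k=\begin{pmatrix} 1_k & 0\\ 0&0\end{pmatrix}\in X_k .
\]
I compute its stabilizer $H$ in $G_n\times G_m'$ and show that $H\subset P_k\times \ov{P'_k}$. Writing $g=\begin{pmatrix}a&b\\c&d\end{pmatrix}$ and $g'=\begin{pmatrix}a'&b'\\c'&d'\end{pmatrix}$, the condition $g^{-1}x_kg'=x_k$, i.e. $x_kg'=gx_k$, forces $c=0$, $b'=0$ and $a=a'$. So $H$ consists of pairs $(p,p')\in P_k\times\ov{P'_k}$ whose $G_k$-components agree, while the $G_{n-k}$ and $G'_{m-k}$ components and both unipotent parts are free.

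I then use induction in stages:
\[
\CS(X_k)=c\text{-}\ind_H^{G_n\times G_m'}\mathbf 1=\ind_{P_k\times\ov{P'_k}}^{G_n\times G_m'}\Bigl(c\text{-}\ind_H^{P_k\times\ov{P'_k}}\mathbf 1\Bigr)
\]
(unnormalized, with modulus corrections to be tracked). Compact and ordinary induction agree here because the flag quotient is compact. The inner induction is trivial on the unipotent radicals and on $G_{n-k}\times G'_{m-k}$. On $G_k\times G_k'$ it is $c\text{-}\ind_{\Delta G_k}^{G_k\times G_k'}\mathbf 1\simeq \CS(G_k)=\rho_k$, with the action $(\rho_k(g,g')\Phi)(h)=\Phi(g^{-1}hg')$, matching \eqref{rho def eq}. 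This is the unnormalized statement.

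The main obstacle is the bookkeeping of characters. First, $\sigma_{n,m}$ is an unnormalized geometric action, whereas $\ind$ in the statement is normalized induction. Second, one must account for the modular characters $\delta_{P_k}^{1/2}$, $\delta_{\ov{P'_k}}^{1/2}$ and the modular character relating Haar measures on $H$ and $P_k\times\ov{P'_k}$. I would compute these modular characters on each Levi factor:
\begin{itemize}
\item $\delta_{P_k}$ is $\nu^{n-k}$ on $G_k$ and $\nu^{-k}$ on $G_{n-k}$;
\item $\delta_{\ov{P'_k}}$ is $\nu'^{-(m-k)}$ on $G'_k$ and $\nu'^{k}$ on $G'_{m-k}$;
\item the Jacobian of the unipotent-radical part of $H\backslash(P_k\times\ov{P'_k})$ contributes the remaining factors.
\end{itemize}
Normalizing then produces the twist $\delta^{-1/2}$ combined with the measure character, which should give the stated $\eta_k$. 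I would verify one component, say $G_{n-k}$, directly: $\nu^{k/2}=\delta_{P_k}^{-1/2}$ there, and the other factors follow the same pattern. Since this is exactly \cite[section 2]{Ming08}, I would ultimately cross-check the normalization against that reference.
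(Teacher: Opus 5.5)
Your approach (orbit of $x_k$, stabilizer, induction in stages) is the standard argument behind \cite[section 2]{Ming08}, which the paper cites without proof. The geometric part is correct: $R_k/R_{k+1}\simeq\CS(X_k)$, the stabilizer $H$ is as you describe, the inner induction is $\rho_k$ with exactly the action \eqref{rho def eq}, and your formulas for $\delta_{P_k}$ and $\delta_{\ov{P'_k}}$ are right.

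\textbf{Where the proposal goes wrong.} The final step is left open and described incorrectly: there are no ``remaining factors''.
\begin{itemize}
\item $H$ contains both unipotent radicals and $G_{n-k}\times G'_{m-k}$. So $H\backslash(P_k\times\ov{P'_k})$ is just $\Delta G_k\backslash(G_k\times G'_k)\simeq G_k$. It has no unipotent part, hence no Jacobian.
\item One checks $\delta_{P_k\times\ov{P'_k}}|_H=\delta_H$. Both are $\nu^{n-m}$ on $\Delta G_k$, $\nu^{-k}$ on $G_{n-k}$ and $\nu'^{k}$ on $G'_{m-k}$. So no measure character enters the induction in stages either, whatever normalization you use at the intermediate stage.
\end{itemize}

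\textbf{How to finish.} Work with unnormalized inductions of functions throughout, with $\mathbf{1}$ trivial on $G_{n-k}\times G'_{m-k}$. Then
\[
R_k/R_{k+1}\simeq \mathrm{Ind}_{P_k\times\ov{P'_k}}^{G_n\times G'_m}\bigl(\rho_k\otimes\mathbf{1}\bigr)\simeq \ind_{P_k\times\ov{P'_k}}^{G_n\times G'_m}\bigl(\rho_k\otimes\delta_{P_k\times\ov{P'_k}}^{-1/2}\bigr).
\]
Your two modular-character bullets already give
\[
\delta_{P_k\times\ov{P'_k}}^{-1/2}=\nu^{\frac{k-n}{2}},\ \nu^{\frac{k}{2}},\ \nu'^{\frac{m-k}{2}},\ \nu'^{-\frac{k}{2}}
\]
on $G_k,G_{n-k},G'_k,G'_{m-k}$ respectively. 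Thus $\eta_k=\delta_{P_k\times\ov{P'_k}}^{-1/2}$ on all four blocks, not only on $G_{n-k}$. If you had multiplied in the nontrivial ``Jacobian'' character that your third bullet anticipates, you would have obtained the wrong $\eta_k$. With that bullet deleted and the two bullets combined as above, the proof is complete.
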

\begin{definition}[Definition 2.1,~\cite{Ming08}]
    We say $\pi\in \Irr(G_n)$ is off the boundary if 
    \[
    \Hom_{G_n}(R_k/R_{k+1},\pi)=0
    \]
    for any $0\leq k<e$.
\end{definition}

If $\pi$ is off the boundary, then $\Hom_{G_n}(\sigma_{n,m}/R_e,\pi)=0$. Applying the left exact functor $\Hom_{G_n}(\cdot,\pi)_{\sm}$ to the short exact sequence
\[
 0\lra R_e \lra \sigma_{n,m}\lra \sigma_{n,m}/R_e \lra 0,
\]
we obtain the exact sequence
\[
0\lra \Hom_{G_n}(\sigma_{n,m}/R_e,\pi)_{\sm}=0\lra \Hom_{G_n}(\sigma_{n,m},\pi)_{\sm}\lra \Hom_{G_n}(R_e,\pi)_{\sm}.
\]

\begin{lemma}\label{lift off boundary lemma}
    Assume $m\geq n$, and assume $\pi\in \Irr(G_n)$ is off the boundary. Then 
    \[
    \ov{\Theta}_{n,m}(\pi)\hookrightarrow  \nu_{m-n}'^{\frac{n}{2}} \times (\pi\cdot \nu_n'^{\frac{n-m}{2}}).
    \]
\end{lemma}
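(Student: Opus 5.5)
We start from the exact sequence displayed just before the statement. Because $\pi$ is off the boundary, restriction gives an injection
\[
\ov{\Theta}_{n,m}(\pi)=\Hom_{G_n}(\sigma_{n,m},\pi)_{\sm}\hookrightarrow \Hom_{G_n}(R_e,\pi)_{\sm}.
\]
Since $m\ge n$ we have $e=n$, so it suffices to identify $\Hom_{G_n}(R_n,\pi)_{\sm}$ with a subspace of $\nu_{m-n}'^{\frac{n}{2}}\times(\pi\cdot\nu_n'^{\frac{n-m}{2}})$. The proof is therefore a computation of the top graded piece of the rank filtration.

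\textbf{Step 1 (the open orbit).} By Lemma~\ref{rank filtration} with $k=n$, the parabolic on the $G_n$ side is $P_n=G_n$. Hence
\[
R_n\simeq \ind_{G_n\times \ov{P'_n}}^{G_n\times G_m'}(\rho_n\otimes\eta_n),
\]
with $\eta_n$ trivial on $G_n$, equal to $\nu'^{\frac{m-n}{2}}$ on $G_n'$, and equal to $\nu'^{-\frac{n}{2}}$ on $G_{m-n}'$. Since $G_n$ is untouched by the induction, $\Hom_{G_n}(\cdot,\pi)$ commutes with inducing in the $G_m'$ variable. This reduces us to computing $\Hom_{G_n}(\rho_n,\pi)_{\sm}$ as a $G_n'$-module.

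\textbf{Step 2 (the regular representation).} Here $\rho_n=\CS(G_n)$ carries the action $(g,g')\Phi(h)=\Phi(g^{-1}hg')$. The standard computation gives $\Hom_{G_n}(\CS(G_n),\pi)_{\sm}\simeq\pi$: a map is determined by a smooth vector via $\Phi\mapsto\int\Phi(h)\pi(h)v\,dh$, and $G_n'$ acts through $\pi$ itself, possibly twisted by an inverse depending on the conventions.

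\textbf{Step 3 (commuting Hom past induction).} For a representation of $G_n\times G_m'$ that is induced in the second variable, I would write the induced space as a space of $\rho_n$-valued functions on $\ov{P'_n}\backslash G_m'$, a compact space. Because $\ov{P'_n}\backslash G_m'$ is compact, the compact and full inductions coincide. The smooth $G_n$-Hom into $\pi$ then becomes the smooth induction of $\Hom_{G_n}(\rho_n\otimes\eta_n,\pi)_{\sm}$, this time with the contragredient-type normalization. Concretely, one can use the Frobenius-type isomorphism $\Hom_{G_n}(\ind(\tau),\pi)_{\sm}\simeq \ind(\Hom_{G_n}(\tau,\pi)_{\sm}\otimes\delta)$, with $\delta$ a modular character. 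Alternatively, one can realize $\Hom(\cdot,\pi)_{\sm}$ as $(\pi^\vee\otimes\cdot)_{G_n}^{\ \vee}$ and use that coinvariants commute with induction and that the contragredient of an induced representation is induced.

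\textbf{Step 4 (bookkeeping).} Inverting the characters $\eta_n$, these steps yield an inducing datum $\nu'^{\frac{n}{2}}$ on $G'_{m-n}$ and $\pi\cdot\nu'^{\frac{n-m}{2}}$ on $G'_n$, induced from $\ov{P'_n}$. Using the identity $\ind_{\ov P_{n,m-n}}(\sigma_1\boxtimes\sigma_2)\simeq\sigma_2\times\sigma_1$, this becomes exactly $\nu_{m-n}'^{\frac n2}\times(\pi\cdot\nu_n'^{\frac{n-m}{2}})$. This bookkeeping step is where I expect the main difficulty: the order of the factors, the signs of the exponents, and whether $\pi$ or $\pi^\vee$ appears must all be tracked carefully.
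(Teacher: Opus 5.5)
Your proposal is correct and follows essentially the same route as the paper: use the off-boundary exact sequence to embed $\ov{\Theta}_{n,m}(\pi)$ into $\Hom_{G_n}(R_n,\pi)_{\sm}$, identify $(\rho_n\otimes\pi^{\vee})_{G_n}\simeq\pi^{\vee}$ as a $G_n'$-module, and take the contragredient of $\ind_{\ov{P'_n}}^{G_m'}(\pi^{\vee}\nu'^{\frac{m-n}{2}}\boxtimes\nu'^{-\frac{n}{2}})$, which is exactly your second alternative in Step 3. Your bookkeeping in Step 4 arrives at the stated target, and because the induction is normalized, no extra modular character $\delta$ is needed in the Frobenius-type isomorphism.
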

\begin{proof}
    Since $\pi$ is off the boundary, 
    \[
    \ov{\Theta}_{n,m}(\pi)\hookrightarrow \Hom_{G_n}(R_n, \pi)_{\sm}\simeq \Hom_{G_n}\left( \ind_{G_n\times \ov{P'_n}}^{G_n\times G_m'}(\rho_n \otimes \eta_n),\pi\right)_{\sm}.
    \]
    As representations of $G_n'$, there is an isomorphism $(\rho_n\otimes \pi^{\vee})_{G_n}\simeq \pi^{\vee}$. Therefore,
    \[
    \ov{\Theta}_{n,m}(\pi)\hookrightarrow \ind_{\ov{P'_n}}^{G_n'}( \pi^{\vee}\cdot \nu'^{\frac{m-n}{2}}\boxtimes \nu'^{\frac{-n}{2}})^{\vee}\simeq \ind_{\ov{P'_n}}^{G_n'}( \pi\cdot \nu'^{\frac{n-m}{2}}\boxtimes \nu'^{\frac{n}{2}}).
    \]
    This completes the proof.
\end{proof}

\begin{lemma}\label{down off boundary lem}
    Assume $m<n$, and assume $\pi\in \Irr(G_n)$ is off the boundary. Then
    \[
   \ov{\Theta}_{n,m}(\pi)\hookrightarrow \BW_{\nu_{n-m}^{\frac{m}{2}}}(\pi)\cdot \nu'^{\frac{n-m}{2}}.
    \]
\end{lemma}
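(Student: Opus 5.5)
Because $\pi$ is off the boundary, the argument before Lemma~\ref{lift off boundary lemma} gives
\[
\ov{\Theta}_{n,m}(\pi)\hookrightarrow \Hom_{G_n}(R_e,\pi)_{\sm}.
\]
When $m<n$ we have $e=m$, so the task is to compute $\Hom_{G_n}(R_m,\pi)_{\sm}$. By Lemma~\ref{rank filtration} with $k=m$,
\[
R_m\simeq \ind_{P_m\times G_m'}^{G_n\times G_m'}(\rho_m\otimes\eta_m).
\]
The factor $G_m'$ is all of the second group, so only $G_n$ is induced. The inducing data are as follows: the Levi $G_m\times G_{n-m}$ acts by $\rho_m$ on $G_m$ (together with $G_m'$) and by the character $\nu^{m/2}$ on $G_{n-m}$; the twists are $\nu^{(m-n)/2}$ on $G_m$ and $\nu'^{0}$ on $G_m'$.

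We write $\pi^{\vee}$-coinvariants as before, $\Hom_{G_n}(R_m,\pi)_{\sm}\simeq ((R_m\otimes\pi^{\vee})_{G_n})^{\vee}_{\sm}$. Here it is cleaner to use Frobenius reciprocity directly. Since $\ind_{P_m}^{G_n}$ is both left adjoint to $\ov r$ (second adjointness) and right adjoint to $r$, we use the form of the identity (1) that rewrites $\ind_{P_{m,n-m}}$ as $\ind_{\ov P_{n-m,m}}$ with the blocks swapped. Second adjointness then gives
\[
\Hom_{G_n}(R_m,\pi)\simeq \Hom_{G_{n-m}\times G_m}\bigl(\nu^{m/2}\boxtimes(\rho_m\cdot\nu^{(m-n)/2}),\ \ov r_{n-m,m}(\pi)\bigr),
\]
as $G_m'$-modules. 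Restricting first to the $G_{n-m}$-factor turns this into
\[
\Hom_{G_m}\bigl(\rho_m\nu^{(m-n)/2},\ \BD_{\nu^{m/2}_{n-m}}(\pi)\bigr).
\]
Up to the placement of the block, this is the big derivative with respect to the character $\nu_{n-m}^{m/2}$. With the paper's conventions, which put the $G_{n-m}$ block on the left via $\BW$, this becomes $\BW_{\nu^{m/2}_{n-m}}(\pi)$. I will need to check carefully which Jacquet module (with respect to $P$ or $\ov P$) appears after using identity (1), and I expect it to produce $r_{n-m,m}$ and hence $\BW$.

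Next I use the standard fact that for a smooth $G_m$-module $\tau$ of finite length,
\[
\Hom_{G_m}(\rho_m\cdot\chi,\tau)_{\sm}\simeq\tau\cdot\chi^{-1},
\]
with $G_m'$ acting through the right translation in $\rho_m$. This holds because $\rho_m=\CS(G_m)$ is the regular bimodule: homomorphisms out of $\CS(G_m)$ into $\tau$ are given by smooth vectors via $\Phi\mapsto\int\Phi(g)\,g v\,dg$, up to contragredient bookkeeping. Applying it yields $\BW_{\nu^{m/2}_{n-m}}(\pi)$ twisted by a power of $\nu'$, and tracking the characters $\eta_m$ together with the normalization of $\sigma_{n,m}$ should give the exponent $\frac{n-m}{2}$.

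The main obstacle is the bookkeeping rather than any conceptual step. Three things must be tracked consistently: the left versus right Jacquet module, the direction of the identification $\Hom(\rho_m,\tau)_{\sm}\cong\tau$ (whether it gives $\tau$ or $\tau^{\vee}$, and whether the $G_m$-action on $\rho_m$ through $g^{-1}$ forces an inverse), and the exponent of the twist. I would settle these by comparing with the proof of Lemma~\ref{lift off boundary lemma}, which is the $m\ge n$ analogue and was stated in terms of $\pi$ rather than $\pi^\vee$ after dualizing.
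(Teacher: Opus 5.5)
Your proposal follows the paper's argument. Off the boundary gives $\ov{\Theta}_{n,m}(\pi)\hookrightarrow\Hom_{G_n}(R_m,\pi)_{\sm}$, and second adjointness reduces to the Levi. Then $\Hom_{G_m}(\rho_m\nu^{(m-n)/2},\tau)_{\sm}\simeq\tau\cdot\nu'^{(n-m)/2}$, with $\eta_m$ trivial on $G_m'$, gives exactly the stated twist.

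One slip needs fixing. In your intermediate display, second adjointness for $\ov P_{n-m,m}$ after identity (1) gives $r_{n-m,m}(\pi)$, not $\ov r_{n-m,m}(\pi)$; the paper instead obtains $\ov r_{m,n-m}(\pi)\simeq r_{n-m,m}(\pi)$ with the factors swapped. Either way you land on $\BW_{\nu_{n-m}^{m/2}}(\pi)$ directly. $\BD$ and $\BW$ are genuinely different functors, not the same up to block placement, so your final expectation is right but the ``placement of the block'' justification should be dropped.
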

\begin{proof}
    Since $\pi$ is off the boundary,
    \[
    \ov{\Theta}_{n,m}(\pi)\hookrightarrow \Hom_{G_n}(R_n, \pi)_{\sm}\simeq \Hom_{G_n}\left( \ind_{P_m\times \ov{G'_m}}^{G_n\times G_m'}(\rho_m \otimes \eta_m),\pi\right)_{\sm}.
    \]
    As representations of $G_m'$, there is an isomorphism $\Hom_{G_m}(\rho_m,\pi)_{\sm}\simeq \pi$. Therefore,
    \[
     \ov{\Theta}_{n,m}(\pi)\hookrightarrow \Hom_{L_{m,n-m}}(\rho_m\otimes\eta_m, \ov{r}_{m,n-m}(\pi))_{\sm}\simeq \Hom_{G_{n-m}}(\nu^{\frac{m}{2}}, r_{n-m,m}(\pi))\nu'^{\frac{n-m}{2}}.
    \]
    This completes the proof.
\end{proof}

\begin{proposition}\label{off boundary multiplicity one prop}
    Suppose that $\pi \in \Irr(G_n)$ is off the boundary. Then, for any positive integer $m$, the representation $\ov{\Theta}_{n,m}(\pi)$ is socle-irreducible whenever it is non-zero.
\end{proposition}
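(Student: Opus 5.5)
We split into the cases $m\geq n$ and $m<n$ and use the two embeddings already established. Socle irreducibility passes to subrepresentations: if $V\hookrightarrow W$ with $W$ socle irreducible and $V\neq 0$, then $\soc(V)\subset\soc(W)$ is nonzero, hence equal to the irreducible $\soc(W)$. Moreover $[V:\soc(V)]\leq[W:\soc(W)]=1$. So it suffices to prove that the ambient representation in Lemma~\ref{lift off boundary lemma} (resp. Lemma~\ref{down off boundary lem}) is socle irreducible, or at least that its socle is irreducible and occurs with multiplicity one.

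\emph{Case $m\geq n$.} By Lemma~\ref{lift off boundary lemma}, $\ov{\Theta}_{n,m}(\pi)\hookrightarrow \nu_{m-n}'^{\frac{n}{2}}\times(\pi\cdot\nu_n'^{\frac{n-m}{2}})$. The character $\nu^{n/2}_{m-n}$ is $Z(\Delta)$ for the segment $\Delta=[\frac{n-(m-n)+1}{2}\cdot\,,\dots]$ based on the trivial character (for general $\mathrm{D}$, based on the appropriate cuspidal $\nu_\rho$-normalization), so the ambient module is $Z(\Delta)\times\tau$ with $\tau$ irreducible. By Lapid--M\'inguez~\cite{LM16}, a product $Z(\Delta)\times\tau$ with $Z(\Delta)$ a segment representation (or more generally one factor "$\square$-irreducible") is socle irreducible. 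This gives the claim in this case. When $m=n$ the ambient module is just $\pi\cdot\nu'^0$, which is irreducible, so nothing needs to be proved.

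\emph{Case $m<n$.} By Lemma~\ref{down off boundary lem}, $\ov{\Theta}_{n,m}(\pi)$ embeds into a twist of $\BW_{\nu^{m/2}_{n-m}}(\pi)$. Twisting by a character does not affect socle irreducibility, so we only need $\BW_{\sigma}(\pi)$ to be socle irreducible when nonzero, for $\sigma=\nu^{m/2}_{n-m}$. We transport this to Proposition~\ref{big derivative socle irreducible} using the duality functor $\mathscr{A}$. By Lemma~\ref{dualtiy commutes big derivative}, $\mathscr{A}(\BW_\sigma(\pi))\simeq \BD_{\mathscr{A}(\sigma)}(\mathscr{A}(\pi))$. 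Here $\mathscr{A}(\pi)$ is irreducible, and $\mathscr{A}(\sigma)$ is the Aubert--Zelevinsky dual of the character $Z(\Delta)$, which is $\St(\Delta)$. Proposition~\ref{big derivative socle irreducible} then says that $\BD_{\St(\Delta)}(\mathscr{A}(\pi))$ is socle irreducible when nonzero. Since $\mathscr{A}$ is an exact equivalence of categories, it preserves the socle and Jordan--H\"older multiplicities, so $\BW_\sigma(\pi)$ is socle irreducible.

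The step I expect to be the main obstacle is the identification $\mathscr{A}(\nu^{m/2}_{n-m})\simeq\St(\Delta)$ as honest representations, not merely in the Grothendieck group. Both are irreducible and $\mathscr{A}$ is an equivalence, so equality in the Grothendieck group forces an isomorphism, and this should go through. I also need to check that the cited Lapid--M\'inguez result applies to $Z(\Delta)\times\tau$ in the required order (the segment factor on the left).
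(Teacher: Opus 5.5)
Your proposal is correct and follows essentially the same route as the paper. For $m\geq n$ you use the embedding of Lemma~\ref{lift off boundary lemma} together with Lapid--M\'inguez socle irreducibility of $Z(\Delta)\times\tau$; the paper cites \cite[Corollary 4.9]{LM16} for this, and it holds in either order. For $m<n$ you transport Lemma~\ref{down off boundary lem} through the equivalence $\mathscr{A}$ to $\BD_{\St(\Delta)}(\mathscr{A}(\pi))$ and apply Proposition~\ref{big derivative socle irreducible}, with the identification $\mathscr{A}(\nu_{n-m}^{m/2})\simeq\nu_{n-m}^{m/2}\St_{n-m}$ justified exactly as you suggest.
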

\begin{proof}
    When $m\geq n$, it follows from Lemma~\ref{lift off boundary lemma} and the fact that $\nu_{m-n}'^{\frac{n}{2}} \times (\pi\cdot \nu_n'^{\frac{n-m}{2}})$ is socle irreducible. See~\cite[Corollary 4.9]{LM16}.

    Now, suppose $m<n$. Since $\mathscr{A}$ is a covariant equivalence of category, it suffices to show that $\mathscr{A}(\ov{\Theta}_{n,m}(\pi))$ is socle irreducible if it is non-zero. Lemma~\ref{down off boundary lem} and Lemma~\ref{dualtiy commutes big derivative} shows that
    \[
   \mathscr{A}(\ov{\Theta}_{n,m}(\pi)) \hookrightarrow \BD_{\mathscr{A}(\nu_{n-m}^{\frac{m}{2}})}(\mathscr{A}(\pi))\cdot \nu^{\frac{n-m}{2}}.
    \]
    By M\oe glin-Waldspurger's algorithm, we have
    \[
   \mathscr{A}(\nu_{n-m}^{\frac{m}{2}})= \nu_{n-m}^{\frac{m}{2}} \St_{n-m},
    \]
    where $\St_{n-m}=\St(n-m;\mathbf{1}_1)$ is the Steinberg representation of $G_{n-m}$. Consequently, the result follows from Proposition~\ref{big derivative socle irreducible}.
\end{proof}

\subsection{Kudla's filtration}
In Lemma~\ref{reducedness characterization lem}, we observe that an important way to prove the cuspidal reducedness is by Jacquet functor. Thus, to prove certain cuspidal reducedness of big theta, it is natural to investigate the Jacquet functor of the representation $(\sigma_{n,m},S_{n,m})$, which has a decreasing filtration due to Kudla:
\[
0=J_{k+1}\subset J_k\subset \cdots\subset J_0= r^{G_n}_{t,n-t}(\sigma_{n,m}),
\]
where $k=\min{\{t,m\}}$ and $0\leq t\leq n$. Let $\tau_i\simeq J_i/J_{i+1}$. The following lemma describes these graded pieces.
\begin{proposition}[{\cite[Proposition 3.2]{Ming08}}]\label{Kudla filtration left}
    As representations of $L_{t,n-t}\times G_m'$, there is a natural isomorphism 
   \[
\tau_i\simeq
\ind_{P_{t-i,i}\times G_{n-t}\times P'_{i,m-i}}^{L_{t,n-t}\times G'_m}
\bigl(\xi_{t,i}\otimes\rho_i\otimes\sigma_{n-t,m-i}\bigr),
\]
where $\rho_i$ is defined by~\eqref{rho def eq}, and $\xi_{t,i}$ is the character
\[
\xi_{t,i}=
\begin{cases}
\nu^{\frac{2m-n+t-i}{2}}&\text{on }G_{t-i},\\
\nu^{\frac{2m-n+2t-i}{2}}&\text{on }G_i,\\
\nu^{\frac t2}&\text{on }G_{n-t},\\
\nu'^{\frac{-m-2t+i}{2}}&\text{on }G'_i,\\
\nu'^{\frac{-2t+i}{2}}&\text{on }G'_{m-i}.
\end{cases}
\]
\end{proposition}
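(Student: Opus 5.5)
We compute the normalized Jacquet module $r^{G_n}_{t,n-t}(\sigma_{n,m})$ directly on the Schrödinger model. The plan is to stratify $M_{n,m}(\mathrm{D})$ by the rank of the top $t\times m$ block. Write $x=\binom{x_1}{x_2}$ with $x_1\in M_{t,m}(\mathrm{D})$ and $x_2\in M_{n-t,m}(\mathrm{D})$. The unipotent radical $N_t$ of $P_{t,n-t}$ acts by $\Phi(x)\mapsto\Phi(x_1-ux_2,\,x_2)$, possibly up to the transpose convention fixed by the definition $(\sigma(g)\Phi)(x)=\Phi(g^{-1}x)$.

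\textbf{Step 1: the Jacquet module has the stated support.} Taking $N_t$-coinvariants of $\CS(M_{n,m})=\CS(M_{t,m})\otimes\CS(M_{n-t,m})$ is not obviously a pushforward. Instead, we use the dual-side filtration by the rank $i$ of the block $x_1$, equivalently the filtration by the rank of the image of $x_1$ modulo the row space of $x_2$. Kudla's approach is to take a partial Fourier transform in the $x_1$ variable. After this transform, $N_t$ acts by the character $\psi(\mathrm{tr}(y\,u\,x_2))$, and the coinvariants localize to the closed set $\{y x_2^{*}=0\}$.

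A cleaner route, and the one I would follow, keeps the original model. The coinvariants $(\CS(M_{n,m}))_{N_t}$ are identified with functions on the quotient of $M_{n,m}$ by the $N_t$-action. The orbits over a fixed $x_2$ are $x_1+M_{t,n-t}x_2$, i.e.\ affine spaces modulo the row space of $x_2$.

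\textbf{Step 2: stratify by rank.} Let $i$ be the rank of $x_1$ modulo the rows of $x_2$, so $0\le i\le\min\{t,m\}$. Order the strata so that the open ones come first; this produces the decreasing filtration $J_i$. On the stratum of rank $i$, $G_t\times G_m'$ acts transitively on the pair consisting of the $i$-dimensional row space and its image. This identifies the fiber with $\rho_i$ induced from $P_{t-i,i}\times P'_{i,m-i}$. The remaining variable $x_2$, projected to the complementary $m-i$ columns, gives $\sigma_{n-t,m-i}$.

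Compact-support integration along the $N_t$-orbits (the affine fibers $M_{t,n-t}x_2$) is exact stratum by stratum, since each orbit is a vector space. Exactness of the Jacquet functor then gives $\tau_i$ as the stated induced representation.

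\textbf{Step 3: compute the characters $\xi_{t,i}$.} This is the main obstacle: it is pure bookkeeping of modulus characters. The contributions are:
\begin{itemize}
\item the Jacobian of $g^{-1}x g'$ on each block;
\item $\delta_{P_{t,n-t}}^{-1/2}$ from the normalized Jacquet module;
\item the half-modulus characters from normalized inductions on both sides;
\item the Haar-measure factor from integrating over the fiber $M_{t,n-t}x_2$, whose dimension depends on $\mathrm{rank}(x_2)$.
\end{itemize}
I would verify the resulting exponents on the extreme cases $i=0$ and $i=t=m$ against Lemma~\ref{rank filtration}. Since this is precisely \cite[Proposition 3.2]{Ming08}, in the paper one may cite it, and the check above serves as a consistency test for the normalization $\sigma_{n,m}$ used here.
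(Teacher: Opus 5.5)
\textbf{The gap is in Step 2.} The ``cleaner route'' through the Schr\"odinger model produces a different filtration from the one in the statement. This is not a normalization issue that the bookkeeping in Step 3 could repair.

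The quickest test is $t=n$. Then $N_t$ is trivial, $x_2$ is empty, and your $i$ is $\mathrm{rank}(x)$. Ordering strata with the open ones first gives exactly the rank filtration $R_i$ of Lemma~\ref{rank filtration}. Its quotient $R_0/R_1$ is $\Phi\mapsto\Phi(0)$, on which $G_n\times G'_m$ acts trivially. The statement instead forces $J_0/J_1=\tau_0\simeq\xi_{n,0}=\nu^{m}\boxtimes\nu'^{-n}$, realized by $\Phi\mapsto\int\Phi=\CF(\Phi)(0)$. This is precisely Remark~\ref{rank filtration dual rem}: for $t=n$ the filtration is the rank filtration of $\CF(\Phi)$, not of $\Phi$. (The paper itself only cites \cite{Ming08}, but this remark pins down which filtration is meant.) So the consistency check you propose against Lemma~\ref{rank filtration} would in fact fail.

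For $0<t<n$ there is also a geometric failure:
\begin{itemize}
\item The $N_t$-orbits $x_1+M_{t,n-t}(\mathrm{D})x_2$ jump in dimension with $\mathrm{rank}(x_2)$, and the orbit space is not Hausdorff.
\item Your sets $\{i\ge i_0\}$ are neither open nor closed. For $n=2$, $t=m=1$ one has $\{i=1\}=\{x_2=0,\ x_1\neq 0\}$.
\item The only stratification on which ``coinvariants $=$ Schwartz functions on the quotient'' is valid is by $r=\mathrm{rank}(x_2)$. It yields the transposed pieces $\ind(\sigma_{t,m-r}\otimes\rho_r\otimes\cdots)$, with $\rho_r$ attached to $G_{n-t}$ and $\sigma_{t,m-r}$ attached to $G_t$. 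In the example above, the open piece is $\CS(\mathrm{D}^\times)$ in the variable $x_2$, on which $G_{n-t}$ acts geometrically; in $\tau_1$, $G_{n-t}$ acts only by $\nu^{1/2}$.
\item The fibre-measure factor in your Step 3 depends on $r$, which varies inside a single $i$-stratum, so it cannot produce a single character $\xi_{t,i}$.
\end{itemize}

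\textbf{The missing idea is the one you state in Step 1 and then discard.} Work in the mixed model obtained by the partial Fourier transform in $x_1\in M_{t,m}(\mathrm{D})$.
\begin{itemize}
\item There $N_t$ acts by multiplication by $\psi(\Trd(u\,x_2y))$ with $y\in M_{m,t}(\mathrm{D})$. By the Bernstein--Zelevinsky localization principle, the $N_t$-coinvariants are $\CS(Z)$, where $Z=\{(y,x_2):x_2y=0\}$ is closed.
\item Stratify $Z$ by $\mathrm{rank}(y)$. Since $\{\mathrm{rank}(y)\ge i\}$ is open in $Z$, this gives the decreasing filtration $J_i$, with $J_k$ at the bottom.
\item On the stratum $\mathrm{rank}(y)=i$, the column space $C\subset\mathrm{D}^m$ of $y$ fibres it $G'_m$-equivariantly over $G'_m/P'_{i,m-i}$. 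The variable $y$ contributes $\ind_{P_{t-i,i}}^{G_t}$ of $\rho_i$, and $x_2\in\Hom(\mathrm{D}^m/C,\mathrm{D}^{n-t})\simeq M_{n-t,m-i}(\mathrm{D})$ contributes $\sigma_{n-t,m-i}$.
\item The characters pick up the Fourier Jacobians $\nu(a)^{m}$ for $a\in G_t$ and $\nu'(g')^{-t}$ for $g'\in G'_m$. These are the source of the terms $2m$ and $-2t$ in $\xi_{t,i}$. For instance, at $i=0$ one gets $\nu^{m}\delta_{P_{t,n-t}}^{-1/2}=\nu^{\frac{2m-n+t}{2}}$ on $G_t$, $\nu^{t/2}$ on $G_{n-t}$ and $\nu'^{-t}$ on $G'_m$, which is exactly $\xi_{t,0}$.
\end{itemize}
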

\begin{remark}\label{rank filtration dual rem}
    When $t=n$, Kudla's filtration in Proposition~\ref{Kudla filtration left} is identical to the rank filtration on the Fourier dual side. More precisely, there is a non-degenerate bilinear pairing
\[
M_{n,m}(\mathrm{D})\times M_{m,n}(\mathrm{D})\lra F \quad \langle X,Y\rangle:= \Trd_{D/F}(XY).
\]
We fix mutually dual Haar measures, and define the Fourier transform
\begin{equation}\label{Fourier transform eq}
    \begin{split}
            \CF:&\CS(M_{n,m}(D))\lra \CS(M_{m,n}(D))\\
    f\mapsto &\CF(f)(Y)=\int_{M_{n,m}(D)} f(X)\psi(\langle X,Y\rangle)dX.
    \end{split}
\end{equation}
Then Kudla's filtration coincides with the rank filtration on $\CS(M_{m,n}(D))$ under the action $\sigma_{m,n}(g',g)\nu^{m}(g) \nu^{-n}(g')$.
\end{remark}
Similarly, we can consider the Jacquet functor of $\sigma_{n,m}$ with respect to $G_m'$:
\[
0 = J_{k'+1}'\subset J'_{k'} \subset \cdots \subset J_0'= \ov{r}^{G_m'}_{t,m-t}(\sigma_{n,m}),
\]
where $k'=\min{\{t,n\}}$ and $0\leq t\leq m$. Let $\tau_i'\simeq J_i'/J_{i+1}'$. The following lemma describes these graded pieces.
\begin{proposition}[{\cite[Proposition 3.3]{Ming08}}]\label{Kudal filtration right}
    As representations of $G_n\times L'_{t,m-t}$, there is a natural isomorphism
   \[
\tau'_i\simeq
\ind_{P_{n-i,i}\times P'_{i,t-i}\times G'_{m-t}}^{G_n\times L'_{t,m-t}}
\bigl(\sigma_{n-i,m-t}\otimes\rho_i\otimes\xi'_{t,i}\bigr),
\]
where $\rho_i$ is defined by~\eqref{rho def eq}, and $\xi'_{t,i}$ is the character
\[
\xi'_{t,i}=
\begin{cases}
\nu^{\frac{2t-i}{2}}&\text{on }G_{n-i},\\
\nu^{\frac{n+2t-i}{2}}&\text{on }G_i,\\
\nu^{\frac{-2n+m-2t+i}{2}}&\text{on }G'_i,\\
\nu^{\frac{m-2n-t+i}{2}}&\text{on }G'_{t-i},\\
\nu^{-t/2}&\text{on }G'_{m-t}.
\end{cases}
\]
\end{proposition}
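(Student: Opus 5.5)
We would prove this directly, following the computation behind Proposition~\ref{Kudla filtration left} (\cite[Proposition 3.3]{Ming08}), with the roles of the two members of the dual pair exchanged. Write $x=(x_1\mid x_2)$ with $x_1\in M_{n,t}(\mathrm{D})$ and $x_2\in M_{n,m-t}(\mathrm{D})$. The unipotent radical $\ov{N}'$ of $\ov{P}'_{t,m-t}$ consists of the matrices $\begin{pmatrix}1&0\\u&1\end{pmatrix}$ with $u\in M_{m-t,t}(\mathrm{D})$. It acts through $\sigma_{n,m}$ by $\Phi(x_1\mid x_2)\mapsto\Phi(x_1+x_2u\mid x_2)$.

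\textbf{Step 1: partial Fourier transform.} Take a partial Fourier transform in the variable $x_1$, using the pairing of $M_{n,t}(\mathrm{D})$ with $M_{t,n}(\mathrm{D})$ given by $\Trd$, as in~\eqref{Fourier transform eq}. This turns $S_{n,m}$ into $\CS(M_{t,n}(\mathrm{D})\times M_{n,m-t}(\mathrm{D}))$. On this model $\ov{N}'$ acts by multiplication by the character $\psi(\Trd(y_1x_2u))$. The standard computation of coinvariants under a unipotent group acting by characters (restriction to the zero locus of the moment map) then identifies the unnormalized Jacquet module with $\CS(Z)$, where
\[
Z=\{(y_1,x_2): y_1x_2=0\}.
\]
We record how $G_n\times G_t'\times G_{m-t}'$ acts on $\CS(Z)$. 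This includes the modulus twists coming from the Fourier transform and from the normalization, which produce the $\nu^{\pm t/2}$ and $\nu^{-t/2}$ factors.

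\textbf{Step 2: stratification by rank.} Stratify $Z$ by $i=\mathrm{rank}(y_1)$, where $0\le i\le\min\{t,n\}=k'$. Closures of strata have larger corank, so the subspaces of functions supported on $\{\mathrm{rank}\,y_1\ge i\}$ give a decreasing filtration $J_i'$ by open pieces. Each graded piece is $\CS$ of a single stratum. On that stratum $G_n\times G_t'$ acts transitively on the possible $y_1$. Choose the base point
\[
y_1=\begin{pmatrix}0&0\\ 1_i&0\end{pmatrix},
\]
placed so that its stabilizer lies in $P_{n-i,i}\times P'_{i,t-i}$. The constraint $y_1x_2=0$ then forces $x_2$ to lie in $M_{n-i,m-t}(\mathrm{D})$, embedded in the first $n-i$ rows. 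Hence the stratum fibres over $G_n\times G_t'$-orbit data, with fibre $G_i\times M_{n-i,m-t}(\mathrm{D})$. Mackey theory then identifies the graded piece as an induction from $P_{n-i,i}\times P'_{i,t-i}\times G'_{m-t}$ of
\[
\sigma_{n-i,m-t}\otimes\rho_i
\]
tensored with a character. The same pattern produced $\rho_k$ in Lemma~\ref{rank filtration}.

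\textbf{Step 3: the character $\xi'_{t,i}$.} Finally we determine $\xi'_{t,i}$. It collects three contributions: the normalizing factors $\nu_n^{-m/2}$ and $\nu_m'^{n/2}$ from $\omega$ versus $\sigma$; the Jacobians of the Fourier transform and of the fibration (the modular characters of the stabilizers acting on the unipotent parts); and the $\delta^{\pm1/2}$ factors from normalized induction and the normalized Jacquet functor. This bookkeeping is the step we expect to be the main obstacle. It is routine in principle but error-prone, in particular the $d$-dependence hidden in $\nu=|\Nrd|^d$ and the exponents on $G_i$ versus $G_i'$.

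As a cross-check, and as an alternative route, we would deduce the result from Proposition~\ref{Kudla filtration left} by symmetry. The map $x\mapsto {}^tx$ identifies $M_{n,m}(\mathrm{D})$ with $M_{m,n}(\mathrm{D}^{\mathrm{op}})$. Composing it with $g\mapsto {}^tg^{-1}$ exchanges the two members of the dual pair and carries $\ov{P}'$ to a standard parabolic. Transporting Proposition~\ref{Kudla filtration left} along this identification should reproduce the exponents of $\xi'_{t,i}$, and comparing the two computations checks the character in Step 3.
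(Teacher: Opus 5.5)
The paper gives no argument for this statement; it only cites \cite[Proposition 3.3]{Ming08}. (Incidentally, Proposition~\ref{Kudla filtration left} is \cite[Proposition 3.2]{Ming08}, not 3.3.) Your Steps 1 and 2 are correct and are the Kudla--M\'inguez computation.

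The gap is Step 3. Once the shape ``induced from $\sigma_{n-i,m-t}\otimes\rho_i\otimes(\text{a character})$'' is established, the whole content of the statement is the exponents of $\xi'_{t,i}$. You neither compute them nor give a correct recipe for them.

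The first contribution you list does not occur. $\tau'_i$ is a graded piece of $\ov{r}^{G'_m}_{t,m-t}(\sigma_{n,m})$ and the inducing datum is $\sigma_{n-i,m-t}$, so no $\omega$ appears anywhere. Inserting $\nu_n^{-m/2}$ and $\nu_m'^{\,n/2}$ would give the $\omega$-normalized character, not $\xi'_{t,i}$.

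Likewise, identify the stratum $Z_i=\{\mathrm{rank}\,y_1=i\}$ through $\CS(Z_i)$ as the unnormalized compact induction (of functions, not densities) from the stabilizer. Then the fibration contributes no Jacobian of its own. The only modulus factors are the $\delta^{-1/2}$'s of the normalized Jacquet functor and of normalized induction, so listing both ``Jacobians of the fibration'' and ``$\delta^{\pm1/2}$ factors'' risks double counting.

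The bookkeeping does close, as follows.
\begin{itemize}
\item \emph{Fourier Jacobian.} A Levi element $(g;a',b')\in G_n\times G'_t\times G'_{m-t}$ acts on $\CS(Z)$ by $F(y_1,x_2)\mapsto\nu(g)^{t}\nu(a')^{-n}F(a'^{-1}y_1g,\,g^{-1}x_2b')$.
\item \emph{Jacquet normalization.} Normalizing the Jacquet functor multiplies this by $\delta_{\ov{P}'_{t,m-t}}^{-1/2}=\nu(a')^{\frac{m-t}{2}}\nu(b')^{-\frac{t}{2}}$.
\item \emph{Induction in stages.} Inducing through $P_{n-i,i}\times P'_{i,t-i}$ gives $\rho_i$, carrying the restriction of this character to the diagonal $G_i$.
\item \emph{Normalized induction.} Passing to normalized induction multiplies by $\nu^{-\frac{i}{2}},\nu^{\frac{n-i}{2}},\nu^{-\frac{t-i}{2}},\nu^{\frac{i}{2}}$ on $G_{n-i},G_i,G'_i,G'_{t-i}$ respectively.
\end{itemize}
The result is $\nu^{\frac{2t-i}{2}}$ on $G_{n-i}$, $\nu^{\frac{m-2n-t+i}{2}}$ on $G'_{t-i}$, $\nu^{-\frac{t}{2}}$ on $G'_{m-t}$, and total exponent $\frac{m-n}{2}$ on $G_i\times G'_i$. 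Only this total is meaningful, because $\rho_i\otimes(\chi\boxtimes\chi^{-1})\simeq\rho_i$ for any character $\chi$. It equals $\frac{n+2t-i}{2}+\frac{-2n+m-2t+i}{2}$, as stated.

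Your proposed cross-check is in fact a complete and shorter proof. The maps $x\mapsto{}^tx$ and $g\mapsto{}^tg^{-1}$ carry $\ov{r}^{G'_m}_{t,m-t}(\sigma_{n,m})$ to $r_{t,m-t}(\sigma_{m,n})$ over $\mathrm{D}^{\mathrm{op}}$, to which Proposition~\ref{Kudla filtration left} applies. Under this transport:
\begin{itemize}
\item $\rho_i$ is carried to $\rho_i$;
\item lower parabolics are converted back via $\ind_{P_{k,n-k}}(\sigma_1\boxtimes\sigma_2)\simeq\ind_{\ov{P}_{n-k,k}}(\sigma_2\boxtimes\sigma_1)$;
\item characters are inverted, since $\Nrd$ is invariant under transposition.
\end{itemize}
Inverting the exponents of $\xi_{t,i}$ for the pair $(m,n)$ then reproduces $\xi'_{t,i}$ entry by entry. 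For example, $\frac{-n-2t+i}{2}$ on $G'_i$ becomes $\frac{n+2t-i}{2}$ on $G_i$, and $\frac{2n-m+t-i}{2}$ on $G_{t-i}$ becomes $\frac{m-2n-t+i}{2}$ on $G'_{t-i}$.
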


Now, we use Kudla's filtration to compute the big theta in terms of the left highest derivative. 
\begin{proposition}\label{big theta relative to highest derivative}
  Let $\pi\in\Irr(G_n)$. And let $\chi$ be a cuspidal representation of $G_l$ such that $\chi\neq \nu_1^{\frac{2m-n+1}{2}}$. Suppose that $\pi\hookrightarrow \chi^{\times a}\times \rho$, where $\rho\in \Irr(G_{n-la})$. 
  \begin{enumerate}
      \item If $m<la$, then $\ov{\Theta}_{n,m}(\pi)=0$;
      \item If $m\geq la$, then 
      \[\ov{\Theta}_{n,m}(\pi)\hookrightarrow (\nu^{\frac{n-m}{2}}\chi)^{\times a} \times \ov{\Theta}_{n-la,m-la}(\nu^{-\frac{la}{2}}\rho)\nu'^{\frac{la}{2}}.
      \]
     Recall when $nm=0$, for $\sigma\in \Irr(G_n)$, we set 
     \[
     \ov{\Theta}_{n,m}(\sigma)\simeq\begin{cases}
          0 &\text{ if } \sigma \not\simeq \mathbf{1}_n;\\
           \mathbf{1}_m &\text{ if } \sigma \simeq \mathbf{1}_n.
     \end{cases}
     \]
  \end{enumerate}
\end{proposition}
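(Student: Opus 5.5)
We would follow Mínguez's approach to the ``removal of a cuspidal factor'' in \cite{Ming08}, applying Kudla's filtration (Proposition~\ref{Kudla filtration left}) with $t=la$. First, Frobenius reciprocity turns the embedding $\pi\hookrightarrow\chi^{\times a}\times\rho$ into a nonzero map $r_{la,n-la}(\pi)\to\chi^{\times a}\boxtimes\rho$. For a smooth $G_m'$-representation $\Pi$, a $G_n\times G'_m$-map $\sigma_{n,m}\to\pi\otimes\Pi^\vee$ composed with the Jacquet functor therefore gives an $L_{la,n-la}\times G_m'$-map
\[
r_{la,n-la}(\sigma_{n,m})\to \chi^{\times a}\boxtimes\rho\otimes \Pi^\vee .
\]
We would use this to produce an injection
\[
\ov{\Theta}_{n,m}(\pi)=\Hom_{G_n}(\sigma_{n,m},\pi)_{\sm}\hookrightarrow \Hom_{L_{la,n-la}}\bigl(r_{la,n-la}(\sigma_{n,m}),\chi^{\times a}\boxtimes\rho\bigr)_{\sm}.
\]
Its injectivity follows because $\Hom_{G_n}(\sigma_{n,m},\pi)\hookrightarrow\Hom_{G_n}(\sigma_{n,m},\chi^{\times a}\times\rho)$, and the latter equals the target by Frobenius reciprocity for normalized induction.

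Next we filter the source by $J_i$ and analyse each graded piece $\tau_i$, $0\le i\le\min\{la,m\}$. We compute $\Hom_{L}(\tau_i,\chi^{\times a}\boxtimes\rho)$ via Frobenius reciprocity on the $G_{la}$ factor. This requires $r_{la-i,i}(\chi^{\times a})$ to contain the character $\xi_{t,i}|_{G_{la-i}}$ of $G_{la-i}$ tensored with something from the $\rho_i$ factor. By the geometric lemma (as in Lemma~\ref{Jac of left chi seq}), $r_{la-i,i}(\chi^{\times a})$ is nonzero only when $l\mid i$, with composition factors $\chi^{\times k}\boxtimes\chi^{\times(a-k)}$. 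The first factor lives on $G_{la-i}$ and must admit the character $\nu^{\frac{2m-n+la-i}{2}}$ on $G_{la-i}$. When $i<la$ this forces $l=1$ and $\chi=\nu^{\frac{2m-n+la-i}{2}}$, with $la-i=1$ by Lemma~\ref{cuspidal product irr lem}, i.e. $\chi=\nu_1^{\frac{2m-n+1}{2}}$. The hypothesis $\chi\neq\nu_1^{\frac{2m-n+1}{2}}$ excludes this, so all pieces with $i<la$ contribute zero Hom. In particular, if $m<la$ every index $i\le m<la$ contributes zero. Left exactness of $\Hom(\cdot,\chi^{\times a}\boxtimes\rho)$ along the filtration then gives $\ov{\Theta}_{n,m}(\pi)=0$, proving (1).

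For (2), the same vanishing shows that the Hom space out of $r_{la,n-la}(\sigma_{n,m})$ injects into the Hom space out of the bottom piece $J_{la}=\tau_{la}$. We use the exact sequences $0\to J_{i+1}\to J_i\to\tau_i\to0$ inductively, with $\Hom(\tau_i,\cdot)=0$ for $i<la$. The bottom piece is
\[
\tau_{la}=\ind_{G_{la}\times G_{n-la}\times P'_{la,m-la}}^{L_{la,n-la}\times G'_m}\bigl(\xi_{la,la}\otimes\rho_{la}\otimes\sigma_{n-la,m-la}\bigr).
\]
Using $\Hom_{G_{la}}(\rho_{la}\otimes\nu^{\ast},\chi^{\times a})_{\sm}\simeq(\chi^{\times a})\nu^{\ast}$ on $G'_{la}$ (as in the proof of Lemma~\ref{lift off boundary lemma}), we get
\[
\Hom(\tau_{la},\chi^{\times a}\boxtimes\rho)_{\sm}\simeq\ind_{P'_{la,m-la}}^{G'_m}\bigl(\text{twist of }\chi^{\times a}\boxtimes\ov{\Theta}_{n-la,m-la}(\text{twist of }\rho)\bigr).
\]
This uses that Hom out of a compactly/normally induced representation from $P'$ equals the smooth induction of the Hom (the contragredient of induction is induction of contragredients). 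The degenerate cases $n=la$ or $m=la$ are handled by the convention for $nm=0$, where $\sigma_{0,m-la}$ or $\sigma_{n-la,0}$ is the trivial representation.

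The main obstacle we expect is bookkeeping the characters. We must check, using $\xi_{la,la}$ (exponent $\frac{2m-n+la}{2}$ on $G_{la}$, $\frac{la}{2}$ on $G_{n-la}$, $\frac{-m-la}{2}$ on $G'_{la}$, $\frac{-la}{2}$ on $G'_{m-la}$), that the twists combine exactly to $(\nu^{\frac{n-m}{2}}\chi)^{\times a}\times\ov{\Theta}_{n-la,m-la}(\nu^{-\frac{la}{2}}\rho)\nu'^{\frac{la}{2}}$. This includes the modulus character of $P'_{la,m-la}$ and the contragredient/inverse in $\Hom(\rho_{la},\cdot)$. A secondary subtlety is the vanishing argument for $i<la$ when $l\mid i$: we must confirm that the character condition on the $G_{la-i}$ block (a single character of $G_{la-i}$ occurring as a cuspidal $\chi^{\times k}$) can only happen for $la-i=l=1$.
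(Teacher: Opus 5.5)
Your proposal is correct and is essentially the paper's proof. It follows the same three steps: Frobenius reciprocity into $\Hom_{L_{la,n-la}}(r_{la,n-la}(\sigma_{n,m}),\chi^{\times a}\boxtimes\rho)_{\sm}$, vanishing on the Kudla pieces $\tau_i$ with $i<la$, and evaluation on $\tau_{la}$. The paper cites the vanishing step from the proof of Lemma 4.1 of \cite{Ming08}, whereas you derive it by comparing cuspidal supports; strictly this goes through second adjointness and $\ov{r}_{la-i,i}(\chi^{\times a})$ rather than $r_{la-i,i}$, but the composition factors have the same shape, so the conclusion is unchanged.

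The bookkeeping you defer closes immediately, and no modulus correction appears because both inductions are normalized: the $G'_{la}$-exponent is $-\frac{2m-n+la}{2}+\frac{m+la}{2}=\frac{n-m}{2}$, and the $G_{n-la}$-factor gives $\ov{\Theta}_{n-la,m-la}(\nu^{-\frac{la}{2}}\rho)\nu'^{\frac{la}{2}}$.
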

\begin{proof}
  Since $\pi\hookrightarrow \chi^{\times a}\times \rho$, we have
 \[
  \ov{\Theta}_{n,m}(\pi)\hookrightarrow \Hom_{G_n}(\sigma_{n,m}, \chi^{\times a}\times \rho)_{\sm}\simeq \Hom_{L_{al,n-al}}\left(r^{G_n}_{al,n-al}(\sigma_{n,m}), \chi^{\times a} \boxtimes \rho\right)_{\sm}.
 \]
    According to the proof of~\cite[Lemma 4.1]{Ming08}, we know that
    \[
    \Hom_{L_{la,n-la}}(\tau_i, \chi^{\times a} \boxtimes \rho)=0 
    \]
    for any $i<al$. We remark that the condition that $\chi\neq  \nu_1^{\frac{2m-n+1}{2}}$ is used to ensure that when $i=al-1$, it is zero. This proves (1).
    
     Assume $m\geq al$. Then,
    \[
     \ov{\Theta}_{n,m}(\pi)\hookrightarrow \Hom_{L_{al,n-al}}(\tau_{al},\chi^{\times a}\boxtimes \rho)_{\sm}.
    \]
    By Proposition~\ref{Kudla filtration left} and the fact that as representations of $G_{al}'$, $\Hom_{G_{al}}(\rho_{al},\chi^{\times a})_{\sm}\simeq \chi^{\times a}$, one can deduce
    \begin{align*}
 &  \Hom_{L_{al,n-al}}(\tau_{al},\chi^{\times a}\boxtimes \rho)_{\sm}    \\
   &\simeq  \Hom_{L_{la,n-la}}\left( \ind_{ P'_{al}}^{ G'_m}
\bigl(\xi_{al,al}\otimes\rho_{al}\otimes\sigma_{n-al,m-al}\bigr) , \chi^{\times a}\boxtimes \rho  \right)_{\sm}\\
&\simeq \ind^{G_m'}_{P_{al}'}\left( \Hom_{G_{al}}(\rho_{al},\chi^{\times a}\nu^{\frac{-2m+n-al}{2}})_{\sm}\nu'^{\frac{m+al}{2}}\boxtimes \Hom_{G_{n-al}}(\xi_{al,al}\otimes\sigma_{n-al,m-al},\rho)_{\sm} \right)\\
&\simeq (\nu^{\frac{n-m}{2}}\chi)^{\times a} \times \ov{\Theta}_{n-la,m-la}(\nu^{-\frac{la}{2}}\rho)\nu'^{\frac{la}{2}}.
    \end{align*} 
    If $n=al$ , then $\rho$ and $\sigma_{n-al,m-al}$  are trivial representations. Thus, $\ov{\Theta}_{n-la,m-la}(\nu^{-\frac{la}{2}}\rho)$ is the trivial representation of $G_{m-al}'$.
\end{proof}

\section{Big theta preserves cuspidal reducedness}
To prove cuspidal reducedness, we compute the Jacquet module of $\ov{\Theta}_{n,m}(\pi)$. This computation relies on the following lemma.
\begin{lemma}\label{Jacquet big theta lemma}
    Let $\pi$ be a representation of $G_n$. Then, as representations of $L_{l,m-l}'$, 
    \[
    r^{G_m'}_{l,m-l}(\ov{\Theta}_{n,m}(\pi))\simeq \Hom_{G_n}\left(\ov{r}^{G_m'}_{l,m-l}(\sigma_{n,m}),\pi\right)_{\sm}.
    \]
\end{lemma}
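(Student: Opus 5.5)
The plan is to obtain the statement by commuting the Jacquet functor past the Hom-space that defines $\ov{\Theta}_{n,m}(\pi)=\Hom_{G_n}(\sigma_{n,m},\pi)_{\sm}$. The tool is Bernstein's second adjointness, applied on the $G_m'$ side. I only expect to need it for $\pi$ of finite length (indeed irreducible), but the formal argument below is uniform.

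First, fix $\tau\in\Alg(L'_{l,m-l})$ and compute $\Hom_{L'_{l,m-l}}\bigl(\tau, r^{G_m'}_{l,m-l}(\ov{\Theta}_{n,m}(\pi))\bigr)$. Second adjointness, written with the conventions of the paper (so that parabolic induction from $\ov{P'}$ is right adjoint to $r$), turns this into
\[
\Hom_{G_m'}\bigl(\ind_{\ov{P'}_{l,m-l}}^{G_m'}\tau,\ \ov{\Theta}_{n,m}(\pi)\bigr).
\]
Since $\ind\tau$ is smooth, maps into $\Hom_{G_n}(\sigma_{n,m},\pi)_{\sm}$ are the same as maps into the full Hom-space. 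By tensor–Hom adjunction this equals
\[
\Hom_{G_n\times G_m'}\bigl(\sigma_{n,m}\otimes\ind_{\ov{P'}}^{G_m'}\tau,\ \pi\bigr),
\]
where $G_m'$ acts trivially on $\pi$, i.e. it is $\Hom_{G_n}\bigl((\sigma_{n,m}\otimes\ind\tau)_{G_m'},\pi\bigr)$.

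Next, move the induction onto $\sigma_{n,m}$ using the projection formula together with Frobenius reciprocity on coinvariants:
\[
(\sigma_{n,m}\otimes \ind_{\ov{P'}}^{G_m'}\tau)_{G_m'}\simeq (\ov{r}^{G_m'}_{l,m-l}(\sigma_{n,m})\otimes \tau)_{L'_{l,m-l}}.
\]
The normalizing modulus characters cancel because both the induction and the Jacquet module are normalized. This identity is the coinvariant form of the adjunction $\Hom_{G'}(\ind_{\ov P'}\tau, V^\vee)\simeq\Hom_{L'}(\tau,(\ov r V)^\vee)$, obtained by dualizing the standard $(\ind_{\ov P'}\tau)^\vee\simeq\ind_{\ov P'}\tau^\vee$ and Frobenius reciprocity, $\Hom_{G'}(V,\ind\tau^\vee)=\Hom_{L'}(\ov r V,\tau^\vee)$.

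Applying tensor–Hom adjunction once more, the expression becomes
\[
\Hom_{L'}\bigl(\tau,\Hom_{G_n}(\ov{r}^{G_m'}_{l,m-l}(\sigma_{n,m}),\pi)\bigr)=\Hom_{L'}\bigl(\tau,\Hom_{G_n}(\ov{r}^{G_m'}_{l,m-l}(\sigma_{n,m}),\pi)_{\sm}\bigr).
\]
All isomorphisms above are natural in $\tau$, so Yoneda's lemma gives the claimed isomorphism of $L'_{l,m-l}$-representations.

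The main obstacle is bookkeeping rather than substance. One must check that second adjointness (Bernstein) applies to the possibly non-admissible smooth representation $\ov{\Theta}_{n,m}(\pi)$; it holds in full generality on $\Alg(G_m')$. One must also verify that the opposite parabolic $\ov{P'}$ and the normalizations line up, so that $r$ on the theta side corresponds to $\ov r$ on the $\sigma_{n,m}$ side. The coinvariant identity requires care with $\ind$ versus compact induction, but $\ov{P'}\backslash G_m'$ is compact, so the two coincide.
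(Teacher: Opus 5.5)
Your argument is correct, but it is organized differently from the paper's.

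\textbf{The paper's route.} It works on the dual side:
\begin{itemize}
\item it writes $\ov{\Theta}_{n,m}(\pi)\simeq\bigl((\sigma_{n,m}\otimes\pi^{\vee})_{G_n}\bigr)^{\vee}$;
\item it applies the compatibility $r^{G_m'}_{l,m-l}(V^{\vee})\simeq\bigl(\ov{r}^{G_m'}_{l,m-l}(V)\bigr)^{\vee}$ of Jacquet modules with contragredients;
\item it notes that $\ov{r}^{G_m'}_{l,m-l}$, a twisted coinvariant functor for the unipotent radical of $\ov{P'}_{l,m-l}$, commutes with $G_n$-coinvariants because the two actions commute;
\item it dualizes back.
\end{itemize}

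\textbf{Your route.} You stay with the defining Hom-space and run a Yoneda argument. It combines second adjointness ($\ind_{\ov{P'}}$ against $r$), tensor--Hom adjunction, and first adjointness ($\ov{r}$ against $\ind_{\ov{P'}}$) in coinvariant form.

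\textbf{What each buys.} The ingredients are the same: composing your two adjunctions is exactly how one proves $r(V^{\vee})\simeq(\ov{r}V)^{\vee}$ for arbitrary smooth $V$. The difference is in what each packaging requires.
\begin{itemize}
\item The paper's two-line proof tacitly uses $\pi\simeq\pi^{\vee\vee}$, both to write $\ov{\Theta}_{n,m}(\pi)$ as a smooth dual and to dualize back. So it needs $\pi$ admissible, which is harmless since the lemma is only applied to irreducible $\pi$.
\item Your route never passes through $\pi^{\vee}$. It therefore proves the lemma for arbitrary smooth $\pi$, as literally stated, at the cost of the Yoneda bookkeeping.
\end{itemize}

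\textbf{Two small corrections.} First, your parenthetical says induction from $\ov{P'}$ is \emph{right} adjoint to $r$. It is \emph{left} adjoint, which is what your displayed isomorphism actually uses.

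Second, the coinvariant identity
$(\sigma_{n,m}\otimes\ind_{\ov{P'}}^{G_m'}\tau)_{G_m'}\simeq\bigl(\ov{r}^{G_m'}_{l,m-l}(\sigma_{n,m})\otimes\tau\bigr)_{L'_{l,m-l}}$
should be proved directly rather than ``obtained by dualizing''. An isomorphism of linear duals does not by itself give the $G_n$-equivariant isomorphism of coinvariant spaces that your next step needs. The fix is to exhibit the natural map:
\begin{itemize}
\item send $v\otimes f$ to the integral over $\ov{P'}\backslash G_m'$ of the class of $g'v\otimes f(g')$ in $\bigl(\ov{r}(\sigma_{n,m})\otimes\tau\bigr)_{L'}$; the integrand transforms by the modulus character of $\ov{P'}$, so this makes sense;
\item this map is $G_n$-equivariant;
\item its transpose is the composite of Frobenius reciprocity with $(\ind\tau)^{\vee}\simeq\ind\tau^{\vee}$;
\item a linear map with bijective transpose is bijective.
\end{itemize}
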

\begin{proof}
    Since $\ov{\Theta}_{n,m}(\pi)\simeq (\sigma_{n,m}\otimes\pi^{\vee})_{G_n}^{\vee}$, we have
    \[
    r^{G_m'}_{l,m-l}(\ov{\Theta}_{n,m}(\pi))\simeq \left(\ov{r}^{G_m'}_{l,m-l}(\sigma_{n,m}\otimes\pi^{\vee})_{G_n}\right)^{\vee}.
    \]
    Since the $G_n$-action and $G_m'$-action commute on $\sigma_{n,m}\otimes \pi^{\vee}$, it is isomorphic to
    \[
    \left((\ov{r}^{G_m'}_{l,m-l}(\sigma_{n,m})\otimes\pi^{\vee})_{G_n}\right)^{\vee}\simeq \Hom_{G_n}\left(\ov{r}^{G_m'}_{l,m-l}(\sigma_{n,m}),\pi\right)_{\sm}.
    \]
\end{proof}

\begin{theorem}\label{preserve reducedness thm}
    Let $\pi\in\Irr(G_n)$. And let $\chi$ be a cuspidal representation of $G_l$ such that $\chi\neq \nu_1^{\frac{n+1}{2}}$. If $\pi$ is left $\chi$-reduced, then any irreducible component in $\ov{\Theta}_{n,m}(\pi)$ is left $\nu^{\frac{n-m}{2}}\chi$-reduced.
\end{theorem}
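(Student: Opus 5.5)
Write $\chi'=\nu^{\frac{n-m}{2}}\chi$. By Lemma~\ref{reducedness characterization lem} and exactness of the Jacquet functor, it is enough to show that $\Jac_{\chi'}(\ov{\Theta}_{n,m}(\pi))=0$. Indeed, every irreducible subquotient $\tau$ of $\ov{\Theta}_{n,m}(\pi)$ has $\Jac_{\chi'}(\tau)$ a subquotient of $\Jac_{\chi'}(\ov{\Theta}_{n,m}(\pi))$. The quantity $\Jac_{\chi'}(\ov{\Theta}_{n,m}(\pi))$ is the $\supp=[\chi']$ part, in the $G_l'$ variable, of $r^{G_m'}_{l,m-l}(\ov{\Theta}_{n,m}(\pi))$. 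By Lemma~\ref{Jacquet big theta lemma} this Jacquet module equals $\Hom_{G_n}(\ov{r}^{G_m'}_{l,m-l}(\sigma_{n,m}),\pi)_{\sm}$. So we must show that the $[\chi']$-block in the $G_l'$-variable of $\Hom_{G_n}(\ov{r}^{G_m'}_{l,m-l}(\sigma_{n,m}),\pi)_{\sm}$ vanishes.

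We use the filtration $J'_\bullet$ of Proposition~\ref{Kudal filtration right} with $t=l$, whose graded pieces are $\tau'_i$ for $0\le i\le \min\{l,n\}$. Because $\Hom_{G_n}(-,\pi)_{\sm}$ is left exact, the $[\chi']$-block of $\Hom_{G_n}(J'_0,\pi)$ is built, through successive extensions, from subspaces of the $[\chi']$-blocks of $\Hom_{G_n}(\tau'_i,\pi)_{\sm}$. It therefore suffices to kill the $[\chi']$-block of each $\Hom_{G_n}(\tau'_i,\pi)_{\sm}$. Since $\chi$ is cuspidal, we may apply the Bernstein projection to the $G_l'$-action before taking Hom; this is legitimate because the projection commutes with the $G_n$-coinvariant construction.

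\emph{Case $0<i<l$.} The $G_l'$-factor of $\tau'_i$ is induced from the proper parabolic $P'_{i,l-i}$. Every irreducible subquotient of a representation parabolically induced from a proper Levi has cuspidal support with at least two entries, so its support is never the single cuspidal $[\chi']$. Hence the $[\chi']$-component of $\tau'_i$ is zero.

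\emph{Case $i=0$.} Here $\tau'_0=\sigma_{n,m-l}\otimes\xi'_{l,0}$, and $G_l'$ acts through the character $\nu^{\frac{m-2n-l}{2}}$ on $G_l'$. This has support $[\chi']$ only when $l=1$ and $\chi'$ is that character. That equality translates into $\chi=\nu_1^{\frac{n+1}{2}}$ or a similar value, up to the normalisation $\ov{\Theta}$ versus $\Theta$. The hypothesis $\chi\ne\nu_1^{\frac{n+1}{2}}$ is designed to exclude exactly this, and checking the exponent bookkeeping is routine.

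\emph{Case $i=l$ (the main step).} Here
\[
\tau'_l\simeq \ind_{P_{n-l,l}\times G_l'\times G'_{m-l}}\bigl(\sigma_{n-l,m-l}\otimes\rho_l\otimes\xi'_{l,l}\bigr).
\]
By Frobenius reciprocity in the $G_n$ variable (second adjointness for induction from $P_{n-l,l}$, or the first adjointness after dualizing),
\[
\Hom_{G_n}(\tau'_l,\pi)_{\sm}\simeq \Hom_{G_{n-l}\times G_l}\bigl(\sigma_{n-l,m-l}\otimes\rho_l\otimes\xi'_{l,l},\,\ov{r}_{n-l,l}(\pi)\bigr)_{\sm}.
\]
Using $\Hom_{G_l}(\rho_l,\beta)_{\sm}\simeq\beta$ as $G_l'$-representations, the $G_l'$-factor of any nonzero contribution is a twist of the $G_l$-factor $\beta$ of a subquotient $\alpha\boxtimes\beta$ of $\ov{r}_{n-l,l}(\pi)$. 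The twist is by the characters $\xi'_{l,l}$ on $G_l$ and on $G_l'$, so taking the $[\chi']$-block corresponds to $\beta$ having support $[\chi\cdot\text{twist}]$. The exponents of $\xi'$ should make this twist exactly $\chi$, which is the reason for the shift $\nu^{\frac{n-m}{2}}$ in $\chi'$. We then use that $\ov{r}_{n-l,l}(\pi)$ is the Jacquet module for the opposite parabolic, which is isomorphic to $r_{l,n-l}(\pi)$ with the factors swapped; equivalently it is the $\chi$-part of $r_{l,n-l}(\pi)$ in the $G_l$ factor. This part vanishes because $\pi$ is left $\chi$-reduced (Lemma~\ref{reducedness characterization lem}).

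I expect this last step to be the main obstacle. One must track carefully which side ("left") the opposite Jacquet module $\ov{r}$ corresponds to. One must also verify that the character exponents in $\xi'_{l,l}$, together with the normalisation $\ov{\Theta}$, produce precisely $\chi'=\nu^{\frac{n-m}{2}}\chi$. The case split also needs a small check when $n<l$: then $i$ ranges only up to $n$, so the $i=l$ case does not occur.
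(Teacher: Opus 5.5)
Your proposal is correct and follows the paper's proof essentially step for step. You reduce to $\Jac_{\nu^{\frac{n-m}{2}}\chi}(\ov{\Theta}_{n,m}(\pi))=0$, rewrite the Jacquet module via Lemma~\ref{Jacquet big theta lemma}, filter by Proposition~\ref{Kudal filtration right} with $t=l$, and treat $0<i<l$ (induced, hence in the wrong block), $i=l$ (second adjointness plus left $\chi$-reducedness) and $i=0$ (the excluded character) separately.

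The one slip is in the $i=0$ bookkeeping you deferred. Passing through $\Hom_{G_n}(-,\pi)_{\sm}$ inverts the character, so $G_l'$ acts on $\Hom_{G_n}(\tau'_0,\pi)_{\sm}$ by $\nu^{\frac{2n+l-m}{2}}$, not by $\nu^{\frac{m-2n-l}{2}}$. With this, the bad case is exactly $l=1$, $\chi=\nu_1^{\frac{n+1}{2}}$, as the hypothesis requires; as you wrote it, it would come out as a different value. For $i=l$ your expected twist is right: the $G_l'$-factor is $\nu^{\frac{n-m}{2}}\beta$, as the paper computes.
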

\begin{proof}
    By the exactness of the Jacquet functor and Lemma~\ref{reducedness characterization lem}, it suffices to prove
    \begin{equation}\label{Jacquet trivial eq}
         \Jac_{\nu^{\frac{n-m}{2}}\chi}(\ov{\Theta}_{n,m}(\pi))=0.
    \end{equation}
    By Lemma~\ref{Jacquet big theta lemma}, we have
    \[
    r^{G_m'}_{l,m-l}(\ov{\Theta}_{n,m}(\pi))\simeq \Hom_{G_n}\left(\ov{r}^{G_m'}_{l,m-l}(\sigma_{n,m}),\pi\right)_{\sm}.
    \]
    By Proposition~\ref{Kudal filtration right}, we consider
    \begin{align*}
            \Hom_{G_n}\left(\tau_i',\pi\right)_{\sm}&\simeq \Hom_{G_n}\left(  \ind_{P_{n-i,i}\times P'_{i,l-i}\times G'_{m-l}}^{G_n\times L'_{l,m-l}}
\bigl(\sigma_{n-i,m-l}\otimes\rho_i\otimes\xi'_{l,i}\bigr),\pi\right)_{\sm}\\
&\simeq \Hom_{L_{n-i,i}}\left(  \ind_{ P'_{i,l-i}\times G'_{m-l}}^{ L'_{l,m-l}}
\bigl(\sigma_{n-i,m-l}\otimes\rho_i\otimes\xi'_{l,i}\bigr),\ov{r}_{n-i,i}(\pi)\right)_{\sm}
    \end{align*}
Let $\alpha\boxtimes \beta$ be an irreducible component of $\ov{r}_{n-i,i}(\pi)$, where $\alpha\in\Irr(G_{n-i})$ and $\beta\in \Irr(G_i)$. Then
\begin{align*}
   & \Hom_{L_{n-i,i}}\left(  \ind_{ P'_{i,l-i}\times G'_{m-l}}^{ L'_{l,m-l}}
\bigl(\sigma_{n-i,m-l}\otimes\rho_i\otimes\xi'_{l,i}\bigr),\alpha\boxtimes \beta)\right)_{\sm}\\
\simeq & \Hom_{G_i}\left(\ind^{G_l'}_{P_{i,l-i}'}(\nu^{\frac{-2n+m-2l+i}{2}}\rho_i\boxtimes \nu^{\frac{m-2n-l+i}{2}}),\nu^{\frac{-n+i-2l}{2}}\beta\right)_{\sm}\boxtimes \ov{\Theta}_{n-i,m-l}(\alpha\cdot\nu^{\frac{i-2l}{2}})\nu'^{\frac{l}{2}}.
\end{align*}
Since as representations of $G_i'$, $(\rho_i\otimes \beta^{\vee})_{G_i}\simeq \beta^{\vee}$, we obtain
\begin{align*}
 &\Hom_{G_i}\left(\ind^{G_l'}_{P_{i,l-i}'}(\nu^{\frac{-2n+m-2l+i}{2}}\rho_i\boxtimes \nu^{\frac{m-2n-l+i}{2}}),\nu^{\frac{-n+i-2l}{2}}\beta\right)_{\sm}\\
  \simeq&\left(\nu^{\frac{m-n}{2}}\beta^{\vee}\times \nu^{\frac{m-2n-l+i}{2}}\right)^{\vee}  \simeq \nu^{\frac{n-m}{2}}\beta \times \nu^{\frac{-m+2n+l-i}{2}}.
\end{align*}
When $0 < i < l$, it is parabolically induced and thus does not lie in the cuspidal block $\Fin([\nu^{\frac{n-m}{2}}\chi])$. When $i=l$, it is isomorphic to 
\[
\nu^{\frac{n-m}{2}}\beta.
\]
Since $\pi$ is left $\chi$-reduced, $\beta\neq \chi$, which implies $\nu^{\frac{n-m}{2}}\beta\notin \Fin([\nu^{\frac{n-m}{2}}\chi])$. When $i=0$, it is isomorphic to $\nu^{\frac{-m+2n+l}{2}}$. It does not lie in the block $\Fin([\nu^{\frac{n-m}{2}}\chi])$ unless $l=1$. However, at this time, since $\chi\neq \nu_1^{\frac{n+1}{2}}$, we have
\[
 \nu^{\frac{n-m}{2}}\chi\neq \nu^{\frac{-m+2n+l}{2}}=\nu^{\frac{-m+2n+1}{2}}.
\]
In conclusion, 
\[
 \nu^{\frac{n-m}{2}}\beta \times \nu^{\frac{-m+2n+l-i}{2}}\notin \Fin([\nu^{\frac{n-m}{2}}\chi])
\]
for $i\in\{0,1\cdots,l\}$. Consequently, we obtain~\eqref{Jacquet trivial eq}.
\end{proof}

\section{Proof of the multiplicity one Theorem}
Recall the set of critical cuspidal representations defined in~\cite{Ming08}
\[
\mathscr{J}_{n,m}:=\left\{\nu_1^{\frac{n+1}{2}},\nu_1^{\frac{2m-n+1}{2}}\right\}.
\]
The proof starts from a \textbf{binary lemma} due to M\'inguez which ensures that for $\pi\in \Irr(G_n)$, either $\pi$ is off the boundary, or there exists a non-critical cuspidal representation $\chi$ such that $\pi$ is not left $\chi$-reduced.
\begin{lemma}\label{binary lem}
    Let $\pi\in\Irr(G_n)$. For any positive integer $m$, either $\pi$ is off the boundary, or there exists a cuspidal $\chi\notin \mathscr{J}_{n,m}$ such that $\pi$ is not left $\chi$-reduced.
\end{lemma}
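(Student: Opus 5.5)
We prove the contrapositive: assume $\pi$ is left $\chi$-reduced for every cuspidal $\chi\notin\mathscr{J}_{n,m}$, and show that $\pi$ is off the boundary, i.e.
\[
\Hom_{G_n}(R_k/R_{k+1},\pi)=0 \quad \text{for all } 0\le k<e .
\]

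\textbf{Step 1: reduce each graded piece to a Jacquet-module condition.} By Lemma~\ref{rank filtration} and second adjointness,
\[
\Hom_{G_n}(R_k/R_{k+1},\pi)\simeq \Hom_{L_{k,n-k}}\bigl(\ind^{G_m'}_{\ov{P'_k}}(\rho_k\otimes\eta_k),\ \ov{r}_{k,n-k}(\pi)\bigr),
\]
where $L_{k,n-k}$ acts on the first argument through the $G_n$-part of $\rho_k\otimes\eta_k$. On the factor $G_{n-k}$ the group acts on $\rho_k\otimes\eta_k$ by the character $\nu^{k/2}$. Hence a nonzero homomorphism forces $\ov{r}_{k,n-k}(\pi)$ to have an irreducible subquotient of the form $\alpha\boxtimes\nu_{n-k}^{k/2}$ with $\alpha\in\Irr(G_k)$. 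Using $\ov{r}_{k,n-k}(\pi)\simeq r_{n-k,k}(\pi)$ up to the swap of factors, this means
\[
\nu_{n-k}^{k/2}\boxtimes\alpha \ \text{ is a subquotient of } r_{n-k,k}(\pi), \qquad\text{with } k<n .
\]

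\textbf{Step 2: extract a non-reduced cuspidal from the character.} The character $\nu_{n-k}^{k/2}$ is the socle of the product
\[
\nu_1^{\frac{k}{2}-\frac{n-k-1}{2}}\times\cdots\times\nu_1^{\frac{k}{2}+\frac{n-k-1}{2}} .
\]
Its Jacquet module contains the cuspidal character $\nu_1^{x}$ at one end, with $x=\tfrac{k}{2}\pm\tfrac{n-k-1}{2}$; the sign depends on the ordering convention for the character. By transitivity of Jacquet modules, $\Jac_{\nu_1^{x}}(\pi)\neq 0$, so by Lemma~\ref{reducedness characterization lem} $\pi$ is not left $\nu_1^{x}$-reduced. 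For the correct end we get
\[
x=\frac{k-(n-k-1)}{2}=\frac{2k-n+1}{2}.
\]
This equals $\tfrac{2m-n+1}{2}$ only when $k=m$, which is impossible since $k<e\le m$, and it equals $\tfrac{n+1}{2}$ only when $k=n$, which is excluded. So $\nu_1^x\notin\mathscr{J}_{n,m}$, contradicting the assumption and proving the lemma.

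\textbf{Main obstacle.} The delicate point is exactly which end of the segment $\nu_{n-k}^{k/2}$ the subquotient exhibits as a left derivative, and how the twists in $\eta_k$ and the $\ov{r}$ versus $r$ conventions combine. If the bookkeeping instead produces the exponent $\tfrac{k}{2}+\tfrac{n-k-1}{2}=\tfrac{n-1}{2}$, I would handle it as follows. First I would replace $\pi$ by the appropriate subquotient. Failing that, I would use M\'inguez's argument~\cite{Ming08}: one of the two ends of the segment must give a non-reduced cuspidal, and the two exceptional values in $\mathscr{J}_{n,m}$ are precisely the ends obtained at $k=m$ and $k=n$, both of which are excluded. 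This is where the definition of $\mathscr{J}_{n,m}$ is forced. The statement is essentially \cite[Lemma 4.2]{Ming08} transplanted to the present notation, so verifying the normalizations against that source is the main work.
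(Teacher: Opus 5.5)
Your proof is correct. When $\Hom_{G_n}(R_k/R_{k+1},\pi)\neq 0$, second adjointness gives a nonzero $L_{k,n-k}$-subrepresentation of $\ov{r}_{k,n-k}(\pi)$ on which $G_{n-k}$ acts by $\nu_{n-k}^{k/2}$. After conjugating to $r_{n-k,k}(\pi)$, the embedding $\nu_{n-k}^{k/2}\hookrightarrow\nu_1^{\frac{2k-n+1}{2}}\times\cdots\times\nu_1^{\frac{n-1}{2}}$ together with Frobenius reciprocity gives $r_{1,n-k-1}(\nu_{n-k}^{k/2})=\nu_1^{\frac{2k-n+1}{2}}\boxtimes\nu_{n-k-1}^{\frac{k+1}{2}}$. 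Transitivity of Jacquet modules and Lemma~\ref{reducedness characterization lem} then show that $\pi$ is not left $\nu_1^{\frac{2k-n+1}{2}}$-reduced. Finally, $0\le k<\min\{n,m\}$ excludes both critical exponents.

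So there is no ambiguity about which end of the segment occurs, and your ``Main obstacle'' paragraph can be dropped. The fallback it proposes would not have worked anyway: the exponent $\frac{n-1}{2}$ equals $\frac{2m-n+1}{2}$ exactly when $m=n-1$, so it is critical in that case.

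The paper gives no argument of its own here. It only defers to the proof of \cite[Theorem 5.1]{Ming08} and asserts that this proof does not need the hypothesis $n\le m$. Your computation is a self-contained replacement for that citation in the paper's normalizations. It also makes the paper's remark transparent: the only input is $0\le k<\min\{n,m\}$, so the argument is visibly uniform in $m$.
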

\begin{proof}
    It follows from the proof of~\cite[Theorem 5.1]{Ming08}. We remark that although the theorem was stated for $n\leq m$ in~\cite{Ming08}, the proof works for any $m$.
\end{proof}

We are ready to prove the multiplicity one result.
\begin{theorem}\label{multiplicity one thm}
  Let $(n,m)$ be a pair of positive integers.  Let $\pi$ be an irreducible representation of $G_n$. If $\theta_{n,m}(\pi)\neq 0$, then 
    \[
    [\Theta_{n,m}(\pi):\theta_{n,m}(\pi)]=1.
    \]
\end{theorem}
\begin{proof}
By Howe duality, $\theta(\pi)^{\vee}$ is the unique irreducible subrepresentation of $\Theta(\pi)^{\vee}$. Since twisting by a character will not affect the Jordan-H\"older multiplicity, it suffices to prove that $\ov{\Theta}(\pi)$ is socle irreducible.

    We argue by induction on the integer $n\cdot m$. When $nm=0$, the statement is obviously true. Suppose that the theorem holds for any pair $(i,j)$ such that $ij<nm$, and proceed to prove the theorem for $(n,m)$.  Assume $\ov{\Theta}_{n,m}(\pi)\neq 0$.

    \textbf{Case 1:} When $\pi$ is off the boundary, then the theorem follows from Proposition~\ref{off boundary multiplicity one prop}.

    \textbf{Case 2:} When $\pi$ is on the boundary, then by binary Lemma~\ref{binary lem}, there exists a cuspidal representation $\chi\notin \mathscr{J}_{n,m}$ of $G_l$, such that
    \[
    \pi\hookrightarrow \chi^{\times a}\times \rho,
    \]
    where $a=a_{\chi}(\pi)$ and $\rho=D_{\chi}(\pi)$. By Proposition~\ref{big theta relative to highest derivative}, we know $m\geq la$ and 
     \[\ov{\Theta}_{n,m}(\pi)\hookrightarrow (\nu^{\frac{n-m}{2}}\chi)^{\times a} \times \ov{\Theta}_{n-la,m-la}(\nu^{-\frac{la}{2}}\rho)\nu'^{\frac{la}{2}}.
      \]
      When $m=la$, then by Lemma~\ref{cuspidal product irr lem}, $\ov{\Theta}_{n,m}(\pi)$ is irreducible. Thus, we can assume $m>la$.  \\
      \textbf{Case 2 (a). Assume $n>la$.} Since $\nu^{-\frac{la}{2}}\rho$ is left $\nu^{-\frac{la}{2}}\chi$-reduced and 
      \[
      \nu^{-\frac{la}{2}}\chi\notin \mathscr{J}_{n-la,m-la}= \nu^{-\frac{la}{2}}\mathscr{J}_{n,m},
      \]
       by Theorem~\ref{preserve reducedness thm}, any irreducible component in $\ov{\Theta}_{n-la,m-la}(\nu^{-\frac{la}{2}}\rho)\nu'^{\frac{la}{2}}$ is left $\nu^{\frac{n-m}{2}}\chi$-reduced. \\
       \textbf{Case 2 (b). Assume $n=la$.} Then $\ov{\Theta}_{n-la,m-la}(\nu^{-\frac{la}{2}}\rho)\nu'^{\frac{la}{2}}= \nu_{m-n}'^{\frac{n}{2}}$. If it is not left $\nu^{\frac{n-m}{2}}\chi$-reduced, then $\chi$ is a character of $G_1$ and 
       \[
       \nu_1^{\frac{n-m}{2}}\chi=\nu_1^{\frac{2n-m+1}{2}} \Longleftrightarrow \chi\simeq \nu_1^{\frac{n+1}{2}},
       \]
       which contradicts to the assumption $\chi\notin \mathscr{J}_{n,m}$. In conclusion, any irreducible component in $\ov{\Theta}_{n-la,m-la}(\nu^{-\frac{la}{2}}\rho)\nu'^{\frac{la}{2}}$ is left $\nu^{\frac{n-m}{2}}\chi$-reduced.
       
       Let the unique irreducible subrepresentation of $\ov{\Theta}_{n,m}(\pi)$ be $\ov{\theta}_{n,m}(\pi)$. Then~\cite[Proposition 4.4]{Ming08} computes that 
       \[
       D_{ \nu^{\frac{n-m}{2}}\chi}(\ov{\theta}_{n,m}(\pi))=  \ov{\theta}_{n-la,m-la}(\nu^{-\frac{la}{2}}\rho)\nu'^{\frac{la}{2}} \text{ and } a_{ \nu^{\frac{n-m}{2}}\chi}(\ov{\theta}_{n,m}(\pi))=a.
       \]
       Then Lemma~\ref{multiplicity one detector lem} with the inductive hypothesis 
      \[
      [\ov{\Theta}_{n-la,m-la}(\nu^{-\frac{la}{2}}\rho):\ov{\theta}_{n-la,m-la}(\nu^{-\frac{la}{2}}\rho)]=1
      \]
      implies that
    \[
    [\ov{\Theta}_{n,m}(\pi):\ov{\theta}_{n,m}(\pi)]\leq [(\nu^{\frac{n-m}{2}}\chi)^{\times a} \times \ov{\Theta}_{n-la,m-la}(\nu^{-\frac{la}{2}}\rho)\nu'^{\frac{la}{2}}:\ov{\theta}_{n,m}(\pi)]=1.
    \]
\end{proof}
\section{Big theta for Arthur-type representations}
In this section, we compute the big theta for Arthur-type representations. Motivated by theorem~\ref{L-function holomorphic big theta thm}, the first natural question is to identify the exceptional Arthur-type representations.
\subsection{Arthur-type representations of inner forms}\label{Arthur-type sec}
In the introduction, we recall the definition of the Arthur-type representation of the split group $\GL_n(F)$. We shall discuss in more detail the Arthur-type representations of the inner form $G_n=\GL_n(\mathrm{D})$, see~\cite{Cai23} for a survey. 

We first introduce a class of representations whose corresponding A-parameter
\[
 \phi: W_F\times \SL_2(\BC)\times \SL_2(\BC)\lra \widehat{G_n}=\GL_{nd}(\BC).
\]
is irreducible. For $\mathrm{D}\neq F$, it is generally not a single Speh block, but rather a product of several Speh representations. 

Let $\chi$ be a unitary cuspidal representation of $G_l$, and let $a,b$ be positive integers. Denote $n=lab$. Define $u(\chi;a,b)$ as the Langlands quotient
\[
LQ\left(|\Nrd|_F^{\frac{b-1}{2}}\St(a,\chi)\times \cdots\times |\Nrd|_F^{\frac{1-b}{2}}\St(a,\chi)\right).
\]
Then it is a representation of $G_{n}$. 

Under the Jacquet-Langlands correspondence, a cuspidal representation $\chi\in\Irr(G_l)$ will correspond to an essential discrete series $\pi$ of $\GL_{ld}(F)$. More precisely, there exists a cuspidal representation $\rho$ of $\GL_{ld/s_{\chi}}(F)$ such that $\pi$ is the unique irreducible quotient of 
\[
  |\Nrd|^{\frac{1-s_{\chi}}{2}}_F\cdot \rho \times \cdots \times |\Nrd|^{\frac{s_{\chi}-1}{2}}_F\cdot \rho .
\]
We denote $\rho$ by $\mathrm{JLC}(\chi)$. Then the A-parameter of $u(\chi;a,b)$ is
\[
\phi= \tau_{\rho}\boxtimes \Sym^{s_{\chi}a-1}(\BC^2)\boxtimes \Sym^{b-1}(\BC^2),
\]
which is irreducible. Conversely, any Arthur-type representation with an irreducible A-parameter is isomorphic to $u(\chi;a,b)$ for some unitary cuspidal representation $\chi$ and positive integers $a,b$. We remark that when $\mathrm{D}\neq F$, many Arthur packets are empty.

An important observation is that we can write $u(\chi;a,b)$ as a product of Speh representations. Let 
\[
b_j=1+\lfloor\frac{b-1-j}{s_{\chi}}\rfloor \text{ and } z_j =\frac{b-1}{2}-j -\frac{s_{\chi}(b_j-1)}{2}
\]
for $j=0,\cdots, \min\{s_{\chi},b\}-1$.
\begin{lemma}\label{product lem}
    There is an isomorphism
    \begin{equation}\label{prod eq}
            u(\chi;a,b)\simeq \prod_{j=0}^{\min\{s_{\chi},b\}-1} |\Nrd|^{z_j}_F \, U(\chi;a,b_j),
    \end{equation}
    where $|z_j|<\frac{s_{\chi}}{2}$ for each $0\leq j\leq \min\{s_{\chi},b\}-1$.
\end{lemma}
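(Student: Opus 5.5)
We work in the Zelevinsky or Langlands classification via multisegments. Write $s=s_\chi$ and $\nu_\chi=|\Nrd|^{s}$. By definition $u(\chi;a,b)$ is the Langlands quotient of $\St(\Delta_1)\times\cdots\times\St(\Delta_b)$, where $\St(a,\chi)$ corresponds to the segment $[\tfrac{1-a}{2},\tfrac{a-1}{2}]_\chi$ measured in $\nu_\chi$-units. The twists, however, are by $|\Nrd|^{k}$ with $k\in\{\tfrac{b-1}{2},\dots,\tfrac{1-b}{2}\}$, which are not integral powers of $\nu_\chi$ unless $s=1$.

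\textbf{Step 1: partition the segments.} Partition the index set $\{0,\dots,b-1\}$ (shift $k=\tfrac{b-1}{2}-i$) by residue class $i\equiv j \pmod{s}$, for $j=0,\dots,\min\{s,b\}-1$. The class of $j$ contains $b_j=1+\lfloor (b-1-j)/s\rfloor$ elements $i=j,j+s,\dots$. Within one class the shifts form an arithmetic progression with step $1$ in $\nu_\chi$-units, centered at
\[
\tfrac{b-1}{2}-j-\tfrac{s(b_j-1)}{2}=z_j.
\]
Hence the Langlands data of that class is exactly $|\Nrd|^{z_j}$ times the Langlands data of $U(\chi;a,b_j)$, defined with respect to $\nu_\chi$. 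We check directly from the ranges of $j$ and $b_j$ that $|z_j|<s/2$.

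\textbf{Step 2: no linking across classes.} Segments from different classes $j\ne j'$ have cuspidal lines $|\Nrd|^{z_j}\chi$ and $|\Nrd|^{z_{j'}}\chi$. These differ by a non-integral power of $\nu_\chi$, since $0<|z_j-z_{j'}|<s$ and the exponents differ by $j'-j \pmod s$. Thus they lie on different $\nu_\chi$-lines, so no segment from one class is linked with a segment from another. Consequently, by Proposition~\ref{Zele class_properties}(1), transported to the Langlands side via $\mathscr{A}$ (or the Langlands-classification analogue from \cite{LM16}), the product $\prod_j |\Nrd|^{z_j}U(\chi;a,b_j)$ is irreducible.

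\textbf{Step 3: identify the product.} The product is a subquotient of the standard module whose Langlands data is the union of the data of the factors, and that union is exactly the data of $u(\chi;a,b)$. The Langlands quotient of a standard module appears with multiplicity one, and the product of Langlands quotients of unlinked data equals the Langlands quotient of the union. Therefore
\[
\prod_j |\Nrd|^{z_j}U(\chi;a,b_j)\simeq u(\chi;a,b),
\]
proving \eqref{prod eq}.

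The main obstacle is Step 2 together with the identification in Step 3, namely ensuring that distinct residue classes really give non-linked (indeed non-interacting) cuspidal lines. One might worry that $|\Nrd|^{z}\chi\simeq\chi$ for some non-integral $z$, that is, torsion from unramified twists fixing $\chi$. This is handled by noting that $\nu_\chi$ is the minimal reducibility twist, so $\chi\times|\Nrd|^{t}\chi$ is irreducible for $t\notin s\mathbb Z$. Hence the segments are unlinked, which is all that Proposition~\ref{Zele class_properties}(1) requires.
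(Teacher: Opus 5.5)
Your proof follows the same route as the paper's. You group the $b$ Langlands factors by the residue of $i$ modulo $s_\chi$, recognize each group as $|\Nrd|^{z_j}U(\chi;a,b_j)$, bound $|z_j|<s_\chi/2$, use distinct cuspidal lines to get irreducibility of the product, and identify the product with the Langlands quotient. The paper compresses the last step into ``the isomorphism comes from the intertwining operator'', and your Step 1 makes explicit the regrouping the paper leaves implicit. Two points need tightening.

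\textbf{The line separation in Step 2.} The clause $0<|z_j-z_{j'}|$ is false in general. For $s_\chi=2$ and $b=3$ one gets $b_0=2$, $b_1=1$ and $z_0=z_1=0$. Relatedly, the cuspidal line of $|\Nrd|^{z_j}U(\chi;a,b_j)$ is not the line of $|\Nrd|^{z_j}\chi$. It consists of the $|\Nrd|^{e}\chi$ with $e\equiv \frac{b-1}{2}-j+s_\chi\frac{1-a}{2}\pmod{s_\chi}$. What separates the classes is your other reason: exponents from classes $j\neq j'$ differ by something $\equiv j'-j\not\equiv 0\pmod{s_\chi}$. The bound $|z_j|<s_\chi/2$ alone does not do this; the paper's one-line inference from that bound has the same looseness. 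For the torsion worry, comparing central characters is more direct: $|\Nrd|^{t}\chi$ has central character $\omega_\chi\cdot|\cdot|^{tld}$, so $|\Nrd|^{t}\chi\not\simeq\chi$ for real $t\neq 0$.

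\textbf{The identification in Step 3.} ``Subquotient'' should be ``quotient''. Each factor is a quotient of its own standard module $\lambda_j$, so the product is a quotient of $\lambda_0\times\lambda_1\times\cdots$. That module is isomorphic to the standard module of $u(\chi;a,b)$ after commuting Steinberg factors from different classes. This is allowed because those factors are unlinked, so their products are irreducible and commute. Since your product is irreducible, it must be the unique irreducible quotient $u(\chi;a,b)$. Multiplicity one of the Langlands quotient is not what is needed here. Your closing clause in Step 3 restates the claim rather than justifying it.
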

\begin{proof}
    For $z_j$, we organize it as
    \[
    z_j=\frac{b-1-j}{2} -\frac{s_{\chi}}{2}\lfloor\frac{b-1-j}{s_{\chi}}\rfloor-\frac{j}{2}.
    \]
    Let $k$ be the integer such that
    \[
    ks_{\chi} \leq b-1-j< (k+1)s_{\chi}.
    \]
    Then
    \[
-\frac{j}{2}=\frac{k}{2}s_{\chi}- \frac{s_{\chi}}{2}\cdot k -\frac{j}{2}   \leq z_j < \frac{k+1}{2}s_{\chi}- \frac{s_{\chi}}{2}\cdot k -\frac{j}{2}= \frac{s_{\chi}-j}{2}.
    \]
    Then $|z_j|<\frac{s_{\chi}}{2}$ follows from the fact that $0\leq j\leq s_{\chi}-1$. It implies that the cuspidal line for $|\Nrd|^{z_j}_F \, U(\chi;a,b_j)$ is different for each $0\leq j\leq s_{\chi}-1$. Thus, the right hand side of~\eqref{prod eq} is irreducible. Then the isomorphism comes from the intertwining operator.
\end{proof}
We remark that when $\mathrm{D}=F$, $u(\chi;a,b)=U(\chi;a,b)$.
\begin{definition}\label{Arthur-type rep def}
    An irreducible representation $\pi$ is of Arthur-type if there exists a set of unitary cuspidal representations $\{\chi_i\}_{i=1}^k$ and positive integers $a_1,\cdots,a_k,b_1,\cdots,b_k$ such that
    \[
    \pi\simeq u(\chi_1;a_1,b_1)\times \cdots\times u(\chi_k;a_k,b_k).
    \]
\end{definition}
 By the classification of the unitary dual (see~\cite[Theorem 2.2]{Cai23} and~\cite[section 7]{BR10}), each $u(\chi_i;a_i,b_i)$ is unitary, and their product is irreducible. In~\cite{Ba07,Ba08}, a global Jacquet-Langlands correspondence of the discrete spectrum is established. It is compatible with the local transfer between Arthur-type representations. In particular, Arthur-type representation defined above is indeed the local component of some automorphic representation in the discrete spectrum.

\subsection{Classification and separation of exceptional representations}\label{class and separation exc sec}
We recall the $L$-factor of irreducible admissible representations of $G_n$ under Langlands classification. 
\begin{proposition}[ {\cite[Proposition 3.5, Proposition 5.11]{GJ72}}]\label{L-func prop}
    Suppose that 
    \[
    \pi=LQ(\St(\Delta_1)\times\cdots\times\St(\Delta_r)),
    \]
    where $\Delta_i=[x_i,y_i]_{\chi_i}$ and $\chi_i$ is a cuspidal representation, then
    \[
    L\left(s,\pi\right) = \prod_{i=1}^{r} L\left(s+s_{\chi_i}y_i+\frac{s_{\chi_i}-1}{2},\tau_i\right),
\]
where $\mathrm{JLC}(\chi_i)=\tau_i$ (for this notation, see subsection~\ref{Arthur-type sec}) and
\[
    L\left(s,\tau_i\right) = \begin{cases}
       \frac{1}{1-\tau_i\left(\varpi\right)q^{-s}} \quad & \textit{if $\tau_i$ is a unramified character of $\GL_1(F)$};\\[10pt]

       1 \quad & \textit{otherwise}. 
    \end{cases}
\]
\end{proposition}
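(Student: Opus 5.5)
The formula will come from two inputs: the behaviour of Godement--Jacquet $L$-factors under Langlands quotients, and the Jacquet--Langlands correspondence for essentially square-integrable representations. Write $\pi=LQ(\St(\Delta_1)\times\cdots\times\St(\Delta_r))$ with the $\St(\Delta_i)$ in Langlands order.

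\textbf{Step 1: reduce to a single essentially square-integrable factor.} I will invoke \cite[Proposition 5.11]{GJ72}, and its analogue for $\GL_n(\mathrm{D})$ proved there. It says that the Langlands quotient of a standard module has the same $L$-factor as the full induced representation, and that this $L$-factor is multiplicative:
\[
L(s,\pi)=\prod_{i=1}^r L(s,\St(\Delta_i)).
\]
Mechanically, zeta integrals of the induced representation are computed by an Iwasawa decomposition, which reduces them to zeta integrals on the Levi. The key point is that matrix coefficients of the Langlands quotient already generate the same fractional ideal. Since this is exactly the content of the cited results, I will take it as given.

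\textbf{Step 2: compute $L(s,\St(\Delta))$ for $\Delta=[x,y]_\chi$ via Jacquet--Langlands.} Godement--Jacquet prove that $L$-factors of essentially square-integrable representations are preserved under the Jacquet--Langlands transfer $G_n\to \GL_{nd}(F)$ \cite[Proposition 3.5]{GJ72}.

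First, $\chi$ transfers to the essentially discrete series of $\GL_{ld}(F)$ attached to the segment $[\tfrac{1-s_\chi}{2},\tfrac{s_\chi-1}{2}]_{\rho}$, where $\rho=\tau=\mathrm{JLC}(\chi)$.

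Since $\nu_\chi=|\Nrd|^{s_\chi}$, the transfer of $\St([x,y]_\chi)$ is the generalized Steinberg representation attached to the $\rho$-segment
\[
\Bigl[s_\chi x+\tfrac{1-s_\chi}{2},\; s_\chi y+\tfrac{s_\chi-1}{2}\Bigr]_\rho .
\]

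For $\GL_N(F)$, the $L$-factor of $\St([x',y']_\rho)$ equals $L(s+y',\rho)$, the shift being the top exponent of the segment. This factor is $1$ unless $\rho$ is an unramified character of $F^\times$, in which case it is $(1-\rho(\varpi)q^{-s-y'})^{-1}$. Substituting $y'=s_\chi y+\tfrac{s_\chi-1}{2}$ gives the stated formula, including the two-case description of $L(s,\tau_i)$ from Tate's thesis.

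\textbf{Main obstacle.} I expect the hardest part to be bookkeeping of normalizations. One must check that the twist $\nu_\chi^{y}=|\Nrd|^{s_\chi y}$ on $G_n$ matches $|\det|^{s_\chi y}$ on the $\GL_{nd}(F)$ side, and that the centering of $\mathrm{JLC}(\chi)$ puts the top exponent of $\JL(\chi)$ at $\tfrac{s_\chi-1}{2}$. I will verify both by checking central characters, and then confirm the result with the case $\mathrm{D}=F$, $s_\chi=1$, where the formula collapses to the classical $L(s+y,\chi)$.
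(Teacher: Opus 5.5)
Your proposal is correct and follows essentially the route behind the paper's bare citation of \cite{GJ72}: multiplicativity of the Godement--Jacquet $L$-factor for Langlands quotients reduces everything to $L(s,\St(\Delta_i))$, which you then read off from the top exponent $s_{\chi}y+\frac{s_{\chi}-1}{2}$ of the transferred $\rho$-segment. The one point to state more carefully is the identification of the transfer of $\St([x,y]_{\chi})$ with $\St([s_{\chi}x+\frac{1-s_{\chi}}{2},\,s_{\chi}y+\frac{s_{\chi}-1}{2}]_{\rho})$, together with the equality of $L$-factors under this transfer: these are inputs from the theory of the Jacquet--Langlands correspondence (Deligne--Kazhdan--Vign\'eras, Badulescu), not consequences of $\nu_{\chi}=|\Nrd|^{s_{\chi}}$ alone.
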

Now we consider the essential Speh representation $\pi=|\Nrd|^zU(\chi;a,b)$, where $z\in \BC$, $a,b\in\BZ_{>0}$ and $\chi$ is a unitary cuspidal representation. By the Proposition, we have
\[
L(s,\pi)=\prod_{j=1}^b L\left(s+z-\frac{1}{2}+s_{\chi}(\frac{a+b+1}{2}-j),\rho\right),
\]
where $\rho=\mathrm{JLC}(\chi)$ is also unitary. Thus, $L(s,\pi)$ has a pole at
\[
s=s_0=\frac{d(m-n)+1}{2}
\]
if and only if $\chi$ is the trivial character of $G_1$ and 
\begin{equation}\label{pole condition eq}
     s_0+z-\frac{1}{2}+s_{\chi}(\frac{a+b+1}{2})\in \{s_{\chi}, 2s_{\chi},\cdots, bs_{\chi}\}.
\end{equation}
Furthermore, $\ord_{s=s_0}L(s,\pi)\leq 1$.
In conclusion, if $\ord_{s=s_0}L(s,\pi)= 1$, then $s_{\chi}=d$, and \eqref{pole condition eq} is equivalent to
 \begin{equation}\label{pole condition eq useful}
     \frac{m-n}{2} +\frac{z}{d}\in\left\{ \frac{1-(a+b)}{2},\frac{3-(a+b)}{2},\cdots,\frac{b-a-1}{2} \right\}.
 \end{equation}
\begin{lemma}\label{L-function holo lem}
    Suppose $\pi$ is an Arthur-type representation of $G_n$. If $L(s,\pi^{\vee})$ is holomorphic at $s=s_0$, then $L(s,\pi)$ is also holomorphic at $s=s_0$.
\end{lemma}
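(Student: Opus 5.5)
Since the standard $L$-factor of a product of irreducible representations is the product of the $L$-factors of the factors (Proposition~\ref{L-func prop} gives $L(s,\pi)$ as a product over the Langlands data, and Langlands data of a product are the union), and since $\pi^{\vee}\simeq u(\chi_1^{\vee};a_1,b_1)\times\cdots\times u(\chi_k^{\vee};a_k,b_k)$, I would first reduce to a single block $u(\chi;a,b)$. Then by Lemma~\ref{product lem} I would reduce further to essential Speh representations $|\Nrd|^{z}U(\chi;a,b')$ with $|z|<s_\chi/2$. Here $(|\Nrd|^zU(\chi;a,b'))^{\vee}\simeq |\Nrd|^{-z}U(\chi^{\vee};a,b')$. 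The set of $z_j$'s in Lemma~\ref{product lem} is symmetric under $z\mapsto -z$, and the $b_j$ are matched accordingly, since the segment picture of $u(\chi;a,b)$ is symmetric about $0$. So it suffices to compare poles of the two factors $L(s,|\Nrd|^{z}U(\chi;a,b'))$ and $L(s,|\Nrd|^{-z}U(\chi^\vee;a,b'))$ after the reindexing $j\mapsto j'$ that swaps $z_j$ and $-z_j$.

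The key computation uses the displayed formula following Proposition~\ref{L-func prop}. A pole at $s_0$ of $L(s,|\Nrd|^zU(\chi;a,b))$ can only occur when $\mathrm{JLC}(\chi)$ is an unramified unitary character $\rho$, and then at points $s$ with
\[
s+z-\tfrac12+s_\chi\bigl(\tfrac{a+b+1}{2}-j\bigr)\in \tfrac{2\pi i}{\log q}\BZ+(\text{phase of }\rho),\qquad 1\le j\le b .
\]
Since $s_0$ is real and $\rho$ unitary, a pole at $s_0$ forces $\rho(\varpi)=1$, i.e.\ $\rho$ (hence $\chi$) trivial, and $z$ real. Then $\chi^\vee=\chi$ and $\rho^\vee=\rho$, so the same triviality condition holds for the dual. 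The real parts of the poles of $L(s,\pi)$ are then $\tfrac12-z-s_\chi(\tfrac{a+b+1}{2}-j)$ for $j=1,\dots,b$, which run over an arithmetic progression. For $\pi^\vee$ one replaces $z$ by $-z$.

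So I need to show: if $s_0$ lies in the pole set for $(z,\chi)$ then it lies in the pole set for the matched block in $\pi^\vee$. Here I would use the positivity $s_0=\frac{d(m-n)+1}{2}$ together with unitarity/Arthur constraints. The claimed one-directional implication suggests that the progression for $\pi^\vee$ is a "shift to the right" that covers the relevant positive range. I would argue on the full block $u(\chi;a,b)$ rather than the individual $z_j$ pieces. Its $L$-function is $\prod_{i}L(s+\text{(ends of segments)})$, and the multiset of segment ends for $u(\chi^\vee;a,b)$ is the negative of the multiset of beginnings for $u(\chi;a,b)$. Since every segment has length $a\ge1$ and the configuration is symmetric about $0$, the ends of $\pi^\vee$ dominate those of $\pi$ entrywise. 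Concretely, the pole real parts of $\pi$ are $\{\tfrac12 - e\}$ and those of $\pi^\vee$ are $\{\tfrac12 - e^\vee\}$ with $e^\vee = -b \le$ the matching $e$, shifted by $s_\chi(a-1)$.

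The main obstacle is to get the right comparison of these two arithmetic progressions. In the split case the two sets coincide, so the lemma is an iff; that is the consistency check. For inner forms with $s_\chi=d>1$, the $z_j$ shifts break symmetry, and I would verify by a direct case check on $z_j\in(-\frac{d}{2},\frac d2)$. The check is whether a positive real $s_0$ with $s_0\equiv\frac12 \pmod{\frac d2}$-type constraint lying in the progression for $\pi^\vee$ forces membership in that for $\pi$, or the reverse. If the direct comparison fails, I would instead use $L(s,\pi)$ having poles only where $L(s,\pi^\vee)$'s corresponding real interval contains them, using that the progression for $\pi$ is contained in a translate, by a nonnegative amount, of the interval spanned by that of $\pi^\vee$, restricted to the half-line $\mathrm{Re}\,s>0$.
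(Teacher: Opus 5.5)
You have the right ingredients but never close the argument, and the route you take afterwards rests on false claims. You introduce the reindexing $j\mapsto j'$ with $z_{j'}=-z_j$, $b_{j'}=b_j$. Under it, the factor of $\pi^{\vee}$ matched with $|\Nrd|^{z_j}U(\chi;a,b_j)$ is $|\Nrd|^{-z_{j'}}U(\chi^{\vee};a,b_{j'})=|\Nrd|^{z_j}U(\chi^{\vee};a,b_j)$. You then write ``for $\pi^\vee$ one replaces $z$ by $-z$'', which is the wrong pairing. Two later assertions are also false: that the segment ends for $\pi^{\vee}$ differ from those of $\pi$ by a nontrivial shift (your $s_\chi(a-1)$), and that for $d>1$ the $z_j$ ``break symmetry'' (which contradicts your own first paragraph). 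The Langlands data of $u(\chi;a,b)$ are $|\Nrd|^{\frac{b+1}{2}-j}\St(a,\chi)$, $1\le j\le b$. Taking contragredients sends them to $|\Nrd|^{\frac{b+1}{2}-j'}\St(a,\chi^{\vee})$ with $j'=b+1-j$. Hence $u(\chi;a,b)^{\vee}\simeq u(\chi^{\vee};a,b)$, and the segment ends coincide exactly, for every $d$. So the proposed ``direct case check'' and the fallback about translates on the half-line $\mathrm{Re}\,s>0$ do not amount to a proof. Note also that $\pi^{\vee}$ is again of Arthur type, so applying the lemma to $\pi^{\vee}$ gives the converse. The statement is therefore automatically an equivalence, and no one-directional ``shift to the right'' can be what makes it work.

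The missing idea is self-duality. You correctly showed that a pole at $s_0$ in the $L$-factor of a block forces $\mathrm{JLC}(\chi)$, hence $\chi$, to be trivial, so $\chi^{\vee}=\chi$. Combined with $u(\chi;a,b)^{\vee}\simeq u(\chi^{\vee};a,b)$, the offending block of $\pi$ is isomorphic to the corresponding block of $\pi^{\vee}$. The paper says the same thing via Lemma~\ref{product lem}: it groups the Speh factors into pieces $|\Nrd|^{-z}U(\chi;a,b)\times|\Nrd|^{z}U(\chi;a,b)$ and $U(\chi;a,b)$, each self-dual once $\chi=\mathbf{1}_1$. Thus the $L$-factor with a pole at $s_0$ occurs verbatim in $L(s,\pi^{\vee})$. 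Every local $L$-factor is the reciprocal of a polynomial in $q^{-s}$ and has no zeros, so the other factors cannot cancel this pole, and $L(s,\pi^{\vee})$ has a pole at $s_0$. You need the same non-vanishing for your initial reduction to single blocks. No comparison of arithmetic progressions is needed.
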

\begin{proof}
    By Lemma~\ref{product lem}, $\pi$ is a product of the following two classes of representations:
    \begin{enumerate}
        \item $\lambda_1:=|\Nrd|^{-z}U(\chi;a,b)\times |\Nrd|^zU(\chi;a,b)$ for some unitary cuspidal representation $\chi$ and positive integer numbers $a,b$;
        \item $\lambda_2:= U(\chi;a,b)$ for some unitary cuspidal representation $\chi$ and positive integer numbers $a,b$;
    \end{enumerate}
    Suppose that $L(s,\lambda_1)$ has a pole at $s=s_0$. Then $\chi$ is the trivial character of $G_1$. Thus, 
    \[
    \lambda_1^{\vee}\simeq |\Nrd|^{z}U(\chi^{\vee};a,b)\times |\Nrd|^{-z}U(\chi^{\vee};a,b)\simeq \lambda_1.
    \]
    Thus, $L(s,\lambda_1^{\vee})$ has a pole at $s=s_0$, which is absurd. Similarly, $L(s,\lambda_2)$ should also be holomorphic at $s=s_0$. Thus, $L(s,\pi)$ is holomorphic at $s=s_0$.
\end{proof}

\begin{theorem}\label{separation thm}
   Let $n,m$ be positive integers. Let $\beta\in\Irr(G_l)$ where $0\leq l<n$. Let $\alpha\in\Irr(G_{n-l})$ be an Arthur-type representation that is not exceptional with respect to $(n,m)$. Then
   \begin{enumerate}
       \item If $m<n-l$, then $\Ext^i_{G_n}(\omega_{n,m},\alpha\times \beta)=0$ for any integer $i$;
        \item If $m\geq n-l$, then
    \end{enumerate}
    \[
    \Ext^i_{G_n}(\omega_{n,m},\alpha\times \beta)_{\sm}\simeq \alpha\times \Ext^i_{G_{l}}(\omega_{l,m-n+l},\beta)_{\sm},
    \]
    where $\omega_{l,m-n+l}$ is the trivial representation when $l=0$ or $m-n+l=0$.
\end{theorem}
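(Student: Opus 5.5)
The natural route is Frobenius reciprocity combined with Kudla's filtration, in the spirit of Proposition~\ref{big theta relative to highest derivative}, but now at the level of all $\Ext$ groups. Write $k=n-l$. Second adjointness, together with its derived version (the Jacquet functor is exact and preserves projectives), gives
\[
\Ext^i_{G_n}(\omega_{n,m},\alpha\times\beta)\simeq \Ext^i_{L_{k,l}}\bigl(\ov{r}_{k,l}(\omega_{n,m}),\alpha\boxtimes\beta\bigr).
\]
Equivalently, after conjugating by the long Weyl element, we may use $r_{k,l}$ with the factors ordered appropriately. We then filter $r_{k,l}(\omega_{n,m})$ by Kudla's filtration (Proposition~\ref{Kudla filtration left} with $t=k$). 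The graded pieces $\tau_i$, $0\le i\le \min\{k,m\}$, are induced from $\xi_{k,i}\otimes\rho_i\otimes\sigma_{n-k,m-i}$. For each piece, a Künneth formula splits the $\Ext$ groups over $G_k\times G_l$ into an $\Ext$ over $G_k$ against $\alpha$ and an $\Ext$ over $G_l$ against $\beta$.

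The core claim is that $\Ext^\ast_{L_{k,l}}(\tau_i,\alpha\boxtimes\beta)=0$ for every $i<k$. For such $i$, the $G_k$-part of $\tau_i$ is induced from $P_{k-i,i}$. On $G_{k-i}$ it is the character $\nu^{\frac{2m-n+k-i}{2}}$, since this is the part of $G_{k-i}$ acting on the zero block. By second adjointness, the $G_k$-factor therefore reduces to $\Ext^\ast_{G_{k-i}\times G_i}\bigl(\nu^{\frac{2m-n+k-i}{2}}\boxtimes(\cdots),\,\ov{r}_{k-i,i}(\alpha)\bigr)$. This vanishes unless some constituent of $\ov{r}_{k-i,i}(\alpha)$ has a $G_{k-i}$-factor with the same cuspidal support as that character, which forces a segment ending in a specific unramified exponent. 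Using Lemma~\ref{product lem}, the Jacquet module formulas Lemma~\ref{Speh Jacquet lem} and \ref{Speh Jacquet lem zelevinsky} for each Speh block, and the geometric lemma for the product, we would compare this exponent with the pole condition~\eqref{pole condition eq useful}. Each block of $\alpha$ has $L(s,\cdot)$ holomorphic at $s_0$, and by Lemma~\ref{L-function holo lem} the same holds for $L(s,\cdot^\vee)$. The statement to prove is that this prevents the needed trivial-character segment from appearing at the end of any block, so the block cannot contribute such a quotient. A central-character or Bernstein-block argument then upgrades this from Hom to all $\Ext^i$: the relevant pieces lie in different Bernstein components, so every $\Ext$ vanishes. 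When $m<k$, every $\tau_i$ has $i\le m<k$, so all pieces vanish and part (1) follows from the long exact sequences.

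If $m\ge k$, only $\tau_k$ survives, and we get
\[
\Ext^i\simeq \Ext^i_{L_{k,l}}\bigl(\ind(\xi_{k,k}\otimes\rho_k\otimes\sigma_{l,m-k}),\alpha\boxtimes\beta\bigr).
\]
Taking smooth vectors, $\rho_k$ against $\alpha$ yields $\alpha$ as a $G_k'$-module, with no higher $\Ext$ because $\rho_k$ is projective relative to this pairing ($\CS(G_k)$ is projective as a $G_k$-module). The $\sigma_{l,m-k}$ factor against $\beta$ yields $\Ext^i_{G_l}(\omega_{l,m-k},\beta)$. The parabolic induction over $P'_{k,m-k}$ commutes with $\Ext$, giving $\alpha\times\Ext^i_{G_l}(\omega_{l,m-n+l},\beta)_{\sm}$. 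The normalizing characters must be tracked so that they cancel after converting from $\sigma$ to $\omega$, exactly as in the computation in Proposition~\ref{big theta relative to highest derivative}. The degenerate cases $l=0$ or $m=k$ are handled by the stated conventions.

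The main obstacle is the vanishing step for $i<k$. It requires an exact translation between "the $G_{k-i}$ Jacquet factor of $\alpha$ contains $\nu^{\frac{2m-n+k-i}{2}}\mathbf{1}$" and "$L(s,\alpha)$ or $L(s,\alpha^\vee)$ has a pole at $s_0$", uniformly in $i$ and including inner forms with the $s_\chi$ shifts. I would first prove it for a single $|\Nrd|^zU(\mathbf{1};a,b)$ by the Zelevinsky form of its Jacquet module. There, the leftmost factor $Z([x_1',c_1]+\cdots)$ being a character forces $a$-dependent constraints matching~\eqref{pole condition eq useful}. I would then extend to products via the geometric lemma, checking that a one-dimensional quotient of a product's Jacquet module must come from characters of the individual blocks.
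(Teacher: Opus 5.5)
Your architecture matches the paper's proof: derived Frobenius reciprocity to $r_{n-l,l}(\omega_{n,m})$, Kudla's filtration, Künneth, second adjointness on the $G_{n-l}$-factor, a Bernstein-block argument killing every piece with $j<\min\{n-l,m\}$, and projectivity of $\rho_{n-l}$ for the top piece. (A minor point: since $\alpha\times\beta$ is the target, the first adjunction is ordinary Frobenius reciprocity with $r_{k,l}$. Your display with $\ov{r}_{k,l}$ would compute $\Ext(\omega_{n,m},\beta\times\alpha)$, although you then switch to $r_{k,l}$.)

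\textbf{The gap.} The gap is in the only step with real content, the vanishing for $j<k$. You leave this as ``the statement to prove'', and the route you sketch for it would fail. After second adjointness the relevant module is $\ov{r}_{k-j,j}(\alpha)$. Its $G_{k-j}$-factor is the \emph{right-hand} factor of $r_{j,k-j}(\alpha)$. In Lemma~\ref{Speh Jacquet lem zelevinsky} this is $Z([c_1+1,y_1']+\cdots+[c_a+1,y_a'])$; equivalently, in Lemma~\ref{Speh Jacquet lem} it is $LQ(\St([x_1,c_1])\times\cdots\times\St([x_b,c_b]))$. You propose instead to use the left factor $Z([x_1',c_1]+\cdots)$. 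Its cuspidal support begins in $\{1-\frac{a+b}{2},\dots,\frac{a-b}{2}\}+\frac zd$, which is not contained in the pole range once $a>b$.

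For a concrete failure, take $\mathrm{D}=F$, $\alpha=\St_2$, $l=n-2$, $m=n$. Both $L(s,\alpha)$ and $L(s,\alpha^\vee)$ are holomorphic at $s_0=\frac12$. In the piece $j=1$, the character on $G_1$ is $\nu^{1/2}$ (in the $\omega$-normalization). This coincides with the left factor of $r_{1,1}(\St_2)=\nu^{1/2}\boxtimes\nu^{-1/2}$, so your test gives no vanishing. Only the correct factor $\nu^{-1/2}$ of $\ov{r}_{1,1}(\St_2)$ yields vanishing. Also, for higher $\Ext$ the test must be equality of cuspidal supports, not whether the factor is a character. Constituents such as $|\Nrd|^z\St([x_1,c_1])$ with $c_1>x_1$ lie in the block of a character.

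\textbf{What is needed.} The paper argues as follows. The support of $\nu^{\frac{m-l-j}{2}}\mathbf{1}_{n-l-j}$ is a multiplicity-free segment beginning at $\frac{m-n+1}{2}$. Every constituent of the $G_{n-l-j}$-factor of $\ov{r}_{n-l-j,j}(|\Nrd|^zU(\mathbf{1}_1;a,b))$ has support beginning at $x_p+\frac zd$ for some $x_p\in\{1-\frac{a+b}{2},\dots,\frac{b-a}{2}\}$, because the nonempty segments $[x_i,c_i]$ are exactly those with $i\le p$. Equal supports force equal starting points. That is \eqref{pole condition eq useful} with $z$ replaced by $-z$, i.e.\ a pole of $L(s,\alpha^\vee)$ at $s_0$.

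This pole is excluded: non-exceptionality together with Lemma~\ref{L-function holo lem} makes $L(s,\alpha^\vee)$ holomorphic. The lemma must be applied to the Arthur-type representations $\alpha,\alpha^\vee$, not to the twisted Speh factors, which are not Arthur-type.

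For several blocks, the paper inducts on the number of essential Speh factors. It writes $\alpha\times\beta=\alpha_1\times(\alpha'\times\beta)$ and applies the one-block case to the finite-length $\alpha'\times\beta$. Your geometric-lemma route also works, provided you argue with supports rather than characters. The minimum $\frac{m-n+1}{2}$ must be the starting point of the support of some block's factor, and that block then has the forbidden pole.
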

\begin{proof}
    By Lemma~\ref{L-function holo lem}, we can assume that $L(s,\alpha^{\vee})$ is holomorphic at $s=s_0$. Write $\alpha$ as a product of essential Speh representations
    \[
    \alpha=\prod_{j=1}^r |\Nrd|^{z_j} U(\chi_j;a_j,b_j).
    \]
    Arguing by induction on $r$, we can assume $\alpha=|\Nrd|^{z} U(\chi;a,b)$ such that $z\in \BR$, $a,b\in\BZ_{>0}$ and $\chi$ is a unitary cuspidal representation. 

   By second adjointness,
   \[
   \Ext^i_{G_n}(\omega_{n,m}, \alpha\times\beta)_{\sm}\simeq \Ext^i_{L_{n-l,l}}(r_{n-l,l}(\omega_{n,m}),\alpha\boxtimes\beta)_{\sm}.
   \]
   By Proposition~\ref{Kudla filtration left}, $r_{n-l,l}(\omega_{n,m})$ has a decreasing filtration
   \begin{equation}\label{filtration right kudla in separate eq}
       0=\widetilde{J}_{k+1}\subset \widetilde{J}_k\subset \cdots\subset \widetilde{J}_0= r^{G_n}_{n-l,l}(\omega_{n,m}),
   \end{equation}
  where $k=\min\{n-l,m\}$, such that as representations of $L_{n-t,t}\times G_m'$, 
   \[
   \widetilde{\tau}_j\simeq \widetilde{J}_j/\widetilde{J}_{j+1}\simeq
\ind_{P_{n-l-j,j}\times G_{l}\times P'_{j,m-j}}^{L_{n-l,l}\times G'_m}
\bigl(\widetilde{\xi}_{n-l,j}\otimes\rho_j\otimes\omega_{l,m-j}\bigr),
\]
where $\rho_j$ is defined by (2.2), and $\widetilde{\xi}_{n-l,j}$ is the character
\[
\widetilde{\xi}_{n-l,j}=
\begin{cases}
\nu^{\frac{m-l-j}{2}}&\text{on }G_{n-l-j},\\
\nu^{\frac{m+n-2l-j}{2}}&\text{on }G_j,\\
\nu^{\frac{n-l-j}{2}}&\text{on }G_{l},\\
\nu'^{\frac{-m-n+2l+j}{2}}&\text{on }G'_j,\\
\nu'^{\frac{-n+l+j}{2}}&\text{on }G'_{m-j}.
\end{cases}
\]
When $0\leq j<k$, we have
\begin{align*}
    &\Ext^i_{L_{n-l,l}}(\widetilde{\tau}_j, \alpha\boxtimes\beta)_{\sm}\\
    \simeq &\ind_{P'_{j,m-j}}^{ G'_m} \left( \Ext^i_{L_{n-l,l}}(\ind_{P_{n-l-j,j}\times G_{l}}^{L_{n-l,l}}\bigl(\widetilde{\xi}_{n-l,j}\otimes\rho_j\otimes\omega_{l,m-j}\bigr)_{\sm}, \alpha\boxtimes\beta) \right),
\end{align*}
where K\"unneth formula~\cite[Theorem 3.5]{Pr23} identifies
\begin{align*}
   & \Ext^i_{L_{n-l,l}}\left(\ind_{P_{n-l-j,j}\times G_{l}}^{L_{n-l,l}}\bigl(\widetilde{\xi}_{n-l,j}\otimes\rho_j\otimes\omega_{l,m-j}\bigr)_{\sm}, \alpha\boxtimes\beta \right)_{\sm}\\
    \simeq &\bigoplus_{i=c+d}\Ext^c_{G_{n-l}}\left(\ind_{P_{n-l-j,j}}^{G_{n-l}}(\widetilde{\xi}_{n-l,j}\otimes\rho_j),\alpha\right)_{\sm}\boxtimes \Ext^d_{G_l}(\omega_{l,m-j},\beta \nu^{\frac{-n+l+j}{2}} )_{\sm} \nu'^{\frac{n-l-j}{2}} .
\end{align*}
We study the first term in the K\"unneth formula. By second adjointness,
\[
\Ext^c_{G_{n-l}}\left(\ind_{P_{n-l-j,j}}^{G_{n-l}}(\widetilde{\xi}_{n-l,j}\otimes\rho_j),\alpha\right)_{\sm}\simeq \Ext^c_{L_{n-l-j,j}}\left(\widetilde{\xi}_{n-l,j}\otimes\rho_j,\ov{r}_{n-l-j,j}(\alpha)\right)_{\sm},
\]
where the character $\widetilde{\xi}_{n-l,j}$ on $G_{n-l-j}$ has cuspidal supports
\begin{equation}\label{cuspidal support of character eq}
    \left\{ \nu_1^{\frac{m-n+1}{2}}, \nu_1^{\frac{m-n+3}{2}},\cdots, \nu_1^{\frac{m+n-1}{2}-l-j} \right\}.
\end{equation}
Thus, by comparing this cuspidal support with the cuspidal support of the first factor of $\ov{r}_{n-l-j,j}(\alpha)$, we find that
\begin{equation}
    \Ext^c_{L_{n-l-j,j}}\left(\widetilde{\xi}_{n-l,j}\otimes\rho_j,\ov{r}_{n-l-j,j}(\alpha)\right)=0 
\end{equation}
unless $\chi$ is the trivial character of $G_1$. Assume $\chi=\mathbf{1}_1$. By Lemma~\ref{Speh Jacquet lem}, the $G_{n-l-j}$-factor of the irreducible component of $\ov{r}_{n-l-j,j}(\alpha)$ has form
\[
|\Nrd|^z  LQ\left(\St([\frac{b-a}{2},c_1])\times \cdots\times \St([1-\frac{a+b}{2},c_b])\right)
\]
for some $c_1>\cdots>c_b$ such that 
\[
\sum_{i=1}^b (c_i-\frac{b-a}{2}+i)= n-l-j.
\]
Suppose it has a same cuspidal support as~\eqref{cuspidal support of character eq}, then the left endpoint of cuspidal supports coincide. Thus,
\[
   \frac{z}{d}+1-\frac{a+b}{2} \leq\frac{m-n+1}{2}\leq \frac{b-a}{2}+\frac{z}{d}
\]
and $\frac{b-a}{2}+\frac{z}{d}-\frac{m-n+1}{2}\in \BZ$. In other words,
\[
 \frac{m-n}{2} -\frac{z}{d}\in\left\{ \frac{1-(a+b)}{2},\frac{3-(a+b)}{2},\cdots,\frac{b-a-1}{2} \right\},
\]
which contradicts to the fact that $L(s,\alpha^{\vee})$ is holomorphic. In conclusion, the first term in the K\"unneth formula
\[
\Ext^c_{G_{n-l}}\left(\ind_{P_{n-l-j,j}}^{G_{n-l}}(\widetilde{\xi}_{n-l,j}\otimes\rho_j),\alpha\right)_{\sm}=0,
\]
for any integer $c$ when $0\leq j<k$, and thus $\Ext^i_{L_{n-l,l}}(\widetilde{\tau}_j, \alpha\boxtimes\beta)_{\sm}=0$ for any integer $i$ when $0\leq j<k$. This proves statement (1).

Now, we assume $m\geq n-l$. Therefore, $k=n-l$. At this time, when $j=k$, the first term in the K\"unneth formula is 
\[
\nu'^{\frac{m-l}{2}}\Ext^c_{G_{n-l}}(\rho_{n-l}\nu^{\frac{m-l}{2}},\alpha)_{\sm}\simeq \nu'^{\frac{m-l}{2}}\Ext^c_{G_{n-l}}(\rho_{n-l},\nu^{\frac{l-m}{2}}\alpha)_{\sm}\simeq \begin{cases}
    \alpha & c=0\\
    0 & c>0,
\end{cases}
\]
since $\rho_{n-l}$ is projective as a representation of $G_{n-l}$. Thus,
\[
\Ext^i_{L_{n-l,l}}(\widetilde{\tau}_k, \alpha\boxtimes\beta)_{\sm}\simeq \alpha \boxtimes\Ext^i_{G_l}(\omega_{l,m-k},\beta)_{\sm} ,
\]
where $\omega_{l,m-k}$ is the trivial representation when $l=0$ or $m-k=0$. This completes the proof by the long exact sequence associated to the filtration~\eqref{filtration right kudla in separate eq}.
\end{proof}

\subsection{Big theta for exceptional Speh representations}\label{computation big theta speh sec}
In this subsection, we consider the big theta for exceptional essential Speh representations appearing in the $u(\chi;a,c)$ for some unitary cuspidal representation $\chi$ and positive integers $a,c$.

Let $\pi=|\Nrd|^zU(\chi;a,b)$, where $b\in\BZ_{>0}$ and $z\in\BR$ such that $|z|<\frac{s_{\chi}}{2}$. Then by~\eqref{pole condition eq}, $\pi$ is exceptional if and only if $\chi=\mathbf{1}_1$, $z=0$, and
\begin{equation}
     \frac{m-n}{2} \in\left\{ \frac{1-(a+b)}{2},\frac{3-(a+b)}{2},\cdots,\frac{b-a-1}{2} \right\}.
\end{equation}
In other words,
\begin{equation}\label{definition of number h}
     h:= \frac{b-a-(m-n)+1}{2}\in \BZ \text{ and } 1\leq h\leq b.
\end{equation}
The next lemma shows that if $u(\chi;a,c)$ is exceptional, the product expression of it~\eqref{prod eq} contains exactly one exceptional Speh factor.
\begin{lemma}\label{one exc Speh factor lem}
    Let $(n,m)$ be a pair of positive integers. Then 
    \[
    \ord_{s=s_0} L(s, u(\chi;a,c))\leq 1.
    \]
\end{lemma}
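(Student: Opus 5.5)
The natural route is to expand $u(\chi;a,c)$ via Lemma~\ref{product lem} as
\[
u(\chi;a,c)\simeq \prod_{j=0}^{\min\{s_{\chi},c\}-1} |\Nrd|^{z_j}_F\, U(\chi;a,c_j),\qquad c_j=1+\left\lfloor\frac{c-1-j}{s_{\chi}}\right\rfloor,
\]
and use multiplicativity of the Godement--Jacquet $L$-factor (Proposition~\ref{L-func prop}). This gives $L(s,u(\chi;a,c))=\prod_j L(s,|\Nrd|^{z_j}U(\chi;a,c_j))$, since the Langlands data of the product is the union of the Langlands data of the factors, and the product is irreducible. It therefore suffices to show two things: each factor contributes order at most one at $s_0$, and at most one factor contributes a pole.

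For a single factor $\pi_j=|\Nrd|^{z_j}U(\chi;a,c_j)$, the discussion following Proposition~\ref{L-func prop} already gives $\ord_{s=s_0}L(s,\pi_j)\le 1$. The shifts $s+z_j-\tfrac12+s_{\chi}(\tfrac{a+c_j+1}{2}-i)$ for $i=1,\dots,c_j$ are pairwise distinct. Moreover, $L(s,\rho)$ for an unramified character $\rho$ of $F^\times$ has simple poles exactly at $s\in -\log_q\rho(\varpi)+\frac{2\pi i}{\log q}\BZ$. Since the shifts differ by nonzero multiples of $s_{\chi}\in\BR$, at most one of them can hit a pole at the real point $s_0$. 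Also, a pole forces $\chi=\mathbf 1_1$, so $\rho$ is trivial and $s_{\chi}=d$.

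The remaining step, which I expect to be the main point, is to show that two different factors $j\neq j'$ cannot both have a pole at $s_0$. Assume $\chi=\mathbf 1_1$. Then the poles of $L(s,\pi_j)$ lie at real points
\[
s\equiv \tfrac12 - z_j - s_{\chi}\tfrac{a+c_j+1}{2} \pmod{s_{\chi}\BZ},
\]
because all poles of $L(s,\pi_j)$ are congruent modulo $s_{\chi}$. So it suffices to check that the residues of $z_j+s_{\chi}\frac{c_j}{2}$ modulo $s_{\chi}$ are distinct for distinct $j$. From the formula $z_j=\frac{c-1}{2}-j-\frac{s_{\chi}(c_j-1)}{2}$ we get $z_j+\frac{s_{\chi}c_j}{2}=\frac{c-1+s_{\chi}}{2}-j$. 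Hence the residues are $\frac{c-1+s_{\chi}}{2}-j$ for $0\le j\le\min\{s_{\chi},c\}-1<s_{\chi}$, and these are pairwise incongruent modulo $s_{\chi}$. Consequently at most one factor has a pole at $s_0$, and it is simple, which gives $\ord_{s=s_0}L(s,u(\chi;a,c))\le 1$.

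A minor point to double-check is the normalization convention: here $s_{\chi}=d$ for the trivial character of $G_1$ and the variable is $|\Nrd|$ rather than $\nu$. The argument only uses differences of shifts, so it is insensitive to this.
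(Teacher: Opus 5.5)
Your proof is correct, but it takes a detour compared with the paper. The paper applies Proposition~\ref{L-func prop} directly to the defining Langlands data of $u(\chi;a,c)$, namely $|\Nrd|^{\frac{c+1}{2}-i}\St(a,\chi)$ for $1\le i\le c$. This gives
\[
L(s,u(\chi;a,c))=\prod_{i=1}^{c}L\Bigl(s+\tfrac{c}{2}-i+\tfrac{as_{\chi}}{2},\rho\Bigr).
\]
The $c$ shifts are distinct real numbers differing by integers. A factor can have a pole at the real point $s_0$ only if $\rho$ is trivial and its argument vanishes, and this pins down $i$.

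You instead pass through Lemma~\ref{product lem}, bound each Speh block separately, and then separate different blocks by the residue of $z_j+\frac{s_\chi c_j}{2}=\frac{c-1+s_\chi}{2}-j$ modulo $s_\chi$. That residue computation amounts to the reindexing $k=j+s_\chi(i-1)$. It shows that the combined exponents $z_j+s_\chi\bigl(\frac{c_j+1}{2}-i\bigr)=\frac{c-1}{2}-k$ run exactly once over $k=0,\dots,c-1$. So you are in effect reconstructing the paper's list of shifts from the factorization.

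\textbf{What each route buys.} The paper's route is a one-liner and avoids your multiplicativity step, i.e.\ the fact that an irreducible product has the concatenated Langlands data. Yours directly shows that at most one block $|\Nrd|^{z_j}U(\chi;a,c_j)$ carries the pole. That is the form in which the lemma is used just before it and in Remark~\ref{speh theta rem}, although it also follows from the paper's statement by multiplicativity.

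Two minor slips, neither of which affects the argument. First, for unramified $\rho$ the poles of $L(s,\rho)$ lie at $\log_q\rho(\varpi)+\frac{2\pi i}{\log q}\BZ$; you have the sign reversed. Second, only the real poles of $L(s,\pi_j)$, not all of its poles, lie in the congruence class you state.
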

\begin{proof}
   By Proposition~\ref{L-func prop}, we have
\begin{equation}\label{product factor of L-function eq}
    L(s,\pi) = \prod_{i=1}^c L\left(s+\frac{c}{2}-i +\frac{as_{\chi}}{2},\rho\right),
\end{equation}
where $\rho=\mathrm{JLC}(\chi)$ is unitary. Since the factor 
\[
L\left(s+\frac{c}{2}-i +\frac{as_{\chi}}{2},\rho\right)
\]
has a pole at $s=s_0$ only if $\rho$ is the trivial character of $\GL_1(F)$ and $s_0+\frac{c}{2}-i +\frac{ad}{2}=0$, at most one factor in~\eqref{product factor of L-function eq} can have a pole at $s=s_0$.
\end{proof}

We formulate the theorem concerning the big theta of exceptional Speh representations.
\begin{theorem}\label{big theta speh thm}
  Let $n,m$ be two positive integers. Let $\pi=U(\mathbf{1}_1;a,b)$ be an exceptional Speh representation of $G_n$ with respect to $(n,m)$. Then
    \begin{equation}
        \Theta_{n,m}(\pi)\simeq\begin{cases}
            0 &\text{if } a>1,m<n  ;\\
            \mathbf{1}_{m-n}\times \pi &\text{if } a>1,m\geq n;\\
       \nu^{\frac{2n-m}{4}}\mathbf{1}_{\frac{m}{2}}\times  \nu^{-\frac{2n-m}{4}}\mathbf{1}_{\frac{m}{2}} & \text{if } a=1 \text{ and } m \text{ is even}.
        \end{cases}
    \end{equation}
\end{theorem}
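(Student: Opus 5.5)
I will compute $\Theta_{n,m}(\pi)$ by working with $\ov{\Theta}_{n,m}(\pi)$ and its Jacquet modules, using the rank filtration of Lemma~\ref{rank filtration} and its Fourier-dual version (Kudla's filtration at $t=n$, Remark~\ref{rank filtration dual rem}). The first step is to compute $\Hom_{G_n}(R_k/R_{k+1},\pi)$ for each $k$. By second adjointness this is $\Hom_{L_{k,n-k}}(\rho_k\otimes\eta_k,\ov{r}_{k,n-k}(\pi))$. Lemma~\ref{Speh Jacquet lem} makes the Jacquet module of $\pi$ explicit and multiplicity-free. Exceptionality is recorded by the integer $h$ of~\eqref{definition of number h}, and it forces a character $\nu^{k/2}\mathbf{1}_{n-k}$ to occur as the $G_{n-k}$-factor for exactly one boundary index $k$. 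I expect this index to be $k=n-(\text{length of one row})$: the constituent truncating the $h$-th segment when $a>1$, and the constituent for $k=n-h$ type when $a=1$ (here $\pi=\mathbf 1_n$ twisted, and $b=n$). Every other graded piece should contribute nothing. The Ext-vanishing argument in the proof of Theorem~\ref{separation thm} adapts verbatim to show this, because it only compares cuspidal supports.

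\textbf{Case $a>1$.} I expect the matching condition on the boundary piece to fail. With $a>1$, the $G_{n-k}$-factors in Lemma~\ref{Speh Jacquet lem} are Speh-like and never one-dimensional characters of the required exponent unless $n-k=0$. So $\pi$ should be off the boundary. If $m<n$, Lemma~\ref{down off boundary lem} embeds $\ov{\Theta}$ into a big derivative $\BW_{\nu^{m/2}}(\pi)$. By Lemma~\ref{Speh Jacquet lem zelevinsky}, the relevant Jacquet piece would require a trivial-character first factor of size $n-m$, which does not occur, so $\Theta=0$. If $m\ge n$, Lemma~\ref{lift off boundary lemma} gives $\ov\Theta\hookrightarrow \nu^{n/2}\mathbf 1_{m-n}\times\pi\nu^{(n-m)/2}$. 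Dually this is a surjection $\Theta\twoheadrightarrow$ (twist of) $\pi\times\mathbf1_{m-n}$ type; I would get the correct ordering $\mathbf 1_{m-n}\times\pi$ by comparing with the Godement–Jacquet map $Z^\sharp$ of Proposition~\ref{GJ proposition}. Since the only contributing graded piece is the open one, $\Theta$ is a quotient of $(\rho_n\text{-piece})$, which is exactly $\mathbf 1_{m-n}\times\pi^\vee$ up to contragredient. The embedding then forces equality by length comparison.

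\textbf{Case $a=1$.} Here $\pi$ is a character, and exactly one boundary stratum $R_k/R_{k+1}$ with $k<n$ contributes besides the open stratum. By Lemma~\ref{GJ integral on boundary lem}, $Z^\sharp$ kills $\widetilde R_n$, so the Godement–Jacquet functional factors through the boundary and produces a second constituent. Concretely, I would show:
\begin{enumerate}
\item $\Theta$ has a filtration with two pieces, namely a quotient of the open-stratum piece and a quotient of the exceptional boundary piece;
\item each of these pieces is an irreducible character induced to $G'_m$, so their constituents are $\theta_{n,m}(\pi)$ and one other representation;
\item the extension is non-split in the direction determined by $Z^\sharp$.
\end{enumerate}
I would then identify the result with $\nu^{\frac{2n-m}{4}}\mathbf{1}_{m/2}\times\nu^{\frac{m-2n}{4}}\mathbf 1_{m/2}$ by matching Jacquet modules via Lemma~\ref{Jacquet big theta lemma} and Kudla's right filtration (Proposition~\ref{Kudal filtration right}). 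Throughout, Theorem~\ref{multiplicity one thm} guarantees that $\theta$ appears only once. This rules out extra copies, so $\Theta$ has length at most $2$ and is determined by its socle and cosocle.

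The main obstacle is the gluing in the case $a=1$: showing that the boundary Hom actually lifts to $\Theta$ rather than being obstructed by $\Ext^1$. I would first try to resolve this using the Fourier-dual rank filtration, which swaps the roles of open and boundary strata, so that both constituents become visible as quotients. The non-vanishing of $Z^\sharp$ then pins down the gluing.
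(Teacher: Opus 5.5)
Your proposal has genuine gaps in both cases.

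\textbf{Case $a>1$.} Your argument hinges on the claim that $\pi$ is off the boundary, and this is false: exceptionality says precisely that $\pi$ is \emph{on} the boundary. In Lemma~\ref{Speh Jacquet lem} the $G_{n-k}$-factor of $\ov{r}_{k,n-k}(\pi)$ can be a character. Keeping only the top point of each of the last $n-k$ segments gives $B=LQ(\nu_1^{\frac{a-b}{2}+n-k-1}\times\cdots\times\nu_1^{\frac{a-b}{2}})$ for every $n-k\le b$. Moreover $B\simeq\widetilde{\eta}_k|_{G_{n-k}}=\nu^{\frac{k-m}{2}}$ exactly when $k=n-h$, where $1\le h\le b$ is the exceptional condition~\eqref{definition of number h}. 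So the stratum $k=n-h$ contributes; for $m\ge n$ it gives $\nu_{m-n+h}^{-\frac{h}{2}}\times A\neq 0$ (Lemma~\ref{graded piece computation lem}). This is always a genuine boundary stratum when $m\ge n$, and often when $m<n$ (e.g.\ $U(\mathbf{1}_1;2,4)$ with $(n,m)=(8,7)$ and $k=6$). Hence neither Lemma~\ref{lift off boundary lemma} nor Lemma~\ref{down off boundary lem} is available.

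For $m\ge n$ you can also see this abstractly. If $\pi$ were off the boundary, $\Theta_{n,m}(\pi)$ would be a quotient of $(\widetilde{R}_n\otimes\pi^{\vee})_{G_n}$. But $Z^{\sharp}$ vanishes there by Lemma~\ref{GJ integral on boundary lem}, contradicting $Z^{\sharp}\neq 0$.

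The real mechanism is the reverse of yours. $Z^{\sharp}$ surjects onto the irreducible $\mathbf{1}_{m-n}\times\pi^{\vee}\simeq(\widetilde{R}_n\otimes\pi^{\vee})_{G_n}$ and kills the image of the open stratum. Multiplicity one then forces the restriction $\Hom_{G_n}(\omega_{n,m},\pi)\rightarrow\Hom_{G_n}(\widetilde{R}_n,\pi)$ to be zero. So the open stratum contributes nothing, and one only gets $\Hom_{G_n}(\omega_{n,m},\pi)_{\sm}\hookrightarrow\nu_{m-n+h}^{-\frac{h}{2}}\times A$. Two ingredients are missing from your plan:
\begin{enumerate}
\item This induced representation has length two with socle $\mathbf{1}_{m-n}\times\pi$ (Proposition~\ref{length two prop}).
\item The Fourier-dual embedding into $A^{\vee}\times\nu_{m-n+h}^{\frac{h}{2}}$ (Lemma~\ref{dual glue embedding lem}) excludes the full length-two module. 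For $a>1$ its cosocle $Z(\Delta_1+\Delta_2+\sum_i\Lambda_i)$ is not self-dual, so it is not a constituent of $A^{\vee}\times\nu_{m-n+h}^{\frac{h}{2}}$.
\end{enumerate}
For $a>1$, $m<n$ you need a different argument, e.g.\ Theorem~\ref{Howe duality thm}(2). A theta lift from $G_m$ with $m<n$ has characters of $G_1$ among its Langlands data, whereas those of $\pi$ are $\St$ of segments of length $a>1$.

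\textbf{Case $a=1$.} Your proposed structure is also wrong here. When $n\le m$ the same argument shows the restriction to the open stratum vanishes. When $n>m$ the top stratum $k=m$ has vanishing $\Ext^{i}$ in every degree. In both cases $\Theta_{n,m}(\pi)^{\vee}$ is the \emph{whole} of $\Hom_{G_n}(\widetilde{R}_{m/2}/\widetilde{R}_{m/2+1},\pi)_{\sm}$, which is irreducible for $m<n$ and of length two for $m\ge n$. So the reducibility lives inside a single boundary stratum rather than coming from gluing an open piece to a boundary piece. Your claim that the constituents are $\theta_{n,m}(\pi)$ and one other fails for $m<n$, and $Z^{\sharp}$ is not even defined there.

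Since you must produce the entire induced representation and not merely its socle, Hom-level embeddings cannot finish the argument. The decisive input is surjectivity of $\Hom_{G_n}(\omega_{n,m}/\widetilde{R}_{m/2+1},\pi)\rightarrow\Hom_{G_n}(\widetilde{R}_{m/2}/\widetilde{R}_{m/2+1},\pi)$. This requires the vanishing of $\Ext^0$ and $\Ext^1$ on all strata $k<m/2$, plus analogous vanishing for $m/2<k<\min\{n,m\}$ to control $\Hom_{G_n}(\widetilde{R}_{m/2+1},\pi)$. The paper obtains this from the $\Ext$ computation of Lemma~\ref{extension group lem}, using $\Ext^i_{G_n}(\chi,\chi)$ (Lemma~\ref{self-ext lem}).

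The Fourier-dual filtration you propose cannot supply this. For $a=1$ one has $A\simeq\nu_{m/2}^{\frac{h}{2}}$ and $A^{\vee}\times\nu_{m/2}^{\frac{h}{2}}\simeq\nu_{m/2}^{-\frac{h}{2}}\times A$, so the dual embedding coincides with the direct one. Finally, Theorem~\ref{multiplicity one thm} by itself does not bound the length of $\Theta_{n,m}(\pi)$ by two.
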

\begin{remark}\label{speh theta rem}
   This Theorem can be extended to the exceptional representation $\pi=u(\mathbf{1}_1;a,c)$ with respect to $(n_0=ac,m_0)$. By Lemma~\ref{product lem}, $\pi$ is exceptional if and only if there exists $0\leq j<\min\{d,c\}$, such that $z_j =\frac{c-1}{2}-j -\frac{d(c_j-1)}{2}=0$,
  \begin{equation}
    h=\frac{c_j-a-(m_0-n_0)+1}{2}\in \BZ \text{ and } 1\leq h\leq c_j,
\end{equation}
where $c_j=1+\lfloor\frac{c-1-j}{d}\rfloor$.  Moreover, Lemma~\ref{one exc Speh factor lem} ensures that such $j$ is unique. Let $b=c_j$. Then
\[
\pi_e= U(\mathbf{1}_1;a,b).
\]
Thus, thanks to Proposition~\ref{separation prop intro}, $\Theta_{n_0,m_0}(\pi)\neq 0$ only if $n_0-m_0\geq ab$. Then, one can apply Theorem~\ref{big theta speh thm} to $\pi_e$ with $n=ab$ and $m=m_0-n_0+ab$. In particular, when $a>1$ and $m_0\geq n_0$, we have
\[
\Theta_{n_0,m_0}(\pi)\simeq \mathbf{1}_{m_0-n_0}\times \pi.
\]
\end{remark}

\medskip

 Before proving the full theorem, we establish the necessary groundwork and address the case $a>1$. We defer the case $a=1$ to the next subsection, as it requires a distinct approach.

We compute through the rank filtration. By Lemma~\ref{rank filtration}, there is a decreasing filtration of $\omega_{n,m}$:
\[
0=\widetilde{R}_{e+1}\subset \widetilde{R}_e \subset \cdots\subset \widetilde{R}_0=\omega_{n,m},
\]
where $e=\min{\{n,m\}}$, such that as representations of $G_n\times G_m'$, 
    \begin{equation}\label{unitary normalized rank filtration eq}
         \widetilde{R}_k/\widetilde{R}_{k+1}\simeq \ind_{P_k\times \ov{P'_k}}^{G_n\times G_m'}(\rho_k \otimes \widetilde{\eta}_k),
    \end{equation}
    where $\widetilde{\eta}_k$ is a character of the standard Levi subgroup of $P_k\times \ov{P'_k}$, given by
    \begin{equation}
      \widetilde{\eta}_k=  \begin{cases}
             \nu^{\frac{k-n-m}{2}} & \text{ on } G_k\\
             \nu^{\frac{k-m}{2}} & \text{ on } G_{n-k}\\
             \nu'^{\frac{m-k+n}{2}}& \text{ on } G_k' \\
             \nu'^{\frac{n-k}{2}} & \text{ on } G_{m-k}'
        \end{cases}.
    \end{equation}
    We first compute the contribution of each piece in the rank filtration when $m\geq n$.
    \begin{lemma}\label{graded piece computation lem}
         Let $m\geq n$ be two positive integers. Let $\pi=U(\mathbf{1}_1;a,b)$ be an exceptional Speh representation of $G_n$ with respect to $(n,m)$. Then
         \[
         \Hom_{G_n}( \widetilde{R}_k/\widetilde{R}_{k+1}, \pi)_{\sm}\simeq\begin{cases}
         \mathbf{1}_{m-n}\times \pi   &\text{ when }k=n;\\
        \nu_{m-k}^{\frac{k-n}{2}}\times A    &\text{ when } k=n-h;\\
         0   &\text{ otherwise } ,
         \end{cases}
         \]
         where $A=Z(\sum_{i=1}^{a-1}[\frac{a-b}{2}-i,\frac{a+b}{2}-1-i] +[\frac{1-m+n}{2},\frac{a+b}{2}-1])$ and $h$ is defined by~\eqref{definition of number h}.
    \end{lemma}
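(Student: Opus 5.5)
Fix $k$ and compute $\Hom_{G_n}(\widetilde{R}_k/\widetilde{R}_{k+1},\pi)_{\sm}$ directly from \eqref{unitary normalized rank filtration eq}, following the template of Lemma~\ref{lift off boundary lemma}.

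\textbf{Reduction to a Jacquet-module computation.} Since the induction is from $P_k\times\ov{P'_k}$, we take $\Hom_{G_n}$ of the $G_n$-induction. Second adjointness turns it into $\Hom_{L_{k,n-k}}$ into $\ov{r}_{k,n-k}(\pi)$; equivalently, by the identity $\ind_{P_{k,n-k}}\simeq\ind_{\ov{P}_{n-k,k}}$, into $r_{n-k,k}(\pi)$ with the blocks swapped. The $G_m'$-induction then passes outside the smooth Hom. For each irreducible summand $\alpha\boxtimes\beta$ with $\alpha\in\Irr(G_{n-k})$ and $\beta\in\Irr(G_k)$, the $G_k$-part gives $\Hom_{G_k}(\rho_k\,\nu^{\frac{k-n-m}{2}},\beta)_{\sm}\simeq\beta\,\nu^{\ast}$, a twist of $\beta$ viewed as a $G_k'$-representation via $\rho_k$. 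The $G_{n-k}$-part is $\Hom_{G_{n-k}}(\nu^{\frac{k-m}{2}},\alpha)$, which is nonzero only if $\alpha$ is the character $\nu_{n-k}^{\frac{k-m}{2}}$. Moreover, the Jacquet module of a Speh representation is semisimple and multiplicity free (Lemma~\ref{Speh Jacquet lem}), and the relevant factor $\alpha$ is a character. So the Hom is exactly $\ind_{\ov{P'_k}}^{G_m'}\bigl(\beta\nu'^{\ast}\boxtimes\nu'^{\frac{n-k}{2}}\bigr)$ when $\nu^{\frac{k-m}{2}}\boxtimes\beta$ occurs in the relevant Jacquet module, and zero otherwise. (Being a direct summand in the block of that cuspidal support, there are no Ext issues for the Hom itself.)

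\textbf{Identifying when the character occurs.} For $k=n$ the $G_{n-k}$-factor is trivial, and the computation reduces verbatim to Lemma~\ref{lift off boundary lemma}, giving $\mathbf{1}_{m-n}\times\pi$ after untwisting (the exponents of $\widetilde\eta$ are chosen so the twists cancel). For $k<n$ we use the Zelevinsky form, Lemma~\ref{Speh Jacquet lem zelevinsky} (or its $\ov{r}$ analogue), applied to $\pi=Z(\Sigma_1+\cdots+\Sigma_a)$. A one-dimensional character $\nu_{n-k}^{\frac{k-m}{2}}$ is $Z$ of a single segment, namely $[\frac{k-m-(n-k)+1}{2},\frac{k-m+n-k-1}{2}]$. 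So we need the truncation to cut only one segment.

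In the factor of Lemma~\ref{Speh Jacquet lem zelevinsky} we must have $c_i=y_i'$ for $i\ge 2$ and a single piece cut from the first (top) segment $\Sigma_1$. The chain condition $c_1>c_2=y_2'$ then forces the cut piece to be a terminal piece of $\Sigma_1=[\frac{a-b}{2},\frac{a+b}{2}-1]$ of length $n-k$. Matching this piece with the segment of the character fixes $k$ in terms of $m-n$, and this should be exactly $k=n-h$ by \eqref{definition of number h}. The complementary factor is then $Z$ of $\Sigma_2+\cdots+\Sigma_a$ plus the beginning part of $\Sigma_1$, which is the stated $A$ up to the twist/contragredient bookkeeping.

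Which end is cut depends on whether we use $r$ or $\ov r$. We will therefore check which side (beginning or end of $\Sigma_1$) is forced and adjust the formula to produce $[\frac{1-m+n}{2},\frac{a+b}{2}-1]$ together with the shifted $\Sigma_i$ as listed in $A$. The condition $1\le h\le b$ ensures that the cut is a proper or full piece. The requirement $a>1$ is not needed here, except that for $a=1$ the complementary factor degenerates, which is consistent with the lemma.

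\textbf{Main obstacle.} The delicate step is the exponent bookkeeping: tracking $\widetilde\eta_k$, the normalization of $\rho_k$, the swap between $r$ and $\ov r$, and the contragredient in $\Hom(\cdot,\pi)_{\sm}$. Together these must produce exactly $\nu_{m-k}^{\frac{k-n}{2}}\times A$ and the value $k=n-h$. I would verify this by first checking the case $a=b=1$, $\pi=\mathbf{1}_1$, where everything is explicit.
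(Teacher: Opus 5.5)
Your reduction is the same as the paper's: second adjointness, the multiplicity-free Jacquet module of the Speh representation, and the requirement that the $G_{n-k}$-factor equal the character $\nu^{\frac{k-m}{2}}$. Using the Zelevinsky form of Lemma~\ref{Speh Jacquet lem zelevinsky} instead of the Langlands form plus M\oe glin--Waldspurger is a harmless variant. The gap is in the only non-formal step: deciding which constituent has that character as its $G_{n-k}$-factor, and reading off $k$ and the complement. You do this incorrectly and then replace it by ``adjust the formula to produce $A$''.

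By your own reduction, the $G_{n-k}$-factor of $\ov{r}_{k,n-k}(\pi)$ is the \emph{first} factor of $r_{n-k,k}(\pi)$. In Lemma~\ref{Speh Jacquet lem zelevinsky} with $t=k$ this is $Z([x_1',c_1]+\cdots+[x_a',c_a])$, built from the \emph{initial} pieces. You instead make the terminal pieces $[c_i+1,y_i']$ the character, and your chain argument fails even there. Indeed, $c_2=y_2'=y_1'-1$ and $c_1>c_2$ force $c_1=y_1'$, so the ``terminal piece of $\Sigma_1$'' is empty. Carried out honestly, matching a terminal piece of $\Sigma_1$ with $[\frac{2k-m-n+1}{2},\frac{n-m-1}{2}]$ would need $\frac{a+b}{2}-1=\frac{n-m-1}{2}$, i.e.\ $m-n=1-a-b<0$. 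So you would get $0$ for every $k<n$. Also, your complement $Z(\Sigma_2+\cdots+\Sigma_a+\text{initial part of }\Sigma_1)$ is not $A$, and is not a twist or contragredient of $A$, so no bookkeeping repairs it.

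Here is the missing argument.
\begin{enumerate}
\item If $[x_i',c_i]$ is empty, then $c_i=x_i'-1$. Hence $c_{i+1}<x_i'-1=x_{i+1}'$, so $[x_{i+1}',c_{i+1}]$ is empty too. The nonempty initial pieces therefore occupy an initial block of indices.
\item A character of $G_{n-k}$ is $Z$ of a single segment, so only $[x_1',c_1]=[\frac{a-b}{2},c_1]$ can survive.
\item Matching it with $[\frac{2k-m-n+1}{2},\frac{n-m-1}{2}]$ gives $c_1=\frac{n-m-1}{2}$ and $k=n-h$. The hypotheses $h\in\BZ$ and $1\le h\le b$ are exactly $c_1-x_1'\in\BZ$ and $x_1'\le c_1\le y_1'$.
\item The complement is $Z([\frac{1-m+n}{2},\frac{a+b}{2}-1]+\Sigma_2+\cdots+\Sigma_a)=A$.
\end{enumerate}

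On the twists: the $G_k'$-twists cancel. The $G'_{m-k}$-character $\nu'^{\frac{n-k}{2}}$ of $\widetilde{\eta}_k$ is inverted in the Hom and, since the induction is from $\ov{P'_k}$, sits on the left. This gives $\nu_{m-k}^{\frac{k-n}{2}}\times A$, not your $\ind(\beta\nu'^{\ast}\boxtimes\nu'^{\frac{n-k}{2}})$.

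Finally, your planned sanity check $a=b=1$ is vacuous. For $m\ge n$, $h\ge 1$ forces $b\ge a+1+(m-n)$, so $\mathbf{1}_1$ is never exceptional. The smallest genuine instance is $a=1$, $b=2$, $m=n=2$, $k=1$.
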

    \begin{proof}
        By~\eqref{unitary normalized rank filtration eq}, we have
        \begin{align*}
      &   \Hom_{G_n}( \widetilde{R}_k/\widetilde{R}_{k+1}, \pi)_{\sm}\simeq    \Hom_{G_n}\left(   \ind_{P_k\times \ov{P'_k}}^{G_n\times G_m'}(\rho_k \otimes \widetilde{\eta}_k),\pi\right)_{\sm}\\
      &\simeq \ind_{\ov{P'_k}}^{G_m}\left( \Hom_{L_{k,n-k}}(\rho_k\otimes\widetilde{\eta}_k,\ov{r}_{k,n-k}(\pi))\right)_{\sm}.
        \end{align*}
        When $k<n$, then the character $\widetilde{\eta}_k$ on $G_{n-k}$ is $\nu^{\frac{k-m}{2}}$, whose cuspidal support is
        \[
        \left\{ \nu_1^{\frac{2k-m-n+1}{2}}, \nu_1^{\frac{2k-m-n+3}{2}},\cdots, \nu_1^{\frac{n-m-1}{2}} \right\}.
        \]
        On the other hand, by Lemma~\ref{Speh Jacquet lem}, the irreducible component of the $G_{n-k}$-factor of $\ov{r}_{k,n-k}(\pi)$  is
        \begin{equation}\label{irreducible factor eq}
             LQ\left(\St([c_1,\frac{a+b}{2}-1])\times \cdots\times\St([c_b,\frac{a-b}{2}])\right)
        \end{equation}
        for some $c_1>c_2>\cdots>c_b$ such that
        \[
        \sum_{i=1}^b(\frac{a+b}{2}-c_i+1-i)=n-k.
        \]
        It is a character if and only if $c_i\geq \frac{a+b}{2}-i$ for each $i$. Observe that if $c_i>\frac{a+b}{2}-i$, then $c_j>\frac{a+b}{2}-j$ for any $j<i$. Therefore, if~\eqref{irreducible factor eq} is a character, then it is isomorphic to
        \begin{equation}\label{character component eq}
         B := LQ\left(\nu_1^{\frac{a-b}{2}+(n-k)-1}\times\cdots\times \nu_1^{\frac{a-b}{2}}\right).
        \end{equation}
        Consequently, applying Lemma~\ref{Speh Jacquet lem} and the M\oe glin-Waldspurger's algorithm to the $G_{k}$-factor corresponding to $B$, we obtain an isomorphism as representations of $G_k$:
        \[
        \Hom_{G_{n-k}}(\widetilde{\eta}_k,\ov{r}_{k,n-k}(\pi))\simeq \begin{cases}
            A  & \text{ when }\frac{2k-m-n+1}{2}= \frac{a-b}{2} \Leftrightarrow k=n-h;\\
            0 & \text{ otherwise}.
        \end{cases}
        \]
        Thus, when $k<n$,
        \[
        \ind_{\ov{P'_k}}^{G_m}\left( \Hom_{L_{k,n-k}}(\rho_k\otimes\widetilde{\eta}_k,\ov{r}_{k,n-k}(\pi))_{\sm}\right)\simeq \begin{cases}
              \nu_{m-k}^{\frac{k-n}{2}}\times A    &\text{ when } k=n-h;\\
         0   &\text{ otherwise }.
         \end{cases}
        \]
       The case $k=n$ follows from an argument analogous to the one in Lemma~\ref{lift off boundary lemma}.
    \end{proof}
    \begin{remark}
        When $k\neq n$ and $k\neq n-h$, $\Ext^i_{G_n}( \widetilde{R}_k/\widetilde{R}_{k+1}, \pi)_{\sm}$ might not be zero when $i>0$. This is an intrinsic difference comparing to the computation of the character in~\cite[Lemma 5.1]{CLLTZ25} when $a>1$.
    \end{remark}
 We glue the information of each graded piece through the multiplicity-one Theorem~\ref{multi one intro thm} and Godement-Jacquet zeta integral.
    \begin{lemma}\label{glue embedding lem}
         Let $m\geq n$ be two positive integers. Let $\pi=U(\mathbf{1}_1;a,b)$ be an exceptional Speh representation with respect to $(n,m)$. Then there is an injection
         \[
          \Hom_{G_n}(\omega_{n,m}, \pi)_{\sm}\hookrightarrow \nu_{m-n+h}^{-\frac{h}{2}}\times A .
         \]
    \end{lemma}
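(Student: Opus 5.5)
The plan is to run the rank filtration $\widetilde{R}_\bullet$ of $\omega_{n,m}$ and feed in the graded-piece computation of Lemma~\ref{graded piece computation lem}. Apply the left exact functor $\Hom_{G_n}(\cdot,\pi)_{\sm}$ to the short exact sequences
\[
0\lra \widetilde{R}_{k+1}\lra \widetilde{R}_k\lra \widetilde{R}_k/\widetilde{R}_{k+1}\lra 0 .
\]
By Lemma~\ref{graded piece computation lem}, every graded piece with $k<n-h$ contributes nothing at the level of $\Hom$. Descending induction on $k$ from $0$ then gives
\[
\Hom_{G_n}(\omega_{n,m},\pi)_{\sm}\hookrightarrow \Hom_{G_n}(\widetilde{R}_{n-h},\pi)_{\sm}.
\]
The remaining filtration on $\widetilde{R}_{n-h}$ has only two nonzero graded contributions, at $k=n-h$ and $k=n$; the pieces with $n-h<k<n$ give zero $\Hom$. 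This yields an exact sequence
\[
0\lra \nu_{m-n+h}^{-\frac{h}{2}}\times A \lra \Hom_{G_n}(\widetilde{R}_{n-h},\pi)_{\sm}\lra \mathbf{1}_{m-n}\times\pi .
\]
Here I identify $\nu_{m-k}^{\frac{k-n}{2}}\times A$ at $k=n-h$ with $\nu_{m-n+h}^{-\frac h2}\times A$, and I use that $\Hom$ from $\widetilde{R}_n$ is a submodule of $\Hom_{G_n}(\widetilde{R}_n/\widetilde{R}_{n+1},\pi)_{\sm}=\mathbf{1}_{m-n}\times\pi$.

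It therefore suffices to show that the composite
\[
\Hom_{G_n}(\omega_{n,m},\pi)_{\sm}\lra \Hom_{G_n}(\widetilde{R}_n,\pi)_{\sm}\subseteq \mathbf{1}_{m-n}\times\pi
\]
is zero. The kernel of this map then lands inside the $k=n-h$ term, which is what the lemma claims. Dually, we must show that no nonzero $G_n$-map $\omega_{n,m}\to\pi$ restricts nontrivially to the open stratum $\widetilde{R}_n$.

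This vanishing is the main obstacle. I would argue via Howe duality, multiplicity one, and Godement--Jacquet, as follows.
\begin{itemize}
\item Suppose the restriction map is nonzero. Its image $I$ is a nonzero subrepresentation of $\mathbf{1}_{m-n}\times\pi$ that is also a quotient of $\Theta_{n,m}(\pi)^\vee$. Twisting conventions between $\Theta$ and $\ov{\Theta}$ are harmless here.
\item Since $\Hom_{G_n}(\omega_{n,m},\pi)_{\sm}$ has the unique irreducible submodule $\theta(\pi)^\vee$, and by Theorem~\ref{multiplicity one thm} this factor occurs with multiplicity one, we can locate $\theta(\pi)^\vee$ in exactly one of the two layers.
\item Consider the Godement--Jacquet functional $Z^\sharp$ at $s_0$. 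It gives a nonzero element of $\Hom(\omega_{n,m}\otimes\pi^\vee,\mathbf{1}_{m-n}\times\pi^\vee)$, so $\mathbf{1}_{m-n}\times\pi^\vee$ receives a nonzero map from $\Theta(\pi)$. Because $\pi$ is exceptional, Lemma~\ref{GJ integral on boundary lem} shows that $Z^\sharp$ vanishes on $\widetilde{R}_n$. Hence this map factors through $\omega_{n,m}/\widetilde{R}_n$, whose contribution is controlled by the $k=n-h$ layer.
\item Dualizing, the copy of $\theta(\pi)^\vee$ detected by $Z^\sharp$ must then appear in the $(n-h)$-layer $\nu^{-h/2}\times A$.
\item If the restriction to $\widetilde{R}_n$ were also nonzero, its image $I$ would contain an irreducible submodule of $\mathbf{1}_{m-n}\times\pi$. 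By \cite[Corollary 4.9]{LM16}, $\mathbf{1}_{m-n}\times\pi$ is socle irreducible, so this submodule is its socle. That socle coincides with $\theta(\pi)^\vee$ by Howe duality (Theorem~\ref{Howe duality thm}(2)).
\item This would produce a second occurrence of $\theta(\pi)^\vee$ in $\Theta(\pi)^\vee$, contradicting Theorem~\ref{multiplicity one thm}.
\end{itemize}

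The delicate point is checking that the image $I$ really sees the socle of $\mathbf{1}_{m-n}\times\pi$. I also need the zeta-integral copy to be genuinely distinct from that socle occurrence. For this I would use the Fourier-dual rank filtration (Remark~\ref{rank filtration dual rem}) to show that $Z^\sharp$ is nonzero on $\widetilde{R}_{n-h}$, so that its copy sits strictly in the lower layer.
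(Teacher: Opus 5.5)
Your reduction is correct and is the paper's. The vanishing of the graded $\Hom$'s for $k\neq n-h,n$ reduces everything to showing that the restriction map $\CR\colon\Hom_{G_n}(\omega_{n,m},\pi)_{\sm}\rightarrow\Hom_{G_n}(\widetilde R_n,\pi)_{\sm}\simeq\mathbf 1_{m-n}\times\pi$ is zero. The paper kills $\CR$ with the same tools: $Z^\sharp$ vanishing on $\widetilde R_n$, plus Theorem~\ref{multiplicity one thm}.

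\textbf{The gap.} Your argument rests on the claim that $\soc(\mathbf 1_{m-n}\times\pi)\simeq\theta(\pi)^\vee$ ``by Howe duality''. Theorem~\ref{Howe duality thm}(2) only gives the Langlands data of $\theta(\pi)$. It does not tell you which constituent of $\mathbf 1_{m-n}\times\pi$ is the socle, and when $L(s,\pi)$ has a pole at $s_0$ the identification can fail. Take $\mathrm D=F$, $(n,m)=(1,3)$, $\pi=\nu^{-3/2}$, so $L(s,\pi)$ has a pole at $s_0=3/2$.
\begin{enumerate}
\item By Theorem~\ref{L-function holomorphic big theta thm}, $\Theta(\pi)^\vee\simeq\nu^{-3/2}\times\mathbf 1_2$, which has length two.
\item $\theta(\pi)^\vee=\nu^{-1/2}\mathbf 1_3$, and this is the cosocle, not the socle, of $\mathbf 1_2\times\nu^{-3/2}$.
\item $\ker\CR=\Hom_{G_1}(\omega_{1,3}/\widetilde R_1,\pi)_{\sm}\simeq\nu^{-1/2}\mathbf 1_3$, so $\CR\neq0$.
\end{enumerate}
Yet in this example $Z^\sharp$ vanishes on $\widetilde R_1$, $\mathbf 1_2\times\nu^{-3/2}$ is socle irreducible, and multiplicity one holds. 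So the ingredients in your bullets cannot by themselves force $\CR=0$.

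\textbf{The missing idea.} You need a fact special to this $\pi$: $\mathbf 1_{m-n}=U(\mathbf 1_1;1,m-n)$ and $\pi$ are unitary Speh representations, so $\mathbf 1_{m-n}\times\pi$ and $\mathbf 1_{m-n}\times\pi^\vee$ are irreducible. The argument then closes as follows, which is exactly the paper's proof.
\begin{enumerate}
\item $Z^\sharp$ is surjective, so $\theta(\pi)\simeq\mathbf 1_{m-n}\times\pi^\vee$ directly, with no appeal to Theorem~\ref{Howe duality thm}(2).
\item $\Hom_{G_n}(\widetilde R_n,\pi)_{\sm}\simeq\theta(\pi)^\vee$ is irreducible, so a nonzero $\CR$ would map onto it.
\item Meanwhile $Z^\sharp\circ\CR^\vee=0$ places a copy of $\theta(\pi)^\vee$ inside $\ker\CR=\Hom_{G_n}(\omega_{n,m}/\widetilde R_n,\pi)_{\sm}$.
\item These two copies give multiplicity at least two, contradicting Theorem~\ref{multiplicity one thm}.
\end{enumerate}
The two copies are automatically distinct Jordan--H\"older occurrences, since one lies in $\ker\CR$ and the other in the quotient by it. So your proposed use of the Fourier-dual filtration to separate them is unnecessary here. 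The paper uses the Fourier side only later, to decide which submodule of $\nu_{m-n+h}^{-\frac{h}{2}}\times A$ actually occurs.
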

    \begin{proof}
        The short exact sequence
        \[
        0\lra \widetilde{R}_n \lra \omega_{n,m} \lra \omega_{n,m}/\widetilde{R}_n\lra 0
        \]
        induces the exact sequence
        \[
       0\lra \Hom_{G_n}(\omega_{n,m}/\widetilde{R}_n, \pi)_{\sm}\lra \Hom_{G_n}(\omega_{n,m}, \pi)_{\sm} \stackrel{\CR}{\lra} \Hom_{G_n}(\widetilde{R}_n, \pi)_{\sm},
        \]
        where $\Hom_{G_n}(\widetilde{R}_n, \pi)\simeq \mathbf{1}_{m-n}\times \pi$. Since $\pi$ is exceptional, Lemma~\ref{GJ integral on boundary lem} shows that
        \[
     (\widetilde{R}_n\otimes \pi^{\vee})_{G_n} \stackrel{\CR^{\vee}}{\lra} (\omega_{n,m}\otimes \pi^{\vee})_{G_n} \stackrel{Z^{\sharp}}{\lra} \mathbf{1}_{m-n}\times \pi^{\vee}
        \]
        is a complex, namely $Z^{\sharp}\circ\CR^{\vee}=0$. Since $\mathbf{1}_{m-n}=U(\mathbf{1}_1;1,m-n)$ is also a Speh representation, $\mathbf{1}_{m-n}\times \pi^{\vee}$ is irreducible. Thus, $Z^{\sharp}$ is surjective by Proposition~\ref{GJ proposition} (3), and
        \[
        \theta_{n,m}(\pi)\simeq \mathbf{1}_{m-n}\times \pi^{\vee}.
        \]
        Since $(\widetilde{R}_n\otimes \pi^{\vee})_{G_n}\simeq \mathbf{1}_{m-n}\times \pi^{\vee}$, by multiplicity-one Theorem~\ref{multiplicity one thm}, we know that $\CR^{\vee}=0$ and $\CR=0$. In other words,
        \[
         \Hom_{G_n}(\omega_{n,m}/\widetilde{R}_n, \pi)_{\sm}\simeq \Hom_{G_n}(\omega_{n,m}, \pi)_{\sm}.
        \]
        Applying the exact sequence successively,
        \[
          0\lra \Hom_{G_n}(\omega_{n,m}/\widetilde{R}_k, \pi)_{\sm}\lra \Hom_{G_n}(\omega_{n,m}/\widetilde{R}_{k+1}, \pi)_{\sm} \stackrel{\CR}{\lra} \Hom_{G_n}(\widetilde{R}_k/\widetilde{R}_{k+1}, \pi)_{\sm},
        \]
        we obtain
        \[
          \Hom_{G_n}(\omega_{n,m}/\widetilde{R}_n, \pi)_{\sm}\simeq   \Hom_{G_n}(\omega_{n,m}/\widetilde{R}_{n-h+1}, \pi)_{\sm}    
        \]
      and $\Hom_{G_n}(\omega_{n,m}/\widetilde{R}_{n-h}, \pi)_{\sm}=0$. Consequently,
       \[
       \Hom_{G_n}(\omega_{n,m}, \pi)_{\sm}\hookrightarrow \Hom_{G_n}(\widetilde{R}_{n-h}/\widetilde{R}_{n-h+1}, \pi)_{\sm}\simeq \nu_{m-n+h}^{-\frac{h}{2}}\times A.
       \]
    \end{proof}
    Then, we apply the same argument as Lemma~\ref{graded piece computation lem} and Lemma~\ref{glue embedding lem} to the rank filtration on the Fourier dual side. That is Kudla's filtration in~\eqref{filtration right kudla in separate eq} when $t=n$. 

    The following lemma corresponds to Lemma~\ref{graded piece computation lem}. 
    \begin{lemma}\label{graded piece computation Fourier lem}
         Let $m\geq n$ be two positive integers. Let $\pi=U(\mathbf{1}_1;a,b)$ be an exceptional Speh representation of $G_n$ with respect to $(n,m)$. Then
         \[
         \Hom_{G_n}( \widetilde{J}_k/\widetilde{J}_{k+1}, \pi)_{\sm}\simeq\begin{cases}
          \pi \times \mathbf{1}_{m-n}   &\text{ when }k=n;\\
        A^{\vee}  \times \nu_{m-k}^{\frac{n-k}{2}}  &\text{ when }  k=n-h;\\
         0   &\text{ otherwise } ,
         \end{cases}
         \]
         where $A=Z(\sum_{i=1}^{a-1}[\frac{a-b}{2}-i,\frac{a+b}{2}-1-i] +[\frac{1-m+n}{2},\frac{a+b}{2}-1])$.
    \end{lemma}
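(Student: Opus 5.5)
The computation runs parallel to Lemma~\ref{graded piece computation lem}, now on the filtration~\eqref{filtration right kudla in separate eq} with $t=n$ (so $l=0$ and $\beta$ trivial). By Remark~\ref{rank filtration dual rem}, the graded pieces are
\[
\widetilde{J}_k/\widetilde{J}_{k+1}\simeq \ind_{P_{n-k,k}\times P'_{k,m-k}}^{G_n\times G_m'}\bigl(\widetilde{\xi}_{n,k}\otimes\rho_k\bigr),
\]
where $\widetilde{\xi}_{n,k}$ is $\nu^{\frac{m-k}{2}}$ on $G_{n-k}$, $\nu^{\frac{m+n-k}{2}}$ on $G_k$, $\nu'^{\frac{-m-n+k}{2}}$ on $G_k'$ and $\nu'^{\frac{-n+k}{2}}$ on $G_{m-k}'$.

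The first step is to apply second adjointness in the $G_n$ variable and take the $G_m'$-induction outside. This gives
\[
\Hom_{G_n}(\widetilde{J}_k/\widetilde{J}_{k+1},\pi)_{\sm}\simeq \ind_{P'_{k,m-k}}^{G_m'}\Bigl(\Hom_{L_{n-k,k}}\bigl(\nu^{\frac{m-k}{2}}\otimes\nu^{\frac{m+n-k}{2}}\rho_k,\ \ov{r}_{n-k,k}(\pi)\bigr)_{\sm}\otimes(\text{characters})\Bigr).
\]

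For $k=n$, the character factor is trivial. Using $\Hom_{G_n}(\rho_n,\sigma)_{\sm}\simeq\sigma$ and the twists, I expect to get $\pi\times\mathbf 1_{m-n}$, with $\pi$ now placed on the left because the parabolic is $P'$ rather than $\ov{P'}$. This is the analogue of Lemma~\ref{lift off boundary lemma}, and the exponents should cancel exactly.

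For $k<n$, I must find the irreducible constituents $\alpha\boxtimes\beta$ of $\ov{r}_{n-k,k}(\pi)$ for which $\alpha$ is the character $\nu_{n-k}^{\frac{m-k}{2}}$. The cuspidal support of this character is the segment $\bigl[\frac{m-n+1}{2},\frac{m+n-1}{2}-k\bigr]$. By Lemma~\ref{Speh Jacquet lem} applied to the opposite Jacquet module $\ov{r}_{n-k,k}=r_{k,n-k}$ up to swapping, $\alpha$ ranges over Langlands quotients built from the initial pieces $[x_i,c_i]$ of the segments $\Delta_i$. Such an $\alpha$ is a character only when all $c_i\le x_i$, which forces the leftmost block structure
\[
LQ\bigl(\nu_1^{x_1}\times\cdots\times\nu_1^{x_1-(n-k)+1}\bigr)
\]
with $x_1=\frac{b-a}{2}$. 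Rather than redo this by hand, it is cleaner to dualize the previous lemma, using $\pi^\vee\simeq\pi$ (since $\chi=\mathbf 1_1$) and $\ov{r}(\pi)\simeq r(\pi^\vee)^\vee$ up to swapping factors.

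Next I match the endpoints. The top of the character's support is $\frac{m+n-1}{2}-k$, and this should equal $-x_1$ after dualizing, that is $\frac{a-b}{2}$. This gives $2k=m+n-1-a+b$, i.e. $n-k=h$ after the sign conventions; the precise identification of which endpoint matches is where I would recheck the signs. The complementary $G_k$-factor is then the contragredient of the representation $A$ from Lemma~\ref{graded piece computation lem}, by Proposition~\ref{Zele class_properties}(2) and the Mœglin–Waldspurger algorithm. Inducing along $P'_{k,m-k}$ and tracking the $G_{m-k}'$ character $\nu'^{\frac{n-k}{2}}$ then gives $A^\vee\times\nu_{m-k}^{\frac{n-k}{2}}$. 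All other $k$ give zero.

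The main obstacle I expect is the bookkeeping of exponents in the last step: checking that the twists on $G_k'$ combine with the $G_k$ twist so that $\beta$ comes out exactly $A^\vee$ with no residual character, and that the matching condition on $k$ is $k=n-h$ and not some reflection of it. I would verify this first in the trivial case $a=1$, $b=1$ as a sanity check.
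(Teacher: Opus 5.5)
Your route is the one the paper intends: its proof only says to rerun Lemma~\ref{graded piece computation lem} on $\widetilde J_\bullet$. Your set-up is right — the graded pieces with $l=0$, second adjointness, the $k=n$ term $\pi\times\mathbf 1_{m-n}$, and the identification of the only possible character constituent as $\alpha=LQ(\nu_1^{x_1}\times\cdots\times\nu_1^{x_1-(n-k)+1})$ with $x_1=\frac{b-a}{2}$.

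\textbf{The endpoint matching fails as written.} You equate the top $\frac{m+n-1}{2}-k$ of the support of $\nu_{n-k}^{\frac{m-k}{2}}$ with $-x_1=\frac{a-b}{2}$. This gives $n-k=\frac{n-m+1+a-b}{2}$, which differs from $h=\frac{n-m+1+b-a}{2}$ unless $a=b$, so your ``i.e.\ $n-k=h$'' does not follow. The top of the support of $\alpha$ is $x_1$ itself, with no sign flip. In the dualized picture, $\alpha$ is the contragredient of the character $B$ of Lemma~\ref{graded piece computation lem}; the \emph{bottom} of $B$ is $\frac{a-b}{2}$, so the top of $\alpha$ is $\frac{b-a}{2}$. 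The correct equation $\frac{m+n-1}{2}-k=\frac{b-a}{2}$ gives $k=\frac{m+n+a-b-1}{2}=n-h$. Then $\alpha\simeq\nu_{n-k}^{\frac{m-k}{2}}$ exactly, since the centre of $\alpha$ is $\frac{b-a+1-h}{2}$, which equals $\frac{m-n+h}{2}$ by the definition of $h$.

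\textbf{A cleaner way to finish.} Apply your dualization to the condition itself rather than to endpoints.
By Casselman's duality and $\pi^\vee\simeq\pi$, we have $\ov r_{n-k,k}(\pi)\simeq r_{n-k,k}(\pi)^\vee$, and $r_{n-k,k}(\pi)$ is $\ov r_{k,n-k}(\pi)$ with the factors swapped.
So the constituents here are $B_c^\vee\boxtimes A_c^\vee$, where $A_c\boxtimes B_c$ runs over the constituents of $\ov r_{k,n-k}(\pi)$ used in Lemma~\ref{graded piece computation lem}.
Moreover $B_c^\vee\simeq\nu^{\frac{m-k}{2}}$ if and only if $B_c\simeq\nu^{\frac{k-m}{2}}$, which is literally the condition there. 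Hence $k=n-h$ and $\beta=A^\vee$ with no new work.
As a direct check, $\beta=LQ(\St([x_i+1,y_i])_{i\le h},\St([x_i,y_i])_{i>h})$, and the reindexing $[-y_i,-x_i]=[x_{b+1-i},y_{b+1-i}]$ identifies it with the contragredient of the $G_k$-factor called $A$ there.

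\textbf{The twists you flagged do cancel.} The $G_k$-character $\nu^{\frac{m+n-k}{2}}$ and the $G_k'$-character $\nu'^{\frac{k-m-n}{2}}$ contribute mutually inverse twists to $\Hom_{G_k}(\rho_k,\cdot)_{\sm}$, leaving $A^\vee$ untwisted. The $G'_{m-k}$-character $\nu'^{\frac{k-n}{2}}$ is inverted by Hom, giving $A^\vee\times\nu_{m-k}^{\frac{n-k}{2}}$. With this correction your argument coincides with the paper's.
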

    \begin{proof}
        The proof is similar to that of Lemma~\ref{graded piece computation lem} by replacing the decreasing filtration $\widetilde{R}_i$ by the decreasing filtration $\widetilde{J}_i$. We leave it for interested readers.
    \end{proof}

    We will also need the integral representation for the $L$-function of the contragredient representation, which is also related to the theta lift of $\pi$. Given $\phi\in S_{n,m}$, $v\in \pi$, $\lambda\in \pi^{\vee}$, and $s\in \BC$, we can associate a contragredient zeta integral:
\[
\widehat{Z}(\phi, \lambda, v;s):=\int_{G_n} \CF(\phi)\begin{pmatrix}
    g \\
    0_{m-n,n} 
\end{pmatrix} \lambda(g^{-1}\cdot v)|\Nrd(g)|_{F}^{s+\frac{dn-1}{2}}\, dg,
\]
where $\CF(\phi)$ is the Fourier transform defined in~\eqref{Fourier transform eq}. Similar as Proposition~\ref{GJ proposition}, we have:
\begin{itemize}
    \item The contragredient zeta integral $\widehat{Z}(\phi,\lambda,v;s)$ is absolutely convergent when $\mathrm{Re} s$ is sufficiently large, and it admits a meromorphic continuation to the whole complex plane, which is also denoted by $\widehat{Z}(\phi,\lambda,v;s)$.  
        \item 
        \[
         \widehat{Z}^{\sharp}(\phi,\lambda,v;s):=\frac{\widehat{Z}(\phi,\lambda,v;s)}{L(s,\pi^{\vee})}
        \]
        is a holomorphic function since $\lambda(g^{-1}\cdot v)$ is a matrix coefficient of $\pi^{\vee}$ and 
        \[
        S_{n,m}\stackrel{\CF}{\lra} S_{m,n}\stackrel{  \mathrm{res}   }{\lra} S_{n,n}
        \]
        is surjective, where $\mathrm{res}$ is the restriction to square matrices.
        \item It induces a non-zero $G_n\times P_n'$-equivariant map $\widehat{Z}^{\sharp}(\cdot,\cdot,\cdot;s): \omega_{n,m} \lra \pi\boxtimes \pi^{\vee}\otimes \xi_s'$, where 
       the action of $P_n'$ factors through $P_n'\lra G_n'\times G_{m-n}'$ and
       \begin{equation}
           \xi_s'=\begin{cases}
               |\Nrd|^{-s-\frac{d(n-m)-1}{2}}  & \text{ on } G_n\\
               |\Nrd|^{s-\frac{1}{2}}    & \text{ on } G_n'\\
               \nu^{-\frac{n}{2}}       & \text{ on } G_{m-n}'.
           \end{cases}
       \end{equation}
       Here, on the right hand side, $G_n$ acts on $\pi$ and $G_n'$ acts on $\pi^{\vee}$.
\end{itemize}
By Frobenius reciprocity, we obtain a non-zero element in
\[
 \widehat{Z}^{\sharp}\in \Hom_{G_n\times G_m'}(\omega_{n,m}\otimes \pi^{\vee}, \pi^{\vee}\times \mathbf{1}_{m-n})
\]
when specializing to $s=s_0:=\frac{1+d(m-n)}{2}$. 

Let $\widetilde{J}_n$ be the subrepresentation of $\omega_{n,m}$ defined in~\eqref{filtration right kudla in separate eq} and Remark~\ref{rank filtration dual rem}.
\begin{lemma}\label{daul GJ integral on boundary lem}
    If $L(s,\pi^{\vee})$ is not holomorphic at $s=s_0$, then $\widehat{Z}^{\sharp}$ restricted to $\widetilde{J}_n\otimes \pi^{\vee}$ is zero.
\end{lemma}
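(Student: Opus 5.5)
The plan is to mirror the proof of Lemma~\ref{GJ integral on boundary lem}, transported to the Fourier dual side via Remark~\ref{rank filtration dual rem}. By that remark, under the Fourier transform $\CF$ of~\eqref{Fourier transform eq}, Kudla's filtration with $t=n$ corresponds to the rank filtration on $\CS(M_{m,n}(\mathrm{D}))$. In particular, the deepest piece $\widetilde{J}_n$ is carried by $\CF$ onto the subspace of functions in $\CS(M_{m,n}(\mathrm{D}))$ supported on the full-rank locus, i.e.\ on $m\times n$ matrices of rank $n$. So the first step is to record precisely that for $\phi\in\widetilde{J}_n$, the function $\CF(\phi)$ has compact support contained in the open set of rank-$n$ matrices.

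Next I show that for such $\phi$ the integral defining $\widehat{Z}(\phi,\lambda,v;s)$ is a finite sum for every $s\in\BC$. Write $x_0:=\begin{pmatrix} g\\ 0_{m-n,n}\end{pmatrix}$. The map $g\mapsto x_0$ embeds $G_n$ as a closed subset of the rank-$n$ locus of $M_{m,n}(\mathrm{D})$, so the compact support of $\CF(\phi)$ meets $G_n$ in a compact subset of $G_n$. Hence $g\mapsto \CF(\phi)(x_0)$ is locally constant and compactly supported on $G_n$. Its product with the smooth matrix coefficient $\lambda(g^{-1}v)$ and with $|\Nrd(g)|^{s+\frac{dn-1}{2}}$ is therefore integrable for all $s$, and the result is a Laurent polynomial in $q^{-s}$, in particular entire. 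By uniqueness of meromorphic continuation, this entire function coincides with $\widehat{Z}(\phi,\lambda,v;s)$ everywhere.

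Finally, since $L(s,\pi^{\vee})$ has a pole at $s_0$, dividing gives $\widehat{Z}^{\sharp}(\phi,\lambda,v;s_0)=\widehat{Z}(\phi,\lambda,v;s_0)/L(s,\pi^{\vee})\big|_{s=s_0}=0$ for all $v\in\pi$ and $\lambda\in\pi^{\vee}$. This means the map $\widehat{Z}^{\sharp}$ at $s_0$, and hence the induced Frobenius-reciprocal map into $\pi^{\vee}\times\mathbf{1}_{m-n}$, vanishes on $\widetilde{J}_n\otimes\pi^{\vee}$.

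The main obstacle is the first step: making sure the identification in Remark~\ref{rank filtration dual rem} sends $\widetilde{J}_n$ exactly to the full-rank-supported functions and not to some other piece, and that the twisting characters in the normalization $\sigma_{m,n}(g',g)\nu^m(g)\nu^{-n}(g')$ do not disturb the support statement. I would verify this by checking that $\CF$ intertwines $\omega_{n,m}$ with the twisted action on $\CS(M_{m,n}(\mathrm{D}))$, so that it preserves supports of the $G_n\times G_m'$-orbit stratification. I would then identify $\widetilde{J}_n$, the bottom piece of Kudla's filtration (whose graded quotient involves $\rho_n$), with the open orbit stratum.
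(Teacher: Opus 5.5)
Your proposal is correct and follows the paper's proof. The paper likewise uses Remark~\ref{rank filtration dual rem} to see that $\CF(\phi)$ is supported on the full-rank locus for $\phi\in\widetilde{J}_n$, deduces that the integral converges for every $s$ and hence equals $\widehat{Z}(\phi,\lambda,v;s)$ on all of $\BC$, and concludes from the pole of $L(s,\pi^{\vee})$ at $s_0$ that $\widehat{Z}^{\sharp}$ vanishes there. Your additional details, namely the compactness of the support in $G_n$, the Laurent-polynomial form of the integral, and the check of the identification in the remark, only make explicit what the paper leaves implicit.
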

\begin{proof}
    By definition in Remark~\ref{rank filtration dual rem}, if $\phi\in\widetilde{J}_n$, then $\CF(\phi)$ is supported on the full rank locus of $M_{m,n}(\mathrm{D})$. Therefore,
    \[
     \int_{G_n} \CF(\phi)\begin{pmatrix}
    g \\
    0_{m-n,n} 
\end{pmatrix} \lambda(g^{-1}\cdot v)|\Nrd(g)|_{F}^{s+\frac{dn-1}{2}}\, dg
    \]
    is absolutely convergent for any $\lambda\in\pi^{\vee}, v\in\pi$ and $s\in \BC$ when $\phi\in\widetilde{J}_n$. Thus, it equals $\widehat{Z}(\phi,\lambda,v;s)$ for any $s\in \BC$. When $L(s,\pi^{\vee})$ is not holomorphic at $s=s_0$, we have $\widehat{Z}^{\sharp}(\phi,\lambda,v;s_0)=0$ for any $v\in\pi$ and $\lambda\in\pi^{\vee}$.
\end{proof}
Then, by a similar argument as Lemma~\ref{glue embedding lem}, we obtain the following result. We omit the proof.
 \begin{lemma}\label{dual glue embedding lem}
         Let $m\geq n$ be two positive integers. Let $\pi=U(\mathbf{1}_1;a,b)$ be an exceptional Speh representation with respect to $(n,m)$. Then there is an injection
         \[
          \Hom_{G_n}(\omega_{n,m}, \pi)_{\sm}\hookrightarrow A^{\vee}\times \nu_{m-n+h}^{\frac{h}{2}} .
         \]
    \end{lemma}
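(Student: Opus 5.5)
The plan is to mirror the proof of Lemma~\ref{glue embedding lem} verbatim, with the rank filtration $\widetilde{R}_k$ replaced by the Kudla filtration $\widetilde{J}_k$ of \eqref{filtration right kudla in separate eq} with $t=n$. By Remark~\ref{rank filtration dual rem}, this is the rank filtration on the Fourier dual side. The Godement--Jacquet integral $Z^{\sharp}$ is replaced by the contragredient zeta integral $\widehat{Z}^{\sharp}$. The graded pieces are supplied by Lemma~\ref{graded piece computation Fourier lem}, and the vanishing on the open stratum by Lemma~\ref{daul GJ integral on boundary lem}.

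\textbf{Step 1.} The short exact sequence $0\to\widetilde{J}_n\to\omega_{n,m}\to\omega_{n,m}/\widetilde{J}_n\to0$ gives a left exact sequence
\[
0\to\Hom_{G_n}(\omega_{n,m}/\widetilde{J}_n,\pi)_{\sm}\to\Hom_{G_n}(\omega_{n,m},\pi)_{\sm}\xrightarrow{\CR'}\Hom_{G_n}(\widetilde{J}_n,\pi)_{\sm}.
\]
By Lemma~\ref{graded piece computation Fourier lem}, the last term is $\pi\times\mathbf{1}_{m-n}$. Passing to duals, we have $(\widetilde{J}_n\otimes\pi^\vee)_{G_n}\simeq\pi^\vee\times\mathbf{1}_{m-n}$.

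\textbf{Step 2.} We show that $\CR'=0$. Since $\pi$ is exceptional, $L(s,\pi^\vee)$ has a pole at $s_0$, so Lemma~\ref{daul GJ integral on boundary lem} gives $\widehat{Z}^{\sharp}\circ\CR'^{\vee}=0$. Moreover, $\pi^\vee\times\mathbf{1}_{m-n}$ is a product of Speh representations, hence irreducible. The map $\widehat{Z}^{\sharp}$ is nonzero, so it surjects $\Theta_{n,m}(\pi)$ onto $\pi^\vee\times\mathbf{1}_{m-n}$. Howe duality then identifies this as $\theta_{n,m}(\pi)$; this is consistent with Lemma~\ref{glue embedding lem}, since $\mathbf{1}_{m-n}\times\pi^\vee\simeq\pi^\vee\times\mathbf{1}_{m-n}$ by irreducibility.

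Now suppose $\CR'^{\vee}\neq0$. Its source is irreducible, so its image would be a copy of $\theta_{n,m}(\pi)$ lying in the kernel of $\widehat{Z}^{\sharp}$. Together with the quotient $\theta_{n,m}(\pi)$, this gives $[\Theta_{n,m}(\pi):\theta_{n,m}(\pi)]\ge2$, which contradicts Theorem~\ref{multiplicity one thm}. Hence $\CR'=0$, and $\Hom(\omega_{n,m},\pi)_{\sm}\simeq\Hom(\omega_{n,m}/\widetilde{J}_n,\pi)_{\sm}$.

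\textbf{Step 3.} Peel off the remaining layers using
\[
0\to\Hom(\omega/\widetilde{J}_k,\pi)_{\sm}\to\Hom(\omega/\widetilde{J}_{k+1},\pi)_{\sm}\to\Hom(\widetilde{J}_k/\widetilde{J}_{k+1},\pi)_{\sm}.
\]
By Lemma~\ref{graded piece computation Fourier lem}, the graded Hom vanishes for $n-h<k<n$, so these steps are isomorphisms. The same lemma gives vanishing for all $k<n-h$, so $\Hom(\omega/\widetilde{J}_{n-h},\pi)_{\sm}=0$. It follows that
\[
\Hom_{G_n}(\omega_{n,m},\pi)_{\sm}\hookrightarrow\Hom_{G_n}(\widetilde{J}_{n-h}/\widetilde{J}_{n-h+1},\pi)_{\sm}\simeq A^\vee\times\nu_{m-n+h}^{\frac{h}{2}}.
\]

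The main obstacle is Step 2: keeping track of the $G_m'$ normalizations so that the target of $\widehat{Z}^{\sharp}$ at $s_0$ really is $\pi^\vee\times\mathbf{1}_{m-n}$, and checking that the restriction map $\CR'^{\vee}$ lands in a quotient where multiplicity one applies. Both points are handled exactly as in the unprimed case.
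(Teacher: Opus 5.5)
Your proposal is correct and is exactly the argument the paper intends. The paper omits this proof, saying it follows Lemma~\ref{glue embedding lem} with $\widetilde{R}_k$, $Z^{\sharp}$, Lemma~\ref{GJ integral on boundary lem} and Lemma~\ref{graded piece computation lem} replaced by $\widetilde{J}_k$, $\widehat{Z}^{\sharp}$, Lemma~\ref{daul GJ integral on boundary lem} and Lemma~\ref{graded piece computation Fourier lem}, which is what your Steps 1--3 do; your Step 2 in fact spells out the multiplicity-one contradiction (the kernel of $\widehat{Z}^{\sharp}$ containing a second copy of $\theta_{n,m}(\pi)$) more explicitly than the paper does.
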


Now, we apply Lemma~\ref{glue embedding lem} and Lemma~\ref{dual glue embedding lem} to prove Theorem~\ref{big theta speh thm} when \(a>1\). We need to investigate the induced representation $\nu_{m-n+h}^{-\frac{h}{2}}\times A$ more closely. Let $\Delta_1$ be the Zelevinsky segment of $\nu_{m-n+h}^{-\frac{h}{2}}$, namely, $\nu_{m-n+h}^{-\frac{h}{2}}= Z(\Delta_1)$. And let 
\[
\Delta_2=[\frac{1-m+n}{2},\frac{a+b}{2}-1] , \Lambda_i= [\frac{a-b}{2}-i,\frac{a+b}{2}-1-i]
\]
for $1\leq i\leq a-1$ be Zelevinsky segments of $A$. Since $1\leq h\leq b$, by comparing the right endpoints of $\Delta_1$ and $\Lambda_{a-1}$, we have
\[
e(\Lambda_{a-1})-e(\Delta_1)=\frac{b-a}{2}-\frac{m-n-1}{2}=h \geq 1,
\]
which ensures that $\Delta_1$ is strictly contained in $\Lambda_i$ for each $i$. The Figure~\ref{fig:segments-exceptional-speh} shows the relative position of segments in the induced representation.
\begin{figure}[htbp]
\centering
\begin{adjustbox}{max width=0.98\textwidth}
\begin{tikzpicture}[
    x=1.05cm,
    y=1.00cm,
    endpoint/.style={circle,fill,inner sep=1.5pt},
    explabel/.style={font=\scriptsize},
    seglabel/.style={font=\small\bfseries}
]

%
%

\draw[
    blue,
    line width=4.5pt,
    line cap=round,
    opacity=.78
]
    (0,0)--(5.15,0);

\draw[
    red,
    line width=4.5pt,
    line cap=round,
    opacity=.78
]
    (2.75,0)--(8.20,0);

\fill[blue] (0,0) circle (1.8pt);
\fill[blue] (5.15,0) circle (1.8pt);

\fill[red] (2.75,0) circle (1.8pt);
\fill[red] (8.20,0) circle (1.8pt);

\node[explabel,above=7pt] at (0,0)
    {$\displaystyle \frac{a-b}{2}$};

\node[explabel,above=7pt] at (5.15,0)
    {$\displaystyle \frac{m-n-1}{2}$};

\node[explabel,below=7pt] at (2.75,0)
    {$\displaystyle \frac{1-m+n}{2}$};

\node[explabel,above=7pt] at (8.20,0)
    {$\displaystyle \frac{a+b}{2}-1$};

\node[seglabel,blue,above=1pt] at (1.25,0)
    {$\Delta_1$};

\node[seglabel,red,above=1pt] at (7.10,0)
    {$\Delta_2$};


\draw[
    red,
    line width=3.8pt,
    line cap=round
]
    (-0.70,-1.45)--(7.50,-1.45);

\fill[red] (-0.70,-1.45) circle (1.6pt);
\fill[red] (7.50,-1.45) circle (1.6pt);

\node[explabel,above=5pt] at (-0.70,-1.45)
    {$\displaystyle \frac{a-b}{2}-1$};

\node[explabel,above=5pt] at (7.50,-1.45)
    {$\displaystyle \frac{a+b}{2}-2$};

\node[seglabel,red,below=2pt] at (3.40,-1.45)
    {$\Lambda_1$};


\draw[
    red,
    line width=3.8pt,
    line cap=round
]
    (-1.40,-2.55)--(6.80,-2.55);

\fill[red] (-1.40,-2.55) circle (1.6pt);
\fill[red] (6.80,-2.55) circle (1.6pt);

\node[explabel,above=5pt] at (-1.40,-2.55)
    {$\displaystyle \frac{a-b}{2}-2$};

\node[explabel,above=5pt] at (6.80,-2.55)
    {$\displaystyle \frac{a+b}{2}-3$};

\node[seglabel,red,below=2pt] at (2.70,-2.55)
    {$\Lambda_2$};


\node[font=\large] at (2.05,-3.30)
    {$\vdots$};


\draw[
    red,
    line width=3.8pt,
    line cap=round
]
    (-2.55,-4.10)--(5.65,-4.10);

\fill[red] (-2.55,-4.10) circle (1.6pt);
\fill[red] (5.65,-4.10) circle (1.6pt);

\node[explabel,above=5pt] at (-2.55,-4.10)
    {$\displaystyle 1-\frac{a+b}{2}$};

\node[explabel,above=5pt] at (5.65,-4.10)
    {$\displaystyle \frac{b-a}{2}$};

\node[seglabel,red,below=2pt] at (1.55,-4.10)
    {$\Lambda_{a-1}$};

\end{tikzpicture}
\end{adjustbox}

\caption{
The segments in
$\nu_{m-n+h}^{-\frac{h}{2}}\times A$.
The segment $\Delta_1$ is blue, while
$\Delta_2,\Lambda_1,\ldots,\Lambda_{a-1}$,
which constitute the multisegment defining $A$, are red.
}
\label{fig:segments-exceptional-speh}
\end{figure}

\begin{proof}[proof of Theorem~\ref{big theta speh thm} when $a>1$]
    When $m\geq n$, since $\theta_{n,m}(\pi)\neq 0$, we have $\Hom_{G_n}(\omega_{n,m},\pi)_{\sm}\neq 0$. By the analysis about the Figure~\ref{fig:segments-exceptional-speh}, we know that $\Delta_1,\Delta_2$ and $\Lambda_i$ satisfy the assumption of Proposition~\ref{length two prop}. Hence, $\nu_{m-n+h}^{-\frac{h}{2}}\times A$ is of length two with socle $\mathbf{1}_{m-n}\times U(\mathbf{1}_{1};a,b)$ and cosocle
    \[
    Z(\Delta_1+\Delta_2+\sum_{i=1}^{a-1} \Lambda_i).
    \]
     Therefore, by Lemma~\ref{glue embedding lem}, there are two possibilities, namely, 
    \[
    \Hom_{G_n}(\omega_{n,m},\pi)_{\sm}\simeq \mathbf{1}_{m-n}\times U(\mathbf{1}_{1};a,b) \text{ or } \simeq \nu_{m-n+h}^{-\frac{h}{2}}\times A.
    \]
    Assume $ \Hom_{G_n}(\omega_{n,m},\pi)_{\sm}\simeq \nu_{m-n+h}^{-\frac{h}{2}}\times A$. In the Grothendieck group,
    \[
    [A^{\vee}\times \nu_{m-n+h}^{\frac{h}{2}}]= [(\nu_{m-n+h}^{-\frac{h}{2}}\times A)^{\vee}],
    \]
    which implies that $A^{\vee}\times \nu_{m-n+h}^{\frac{h}{2}}$ has length two and contains the irreducible component
    \[
    Z(\Delta_1+\Delta_2+\sum_{i=1}^{a-1} \Lambda_i)^{\vee}\simeq Z(\Delta_1^{\vee}+\Delta_2^{\vee}+\sum_{i=1}^a \Lambda_i^{\vee}).
    \]
    Since when $a>1$, we have $\ell(\Delta_1)\geq 1$ and $\ell(\Delta_2)\geq 1$, which implies
    \[
    \Delta_1^{\vee}+\Delta_2^{\vee}+\sum_{i=1}^a \Lambda_i^{\vee}\neq \Delta_1+\Delta_2+\sum_{i=1}^{a-1} \Lambda_i.
    \]
    This leads to a contradiction to Lemma~\ref{dual glue embedding lem}. Consequently, when $m\geq n$,
    \[
     \Hom_{G_n}(\omega_{n,m},\pi)_{\sm}\simeq \mathbf{1}_{m-n}\times U(\mathbf{1}_{1};a,b).
    \]
    When $m<n$ and $a>1$, then by the Langlands parameter of $\pi=U(\mathbf{1}_1;a,b)$ (see~\ref{multi-seg speh eq}), Howe duality Theorem~\ref{Howe duality thm} (2) tells us, there does not exists an irreducible representation $\sigma$ of $G_m$, such that
    \[
    \theta_{m,n}(\sigma)\simeq \pi.
    \]
    Thus, by Howe duality Theorem~\ref{Howe duality thm} (1), $\Hom_{G_n}(\omega_{n,m},\pi)=0$.
\end{proof}

\subsection{Proof of Theorem~\ref{big theta speh thm} when $a=1$}
When $a=1$, we have $n=b$. At this time, since $\pi$ is exceptional,
\[
h=\frac{b-a-(m-n)+1}{2}=\frac{2n-m}{2}\in \BZ,
\]
which implies that $m$ is an even integer. We also observe that $m-n+h=\frac{m}{2}$ and $A\simeq \nu^{\frac{h}{2}}_{\frac{m}{2}}$. Therefore, Lemma~\ref{dual glue embedding lem} will not provide additional information, and the argument in subsection~\ref{computation big theta speh sec} does not work.

Instead, we follows the strategy in~\cite{CLLTZ25} and argue by computing the extension group in the rank filtration. We will need the following lemma.
\begin{lemma}[\cite{BW00}, X. Proposition 2.6]\label{self-ext lem}
    Let $\chi$ be a character of $G_n$. Then
    \[
    \Ext^i_{G_n}(\chi,\chi)\simeq\begin{cases}
        \BC & \text{ when } i=0,1;\\
        0 & \text{ otherwise} .
    \end{cases}
    \]
\end{lemma}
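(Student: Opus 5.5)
The plan is to reduce to the trivial character and compute its cohomology directly. Twisting by $\chi^{-1}$ gives $\Ext^i_{G_n}(\chi,\chi)\simeq \Ext^i_{G_n}(\mathbf{1}_n,\mathbf{1}_n)$, so it suffices to treat $\chi=\mathbf{1}_n$. The case $i=0$ is immediate, since $\Hom_{G_n}(\mathbf{1}_n,\mathbf{1}_n)=\BC$.

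For $i\geq 1$ I would pass through the centre. Write $G_n^1$ for the kernel of $\nu_n$, that is, the elements whose reduced norm is a unit. Then $G_n/G_n^1\simeq\BZ$ via $\ord\circ\Nrd$. Smooth $G_n^1$-coinvariants, or invariants, of the trivial representation are exact on the relevant objects because $G_n^1$ is generated by compact open subgroups together with a finite-covolume piece. Using this, I would set up the Hochschild--Serre type spectral sequence
\[
E_2^{p,q}=H^p\bigl(\BZ, \Ext^q_{G_n^1}(\mathbf{1},\mathbf{1})\bigr)\Longrightarrow \Ext^{p+q}_{G_n}(\mathbf{1}_n,\mathbf{1}_n).
\]
Since $\BZ$ has cohomological dimension one, with $H^0(\BZ,\BC)=H^1(\BZ,\BC)=\BC$, the statement reduces to the vanishing $\Ext^q_{G_n^1}(\mathbf{1},\mathbf{1})=0$ for $q>0$.

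That vanishing is the main obstacle. I would establish it through the Borel--Serre/Casselman computation of the cohomology of the trivial representation. The Bruhat--Tits building of $G_n^1$ (equivalently of $\SL_n(\mathrm{D})$ acting through $G_n^1$) is contractible, so it yields a projective resolution of $\mathbf{1}$ by compactly induced modules from the stabilisers of the simplices. Applying $\Hom(\cdot,\mathbf{1})$ then computes the cohomology of the quotient complex. One must then show that the $G_n^1$-quotient of the building, namely a chamber, has trivial reduced cohomology.

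Alternatively, following \cite{BW00}, I would cite Casselman's theorem that $\Ext^q_{G}(\mathbf{1},\mathbf{1})$ vanishes in positive degree for a semisimple $p$-adic group. I would then handle the central extension directly: $G_n^1\supset Z^1\cdot[G_n,G_n]$ with finite index modulo a compact centre, and compact groups contribute no higher Ext. Assembling these pieces gives $\Ext^1=\BC$, spanned by the extension $g\mapsto\begin{pmatrix}1&\ord\Nrd(g)\\0&1\end{pmatrix}$, and $\Ext^i=0$ for $i\geq2$.
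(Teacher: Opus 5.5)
Your argument is correct and is essentially the standard Casselman/Borel--Wallach proof behind the citation \cite{BW00} that the paper relies on. You pass to $G_n^1=\ker\nu_n$ using $G_n/G_n^1\simeq\BZ$. You then kill $\Ext^q_{G_n^1}(\mathbf{1},\mathbf{1})$ for $q>0$ with the projective resolution coming from the contractible building. Its quotient by $G_n^1\supseteq\SL_n(\mathrm{D})$ is a single closed chamber because the action is chamber-transitive and type-preserving; this also ensures stabilisers fix simplices pointwise, so no orientation characters enter.

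One correction concerns your justification of the Hochschild--Serre spectral sequence. $G_n^1$-invariants are not exact: for $n\geq 2$ and $K$ compact open, the surjection $c\text{-}\ind_K^{G_n^1}\mathbf{1}\twoheadrightarrow\mathbf{1}$ has zero invariants on the source. The spectral sequence should instead be justified by the openness of $G_n^1$ in $G_n$. Restriction to $G_n^1$ preserves injectives because its left adjoint $c\text{-}\ind$ is exact. Also, $V\mapsto V^{G_n^1}$ preserves injectives because it is right adjoint to the exact inflation functor from $\BZ$-modules.
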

The extension group in the rank filtration is given by the following lemma. The Hom-space is also computed in Lemma~\ref{graded piece computation lem}.
\begin{lemma}\label{extension group lem}
 Let $n,m$ be two integers.   Let $\pi=U(\mathbf{1}_{1};1,n)$ be the trivial representation of $G_n$ which is exceptional. Then
 \begin{enumerate}
     \item When $0\leq k<\min\{n,m\}$,
     \[
    \Ext^i_{G_n}(\widetilde{R}_k/\widetilde{R}_{k+1}, \pi)_{\sm}\simeq\begin{cases} \nu^{\frac{m-2n}{4}}\mathbf{1}_{\frac{m}{2}}\times  \nu^{\frac{2n-m}{4}}\mathbf{1}_{\frac{m}{2}} &\text{ if } k=\frac{m}{2} \text{ and }i=0,1;\\
    0 & \text{ otherwise}.
    \end{cases}
     \]
     \item When $k=\min\{n,m\}$,
     \[
    \Ext^i_{G_n}(\widetilde{R}_k/\widetilde{R}_{k+1}, \pi)_{\sm}\simeq\begin{cases} \mathbf{1}_{m-n}\times \mathbf{1}_n &\text{ if } n\leq m \text{ and } i=0;\\
    0 & \text{ otherwise}.
    \end{cases}
     \]
 \end{enumerate}
\end{lemma}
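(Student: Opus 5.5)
The plan is to compute each graded piece directly from the description \eqref{unitary normalized rank filtration eq}. By Frobenius reciprocity in the $G_m'$-variable (induction is exact and commutes with taking smooth Ext in the $G_n$-variable) and second adjointness in the $G_n$-variable, we get
\[
\Ext^i_{G_n}(\widetilde{R}_k/\widetilde{R}_{k+1},\mathbf{1}_n)_{\sm}\simeq \ind_{\ov{P'_k}}^{G_m'}\Bigl(\Ext^i_{L_{k,n-k}}\bigl(\rho_k\otimes\widetilde{\eta}_k,\ov{r}_{k,n-k}(\mathbf{1}_n)\bigr)_{\sm}\Bigr).
\]
The Jacquet module $\ov{r}_{k,n-k}(\mathbf{1}_n)$ is the single character $\nu^{-\frac{n-k}{2}}\boxtimes\nu^{\frac{k}{2}}$, with the signs fixed by the normalization. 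The K\"unneth formula \cite[Theorem 3.5]{Pr23} then splits the $L_{k,n-k}$-Ext into a $G_k$-part and a $G_{n-k}$-part. The $G_k$-part is $\Ext^c_{G_k}(\rho_k\otimes\nu^{\frac{k-n-m}{2}},\text{character})$. Since $\rho_k$ is projective as a $G_k$-module and $(\rho_k\otimes\lambda^{\vee})_{G_k}\simeq\lambda^{\vee}$, this part vanishes for $c>0$ and gives a character of $G_k'$ for $c=0$. The $G_{n-k}$-part is $\Ext^d_{G_{n-k}}(\nu^{\frac{k-m}{2}},\nu^{\frac{k}{2}}\cdot(\text{shift}))$, an Ext between two characters of $G_{n-k}$.

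For $k<n$ the $G_{n-k}$-part is nonzero only when the two characters coincide, since they would otherwise lie in different Bernstein blocks. After tracking exponents, coincidence should be exactly the condition $k=n-h=\frac{m}{2}$, matching Lemma~\ref{graded piece computation lem} with $a=1$. In that case Lemma~\ref{self-ext lem} gives $\BC$ in degrees $d=0,1$ and zero otherwise. So the graded piece has Ext only in degrees $0,1$, each isomorphic to the same induced representation $\ind_{\ov{P'_{m/2}}}^{G_m'}$ of a product of two characters. That representation should be $\nu^{\frac{m-2n}{4}}\mathbf{1}_{\frac{m}{2}}\times\nu^{\frac{2n-m}{4}}\mathbf{1}_{\frac{m}{2}}$, once one uses $\ov{P'}$-induction $\sigma_1\boxtimes\sigma_2\mapsto\sigma_2\times\sigma_1$ and dualizes: we use $\Hom(\cdot,\cdot)_{\sm}$ of the coinvariants, which is the contragredient, hence the exponent flip relative to Theorem~\ref{big theta speh thm}. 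Note that $k=\frac{m}{2}<\min\{n,m\}$ automatically, since $h\ge1$ means $m<2n$, and $m>0$.

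For $k=\min\{n,m\}$ we treat two cases. If $n\le m$, then $k=n$ and there is no $G_{n-k}$-factor. Projectivity of $\rho_n$ kills higher Ext, and the degree-zero computation is that of Lemma~\ref{lift off boundary lemma}, giving $\mathbf{1}_{m-n}\times\mathbf{1}_n$. If $m<n$, then $k=m<n$ and the $G_{n-m}$-characters must match. The coincidence condition $k=\frac{m}{2}$ fails because $k=m>0$, so everything vanishes.

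The main obstacle is the bookkeeping of exponents: combining the normalization characters $\widetilde{\eta}_k$, the modulus shifts in $\ov{r}_{k,n-k}(\mathbf{1}_n)$, and the contragredient in the $\Hom_{\sm}$ formalism. These must be checked carefully to confirm both that the coincidence occurs exactly at $k=\frac{m}{2}$ and that the resulting exponents are $\pm\frac{2n-m}{4}$ in the stated order. A minor additional point is justifying that smooth vectors commute with induction and with the K\"unneth decomposition, which is handled as in the proof of Theorem~\ref{separation thm}.
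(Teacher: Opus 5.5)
Your outline follows the same route as the paper. The paper omits its proof and cites \cite[Lemma 5.1]{CLLTZ25} together with Lemma~\ref{self-ext lem}. That argument is the graded-piece computation you describe: $\ov{P'_k}$-induction and second adjointness on each $\widetilde{R}_k/\widetilde{R}_{k+1}$, projectivity of $\rho_k$ to kill the $G_k$-part in positive degree, and self-extensions of characters on the $G_{n-k}$-part.

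There is one correction, in exactly the bookkeeping you flagged. The opposite Jacquet module is $\ov{r}_{k,n-k}(\mathbf{1}_n)\simeq\nu^{\frac{n-k}{2}}\boxtimes\nu^{-\frac{k}{2}}$, because $\mathbf{1}_n$ is a quotient of $\nu^{\frac{n-k}{2}}\mathbf{1}_k\times\nu^{-\frac{k}{2}}\mathbf{1}_{n-k}$. The character you wrote, $\nu^{-\frac{n-k}{2}}\boxtimes\nu^{\frac{k}{2}}$, is $r_{k,n-k}(\mathbf{1}_n)$. With it, the $G_{n-k}$-characters $\nu^{\frac{k-m}{2}}$ and $\nu^{\frac{k}{2}}$ never coincide for $m>0$, so part (1) would come out identically zero.

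With the correct sign, the $G_{n-k}$-characters $\nu^{\frac{k-m}{2}}$ and $\nu^{-\frac{k}{2}}$ coincide if and only if $k=\frac{m}{2}$.
\begin{itemize}
\item At $k=\frac{m}{2}$, the $G_k$-Hom space is the character $\nu'^{\frac{n-k}{2}}$ of $G'_k$. The twist by $\widetilde{\eta}_k$ contributes $\nu'^{-\frac{n-k}{2}}$ on $G'_{m-k}$.
\item Then $\ind_{\ov{P'_k}}(\sigma_1\boxtimes\sigma_2)\simeq\sigma_2\times\sigma_1$ gives $\nu^{\frac{m-2n}{4}}\mathbf{1}_{\frac m2}\times\nu^{\frac{2n-m}{4}}\mathbf{1}_{\frac m2}$, as claimed.
\item For $k=m<n$, one compares $\mathbf{1}_{n-m}$ with $\nu^{-\frac m2}\mathbf{1}_{n-m}$. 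These differ, so everything vanishes.
\end{itemize}

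A minor precision on terminology: the Ext-vanishing for non-matching characters comes from distinct cuspidal supports, via the Bernstein center, in the sense of \eqref{Bernstein decomp eq}. All these unramified characters lie in the same Bernstein component of $\Alg(G_{n-k})$.
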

\begin{proof}
    Assuming Lemma~\ref{self-ext lem}, the proof is exactly the same as in~\cite[Lemma 5.1]{CLLTZ25}. We omit the details.
\end{proof}
\begin{proof}[Proof of Theorem~\ref{big theta speh thm} when $a=1$]
    By Lemma~\ref{extension group lem}, we have
    \[
 \Ext^i_{G_n}   \left(\omega_{n,m}/\widetilde{R}_{\frac{m}{2}+1},\pi\right)_{\sm}\simeq\Ext^i_{G_n}(\widetilde{R}_{\frac{m}{2}}/\widetilde{R}_{\frac{m}{2}+1}, \pi)_{\sm}
    \]
    On the other hand, using the long exact sequence associated to the decreasing filtration of $\widetilde{R}_{\frac{m}{2}+1}$, Lemma~\ref{extension group lem} implies that
    \[
    \Ext^i_{G_n}   \left(\widetilde{R}_{\frac{m}{2}+1},\pi\right)_{\sm}\simeq \begin{cases}
    \mathbf{1}_{m-n}\times \mathbf{1}_n  &\text{ if } n\leq m \text{ and } i=0; \\
     0 &\text{ otherwise}.
 \end{cases}
    \]
    for any integer $i$. Now, consider the short exact sequence
    \[
    0 \lra \widetilde{R}_{\frac{m}{2}+1}\lra \omega_{n,m} \lra \omega_{n,m}/\widetilde{R}_{\frac{m}{2}+1}\lra 0
    \]
    and its associated long exact sequence
    \begin{align*}
   0\lra  \Hom_{G_n}&(\omega_{n,m}/\widetilde{R}_{\frac{m}{2}+1},\pi)_{\sm}\lra \Hom_{G_n}(\omega_{n,m},\pi)_{\sm}\stackrel{\mathscr{T}}{\lra} \Hom_{G_n}(\widetilde{R}_{\frac{m}{2}+1},\pi)_{\sm}\\
   &\lra \Ext^1_{G_n}(\omega_{n,m}/\widetilde{R}_{\frac{m}{2}+1},\pi)_{\sm} \lra \Ext^1_{G_n}(\omega_{n,m},\pi)_{\sm}.
    \end{align*}
    Therefore, when $n>m$, 
\[
\Hom_{G_n}(\omega_{n,m}/\widetilde{R}_{\frac{m}{2}+1},\pi)_{\sm}\simeq \Hom_{G_n}(\omega_{n,m},\pi)_{\sm}\simeq \nu^{\frac{m-2n}{4}}\mathbf{1}_{\frac{m}{2}}\times  \nu^{\frac{2n-m}{4}}\mathbf{1}_{\frac{m}{2}}.
\]
When $n\leq m$, it suffices to show that $\mathscr{T}=0$. Since $\mathbf{1}_{m-n}\times \mathbf{1}_n$ is irreducible, by Godement-Jacquet zeta integral Proposition~\ref{GJ proposition} (3), 
\[
\theta_{n,m}(\pi)\simeq \mathbf{1}_{m-n}\times \mathbf{1}_n\simeq \theta_{n,m}(\pi)^{\vee}.
\]
If $\mathscr{T}\neq0$, then $\mathbf{1}_{m-n}\times \mathbf{1}_n$ is a quotient of $ \Hom_{G_n}(\omega_{n,m},\pi)_{\sm}$. Thus, $\mathscr{T}$ is a splitting of 
\[
\theta_{n,m}(\pi)^{\vee}\hookrightarrow  \Hom_{G_n}(\omega_{n,m},\pi)_{\sm},
\]
since $\theta_{n,m}(\pi)^{\vee}$ has multiplicity one in $\Hom_{G_n}(\omega_{n,m},\pi)_{\sm}$ by Theorem~\ref{multiplicity one thm},
which contradicts to the Howe duality Theorem~\ref{Howe duality thm}. This completes the proof.
\end{proof}

\section{A conjecture on irreducibility of big theta lift}
In this section, we discuss two special cases of the Conjecture~\ref{irr conj intro} mentioned in the introduction.
\subsection{Test representations and reducible big theta}\label{reducible sec}
This subsection is devoted to the proof of the following Proposition.
\begin{proposition}\label{reducible prop}
    Let $m=n$ be a positive integer. Assume $\mathrm{D}=F$. Let $\pi$ be an Arthur-type representation such that
    \[
   \pi\simeq U(\chi_1;a_1,b_1)\times \cdots\times U(\chi_k;a_k,b_k),
    \]
    where each Speh block $U(\chi_i;a_i,b_i)$ is exceptional with respect to $(n,m)$. If $a_i=1$ for some $i$, then $\Theta_{n,n}(\pi)$ is reducible.
\end{proposition}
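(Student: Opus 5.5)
We will produce a GGP-test representation $\lambda\in\Irr(G_{n-1})$ for $\pi$, in the sense of the definition in the introduction. Such a $\lambda$ immediately shows $\Theta_{n,n}(\pi)$ is non-zero and reducible. Since $m=n$, exceptionality of a block $U(\chi_i;a_i,b_i)$ forces $\chi_i=\mathbf{1}_1$ and $h_i=\frac{b_i-a_i+1}{2}\in\{1,\dots,b_i\}$. A block with $a_i=1$ is therefore the trivial character $\mathbf{1}_{b_i}$ with $b_i$ odd. Theorem~\ref{L-function holomorphic big theta thm} does not apply, but Howe duality (Theorem~\ref{Howe duality thm}(2)) still gives $\theta_{n,n}(\pi)=\pi^\vee$. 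Write $\pi=\mathbf{1}_{b}\times\pi'$ with $b=b_i$ odd and $\pi'$ the product of the remaining blocks.

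\textbf{Step 1: the big-theta side.} By the see-saw identity
\[
\Hom_{G_{n-1}}(\Theta_{n,n}(\pi),\lambda)\simeq \Hom_{G_n}(\Theta_{n-1,n}(\lambda)\otimes\Omega_n,\pi),
\]
it suffices to choose $\lambda$ so that $\theta_{n-1,n}(\lambda)$ is of Arthur type and relevant to $\pi$ in the sense of Definition~\ref{relevant def}. Chan's theorem on the non-tempered Fourier--Jacobi GGP conjecture \cite{Ch22} then gives $\Hom_{G_n}(\theta_{n-1,n}(\lambda)\otimes\Omega_n,\pi)\ne0$. The small theta is a quotient of the big theta, and $\Omega_n$ is flat, so the left-hand side is non-zero. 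I would take $\theta_{n-1,n}(\lambda)=\mathbf{1}_{1}\times\lambda^\vee$ as in Theorem~\ref{Howe duality thm}(2), with $\lambda$ itself a product of Speh blocks. The A-parameter of $\lambda^\vee$ is obtained from that of $\pi$ by replacing the summand $\Sym^{0}\boxtimes\Sym^{b-1}$ with $\Sym^0\boxtimes\Sym^{b-2}$, and keeping the other summands (up to contragredient). The extra $\mathbf{1}_1$ then supplies the summand needed for relevance. I would first choose exactly this $\lambda$ and verify the combinatorial relevance condition block by block.

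\textbf{Step 2: the small-theta side.} Next show $\Hom_{G_{n-1}}(\pi^\vee,\lambda)=0$. This is a branching problem $G_n\downarrow G_{n-1}$ for Arthur-type representations, governed by Chan's quotient branching law \cite{Cha_qbl,Cha_csq}. In the $G_n\to G_{n-1}$ restriction, relevance requires A-parameters to differ by the ``swapped'' $\SL_2$ pattern: a summand $\Sym^{a-1}\boxtimes\Sym^{b-1}$ may pair with $\Sym^{a-2}\boxtimes\Sym^{b-1}$, and not with $\Sym^{a-1}\boxtimes\Sym^{b-2}$. Our $\lambda$ was built by shrinking the second $\SL_2$ of a block with $a=1$, which the branching law cannot match. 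One can also check this directly with the Bernstein--Zelevinsky filtration recalled in the paper. The filtration expresses $\Hom_{G_{n-1}}(\pi^\vee,\lambda)$ through the terms $\pi^{\vee[k]}\times c\text{-}\ind\,\psi$. Each nonzero piece would force $\lambda^\vee$ to be a quotient of a product of a shifted derivative of $\pi^\vee$ with a generic representation. Proposition~\ref{Zele class_properties}(3) computes the highest derivative, and cuspidal supports rule out every $k$.

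\textbf{Main obstacle.} The hard part is Step 2, together with the bookkeeping that makes Step 1's relevance and Step 2's non-relevance hold simultaneously when $\pi'$ contains other exceptional blocks, possibly with the same $b$. There $\lambda$ must be chosen so that no alternative matching of summands rescues the branching. I would first treat $\pi=\mathbf{1}_b$ alone, then check that the remaining blocks of $\pi'$ pair identically on both sides and so do not interfere.
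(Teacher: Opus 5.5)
Your framework (see-saw identity, Chan's Fourier--Jacobi theorem for the big theta, a vanishing argument for the small theta) is the paper's. However, the $\lambda$ you propose is not a test representation, and the heuristic behind Step~2 is backwards.

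\textbf{The proposed $\lambda$ fails.} Take the simplest case $\pi=\mathbf{1}_b$. Your recipe gives $\lambda=\lambda^\vee=\mathbf{1}_{b-1}$, and then $\Hom_{G_{b-1}}(\theta_{b,b}(\pi),\lambda)=\Hom_{G_{b-1}}(\mathbf{1}_b,\mathbf{1}_{b-1})\neq0$. The underlying error is in your description of branching: for the restriction $G_n\downarrow G_{n-1}$, GGP relevance also shifts the \emph{second} (Arthur) $\SL_2$ by one, with the same shape as Definition~\ref{relevant def}. Shrinking $\Sym^{b-1}$ to $\Sym^{b-2}$ is therefore exactly what makes the branching Hom non-zero. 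Your cuspidal-support check via the Bernstein--Zelevinsky filtration cannot help either, since $\supp(\lambda)$ consists only of unramified characters $\nu^x$, just like $\supp(\pi^\vee)$.

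When $k\geq 2$, the relevance needed in Step~1 also breaks down. Relevance forces \emph{every} summand of $\phi_\pi$ to be matched with one whose Arthur-$\SL_2$ dimension differs by exactly one, so the untouched blocks of $\pi'$ cannot pair with themselves. For instance, for $\pi=\mathbf{1}_2\times\mathbf{1}_2$ you get $\theta_{3,4}(\lambda)=\mathbf{1}_1\times\mathbf{1}_1\times\mathbf{1}_2$, and the Arthur dimensions $(2,2)$ versus $(1,1,2)$ admit no relevant matching.

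There is also a parity slip: with $m=n$ and $a_i=1$ one has $h_i=b_i/2$, so $b_i$ is \emph{even}, not odd. The correct construction depends on this parity.

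\textbf{The missing idea is the paper's choice of $\lambda$.}
\begin{itemize}
\item Shrink \emph{all} blocks by passing to the highest shifted derivative $\pi^-=\prod_i U(\mathbf{1}_1;a_i,b_i-1)$ (Proposition~\ref{Zele class_properties}(3)).
\item Instead of the extra $\mathbf{1}_1$, adjoin an auxiliary cuspidal $\rho$ of $G_d$, where $d=\sum_i a_i$ is the depth of $\pi$, taking $\rho$ ramified if $d=1$.
\end{itemize}
Then $\phi_\pi$ is relevant to $\phi_{\pi^-\times\rho}$: each block pairs with its shrink and $\rho$ is unpaired.

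The hypothesis $a_i=1$ serves only to realize $\pi^-\times\rho$ as a small theta lift from $G_{n-1}$. By Theorem~\ref{Howe duality thm}(2), such a lift must contain $\nu^0=\mathbf{1}_1$ in its Langlands data. Since $b_i$ is even, the factor $\mathbf{1}_{b_i-1}$ of $\pi^-$ supplies one. Deleting it from the Langlands data of $\pi^-\times\rho$ and dualizing gives $\lambda\in\Irr(G_{n-1})$ with $\theta_{n-1,n}(\lambda)=\pi^-\times\rho$.

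For the small theta, $\rho^\vee\in\supp(\lambda)$ lives on $G_d$, where $d$ is also the depth of $\pi^\vee$, and it is not an unramified twist of anything in $\supp(\pi^\vee)$. Each Bernstein--Zelevinsky piece $\pi^{\vee[j]}\times c\text{-}\ind_{U_{j-1}}^{G_{j-1}}\psi_{j-1}$ only sees $\ov{r}_{n-j,j-1}(\lambda)$ with $j-1<d$. This forces $\rho^\vee$ into the first factor, and comparing cuspidal supports kills every piece (Lemma~\ref{hom-zero lem}).
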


We prove this Proposition by constructing a (GGP-)test representation for $\pi$. We first analyze the Langlands parameter of $\pi^-$. Since each Speh block is exceptional, the discussion in subsection~\ref{class and separation exc sec} shows that $\chi_i$ is the trivial character of $G_1$ for each $1\leq i \leq k$. Moreover, if $U(\mathbf{1}_1;1,b)$ is exceptional with respect to $(n,m)$, then
\[
 \frac{b-a-(m-n)+1}{2}=\frac{b}{2}\in \BZ.
\]
In other words, $b$ is even. The highest BZ-derivative formula with respect to Zelevinsky classification (Proposition~\ref{Zele class_properties}, part~\ref{part: highest derivative}) shows
\[
U(\mathbf{1}_1;1,b)^- =U(\mathbf{1}_1;1,b-1),
\]
whose Langlands parameter contains the trivial character of $G_1$. Thus, if we write
\[
\pi^-=LQ(\tau_1\times \cdots\times \tau_N),
\]
where $\tau_i$ is essentially square-integrable and $\tau_1\times \cdots\times \tau_N$ is a standard representation, then there exist $1\leq j\leq N$, such that $\tau_j=\mathbf{1}_1$. Let 
\[
\sigma=LQ(\tau_1\times \cdots\times \widehat{\tau_j}\times \cdots\times \tau_N),
\]
where, as usual, $\widehat{\cdot}$ over an argument means that the argument should be omitted. Then, $\sigma$ is an irreducible representation of $G_{n-d-1}$, where
\[
d= \sum_{i=1}^k a_i
\]
is the depth of $\pi$. Let $\rho$ be a cuspidal representation of $G_d$ which is assumed to be ramified when $d=1$. We claim that
\[
\lambda:=LQ(\tau_N^{\vee}\times \cdots\times \widehat{\tau_j^{\vee}}\times \cdots\times \tau_1^{\vee}\times \rho^{\vee})\in \Irr(G_{m-1})
\]
is a desired test representation. We first prove that the Hom-space for small theta is zero, which follows from a more general lemma.
\begin{lemma}\label{hom-zero lem}
    Let $\beta\in \Irr(G_n)$ and $\gamma\in \Irr(G_{n-1})$. Let $d$ be the depth of $\beta$. If there is a cuspidal representation $\rho \in \supp (\gamma)$ of $G_d$ such that $\rho\neq \xi\cdot \chi$ in $\Irr$ for any unramified character $\xi$ and cuspidal representation $\chi\in \supp(\beta)$, then
    \[
    \Hom_{G_{n-1}}(\beta,\gamma)=0.
    \]
\end{lemma}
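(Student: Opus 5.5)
We will use the Bernstein--Zelevinsky filtration of $\beta|_{G_{n-1}}$ recalled above, together with the Bernstein decomposition. The filtration has graded pieces
\[
\beta_{k-1}/\beta_k\simeq \beta^{[k]}\times c\text{-}\mathrm{ind}_{U_{k-1}}^{G_{k-1}}\psi_{k-1},\qquad 1\le k\le d,
\]
and the pieces with $k>d$ vanish by definition of the depth. By left exactness of $\Hom_{G_{n-1}}(\cdot,\gamma)$ along this finite filtration, it suffices to show that
\[
\Hom_{G_{n-1}}\bigl(\beta^{[k]}\times c\text{-}\mathrm{ind}_{U_{k-1}}^{G_{k-1}}\psi_{k-1},\,\gamma\bigr)=0 \quad\text{for each }1\le k\le d.
\]

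For a fixed $k$, write $\Pi_k=\beta^{[k]}\times c\text{-}\mathrm{ind}_{U_{k-1}}^{G_{k-1}}\psi_{k-1}$. The Gelfand--Graev representation decomposes over the Bernstein components of $G_{k-1}$, so $\Pi_k$ is a direct sum over Bernstein blocks of $G_{n-1}$. Any nonzero map $\Pi_k\to\gamma$ factors through the block of $\supp(\gamma)$ (up to unramified twists of the cuspidal data, i.e.\ the inertial class). We therefore compare inertial supports. Every irreducible subquotient of $\Pi_k$, or more precisely every Bernstein block meeting $\Pi_k$, has inertial support equal to the inertial class of $\supp(\beta^{[k]})$ together with some multiset of cuspidals $\rho'$ with $\sum \mathrm{gr}(\rho')=k-1$. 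Here $\supp(\beta^{[k]})$ is contained in $\supp(\beta)$ up to the unramified twist by $\nu^{1/2}$, since derivatives come from Jacquet modules of $\beta$ along $P_{n-k,k}$.

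Now consider the cuspidal $\rho\in\supp(\gamma)$ on $G_d$. It cannot come from the $\beta^{[k]}$ part, because by hypothesis $\rho$ is not an unramified twist of any $\chi\in\supp(\beta)$. It also cannot come from the Gelfand--Graev part, since $\mathrm{gr}(\rho)=d>k-1$ for every $k\le d$. Hence the inertial support of $\gamma$ does not match any block of $\Pi_k$, and the Hom space vanishes for every $k$. This gives the claim.

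The main point to verify carefully is that the Gelfand--Graev factor $c\text{-}\mathrm{ind}_{U_{k-1}}^{G_{k-1}}\psi_{k-1}$, which is not of finite length, still only involves Bernstein blocks of $G_{k-1}$. This holds by Bernstein's decomposition of the full smooth category, and parabolic induction respects inertial supports, so the block argument applies to $\Pi_k$ even though it is not admissible.
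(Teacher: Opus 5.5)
Your argument is correct and is essentially the paper's proof. Both proofs filter $\beta|_{G_{n-1}}$ by the Bernstein--Zelevinsky filtration and kill each graded piece by a cuspidal-support comparison: the hypothesis on $\rho$ rules out the $\beta^{[k]}$ factor, and $\mathrm{gr}(\rho)=d>k-1$ rules out the Gelfand--Graev factor. The only difference is that the paper first applies second adjointness and compares supports on the $G_{n-k}$-factor of the finite-length Jacquet module $\overline{r}_{n-k,k-1}(\gamma)$, whereas you compare inertial supports directly on $G_{n-1}$; this is why you need, and correctly invoke, the Bernstein block decomposition of the Gelfand--Graev representation, which the paper's route avoids.
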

\begin{proof}
    There is a decreasing filtration of $\beta|_{G_{n-1}}$ called the Bernstein-Zelevinsky filtration, whose subquotient is isomorphic to 
    \[
    Q_j:= \beta^{[j]} \times c-\ind_{U_{j-1}}^{G_{j-1}}(\psi_{j-1})
    \]
    for $1\leq j\leq d$. To prove the Lemma, it suffices to show
    \[
    \Hom_{G_{n-1}}(Q_j, \gamma)=0.
    \]
    By second adjointness, it is isomorphic to
    \[
    \Hom_{G_{n-j}\times G_{j-1}}\left(\beta^{[j]} \boxtimes c-\ind_{U_{j-1}}^{G_{j-1}}(\psi_{j-1}), \ov{r}_{n-j,j-1}(\gamma) \right).
    \]
    We observe that since $j-1<d$, for any irreducible component $\alpha_1\boxtimes\alpha_2$ of $\ov{r}_{n-j,j-1}(\gamma)$, $\rho\in \supp(\alpha_1)$. Since any cuspidal representation in the cuspidal support of any irreducible component of $\beta^{[j]}$ is an unramified twist of a cuspidal representation in $\supp(\beta)$, the Hom-space is zero by comparing the cuspidal support.
\end{proof}

Since $\theta_{n,n}(\pi)\simeq \pi^{\vee}$, we obtain
\[
\Hom_{G_{n-1}}(\theta_{n,n}(\pi),\lambda)=0
\]
by Lemma~\ref{hom-zero lem} and the assumption on the cuspidal representation $\rho$. 

We then prove the Hom-space for big theta is non-zero, namely,
\[
 \Hom_{G_{n-1}}(\Theta_{n,n}(\pi),\lambda)\simeq \Hom_{G_n}(\Theta_{n-1,n}(\lambda)\otimes \Omega_n , \pi)\neq 0.
\]
By Howe duality Theorem~\ref{Howe duality thm}, $\theta_{n-1,n}(\lambda)\simeq \pi^-\times \rho$. Hence, it suffices to show that
\begin{equation}
    \Hom_{G_n}((\pi^-\times \rho)\otimes \Omega_n , \pi)\neq 0.
\end{equation}
since $\theta_{n-1,n}(\lambda)$ is the unique irreducible quotient of $\Theta_{n-1,n}(\lambda)$.
We recall the relevant relation of A-parameters introduced by Gan-Gross-Prasad in~\cite{GGP20} that determines whether this Hom-space is non-zero.
\begin{definition}(relevant relation)\label{relevant def}
    Let $n,m$ be positive integers. Let $\pi_1\in\Irr(G_n)$ and $\pi_2\in \Irr(G_m)$. We call the A-parameter $\phi_1$ of $\pi_1$ is relevant to the A-parameter $\phi_2$ of $\pi_2$ if there exist irreducible representations $\{\tau_i\}_{i=1}^{r+s}$ of $W_F$ with bounded image and positive integers $a_1,\cdots,a_r,b_{r+1},\cdots,b_{r+s},c_{1},\cdots,c_{r+s}$ such that
    \[
    \phi_1= \sum_{i=1}^r \tau_i \boxtimes \Sym^{c_i-1}(\BC^2)\boxtimes \Sym^{a_i-1}(\BC^2)\bigoplus \sum_{i=r+1}^{r+s} \tau_i \boxtimes \Sym^{c_i-1}(\BC^2)\boxtimes \Sym^{b_i-2}(\BC^2) 
    \]
    and 
    \[
    \phi_2=\sum_{i=1}^r \tau_i \boxtimes \Sym^{c_i-1}(\BC^2)\boxtimes \Sym^{a_i-2}(\BC^2)\bigoplus \sum_{i=r+1}^{r+s} \tau_i \boxtimes \Sym^{c_i-1}(\BC^2)\boxtimes \Sym^{b_i-1}(\BC^2).
    \]
\end{definition}

According to this definition, we find that $\pi^-\times \rho$ is relevant to $\pi$. Therefore, the desired non-zero property follows from~\cite[Proposition 5.1, Theorem 1.3]{Ch22}. This completes the proof of Proposition~\ref{reducible prop}.

\subsection{Big theta of repeated exceptional Speh blocks}
This subsection is devoted to the proof of the following Proposition.
\begin{proposition}\label{irreducible prop}
    Let $m_0\geq n_0$ be positive integers. Let $\Pi$ be an Arthur-type representation of $G_{n_0}$ such that
    \[
   \Pi\simeq u(\chi;a,c)^{\times r}
    \]
    where $u(\chi;a,c)$ is exceptional with respect to $(n_0,m_0)$ and $a>1$. Then, $\Theta_{n_0,m_0}(\Pi)$ is irreducible. More precisely, $\Theta_{n_0,m_0}(\Pi)\simeq \mathbf{1}_{m_0-n_0}\times \Pi$.
\end{proposition}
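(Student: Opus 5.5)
First I would reduce to a single exceptional Speh block repeated $r$ times. By Lemma~\ref{product lem}, $u(\chi;a,c)$ is a product of twisted Speh blocks $|\Nrd|^{z_j}U(\chi;a,c_j)$. By Remark~\ref{speh theta rem} and Lemma~\ref{one exc Speh factor lem}, exactly one of them, $U(\mathbf{1}_1;a,b)$ with $b=c_j$, is exceptional. Hence $\Pi\simeq \Pi_{\mathrm h}\times U(\mathbf{1}_1;a,b)^{\times r}$, where $\Pi_{\mathrm h}$ collects the non-exceptional factors. Theorem~\ref{separation thm}, with $\alpha=\Pi_{\mathrm h}$ and $\beta=U(\mathbf{1}_1;a,b)^{\times r}$, gives $\Theta_{n_0,m_0}(\Pi)\simeq \Pi_{\mathrm h}^{\vee}\times\Theta_{n,m}(\beta)$, up to the contragredient/normalization conventions, with $n=rab$ and $m=m_0-n_0+n\ge n$. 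It therefore suffices to show $\Theta_{n,m}(\pi)\simeq \mathbf{1}_{m-n}\times\pi$ for $\pi=U(\mathbf{1}_1;a,b)^{\times r}$. Reassembling then gives $\mathbf{1}_{m_0-n_0}\times\Pi$: the factors commute because all products of Speh blocks are irreducible, and $\Pi$ is self-dual in the relevant sense.

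For this $\pi$ I would copy the strategy of the case $a>1$ in Theorem~\ref{big theta speh thm}. The first step is the analogue of Lemma~\ref{graded piece computation lem}, computing $\Hom_{G_n}(\widetilde R_k/\widetilde R_{k+1},\pi)_{\sm}$. This requires the Jacquet module $\ov r_{k,n-k}(\pi)$, obtained from Lemma~\ref{Speh Jacquet lem} together with the geometric lemma applied to the $r$ factors. I then look for the components whose $G_{n-k}$ factor is the character $\nu^{\frac{k-m}{2}}$. A character quotient of the $G_{n-k}$ part must come from the character parts of each block, each a segment ending at $\frac{a-b}{2}$. A product of $s$ such characters is a character only if $s\le1$, because identical segments times each other give $Z$ of a multisegment with repeated segments, which is not a character unless it is empty. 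So the only nonzero pieces are $k=n$ and $k=n-h$. For $k=n-h$ the answer is $\nu_{m-k}^{\frac{k-n}{2}}\times A\times U(\mathbf{1}_1;a,b)^{\times(r-1)}$. The same holds on the Fourier side, giving the analogue of Lemma~\ref{graded piece computation Fourier lem}.

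Next I glue the pieces exactly as in Lemma~\ref{glue embedding lem}. Here $\pi$ is exceptional, since both $L(s,\pi)$ and $L(s,\pi^\vee)$ have a pole, of order $r$. Lemma~\ref{GJ integral on boundary lem} then kills $Z^\sharp$ on $\widetilde R_n$. The target $\mathbf{1}_{m-n}\times\pi^\vee$ is irreducible, so it is $\theta_{n,m}(\pi)$. Theorem~\ref{multiplicity one thm} forces the restriction map to the open stratum to vanish. This yields embeddings
\[
\Hom_{G_n}(\omega_{n,m},\pi)_{\sm}\hookrightarrow \nu_{m-n+h}^{-\frac h2}\times A\times U(\mathbf{1}_1;a,b)^{\times (r-1)},
\]
and dually into $U(\mathbf{1}_1;a,b)^{\times(r-1)}\times A^\vee\times\nu_{m-n+h}^{\frac h2}$.

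The main obstacle is the structure of the induced representation $I=\nu_{m-n+h}^{-\frac h2}\times A\times U^{\times(r-1)}$. I need its socle to be $\mathbf{1}_{m-n}\times\pi$ with multiplicity one, and every other constituent must be excluded by the dual embedding. My first attempt is to show that $I$ has exactly two Jordan--H\"older constituents, $\mathbf{1}_{m-n}\times\pi$ and $Z(\Delta_1+\Delta_2+\sum\Lambda_i)\times U^{\times(r-1)}$. Then the contragredient argument with $a>1$, which distinguishes the multisegment from its dual, proceeds verbatim. To prove length two, I would use $I\simeq(\nu^{-h/2}_{m-n+h}\times A)\times U^{\times(r-1)}$ and Proposition~\ref{length two prop}. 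I would also argue that the product of each constituent with $U^{\times(r-1)}$ stays irreducible. For that I would invoke the Lapid--M\'inguez irreducibility criterion for products with a Speh (ladder) representation, checking that the segment configuration in Figure~\ref{fig:segments-exceptional-speh} does not create new linkages with the segments of $U(\mathbf{1}_1;a,b)$. If this irreducibility fails, the fallback is weaker: show only that every constituent of $I$ other than the socle has a Zelevinsky multisegment not stable under $\vee$, which suffices for the contradiction.
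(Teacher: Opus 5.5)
Your proposal follows the paper's proof essentially step for step. The shared steps are:
\begin{enumerate}
\item reduction to $\pi=U(\mathbf{1}_1;a,b)^{\times r}$ via Theorem~\ref{separation thm};
\item vanishing of every graded piece except $k=n$ and $k=n-h$;
\item gluing through the Godement--Jacquet integral and Theorem~\ref{multiplicity one thm}, on both the rank filtration and its Fourier dual;
\item length two of $\nu_{m-n+h}^{-\frac h2}\times A$ from Proposition~\ref{length two prop}, together with irreducibility after multiplying by $U(\mathbf{1}_1;a,b)^{\times(r-1)}$ via Lemma~\ref{parabolic irr lem};
\item the $\vee$-comparison of multisegments for $a>1$.
\end{enumerate}
In step 2 you argue via Frobenius reciprocity that each block must contribute a character piece containing $\nu_1^{\frac{a-b}{2}}$. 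This is equivalent to the paper's comparison with the multiplicity-free cuspidal support of $\widetilde\eta_k$.

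One step is asserted rather than proved: the exact value of the graded piece at $k=n-h$.
\begin{itemize}
\item The geometric lemma produces $r$ subquotients of $\ov r_{k,n-k}(\pi)$ isomorphic to $(A\times U(\mathbf{1}_1;a,b)^{\times(r-1)})\boxtimes\widetilde\eta_k$, one for each block carrying the character piece.
\item Hence $\Hom_{G_{n-k}}(\widetilde\eta_k,\ov r_{k,n-k}(\pi))$ is a priori an iterated extension of up to $r$ copies of $A\times U(\mathbf{1}_1;a,b)^{\times(r-1)}$. Nothing you wrote shows it is a single copy, so your target $I$ and the claim that it has exactly two constituents are not yet justified.
\item The paper closes this by identifying the space with the big derivative $\BW_{\widetilde\eta_k}(\pi)$. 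Chan's socle irreducibility (Proposition~\ref{big derivative socle irreducible}, transported through $\mathscr{A}$) then gives multiplicity one of the socle, hence length one.
\end{itemize}

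You could also bypass this point. Extra copies would only add constituents of the two types $\mathbf{1}_{m-n}\times\pi$ and $Z(\Delta_1+\Delta_2+\sum_i\Lambda_i)\times U(\mathbf{1}_1;a,b)^{\times(r-1)}$. The dual embedding has constituents $\mathbf{1}_{m-n}\times\pi$ and the contragredient of the second type, so it still excludes the second type. Theorem~\ref{multiplicity one thm} then forces $\Hom_{G_n}(\omega_{n,m},\pi)_{\sm}$ to have length one. With this route you also do not need multiplicity one of the socle of $I$ separately. With either fix the argument is complete.
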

The proof of this proposition follows the strategy of Theorem~\ref{big theta speh thm}, but requires a more careful study of the Hom-space for each graded piece in the rank filtration. Consequently, we focus on computing these Hom-spaces and only sketch the gluing process. 

Before we start the proof, we remark that the exceptional condition is equivalent to $\chi=\mathbf{1}_1$ and the existence of 
$0\leq j<\min\{d,c\}$, such that $z_j =\frac{c-1}{2}-j -\frac{d(c_j-1)}{2}=0$,
  \begin{equation}
    h_j=\frac{c_j-a-(m_0-n_0)+1}{2}\in \BZ \text{ and } 1\leq h_j\leq c_j,
\end{equation}
where $c_j=1+\lfloor\frac{c-1-j}{d}\rfloor$. Let $b=c_j$. Set $n=abr$ and $m=m_0-n_0+abr$. Thus, by Proposition~\ref{separation prop intro}, we can replace $\pi$ by its exceptional part:
\[
\Pi_e=U(\mathbf{1}_1;a,b)^{\times r}
\]
and prove that $\Theta_{n,m}(\Pi_e)$ is irreducible.
  \begin{lemma}\label{graded piece computation lem in irr sec}
         Let $m_0\geq n_0$ be two positive integers, and assume $a>1$. Let $\pi=U(\mathbf{1}_1;a,b)^{\times r} $ be a repeated product of exceptional Speh representations with respect to $(n_0,m_0)$.  Then
         \[
         \Hom_{G_n}( \widetilde{R}_k/\widetilde{R}_{k+1}, \pi)_{\sm}\simeq\begin{cases}
         \mathbf{1}_{m-n}\times \pi   &\text{ when }k=n;\\
        \nu_{m-k}^{\frac{k-n}{2}}\times A\times  U(\mathbf{1}_1;a,b)^{\times r-1}   &\text{ when } k=n-h;\\
         0   &\text{ otherwise } ,
         \end{cases}
         \]
         where $A=Z(\sum_{i=1}^{a-1}[\frac{a-b}{2}-i,\frac{a+b}{2}-1-i] +[\frac{1-m+n}{2},\frac{a+b}{2}-1])$ and
         \[
         h=\frac{b-a-(m-n)+1}{2}.
         \]
    \end{lemma}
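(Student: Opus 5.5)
Following the proof of Lemma~\ref{graded piece computation lem}, I will start from \eqref{unitary normalized rank filtration eq} and apply second adjointness:
\[
\Hom_{G_n}(\widetilde{R}_k/\widetilde{R}_{k+1},\pi)_{\sm}\simeq \ind_{\ov{P'_k}}^{G_m'}\Bigl(\Hom_{L_{k,n-k}}(\rho_k\otimes\widetilde{\eta}_k,\ov{r}_{k,n-k}(\pi))_{\sm}\Bigr).
\]
For $k=n$ the computation is the same as in Lemma~\ref{lift off boundary lemma}, using $\Hom_{G_n}(\rho_n,\pi)_{\sm}\simeq\pi$. This gives $\mathbf{1}_{m-n}\times\pi$.

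For $k<n$, the problem reduces to finding the part of $\ov{r}_{k,n-k}(\pi)$ whose $G_{n-k}$-factor is the character $\nu^{\frac{k-m}{2}}$. I will use the geometric lemma to compute $\ov{r}_{k,n-k}(U(\mathbf{1}_1;a,b)^{\times r})$. In the Grothendieck group it is a sum, over decompositions $n-k=t_1+\cdots+t_r$, of
\[
(\alpha_1\times\cdots\times\alpha_r)\boxtimes(\beta_1\times\cdots\times\beta_r),
\]
where each $\alpha_i\boxtimes\beta_i$ is a constituent of the Jacquet module of one Speh factor, described by Lemma~\ref{Speh Jacquet lem}. The $G_{n-k}$-factor $\beta_1\times\cdots\times\beta_r$ must have cuspidal support equal to the segment $\{\nu^{\frac{2k-m-n+1}{2}},\dots,\nu^{\frac{n-m-1}{2}}\}$, with each element appearing once. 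So at most one $t_i$ is nonzero: if two were nonzero, the top exponent $\frac{a+b}{2}-1$ of the first segment of each nonzero $\beta_i$ would appear twice. Hence exactly one $\beta_i$ is nontrivial, and by the proof of Lemma~\ref{graded piece computation lem} it must be the character $B$ of \eqref{character component eq}. This forces $k=n-h$, and the complementary factor is $\alpha_i=A$ while the other factors remain $U(\mathbf{1}_1;a,b)$.

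The main obstacle is to show that the Hom-space is exactly $A\times U(\mathbf{1}_1;a,b)^{\times(r-1)}$, rather than $r$ copies or a nonsplit extension. There are $r$ choices of the index $i$ and several Weyl-group shuffles in the geometric lemma. I will handle this through the Bernstein block decomposition. By Frobenius reciprocity, the relevant space is $\BD_{\nu^{\frac{k-m}{2}}}(\pi)$. Since $\pi$ is irreducible, $\pi\simeq U^{\times(r-1)}\times U$ with $U=U(\mathbf{1}_1;a,b)$ in any order. I will take the $U$ whose right Jacquet factor is used to be the last one and apply the geometric lemma to the two-step product $(U^{\times(r-1)})\times U$. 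Only the term in which the entire right $G_{n-k}$-part comes from the last factor can contribute a character. The reason is that any term using a nonempty part from the left factor $U^{\times(r-1)}$ would have the exponent $\frac{a+b}{2}-1$ in the right factor coming from that part, and this leads to the duplication excluded above. That term is the bottom piece $U^{\times(r-1)}\times r_U$ of the filtration, which is exact for this block. So
\[
\BD_{\nu^{\frac{k-m}{2}}}(\pi)\simeq U^{\times(r-1)}\times\BD_{\nu^{\frac{k-m}{2}}}(U)=U^{\times(r-1)}\times A,
\]
and the value of $\BD_{\nu^{\frac{k-m}{2}}}(U)$ is taken from Lemma~\ref{graded piece computation lem}. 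Finally I use irreducibility of $A\times U$, which holds because $\pi$ is irreducible and the product $U^{\times(r-1)}\times A$ commutes up to isomorphism. With this, inducing through $\ind_{\ov{P'_k}}^{G'_m}$ with the twist $\nu_{m-k}^{\frac{k-n}{2}}$ gives the stated formula. For all other $k<n$ the space vanishes.
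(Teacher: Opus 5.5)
Your treatment of $k=n$, of the vanishing for $k\neq n-h$, and of the identification of the piece at $k=n-h$ follows the paper. There is one slip there: the exponent shared by every nontrivial $G_{n-k}$-factor coming from a single Speh block is $\frac{a-b}{2}$, the top of the last Langlands segment, not $\frac{a+b}{2}-1$. A nontrivial upper part need not meet the first segment; for instance $B$ contains $\nu_1^{\frac{a+b}{2}-1}$ only when $h=b$. With this correction your duplication argument goes through.

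\textbf{The gap is in the multiplicity step.} Apply the geometric lemma to $U^{\times(r-1)}\times U$, with $U=U(\mathbf{1}_1;a,b)$. Consider the term in which the whole $G_{n-k}$-part comes from $U^{\times(r-1)}$ and the last $U$ contributes nothing. Only one Speh block contributes there, so nothing is duplicated. Since $U^{\times(r-1)}$ contains copies of $U$, this term has $r-1$ constituents of the form $(A\times U^{\times(r-1)})\boxtimes B$. So your claim that only the bottom piece can contribute a character is false. Your own count over the $r$ choices of the index $i$ already shows there are $r$ such constituents, one from each Speh block.

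Left exactness of $\Hom_{G_{n-k}}(\widetilde\eta_k,-)$ applied to the filtration therefore gives only a lower bound,
$U^{\times(r-1)}\times A\hookrightarrow \Hom_{G_{n-k}}(\widetilde\eta_k,\ov{r}_{k,n-k}(\pi))$.
Whether this space has one constituent or up to $r$ depends on how the $r$ copies are glued. Since $\Ext^1_{G_{n-k}}(B,B)\neq 0$ (Lemma~\ref{self-ext lem}), the gluing is not visible from cuspidal supports or from choosing the order of the factors.

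\textbf{The missing input is the one the paper uses.} In the paper's notation this Hom-space is the left big derivative $\BW_{\widetilde\eta_k}(\pi)$, not $\BD$. By Lemma~\ref{dualtiy commutes big derivative}, $\mathscr{A}$ carries it to a right big derivative with respect to a twisted Steinberg representation. That derivative is socle irreducible by Chan's Proposition~\ref{big derivative socle irreducible}. Every constituent is $A\times U^{\times(r-1)}$, so socle irreducibility forces exactly one copy.

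That last step needs $A\times U^{\times(r-1)}$ to be irreducible, as does identifying $U^{\times(r-1)}\times A$ with $A\times U^{\times(r-1)}$. Irreducibility of $\pi$ says nothing about this. It has to come from Lemma~\ref{parabolic irr lem} (Lapid--M\'inguez), applied inductively on $r$.
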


Note that $m_0-n_0=m-n$. Thus, equivalently, $\pi$ is  exceptional with respect to $(n,m)$. Before the proof of Lemma~\ref{graded piece computation lem in irr sec}, we recall an irreducibility result of parabolic induction with Speh representations due to Lapid-M\'inguez.
\begin{lemma}[{\cite[Proposition 6.6]{LM16}}]\label{parabolic irr lem}
    Let $\sigma=U(\chi;a,b)=Z(\Sigma_1+\cdots+\Sigma_a)$ be a Speh representation as in~\eqref{Speh Zelevinsky}. And let $\beta=Z(\fkm)\in \Irr$ for some multisegment $\fkm$. Suppose that $\Sigma_1\not\prec \fkm$ and $\fkm\not\prec \Sigma_a$, then $\beta\times \sigma$ is irreducible.
\end{lemma}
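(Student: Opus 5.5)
The plan is to use the Lapid--M\'inguez socle-irreducibility framework for ladder representations. By \eqref{Speh Zelevinsky}, $\sigma=Z(\Sigma_1+\cdots+\Sigma_a)$ is a ladder: the segments satisfy $b(\Sigma_1)>\cdots>b(\Sigma_a)$ and $e(\Sigma_1)>\cdots>e(\Sigma_a)$, with each $\Sigma_{i+1}$ the shift of $\Sigma_i$ by $-1$. Moreover $\sigma$ is unitary and $\sigma\times\sigma$ is irreducible, so $\sigma$ is $\square$-irreducible in the sense of \cite{LM16}. The proof then has three steps.

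\textbf{Step 1 (socle irreducibility).} Because $\sigma$ is $\square$-irreducible, both $\sigma\times\beta$ and $\beta\times\sigma$ are socle irreducible for every $\beta\in\Irr$; this is a standard result of \cite{LM16}. Passing to contragredients, and using Proposition~\ref{Zele class_properties}\eqref{part: contragredient} together with $(\sigma\times\beta)^\vee\simeq\sigma^\vee\times\beta^\vee$, the cosocle of $\beta\times\sigma$ is also irreducible and occurs with multiplicity one. Hence it suffices to prove
\[
\soc(\beta\times\sigma)\simeq\cos(\beta\times\sigma)\simeq\soc(\sigma\times\beta).
\]
Once this holds, the unique irreducible submodule of $\beta\times\sigma$ is also its unique irreducible quotient and has multiplicity one. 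This forces $\beta\times\sigma=\soc(\beta\times\sigma)$, so $\beta\times\sigma$ is irreducible.

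\textbf{Step 2 (identify the two socles).} Here I would use Proposition~\ref{Zele class_properties}\eqref{part: non-preceding socle}. Wherever the non-preceding hypotheses apply, it gives $\soc(Z(\fkm)\times\sigma)=Z(\fkm+\Sigma_1+\cdots+\Sigma_a)$ and $\cos(\sigma\times Z(\fkm))=Z(\fkm+\Sigma_1+\cdots+\Sigma_a)$, and symmetrically in the other order. What remains is to upgrade the hypotheses, which mention only the extreme segments, to the statements $\fkm\not\prec\Sigma_1+\cdots+\Sigma_a$ and $\Sigma_1+\cdots+\Sigma_a\not\prec\fkm$, or to an equivalent matching condition. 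The reason this should work is the rigid ladder shape. Suppose $\Delta\prec\Sigma_i$ for some $\Delta\in\fkm$ and some $i<a$. Then $b(\Delta)<b(\Sigma_i)$ and $e(\Delta)+1\ge b(\Sigma_i)$. Since $\Sigma_a$ has beginning $b(\Sigma_i)-(a-i)$ and ending $e(\Sigma_i)-(a-i)$, one checks that $\Delta$ either still precedes $\Sigma_a$, which is excluded by $\fkm\not\prec\Sigma_a$, or else contains $\Sigma_a$ strictly on the right.

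\textbf{Step 3 (matching in the residual case).} The residual configurations from Step 2 are not covered by Proposition~\ref{Zele class_properties}\eqref{part: non-preceding socle} directly. For these I would invoke the combinatorial socle criterion for products with a ladder in \cite{LM16}, which is phrased via matching functions between $\fkm$ and the ladder. The idea is to build explicitly an injective matching that pairs each offending $\Delta$ with a suitable $\Sigma_j$. The equal-length, unit-shift structure of the $\Sigma_j$ should make a greedy matching work.

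I expect Step 3 to be the main obstacle: a careful case analysis of how a segment of $\fkm$ can sit relative to the ladder. If the matching becomes messy, the fallback is a Jacquet-module argument. One would use Lemma~\ref{Speh Jacquet lem zelevinsky} to check directly that $Z(\fkm+\Sigma_1+\cdots+\Sigma_a)$ embeds into $\sigma\times\beta$, via Frobenius reciprocity and multiplicity-freeness of $r(\sigma)$, and then conclude by Step 1.
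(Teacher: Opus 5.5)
Your Step~1 is the standard and correct reduction. Because $\sigma$ is $\square$-irreducible, both $\beta\times\sigma$ and $\sigma\times\beta$ are socle irreducible. Moreover $\cos(\beta\times\sigma)$ is irreducible of multiplicity one and isomorphic to $\soc(\sigma\times\beta)$, so irreducibility is equivalent to $\soc(\beta\times\sigma)\simeq\soc(\sigma\times\beta)$. That reduction is formal, though. The whole content of the statement is showing that the two socles agree under only the two endpoint hypotheses; the paper does not prove this but imports it from \cite[Proposition 6.6]{LM16}. Your proposal does not supply that argument.

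\textbf{Step~2 fails.} You cannot upgrade the hypotheses to $\fkm\not\prec\Sigma_1+\cdots+\Sigma_a$ and $\Sigma_1+\cdots+\Sigma_a\not\prec\fkm$, because they are genuinely weaker. Take $\mathrm{D}=F$, $a=2$, $b=1$, so $\sigma=Z([\frac12]+[-\frac12])$, and $\fkm=[-\frac12]$. Both hypotheses hold, yet $[-\frac12]\prec\Sigma_1$. Your dichotomy (``precedes $\Sigma_a$ or contains $\Sigma_a$'') misses this because here $\Delta=\Sigma_a$; you overlooked the case $b(\Sigma_a)\le b(\Delta)<b(\Sigma_i)$. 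Symmetrically, for $a=b=2$ and $\fkm=[0,2]$ one gets $\Sigma_2=[-1,0]\prec[0,2]$. For $a=2$, $b=1$ and $\fkm=[-\frac12]+[\frac12,\frac32]$, both non-preceding conditions fail at once. So Proposition~\ref{Zele class_properties}(1) identifies neither socle in general.

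\textbf{Step~3 is only a hope.} Everything therefore rests on it, and you neither state the \cite{LM16} socle criterion for a ladder times an irreducible, nor construct the matching, nor verify its required properties. This would have to be done in both orders.

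\textbf{The fallback does not work as described.} An embedding $Z(\fkm+\Sigma_1+\cdots+\Sigma_a)\hookrightarrow\sigma\times\beta$ via Frobenius reciprocity needs the Jacquet module of $Z(\fkm+\Sigma_1+\cdots+\Sigma_a)$, not of $\sigma$. So Lemma~\ref{Speh Jacquet lem zelevinsky} and the multiplicity-freeness of $r(\sigma)$ give nothing. It also addresses only $\sigma\times\beta$, whereas in the first example above it is $\soc(\beta\times\sigma)$ that Proposition~\ref{Zele class_properties}(1) fails to determine.

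To turn this into a proof you must either quote \cite[Proposition 6.6]{LM16}, as the paper does, or state the ladder criterion of \cite{LM16} precisely. In the second case you must then check its matching condition in both orders, using $\Sigma_1\not\prec\fkm$ and $\fkm\not\prec\Sigma_a$.
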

Applying this lemma inductively on $r$, we obtain that $A\times U(\mathbf{1}_1;a,b)^{\times r-1}$ is irreducible for any positive integer $r$. 
\begin{proof}[Proof of Lemma~\ref{graded piece computation lem in irr sec}] We focus on $k<n$.
By~\eqref{unitary normalized rank filtration eq}, we have
        \begin{align*}
      &   \Hom_{G_n}( \widetilde{R}_k/\widetilde{R}_{k+1}, \pi)_{\sm}\simeq    \Hom_{G_n}\left(   \ind_{P_k\times \ov{P'_k}}^{G_n\times G_m'}(\rho_k \otimes \widetilde{\eta}_k),\pi\right)_{\sm}\\
      &\simeq \ind_{\ov{P'_k}}^{G_m}\left( \Hom_{L_{k,n-k}}(\rho_k\otimes\widetilde{\eta}_k,\ov{r}_{k,n-k}(\pi))_{\sm}\right).
        \end{align*}
        When $k<n$, then the character $\widetilde{\eta}_k$ on $G_{n-k}$ is $\nu^{\frac{k-m}{2}}$, whose cuspidal support is
        \[
        \left\{ \nu_1^{\frac{2k-m-n+1}{2}}, \nu_1^{\frac{2k-m-n+3}{2}},\cdots, \nu_1^{\frac{n-m-1}{2}} \right\}.
        \]
    In particular, each cuspidal representation occurs once. Thus, by comparing cuspidal supports, if $\alpha\boxtimes \beta$ is a subquotient obtained by applying the geometric lemma to $\ov{r}_{k,n-k}(\pi)$, then
    \[
    \Hom_{G_{n-k}}(\widetilde{\eta}_k,\beta)= 0,
    \]
    unless $\beta$ comes from the Jacquet module of a single Speh representation $U(\mathbf{1}_1;a,b)$. Therefore, the argument in Lemma~\ref{graded piece computation lem} shows that
    \[
   \Hom_{G_{n-k}}(\widetilde{\eta}_k,\ov{r}_{k,n-k}(\pi))\neq 0
    \]
    only if $k=n-h$. Moreover, at this time, $A\times U(\mathbf{1}_1;a,b)^{\times r-1}$ is the unique irreducible representation in its composition series. As representations of $G_k$, we have
    \[
    \Hom_{G_{n-k}}(\widetilde{\eta}_k,\ov{r}_{k,n-k}(\pi))\simeq \Hom_{G_{n-k}}(\widetilde{\eta}_k,r_{n-k,k}(\pi))=\BW_{\widetilde{\eta}_k}(\pi).
    \]
    Then it is socle irreducible by Lemma~\ref{duality commutes induction lem} and Proposition~\ref{big derivative socle irreducible}. In particular, the socle is multiplicity-one in the composition series, which implies that 
    \[
     \Hom_{G_{n-k}}(\widetilde{\eta}_k,\ov{r}_{k,n-k}(\pi))\simeq A\times U(\mathbf{1}_1;a,b)^{\times r-1}
    \]
    since $A\times U(\mathbf{1}_1;a,b)^{\times r-1}$ is irreducible. This completes the proof.
\end{proof}

By a similar gluing argument in Lemma~\ref{glue embedding lem} and the Fourier symmetry, we obtain that
\[
\Hom_{G_n}(\omega_{n,m}, \pi)_{\sm}\hookrightarrow \nu_{m-n+h}^{-\frac{h}{2}}\times A\times U(\mathbf{1}_1;a,b)^{\times r-1}:=\sigma .
\]
and 
\[
\Hom_{G_n}(\omega_{n,m}, \pi)_{\sm}\hookrightarrow  U(\mathbf{1}_1;a,b)^{\times r-1}\times A^{\vee}\times \nu_{m-n+h}^{\frac{h}{2}}.
\]
Then, we analyze the irreducible constituents of $\sigma$. We adopt the notation in Figure~\ref{fig:segments-exceptional-speh} about segments in $A$. By the proof of Theorem~\ref{big theta speh thm} when $a>1$, $\nu_{m-n+h}^{-\frac{h}{2}}\times A$ is of length two with socle $\mathbf{1}_{m-n}\times U(\mathbf{1}_{1};a,b)$ and cosocle
    \[
    Z(\Delta_1+\Delta_2+\sum_{i=1}^{a-1} \Lambda_i).
    \]
   Since the product of two Arthur-type representations is still irreducible,
   \[
  ( \mathbf{1}_{m-n}\times U(\mathbf{1}_{1};a,b))\times U(\mathbf{1}_1;a,b)^{\times r-1} 
   \]
   is irreducible. On the other hand, by Lemma~\ref{parabolic irr lem},
   \[
   Z(\Delta_1+\Delta_2+\sum_{i=1}^{a-1} \Lambda_i)\times U(\mathbf{1}_1;a,b)^{\times r-1}
   \]
   is also irreducible by an inductive argument on $r$. Furthermore,~\cite[Corollary 4.9]{LM16} tells us that $\sigma$ is socle irreducible. Consequently, $\sigma$ is of length two and 
   \[
   \soc(\sigma)= ( \mathbf{1}_{m-n}\times U(\mathbf{1}_{1};a,b))\times U(\mathbf{1}_1;a,b)^{\times r-1} .
   \]
   A similar argument as the proof of Theorem~\ref{big theta speh thm} when $a>1$ shows that 
   \[
   \Hom_{G_n}(\omega_{n,m},\pi)_{\sm}\simeq \soc(\sigma).
   \]
   This completes the proof of Proposition~\ref{irreducible prop}.
  \appendix

\section{A class of parabolic induced representations of length two}
In this appendix, we prove a technical lemma showing that a class of parabolically induced representations has length two. Our tool is basically the Jacquet module and some irreducibility criteria from~\cite{LM16}. The author believes that some more recent progress, such as~\cite{Gu19}, can also be applied to prove it. 

In what follows, we fix a character $\chi=\nu_1^s$ for some $s\in \BC$. The segments involved are always assumed to be based on this cuspidal representation $\chi$.
\begin{proposition}\label{length two prop}
    Let $\Delta_1,\Delta_2$ be two segments such that $\ell(\Delta_1)\geq 1$ and $\Delta_1\prec \Delta_2$. Let $\Lambda_i$ be a family of segments for $1\leq i\leq k$. Assume that
    \begin{enumerate}
        \item $\Lambda_k\prec \Lambda_{k-1}\prec\cdots\prec \Lambda_1\prec \Delta_1\cup \Delta_2$;
        \item $\Delta_1$ is strictly contained in each $\Lambda_i$.
    \end{enumerate}
    Consider $\pi:=Z(\Delta_1)\times Z(\Delta_2+\sum_{i=1}^k \Lambda_i)$. Then,
    \begin{enumerate}
        \item The socle of $\pi$ is isomorphic to 
        \[ \sigma:=Z(\Delta_1\cap \Delta_2)\times Z(\Delta_1\cup \Delta_2+\sum_{i=1}^k\Lambda_i)
        \]
        and the cosocle is isomorphic to 
        \[
       \gamma:= Z(\Delta_1+\Delta_2+\sum_{i=1}^k\Lambda_i ).
        \]
        \item  The length of $\pi$ is two. In other words, there is a short exact sequence
        \[
        0\lra \sigma\lra \pi\lra \gamma\lra 0.
        \]
    \end{enumerate}
\end{proposition}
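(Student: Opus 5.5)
Write $\fkn:=\Delta_2+\sum_{i=1}^k\Lambda_i$. The plan has three parts: identify the cosocle with the Zelevinsky product formula, identify the socle by a Jacquet-module computation, and then show there are no other constituents.

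\emph{Cosocle.} By hypothesis (1) and since $\Delta_1\prec\Delta_2$, every segment of $\fkn$ begins after $b(\Delta_1)$. Hypothesis (2) then gives $\fkn\not\prec\Delta_1$. Proposition~\ref{Zele class_properties}(1) yields $\cos(Z(\Delta_1)\times Z(\fkn))=Z(\Delta_1+\fkn)=\gamma$, and the same result gives $[\pi:\gamma]=1$, since $\gamma=\soc(Z(\fkn)\times Z(\Delta_1))$ occurs with multiplicity one in the standard module.

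\emph{Socle.} Let $\fkm_\sigma:=\Delta_1\cap\Delta_2+\Delta_1\cup\Delta_2+\sum_i\Lambda_i$. First I check that $\sigma$ is irreducible and equals $Z(\fkm_\sigma)$. Since $\Delta_1\cap\Delta_2\subset\Delta_1$ is strictly inside every $\Lambda_i$ and inside $\Delta_1\cup\Delta_2$, it is linked to none of them. Proposition~\ref{Zele class_properties}(1) therefore makes $Z(\Delta_1\cap\Delta_2)\times Z(\Delta_1\cup\Delta_2+\sum\Lambda_i)$ irreducible, equal to $Z(\fkm_\sigma)$. Next I produce $\sigma\hookrightarrow\pi$. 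The multisegment $\fkn$ is a ladder by (1), so $Z(\fkn)\hookrightarrow Z(\Delta_2)\times Z(\Lambda_1+\cdots+\Lambda_k)$. The standard length-two sequence $0\to Z(\Delta_1\cap\Delta_2)\times Z(\Delta_1\cup\Delta_2)\to Z(\Delta_1)\times Z(\Delta_2)\to Z(\Delta_1+\Delta_2)\to 0$ then gives a map into $Z(\Delta_1)\times Z(\Delta_2)\times Z(\sum\Lambda_i)$. To show $\sigma$ actually lands in $\pi$, I argue through Jacquet modules. Set $\Delta_1=[x,y]$. I compare the highest left $\nu^{x}$-derivative (Lemma~\ref{Jac of left chi seq} style) of $\pi$ and of its constituents. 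The relevant part of $\Jac_{Z(\Delta_1\cap\Delta_2)}(\pi)$, computed by the geometric lemma and the ladder Jacquet formula of \cite{KL12}, should be multiplicity free and contain $Z(\Delta_1\cap\Delta_2)\boxtimes Z(\Delta_1\cup\Delta_2+\sum\Lambda_i)$. Frobenius reciprocity then gives a nonzero map $\pi^\vee$-side, equivalently $\sigma\hookrightarrow\pi$ after dualizing via $Z(\fkm)^\vee=Z(\fkm^\vee)$. Alternatively, \cite[Corollary 4.9]{LM16}-type socle irreducibility applies once $Z(\Delta_1)$ is treated as $\square$-irreducible (a segment is), so $\soc(\pi)$ is irreducible and occurs once. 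I would show it is $\sigma$ by exhibiting the Jacquet-module constituent $Z(\Delta_1\cap\Delta_2)\boxtimes\cdots$ in $\soc$.

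\emph{Length two.} This is the main obstacle. I would bound $[\pi]$ in the Grothendieck group by a Jacquet-module count. Take the left derivative with respect to $\nu^{x}$ (the beginning of $\Delta_1$). By hypothesis (2) no segment of $\fkn$ starts at $x$ or at $x+1$ linked appropriately, so $\fkn$-factors are $\nu^x$-reduced and $a_{\nu^x}(\pi)=1$. Then $D_{\nu^x}$ of any constituent lies in $Z({}^-\Delta_1)\times Z(\fkn)$, and I induct on $\ell(\Delta_1)$. In the base case $\ell(\Delta_1)=1$, $Z(\Delta_1)$ is cuspidal and the length is read off from the Jacquet module at the cuspidal $\nu^x$: the $\nu^x$-isotypic part of $r(\pi)$ is $Z(\fkn)$ plus $\nu^x\times\BW$-terms, and exactly two constituents survive. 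In the inductive step, the $\nu^x$-reduced constituents (those of $\Jac=0$) must be excluded by comparing $\Jac_{\nu^x}(\pi)=Z(\fkn)+\nu^x\times\Jac$ terms. The key difficulty is controlling the constituents of $Z({}^-\Delta_1)\times Z(\fkn)$ that are not reached by the $\nu^x$ derivative. I expect the strict containment (2) together with the ladder structure (1) to make $r(\pi)$ small enough to force exactly $\sigma$ and $\gamma$, using that $\gamma$ and $\sigma$ each occur once.
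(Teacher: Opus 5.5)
Your cosocle argument is the paper's (Proposition~\ref{Zele class_properties}(1) applied to $\fkn\not\prec\Delta_1$). Part~(2), however, has a genuine gap.

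Put $x=b(\Delta_1)$. You work only with the $\nu_1^{x}$-derivative. Exactness of $\Jac_{\nu_1^x}$, together with $a_{\nu_1^x}(\pi)=1$ and Lemma~\ref{Jac of left chi seq}, controls only the constituents of $\pi$ that are \emph{not} left $\nu_1^x$-reduced. Counted with multiplicity, their number equals the length of $Z({}^-\Delta_1)\times Z(\fkn)$. Constituents that are left $\nu_1^x$-reduced are invisible to this count, and nothing in your proposal excludes them (``I expect \dots''). This is not a side issue. By Lemma~\ref{different left endpoint Jac lem}, $\gamma$ itself is left $\nu_1^x$-reduced whenever $b(\Delta_2)=x+1$, since then $\Delta_1\notin H(\Delta_1+\Delta_2+\sum_i\Lambda_i)$. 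In your base case $\ell(\Delta_1)=1$ this always happens, and there $\Jac_{\nu_1^x}(\pi)=\nu_1^x\boxtimes Z(\fkn)$ is irreducible and detects only $\sigma$. So the assertion that ``exactly two constituents survive'' is false. A hypothetical third, left $\nu_1^x$-reduced constituent would slip through every step of your induction on $\ell(\Delta_1)$.

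The missing idea is to use all cuspidal lines at once. The paper inducts on $N$ and examines the whole
\[
r_{1,N-1}(\pi)=\bigoplus_{y\in\CG(\pi)}\Jac_{\nu_1^y}(\pi),\qquad \CG(\pi)=\{b(\Delta_1),b(\Delta_2),b(\Lambda_1)\}\cup\CY.
\]
Each $\Jac_{\nu_1^y}(\pi)$ is either again of the shape in the proposition, hence of length two by induction, or irreducible in three degenerate cases. The resulting total is compared with $|\CG(\sigma)|+|\CG(\gamma)|$, obtained from Lemma~\ref{different left endpoint Jac lem}. A nonzero irreducible representation with cuspidal support in characters of $G_1$ never has vanishing $r_{1,N-1}$. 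So a third constituent would make $r_{1,N-1}(\pi)$ longer than $\sigma$ and $\gamma$ account for.

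There is also a smaller gap in your socle step. Getting socle irreducibility from the $\square$-irreducibility of $Z(\Delta_1)$ \cite{LM16} is a legitimate alternative to the paper's explicit computation. The paper instead embeds $Z(\Delta_1)\hookrightarrow Z(\Delta_1\setminus\Delta_2)\times Z(\Delta_1\cap\Delta_2)$, uses that $Z(\Delta_1\cap\Delta_2)\times Z(\fkn)$ is irreducible, and then applies \cite[Theorem 5.11(2), Proposition 5.6]{LM16}. The problem is your identification of the socle: a Jordan--H\"older constituent of a Jacquet module gives no homomorphism. Via second adjointness you need $Z(\Delta_1\cap\Delta_2)\boxtimes Z(\Delta_1\cup\Delta_2+\sum_i\Lambda_i)$ to be a \emph{subrepresentation} of $\Jac_{Z(\Delta_1\cap\Delta_2)}(\pi)$. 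By the geometric lemma this Jacquet piece is
\[
Z(\Delta_1\cap\Delta_2)\boxtimes\bigl(Z(\Delta_1)\times Z([e(\Delta_1)+1,e(\Delta_2)]+\textstyle\sum_i\Lambda_i)\bigr).
\]
So you would still have to show that $Z(\Delta_1\cup\Delta_2+\sum_i\Lambda_i)$ is the socle of this product, which is a socle computation of the same kind the paper performs.
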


In order to prove the proposition, we will need a lemma computing the Jacquet module of an irreducible representation whose multisegment has distinct left endpoints. This Lemma generalizes Lemma~\ref{Speh Jacquet lem zelevinsky}.
\begin{lemma}\label{different left endpoint Jac lem}
    Let $\pi=Z(\mathfrak{m})\in\Irr(G_n)$, where $\fkm=\Delta_1+\cdots +\Delta_r$. Suppose that $b(\Delta_i)\neq b(\Delta_j)$ for any $i\neq j$. Let 
    \[
    H(\fkm):=\{\Delta \in \fkm \mid \not\exists \Delta' \in\fkm \text{ such that } b(\Delta)+1=b(\Delta') \text{ and } \Delta\prec \Delta'\}.
    \]
    Then we have
    \begin{equation}\label{Jacquet different left ends}
           r_{1,n-1}(\pi)= \bigoplus_{\Delta\in H(\fkm)} \chi\nu^{b(\Delta)}_1\boxtimes Z(\fkm-\Delta+{}^-\Delta) . 
    \end{equation}
\end{lemma}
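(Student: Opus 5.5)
I plan to argue by induction on $n$ (equivalently on $|\fkm|$), using the Mœglin--Waldspurger type description of the first Jacquet piece. Since all segments are based on the same character $\chi$ and the left endpoints are pairwise distinct, every cuspidal in $\supp(\pi)$ is of the form $\chi\nu_1^{x}$. The first step is to determine which $x$ can occur as the first factor of $r_{1,n-1}(\pi)$, i.e.\ to compute $\Jac_{\chi\nu_1^x}(\pi)$ for each $x$.

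For this I will order $\fkm=\Delta_1+\cdots+\Delta_r$ so that $\Delta_i\not\prec\Delta_j$ for $i<j$, which gives $Z(\fkm)\hookrightarrow Z(\Delta_1)\times\cdots\times Z(\Delta_r)$. By exactness of the Jacquet functor and the geometric lemma, $r_{1,n-1}$ of this product has a filtration with pieces $\chi\nu_1^{b(\Delta_i)}\boxtimes Z(\Delta_1)\times\cdots\times Z({}^-\Delta_i)\times\cdots\times Z(\Delta_r)$, because $r_{1,\ell-1}(Z(\Delta))=\chi\nu_1^{b(\Delta)}\boxtimes Z({}^-\Delta)$. Since the left endpoints are distinct, these pieces lie in distinct blocks with respect to the first factor. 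Hence $r_{1,n-1}(\pi)$ decomposes as a direct sum over $x=b(\Delta)$, and each summand is a subrepresentation of $\chi\nu_1^{b(\Delta)}\boxtimes\big(\cdots\times Z({}^-\Delta)\times\cdots\big)$.

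Next I will identify each summand. A nonzero $\Jac_{\chi\nu_1^{b(\Delta)}}(\pi)$ means $\pi\hookrightarrow\chi\nu_1^{b(\Delta)}\times\tau$ for some irreducible $\tau$. The standard left-derivative theory for a single cuspidal (Jantzen/Mínguez: the $\rho$-derivative of $Z(\fkm)$ is obtained by removing the beginnings of the non-matched segments starting at $\rho$) gives the answer here. Because each left endpoint appears at most once, the multiplicity $a_{\chi\nu_1^x}(\pi)$ is at most $1$. It equals $1$ exactly when the segment $\Delta$ with $b(\Delta)=x$ is not matched with a segment $\Delta'$ with $b(\Delta')=x+1$ and $\Delta\prec\Delta'$; this is precisely the condition $\Delta\in H(\fkm)$. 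In that case $D_{\chi\nu_1^x}(\pi)=Z(\fkm-\Delta+{}^-\Delta)$, and Lemma~\ref{Jac of left chi seq} together with the proof of the corollary after it shows that $\Jac_{\chi\nu_1^x}(\pi)=\chi\nu_1^x\boxtimes D_{\chi\nu_1^x}(\pi)$ is irreducible. For $\Delta\notin H(\fkm)$ the piece vanishes by Lemma~\ref{reducedness characterization lem}. To be self-contained I will verify the matching criterion directly. If $\Delta\prec\Delta'$ with $b(\Delta')=b(\Delta)+1$, then $Z(\Delta)\times Z(\Delta')$ has socle $Z(\Delta+\Delta')$, and this embeds into $Z(\Delta')\times Z(\Delta)$ by Proposition~\ref{Zele class_properties}(1). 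A Jacquet computation on the two-segment representation, where the first factor $\chi\nu_1^{b(\Delta)}$ cannot be split off, combined with the embedding $\pi\hookrightarrow Z(\fkm-\Delta-\Delta')\times Z(\Delta+\Delta')$ in a suitable order, then kills $\Jac$. Conversely, if $\Delta\in H(\fkm)$, I can reorder so that $Z(\Delta)$ comes first, giving $\pi\hookrightarrow Z(\Delta)\times Z(\fkm-\Delta)\hookrightarrow\chi\nu_1^{b(\Delta)}\times Z({}^-\Delta)\times Z(\fkm-\Delta)$. Lemma~\ref{subquotient lem} then identifies the derivative as $Z(\fkm-\Delta+{}^-\Delta)$, since it is the unique constituent with matching Zelevinsky data.

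The main obstacle is the converse direction: showing that when $\Delta\in H(\fkm)$ the embedding really yields $\Jac\neq0$, and that the derivative is exactly $Z(\fkm-\Delta+{}^-\Delta)$ rather than some other constituent. Reordering so that $\Delta$ comes first requires $\Delta'\not\prec\Delta$ for all the other $\Delta'$, which can fail. I would therefore first try to rely directly on Mínguez's $\rho$-derivative combinatorics (\cite{Ming09}, valid for inner forms via \cite{MS13}) and only check that the distinct-left-endpoint hypothesis reduces the matching procedure to the condition defining $H(\fkm)$.
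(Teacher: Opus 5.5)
This is essentially the paper's proof. The distinct left endpoints split $r_{1,n-1}(\pi)$ into blocks indexed by the first cuspidal factor, and each block is identified from the left-derivative combinatorics, which the paper quotes from \cite[Theorem~5.11(4)]{LM16} together with Lemma~\ref{reducedness characterization lem}. Your remark that $a_{\chi\nu_1^{x}}(\pi)\le 1$, so that each block is the irreducible $\chi\nu_1^{x}\boxtimes D_{\chi\nu_1^{x}}(\pi)$ by Lemma~\ref{Jac of left chi seq}, spells out a multiplicity-one point the paper leaves implicit.

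In your optional direct check, two details need fixing.
\begin{itemize}
\item $Z(\Delta+\Delta')$ is the socle of $Z(\Delta')\times Z(\Delta)$, not of $Z(\Delta)\times Z(\Delta')$.
\item A two-factor embedding of $\pi$ may not exist. Ordering the segments by decreasing left endpoints instead gives $\pi\hookrightarrow Z(\fkm_{>})\times Z(\Delta+\Delta')\times Z(\fkm_{<})$ by two applications of Proposition~\ref{Zele class_properties}(1). Here $\fkm_{>}$ (resp.\ $\fkm_{<}$) denotes the remaining segments beginning above (resp.\ below) $b(\Delta)$. This three-factor embedding suffices for the vanishing.
\end{itemize}
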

\begin{proof}
By Lemma~\ref{reducedness characterization lem}, $\alpha\boxtimes\beta$ is an irreducible component of $r_{1,N-1}(\pi)$ if and only if $\pi$ is not left $\alpha$-reduced. Furthermore, \cite[Theorem~5.11~(4)]{LM16} demonstrates that the irreducible components of $r_{1,N-1}(\pi)$ consist precisely of the terms on the right-hand side of~\eqref{Jacquet different left ends}. Moreover, this decomposition is a direct sum because $\chi\nu^{b(\Delta_i)}\not\simeq \chi\nu^{b(\Delta_j)}$ for any $1\leq i\neq j\leq r$.
\end{proof}
We are ready to prove the Proposition.
\begin{proof}[Proof of Proposition~\ref{length two prop}]
By twisting the character $\chi^{-1}$ to $\pi$, we can assume $s=0$.
   \textbf{Part (1).} We first compute the socle of $\pi$ by induction on the length of $\Delta_1\setminus\Delta_2$. By definition of Zelevinsky segment,
    \[
    Z(\Delta_1)\hookrightarrow Z(\Delta_1\setminus\Delta_2)\times Z(\Delta_1\cap \Delta_2).
    \]
    Therefore,
    \[
     \pi\hookrightarrow Z(\Delta_1\setminus\Delta_2)\times \left(Z(\Delta_1\cap \Delta_2)\times Z(\Delta_2+\sum_{i=1}^k \Lambda_i)\right).
    \]
    Since $\Delta_1\cap \Delta_2$ is contained in $\Delta_2$ and $\Lambda_i$, $\lambda:=Z(\Delta_1\cap \Delta_2)\times Z(\Delta_2+\sum_{i=1}^k \Lambda_i)$ is irreducible by Proposition~\ref{Zele class_properties} part~\ref{part: non-preceding socle}. Thus, it suffices to show that $ Z(\Delta_1\setminus\Delta_2)\times\lambda$ has socle $\sigma$.
    
    Suppose $\ell(\Delta_1\setminus \Delta_2)=1$. Then by direct computation, we observe that it satisfies the assumption in~\cite[Theorem 5.11 (2)]{LM16}. Hence,
    \[
    \soc(Z(\Delta_1\setminus\Delta_2)\times\lambda)= Z(\Delta_1\cap \Delta_2 + \Delta_1\cup \Delta_2+\sum_{i=1}^k \Lambda_i),
    \]
    which is isomorphic to $\sigma$ by Proposition~\ref{Zele class_properties} part~\ref{part: non-preceding socle} again. Suppose that when $\ell(\Delta_1\setminus \Delta_2)<j$, $\soc(Z(\Delta_1\setminus\Delta_2)\times\lambda)=\sigma$. We proceed to assume that $\ell(\Delta_1\setminus \Delta_2)=j$. Then by~\cite[Proposition 5.6]{LM16}, 
    \begin{align*}
        \soc(Z(\Delta_1\setminus\Delta_2)\times\lambda)&\simeq \soc ( \nu_1^{b(\Delta_1) }\times \soc( Z( {}^- (\Delta_1\setminus \Delta_2))\times \lambda))\\
        &\simeq \soc ( \nu_1^{b(\Delta_1)} \times Z(\Delta_1\cap \Delta_2 + {}^- (\Delta_1\cup \Delta_2)+\sum_{i=1}^k \Lambda_i).
    \end{align*}
    Then by~\cite[Proposition 5.6]{LM16} again, we obtain the result.

    We compute the cosocle of $\pi$. Since $\Delta_2\not\prec \Delta_1$ and $\Delta_1$ is contained in $\Lambda_i$ for each $i$, the result follows from Proposition~\ref{Zele class_properties} Part~\ref{part: non-preceding socle}.\\
    \textbf{Part (2).}  Assume that $\pi$ is a representation of $G_N$. We argue by induction on $N$. The minimal possibility is $N=(k+1)(k+2)$, in which case $\ell(\Delta_1)=1$ and $\ell(\Lambda_i)=k+2$ for each $i$. At this time, on the one hand, by geometric lemma and Lemma~\ref{different left endpoint Jac lem}, we have
    \begin{align*}
       & [r_{1,N-1}(\pi)]=[\nu_1^{b(\Delta_1)}\boxtimes Z(\Delta_2+\sum_{i=1}^k \Lambda_i)]+\\
       [\nu_1^{b(\Delta_2)}&\boxtimes Z(\Delta_1)\times Z({}^-\Delta_2+\sum_{i=1}^k \Lambda_i)]+[\nu_1^{b(\Lambda_1)}\boxtimes Z(\Delta_1)\times Z(\Delta_2+{}^-\Lambda_1+\sum_{i=2}^k \Lambda_i)]
    \end{align*}
    in the Grothendieck group. On the other hand, by Lemma~\ref{different left endpoint Jac lem}, we have
    \[
    [r_{1,N-1}(\sigma)]= [\nu_1^{b(\Delta_1)}\boxtimes Z(\Delta_2+\sum_{i=1}^k \Lambda_i)]
    \]
    and
    \[
    [r_{1,N-1}(\gamma)]= [\nu_1^{b(\Delta_2)}\boxtimes Z(\Delta_1)\times Z({}^-\Delta_2+\sum_{i=1}^k \Lambda_i)]+[\nu_1^{b(\Lambda_1)}\boxtimes Z(\Delta_1)\times Z(\Delta_2+{}^-\Lambda_1+\sum_{i=2}^k \Lambda_i)].
    \]
    Since the Jacquet functor is exact, if there is another non-zero irreducible component $\pi'$ besides $\sigma,\gamma$, then $r_{1,N-1}(\pi')=0$. However, it is impossible since the cuspidal support of $\pi'$ consists of characters of $G_1$.

    Now, assume that when $\pi$ is a representation of $G_t$, where $t<N$, the statements holds. We proceed to prove when $\pi$ is a representation of $G_N$. The idea follows from the case when $N=(k+1)(k+2)$. That is, it suffices to show that 
    \[
    \length(r_{1,N-1}(\pi))\leq \length(r_{1,N-1}(\sigma))+\length(r_{1,N-1}(\gamma)).
    \]
    For a finite length representation $V$ of $G_N$, we define
    \[
    \CG(V):=\{x\in \BR\mid \Jac_{\nu_1^x}(V)\neq 0\}.
    \]
    Then by geometric lemma and Lemma~\ref{different left endpoint Jac lem}, we have
    \[
    \CG(\pi)= \{b(\Delta_1),b(\Delta_2),b(\Lambda_1)\}\cup \CY,
    \]
    where $\CY:= \{ b(\Lambda_i)\mid 1<i\leq k \text{ and }
    b(\Lambda_i)+1\neq b(\Lambda_{i-1})\}$. We observe that when $x\in \CG(V)$, $\Jac_{\nu_1^x}(\pi)$ satisfies the assumption of the Proposition except for the following cases:
    \begin{enumerate}
        \item $b(\Delta_1)=b(\Delta_2)-1$, in other words, ${}^-\Delta_1\subset \Delta_2$. In this case, $\Jac_{\nu_1^{b(\Delta_1)}}(\pi)$ is irreducible since ${}^-\Delta_1$ is also contained in each $\Lambda_i$.
        \item $b(\Delta_2)=e(\Delta_1)+1$, equivalently, ${}^-\Delta_2$ is not linked with $\Delta_1$. In this case, $\Jac_{\nu_1^{b(\Delta_2)}}(\pi)$ is irreducible since $\Delta_1$ is also not linked with $\Lambda_i$ for each $i$.
        \item $b(\Lambda_1)=b(\Delta_1)-1$. In this case, $\Jac_{\nu_1^{b(\Lambda_1)}}(\pi)$ is also irreducible by~\cite[Corollary 5.14]{LM16}.
    \end{enumerate}
    Besides these exceptional cases, since $\Jac_{\nu_1^x}(\pi)$ satisfies the assumption of the Proposition, by inductive hypothesis, it is of length two.

    On the other hand, we compute $\CG(\sigma)$ and $\CG(\gamma)$. If we let $\fkm$ be the multisegment $\Delta_1\cap\Delta_2+\Delta_1\cup \Delta_2+\sum_{i=1}^k\Lambda_i$, then by Lemma~\ref{different left endpoint Jac lem}, we have $\CG(\sigma)= \{b(\Delta)\mid \Delta\in H(\fkm)\}$. More explicitly,
    \[
    \CG(\sigma)= \begin{cases}
        \{ b(\Delta_1)\}\cup \CY  & \text{when } b(\Lambda_1)=b(\Delta_1)-1 \text{ and } \Delta_1\not\prec{}^-\Delta_2\\
         \{ b(\Delta_1), b(\Lambda_1)\}\cup \CY &\text{when } b(\Lambda_1)<b(\Delta_1)-1 \text{ and }\Delta_1\not\prec{}^-\Delta_2\\
          \{ b(\Delta_1), b(\Delta_2)\}\cup \CY &\text{when } b(\Lambda_1)=b(\Delta_1)-1 \text{ and }\Delta_1 \prec{}^-\Delta_2\\
         \{ b(\Delta_1),b(\Delta_2), b(\Lambda_1)\}\cup \CY &\text{otherwise } 
    \end{cases}  .
    \]
    In addition, if we let $\fkm'$ be the multisegment $\Delta_1+\Delta_2+\sum_{i=1}^k\Lambda_i$, then by Lemma~\ref{different left endpoint Jac lem}, we have $\CG(\gamma)= \{b(\Delta)\mid \Delta\in H(\fkm')\}$. More explicitly,
    \[
    \CG(\gamma)=\begin{cases}
        \{b(\Lambda_1),b(\Delta_2)\}\cup \CY & \text{when } b(\Delta_1)=b(\Delta_2)-1 \\
         \{b(\Delta_1),b(\Lambda_1),b(\Delta_2)\}\cup \CY & \text{when } b(\Delta_1)<b(\Delta_2)-1 \\
    \end{cases}.
    \]
    In conclusion, the above calculation shows
    \[
    \length(r_{1,N-1}(\pi))= |\CG(\sigma)|+|\CG(\gamma)|
    \]
    which is at most $\length(r_{1,N-1}(\sigma))+\length(r_{1,N-1}(\gamma))$. This completes the proof.
\end{proof}


\begin{thebibliography}{99}
\bibitem[APS17]{APS17} Jeffrey D. Adams, Dipendra Prasad, and Gordan Savin. \emph{Euler-Poincar\'e characteristic for the oscillator representation.} In Representation theory, number theory, and invariant theory, volume 323 of Progr. Math., pages 1–22. Birkha\"user/Springer, Cham, 2017.

\bibitem[Au95]{Au95} Aubert, A.-M.: \emph{Dualit\'e dans le groupe de Grothendieck de la cat\'egorie des repr\'esentations lisses delongueur finie d'un groupe r\'eductif p-adique.} Trans. Am. Math. Soc. 347(6), 2179-2189 (1995) https://doi.org/10.1090/S0002-9947-1995-1285969-0 

\bibitem[Ba07]{Ba07} A. I. Badulescu, Jacquet-Langlands et unitarisabilité. J. Inst. Math. Jussieu 6 (2007), no. 3, 349–379

\bibitem[Ba08]{Ba08} \bysame, Global Jacquet-Langlands correspondence, multiplicity one and classification of automorphic representations. With an appendix by Neven Grbac. Invent. Math. 172 (2008), no. 2, 383–438

\bibitem[Ber92]{Ber92}
J.~Bernstein, \emph{Representations of $p$-adic groups}, notes written by
K.~Rumelhart, Harvard University, 1992.

\bibitem[BBK18]{BBK18} Bernstein, J., Bezrukavnikov, R. and Kazhdan, D. \emph{Deligne-Lusztig duality and wonderful compactification.} Sel. Math. New Ser. 24, 7-20 (2018). https://doi.org/10.1007/s00029-018-0391-5

\bibitem[BR10]{BR10} A. I. Badulescu and D. Renard, \emph{Unitary dual of $\GL_n$ at Archimedean places and global Jacquet-Langlands correspondence}. Compos. Math. 146 (2010), no. 5, 1115–1164

\bibitem[BW00]{BW00} A. Borel, N. Wallach, Continuous Cohomology, Discrete Subgroups, and Representations of Reductive Groups, second ed., Math. Surveys Monogr., vol. 67, Amer. Math. Soc., Providence, RI, 2000.

\bibitem[BZ77]{BZ77} I. N. Bernstein and A. V. Zelevinsky, {\it Induced representations of reductive p-adic groups}, I, Ann. Sci. Ecole Norm. Sup. {\bf 10} (1977), 441-472.

\bibitem[Cai23]{Cai23} Yuanqing Cai, \emph{Quaternionic Speh representations}, Doc. Math. 28 (2023), no. 4, 903–937. MR4705603

\bibitem[CLLTZ25]{CLLTZ25} Rui Chen, Yufeng Li, Xiaohuan Long, Chenhao Tang, Jialiang Zou. \emph{Godement--Jacquet L-function and homological theta lifting.} arXiv preprint arXiv:2507.07531 (2025).


\bibitem[Ch22]{Ch22} Kei Yuen Chan, \emph{Restriction for general linear groups: The local non-tempered Gan-Gross-Prasad conjecture (non-Archimedean case)}. Crelles Journal, vol. 2022, no. 783, 2022, pp. 49-94. 

\bibitem[Ch23]{Cha_qbl} \bysame, \emph{Quotient branching law for p-adic $(\mathrm{GL}_{n+1}, \mathrm{GL}_{n})$ I: generalized GGP relevant pair}. arXiv:2212.05919 (v2, 2023).

\bibitem[Ch25]{Cha_csq} \bysame, \emph{Construction of simple quotients of Bernstein-Zelevinsky derivatives and highest derivative multisegments I: reduction to combinatorics}, Trans. Amer. Math. Soc. Ser. B 12 (2025), 851-909.  


\bibitem[Ch24a]{Cha_csq_ii} \bysame, \emph{Construction of simple quotients of Bernstein-Zelevinsky derivatives and highest derivative multisegments II: minimal sequences}. preprint (2024).



\bibitem[Ch24b]{Cha_csq_iii} \bysame, \emph{Construction of simple quotients of Bernstein-Zelevinsky derivatives and highest derivative multisegments III: properties of minimal sequences}. preprint (2024).

\bibitem[Ch24c]{Cha_duality} \bysame, \emph{Duality for generalized Gan-Gross-Prasad relevant pairs for $p$-adic $\mathrm{GL}_n$}, preprint, arXiv:2210.17249

\bibitem[Ch24]{Ch24} \bysame, \emph{On the product functor on inner forms of general linear group over a non-Archimedean local field}, Transformation Groups (2024).

\bibitem[CP25]{CP25}  K.Y. Chan and Basudev Pattanayak, \emph{Algorithms for parabolic inductions and Jacquet modules in $\mathrm{GL}_n$}, arXiv:2503.00886.

\bibitem[FSX18]{FSX18} Yingjue Fang, Binyong Sun, and Huajian Xue. \emph{Godement-Jacquet L-functions and full theta lifts}. Math. Z., 289(1-2):593–604, 2018.

\bibitem[GGP20]{GGP20} W. T. Gan, B. H. Gross and D. Prasad, \emph{Branching laws for classical groups: the non-tempered case}, Compos. Math. 156 (2020), no. 11, 2298–2367.

\bibitem[GJ72]{GJ72} Roger Godement and Herv\'e Jacquet. \emph{Zeta functions of simple algebras}, volume Vol. 260 of Lecture Notes in Mathematics. Springer-Verlag, Berlin-New York, 1972.

\bibitem[Gu19]{Gu19} Maxim Gurevich. \emph{Decomposition rules for the ring of representations of non-Archimedean $\GL_n$.} International Mathematics Research Notices, 2020(20):6815–6855, 02 2019.

\bibitem[HT01]{HT01} M. Harris and R. Taylor, The geometry and cohomology of some simple Shimura varieties, With an appendix by Vladimir G. Berkovich, Annals of Mathematics Studies, 151. Princeton University Press, Princeton, NJ, 2001.

\bibitem[KL12]{KL12} A. Kret and E. Lapid, \emph{Jacquet modules of ladder representations}, C. R. Math. Acad. Sci. Paris 350 (2012), no. 21-22, 937–940. MR2996769

\bibitem[LM14]{LM14}
E. Lapid and A. M\'inguez, \emph{On a determinantal formula of Tad\'ic}, Amer. J. Math. 136 (2014), no. 1, 111–142. MR3163355.

\bibitem[LM16]{LM16} \bysame, \emph{On parabolic induction on inner forms of the general linear group over a non-Archimedean local field}, Sel. Math. New Ser. (2016) 22, 2347-2400.

\bibitem[Ming08]{Ming08}
A.~Mínguez, \emph{Correspondance de Howe explicite: paires duales de type
II}, Ann. Sci. Éc. Norm. Supér. (4) \textbf{41} (2008), no.~5, 717--741.
DOI: 10.24033/asens.2080.

\bibitem[Ming09]{Ming09}
\bysame, \emph{Sur l'irréductibilité d'une induite parabolique},
J. Reine Angew. Math. \textbf{629} (2009), 107--131.
DOI: 10.1515/CRELLE.2009.028.

\bibitem[MS13]{MS13} M\'inguez, A., S\'echerre, V. (2013). Repr\'esentations banales de ${\mathrm GL}_{m}({\mathrm D})$. Compositio Mathematica, 149(4), 679-704. doi:10.1112/S0010437X12000590

\bibitem[Pr23]{Pr23} Dipendra Prasad. \emph{Homological aspects of branching laws}. arXiv preprint arXiv:2302.03492, 2023.

\bibitem[SS97]{SS97} P. Schneider, U. Stuhler, \emph{Representation theory and sheaves on the Bruhat-Tits building}, Publ. Math. Inst. Hautes \'Etudes Sci. 85 (1997) 97-191.

\bibitem[GW26]{GW26} Z. Geng and K. Wu, \emph{Multiplicity one property for Archimedean theta correspondence}, arXiv preprint, 	arXiv:2609.26630 (2026).

\bibitem[Ze80]{Ze80} A. Zelevinsky, \emph{Induced representations of reductive p-adic groups II}, Ann. Sci. Ecole Norm. Sup. {\bf 13} (1980), 154-210.
\end{thebibliography}
\end{document}